\newtheorem{thm}{Theorem}[section]
\newtheorem{cor}[thm]{Corollary}
\newtheorem{lem}[thm]{Lemma}
\newtheorem{prop}[thm]{Proposition}
\newtheorem {conj}[thm]{Conjecture}
\newtheorem {ques/conj}[thm]{Question/Conjecture}
\newtheorem{rmk}[thm]{Remark}
\DeclareMathOperator{\Tr}{Tr}
\DeclareMathOperator{\vol}{vol}
\DeclareMathOperator{\Irr}{Irr}
\DeclareMathOperator{\Norm}{Norm}
\newcommand{\BA}{{\mathbb {A}}}
\newcommand{\BC}{{\mathbb {C}}}
\newcommand{\CA}{{\mathcal {A}}}
\newcommand{\CC}{{\mathcal {C}}}
\newcommand{\CT}{{\mathcal {T}}}
\newcommand{\Fg}{{\mathfrak {g}}}
\newcommand{\GL}{{\mathrm{GL}}}
\newcommand{\Hom}{{\mathrm{Hom}}}
\newcommand{\Mat}{{\mathrm{Mat}}}
\newcommand{\SO}{{\mathrm{SO}}}
\newcommand{\ul}{\underline}
\begin{document}

\title{A Local Trace Formula for the Generalized Shalika Model}

\author{Rapha\"el Beuzart-Plessis,\; Chen Wan}

\date{\today}

\maketitle

\begin{abstract}
\noindent
We study local multiplicities associated to the so-called generalized Shalika models. By establishing a local trace formula for these kind of models, we are able to prove a multiplicity formula for discrete series. As a result, we can show that these multiplicities are constant over every discrete Vogan $L$-packet and that they are related to local exterior square $L$-functions.
\end{abstract}

\tableofcontents

\section{Introduction}

Let $G$ be a $p$-adic reductive group, $H$ a closed subgroup of $G$ and $\chi$ a character of $H$ (potentially the trivial one). To every smooth irreducible representation $\pi$ of $G$, we associate a multiplicity

$$\displaystyle m(\pi,\chi):=\dim \Hom_H(\pi,\chi)$$

\noindent If the subgroup $H$ is {\it spherical} (that is it admits an open orbit on the flag variety of $G$) then we expect these multiplicities to always be finite (this is already known in a certain number of cases see \cite{Del} Theorem 4.5 and \cite{SV} Theorem 5.1.5) and to roughly detect certain kind of functorial lifts. For a good references on this circle of ideas, that has come to be called the {\it relative local Langlands program}, we refer the reader to \cite{Pras} and to the monograph \cite{SV} which set forth a general formalism '\`a la Langlands' for these kind of problems.

In the foundational papers \cite{WalGGPI}, \cite{WalGGPII}, Waldspurger has discovered a new way to attack these questions by proving a certain integral formula computing the multiplicity $m(\pi,\chi)$ in the case of the so-called orthogonal Gross-Prasad models which, together with some twisted version of it related to epsilon factors of pair, has found a remarkable application to the local Gross-Prasad conjecture for orthogonal groups (see \cite{WalGGPIII}, \cite{MWGGP}). This line of attack has then been adapted by the first author \cite{BeuGGP0}, \cite{BeuGGP} to deal with the local Gross-Prasad conjecture for unitary groups and by the second author \cite{Wan15}, \cite{Wan16a} in the setting of the so-called Ginzburg-Rallis models. Subsequently, in \cite{BeuGalP} the first author has also find another application of this method to a conjecture of Prasad concerning Galois pairs. In all these cases the basic tool to prove the aforementioned multiplicity formulas has been some new kind of local (simple) trace formulas in the spirit of Arthur \cite{ArthurlocalTF}. However, the proofs of these trace formulas, and particularly of their geometric sides, has each time been done in some ad hoc way pertaining to the particular features of the case at hand. It makes now little doubt that such trace formulas should exist in some generality and we provide here another example for the 'generalized Shalika models' in the hope that it can shed some light on the general features of a potential generalization.

\subsection{Main results}
Let $F$ be a $p$-adic field and $\CA$ be a central simple algebra over $F$ of rank $n$ (i.e. $\CA=\Mat_m(\mathcal{D})$ where $\mathcal{D}/F$ is a division algebra of degree $r$ and $n=mr$). We will denote by $\Tr_{\mathcal{A}/F}:\mathcal{A}\to F$ and $N_{\mathcal{A}/F}:\mathcal{A}\to F^\times$ the reduced trace and norm respectively. Set $G:=GL_2(\mathcal{A})$ and define the following subgroups of $G$:

\begin{itemize}
\item $H_0:=\{\begin{pmatrix}
\lambda & \\ & \lambda \end{pmatrix}\mid \lambda\in \mathcal{A}^\times \}$;

\item $N:=\{ \begin{pmatrix}
 1 & X \\ & 1 \end{pmatrix}\mid X\in \mathcal{A}\}$;

\item $H:=H_0\ltimes N$.
\end{itemize}

Fix a continuous character $\omega:F^\times\to \mathbf{C}^\times$ that we identify with a character of $H_0$ through composition with $N_{\mathcal{A}/F}:H_0\to F^\times$. Let $\psi:F\to \BC^{\times}$ be a nontrivial character and define $\xi:N\to \mathbf{C}^\times$ by

$$\displaystyle \xi\begin{pmatrix} 1 & X \\ & 1 \end{pmatrix}:=\psi(\Tr_{\mathcal{A}/F} X),\;\;\; X\in \mathcal{A}.$$

\noindent Then $\xi$ is invariant under the $H_0$-conjugation and thus extends to a character, again denoted $\xi$, of $H$ trivial on $H_0$. Similarly, we consider $\omega$ as a character on $H$ by composition with the projection $H\twoheadrightarrow H_0$ and we denote by $\omega\otimes \xi$ the product of these two characters of $H$. We refer to the triple $(G,H,\omega\otimes \xi)$ as a \textit{generalized Shalika triple}. In particular, if $\CA=\Mat_n(F)$, we recover the usual Shalika model for $GL_{2n}$. For all irreducible admissible representation $\pi$ of $G$, we define the \textit{multiplicity} $m(\pi,\omega)$ to be

$$\displaystyle m(\pi, \omega):=\dim \Hom_{H}(\pi,\omega\otimes \xi).$$

\noindent By Theorem 4.5 of \cite{Del}, we know that this multiplicity is always finite. The goal of this paper is to study the behavior of the multiplicity $m(\pi,\omega)$ inside the discrete local Vogan $L$-packets i.e. under the Jacquet-Langlands correspondences.

\begin{rmk}
In fact, by \cite{JR} and \cite{CS}, we even know the multiplicity $m(\pi,\omega)$ is less or equal to 1 (i.e. the generalized Shalika models are Gelfand pairs). But we don't need this result in the proof of the main theorem.
\end{rmk}

Let $\mathcal{A}'$ be another degree $n$ central simple algebra over $F$. Set $G':=GL_2(\mathcal{A}')$ and define subgroups $H_0'$, $N'$, $H':=H_0'\ltimes N'$ analogous to the subgroups $H_0$, $N$ and $H$ of $G$. We also define similarly characters characters $\xi'$, $\omega\otimes \xi'$ of $N'$, $H'$ respectively and for all irreducible admissible representation $\pi'$ of $G'$, we set

$$\displaystyle m(\pi', \omega):=\dim\Hom_{H'}(\pi',\omega\otimes \xi').$$
The main result of this paper is the following theorem which says that these multiplicities are constant over every discrete Vogan $L$-packet.

\begin{thm}\label{main theorem 1}
Let $\pi$ (resp. $\pi'$) be a discrete series of $G$ (resp. $G'$). Assume that $\pi$ and $\pi'$ correspond to each other under the local Jacquet-Langlands correspondence (see \cite{DKV84}). Then
$$\displaystyle m(\pi,\omega)=m(\pi',\omega).$$
\end{thm}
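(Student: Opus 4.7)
The plan is to follow the strategy pioneered by Waldspurger in \cite{WalGGPI}, \cite{WalGGPII} and adapted in \cite{BeuGGP}, \cite{Wan15}: first prove a local trace formula that expresses the multiplicity $m(\pi,\omega)$ of a discrete series $\pi$ as an integral of the Harish-Chandra character $\theta_\pi$ over a set of semisimple conjugacy classes intrinsic to the Shalika data, and then deduce $m(\pi,\omega)=m(\pi',\omega)$ from the character identity underlying the Jacquet--Langlands correspondence of \cite{DKV84}.

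Concretely, for $f\in\cg$ set
$$K_f(x,y)=\int_{\zg} f(x^{-1}gy)\,dg,$$
and, modulo a truncation to achieve convergence, consider the Shalika-twisted double period
$$J(f)=\int_{Z_G(F)\backslash H(F)}\int_{Z_G(F)\backslash H(F)}(\omega\otimes \xi)(h_1)\overline{(\omega\otimes \xi)(h_2)}\,K_f(h_1,h_2)\,dh_1\,dh_2.$$
Expanding $K_f$ spectrally in terms of matrix coefficients should produce a spectral expansion of $J(f)$ of the form $\sum_\pi m(\pi,\omega)\,I_\pi(f)$, where $I_\pi(f)$ is a canonical Shalika-type linear form attached to $\pi$. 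On the other hand, decomposing $G$ into $H\times H$-orbits by a Bruhat-type stratification and carrying out semisimple descent along the unipotent $\xi$-twist should yield a geometric expansion as a sum of weighted orbital integrals of $f$ against $\omega$ over representatives of conjugacy classes of maximal tori of $\CA^{\times}$ modulo the center.

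Applying this identity to a truncated matrix coefficient of a fixed discrete series $\pi$ collapses the spectral side to the single term proportional to $m(\pi,\omega)$, while on the geometric side Harish-Chandra's local character theory converts the orbital integrals of matrix coefficients into expressions involving $\theta_\pi$ and the local germ coefficients $c_\pi(t)$ near each semisimple $t$. One should thereby obtain a multiplicity formula of the shape
$$m(\pi,\omega)=\sum_T |W(H_0,T)|^{-1}\int_{T(F)/Z_G(F)} c_\pi(t)\,D^H(t)\,\omega(t)\,dt,$$
where $T$ ranges over representatives of stable conjugacy classes of maximal tori of $\CA^\times$ and $D^H$ is a suitable Weyl-type Jacobian. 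Since the (elliptic) maximal tori of $\CA^\times$ and of $(\CA')^\times$ coincide up to stable conjugacy, the analogous formula for $\pi'$ has the same shape with the same index set, and the character identity of the local Jacquet--Langlands correspondence (together with the compatibility of transfer factors on the two sides) forces the right-hand sides to agree term by term, yielding $m(\pi,\omega)=m(\pi',\omega)$.

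The main obstacle will lie in the geometric side of the trace formula. Because $J(f)$ is not absolutely convergent, one must introduce a careful truncation and prove that the resulting regularized quantity admits an asymptotic expansion whose leading term has the desired geometric form. Identifying this leading term as a sum of weighted orbital integrals over a tractable set of semisimple classes requires a detailed study of the $H\times H$-orbit structure on $G$, of the Shalika germ expansion of the character $\theta_\pi$, and of the asymptotics of truncated matrix coefficients against Shalika periods; these are exactly the ingredients that in \cite{WalGGPII}, \cite{BeuGGP}, \cite{Wan15} demanded the most technical work. Once the multiplicity formula is in hand, the comparison across Jacquet--Langlands is essentially formal, since the geometric side only sees the restriction of $\theta_\pi$ to elliptic semisimple elements, on which Jacquet--Langlands is a character identity.
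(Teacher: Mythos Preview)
Your overall strategy is correct and is exactly the one the paper follows: establish a simple local trace formula whose spectral side isolates $m(\pi,\omega)$ and whose geometric side is an expression in the Harish-Chandra character (more precisely, in the regularized values $c_\pi(t)$), and then compare the two sides across the Jacquet--Langlands correspondence using the character identity of \cite{DKV84} on elliptic regular elements. The paper's multiplicity formula is precisely of the form you wrote (up to $\omega(t)^{-1}$ versus $\omega(t)$), with the essential point that only \emph{elliptic} maximal tori of $H_0\simeq\mathcal{A}^\times$ contribute; these are in natural bijection with the elliptic tori of $(\mathcal{A}')^\times$, and the comparison is then term-by-term exactly as you describe.

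The one genuine discrepancy is in the shape of the distribution $J(f)$. Your double $H$-period of a full kernel $K_f(x,y)=\int_{Z_G\backslash G}f(x^{-1}gy)\,dg$ is not what the paper uses (and as written that integral does not converge). The paper instead builds the $H$-integration into the kernel, setting $K_f(x,y)=\int_H f(x^{-1}hy)(\omega\otimes\xi)(h)^{-1}\,dh$, and then forms the \emph{single} integral $J(f)=\int_{H\backslash G}K_f(x,x)\,dx$ for $f$ a Harish-Chandra cusp form. This one-variable version, in the style of \cite{WalGGPI} and \cite{BeuGalP}, avoids the $H\backslash G/H$ orbit analysis implicit in your two-variable formulation and leads directly, via a truncation on $H\backslash G$, to the geometric side as a sum over elliptic tori of $H_0$ of $\theta$-weighted (singular) orbital integrals. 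Your diagnosis that the geometric side is where the work lies is accurate; in the paper this is handled by an explicit computation of certain singular $\theta$-weighted orbital integrals rather than by a Fourier-transform linearization as in the Gross--Prasad cases.
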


Assume one moment that $\omega\mathbf{1}$ (the trivial character) and set for simplicity $m(\pi):=m(\pi,\mathbf{1})$. Then, by work of Kewat \cite{K11}, Kewat-Ragunathan \cite{KR}, Jiang-Nien-Qin \cite{JNY} and the multiplicity one theorem of Jacquet-Rallis \cite{JR}, in the particular case where $\mathcal{A}=M_n(F)$ we know that for all discrete series $\pi$ we have $m(\pi)=1$ if and only if $L(s,\pi,\wedge^2)$ (the Artin exterior square $L$-function) has a pole at $s=0$ (i.e. the Langlands parameter of $\pi$ is symplectic) and $m(\pi)=0$ otherwise. Actually, to our knowledge, a full proof of this result has not appeared in the literature and thus for completeness we provide the necessary complementary arguments in Section \ref{application 1}. Together with Theorem \ref{main theorem 1} this immediately implies

\begin{thm}\label{main theorem 2}
For all discrete series representation $\pi$ of $G$, we have $m(\pi)=1$ if and only if the local exterior square $L$-function $L(s,\pi,\wedge^2)$ has a pole at $s=0$ and $m(\pi)=0$ otherwise.
\end{thm}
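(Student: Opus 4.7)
The plan is to reduce Theorem \ref{main theorem 2} to the special case $\CA=\Mat_n(F)$, where $G=\GL_{2n}(F)$ is equipped with the usual Shalika model, and then transport the resulting dichotomy to arbitrary inner forms using Theorem \ref{main theorem 1}.

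First I would record the split-case statement: for every discrete series $\pi'$ of $\GL_{2n}(F)$, one has $m(\pi',\mathbf{1})=1$ if $L(s,\pi',\wedge^2)$ has a pole at $s=0$, and $m(\pi',\mathbf{1})=0$ otherwise. This is the combined output of four existing works. The Jacquet-Rallis multiplicity-at-most-one theorem \cite{JR} bounds $m(\pi',\mathbf{1})\le 1$ uniformly; Jiang-Nien-Qin \cite{JNY} handles the supercuspidal discrete series; and Kewat \cite{K11} together with Kewat-Ragunathan \cite{KR} treat the remaining discrete series, which are Speh-type representations obtained as generalized Steinberg constructions. What is missing from the literature is a single coherent statement that matches, across the entire discrete spectrum of $\GL_{2n}(F)$, the existence of a Shalika functional with the symplectic nature of the Langlands parameter (equivalently, with a pole of the exterior square $L$-function at $s=0$). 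I would fill this gap in Section \ref{application 1}, by tracking how the Shalika period behaves under parabolic induction from Speh representations of lower rank and comparing the resulting criterion with the known factorisation of $L(s,\pi',\wedge^2)$ along the Langlands parameter.

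The transfer step is then essentially formal. Given a discrete series $\pi$ of $G=\GL_2(\CA)$, let $\pi'$ denote its image under the local Jacquet-Langlands correspondence of \cite{DKV84}, which is a discrete series of $\GL_{2n}(F)$. Applying Theorem \ref{main theorem 1} with the trivial character $\omega=\mathbf{1}$ gives
$$m(\pi,\mathbf{1})=m(\pi',\mathbf{1}).$$
Since the local Jacquet-Langlands correspondence preserves Langlands parameters, it in particular preserves the exterior square $L$-factor, so $L(s,\pi,\wedge^2)=L(s,\pi',\wedge^2)$. Substituting into the split-case dichotomy immediately yields the desired conclusion for $\pi$.

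The hard part is therefore the split-case statement, not the transfer. The cited papers each cover a specific class of discrete series using methods tailored to that class and with slightly different conventions, so the main work is to align these partial results into a uniform statement and, in particular, to match carefully the pole behaviour of $L(s,\pi',\wedge^2)$ with the existence of a Shalika functional on Speh-type discrete series, where the period has to be accessed by descent from a parabolically induced representation. Once this book-keeping is carried out, Theorem \ref{main theorem 2} drops out of Theorem \ref{main theorem 1} with no further analytic input.
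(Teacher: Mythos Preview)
Your overall architecture is exactly the paper's: establish the dichotomy for the split group $\GL_{2n}(F)$ and then transport it to inner forms via Theorem \ref{main theorem 1}, using that the Jacquet-Langlands correspondence preserves $L$-functions. The transfer step you sketch is identical to the paper's Corollary in \S\ref{application 1}.

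Where you diverge is in how you propose to close the remaining gap in the split case. You correctly identify that \cite{JR}, \cite{JNY}, \cite{K11}, \cite{KR} do not quite combine to a complete biconditional for all discrete series: what is actually missing is the implication ``$m(\pi')\neq 0 \Rightarrow$ pole of $L(s,\pi',\wedge^2)$ at $s=0$'' for non-supercuspidal discrete series. Your plan is to attack this directly, by tracking the Shalika period through the Speh/Steinberg construction and matching against the factorisation of the exterior square $L$-function. The paper instead takes an indirect but sharper route: from $m(\pi')\neq 0$ one deduces, via \cite{JQ} Theorem 3.1 and \cite{JNY} Proposition 2.3, that the Siegel-induced representation $I_P^{\SO_{4n}}(\pi'\otimes\lvert\det\rvert^{1/2})$ is reducible; then reducibility is tied to a pole of Shahidi's $\gamma^{Sh}(2s,\pi',\wedge^2,\psi)$ at $s=1/2$ (using \cite{GI} Lemma B.2 and the Standard Module Conjecture \cite{HO}), and finally Henniart \cite{He10} identifies this with a pole of the Artin exterior square $L$-function at $s=0$.

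Your direct approach may well be workable, but as stated it is a plan rather than an argument: you have not said how a Shalika functional on a generalised Steinberg representation descends to give information on the inducing supercuspidal, nor how that information forces the parameter to be symplectic. The paper's $\SO_{4n}$ detour avoids this by outsourcing the hard representation theory to established reducibility and $\gamma$-factor results; if you wish to keep your direct route, you should spell out the descent of the Shalika period along the Bernstein--Zelevinsky filtration and check that the resulting condition on the supercuspidal support is exactly the symplectic-parameter criterion.
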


We will prove Theorem \ref{main theorem 1} in Section \ref{application 1}. The key ingredient of our proof is a certain integral formula computing the multiplicity $m(\pi,\omega)$ that we now state. Recall that following Harish-Chandra, any irreducible representation $\pi$ has a well-defined character $\Theta_\pi$ which is a locally integrable function on $G$ locally constant on the regular semi-simple locus. Moreover, Harish-Chandra has completely described the possible singularities of $\Theta_\pi$ near singular semi-simple elements leading to certain local expansions of the character near such point. Using these, we can define a certain regularization $x\mapsto c_\pi(x)$ of $\Theta_\pi$ at all semi-simple point by taking the average of the 'leading coefficients' of these local expansions (see Section \ref{representations} for details, actually for the groups considered in this paper there is always at most one such leading coefficient). Given this, our multiplicity formula can be stated as follows (see Proposition \ref{prop multiplicities})

\begin{thm}\label{main theorem 3}
For all essentially square-integrable representation $\pi$ of $G$ with central character $\chi=\omega^n$ (seen as a character of $A_G=F^\times$), we have

$$\displaystyle m(\pi,\omega)=\sum_{T\in \mathcal{T}_{ell}(H_0)} \lvert W(H_0,T)\rvert^{-1} \int_{A_G\backslash T} D^{H}(t) c_\pi(t) \omega(t)^{-1}dt$$

\noindent where $\mathcal{T}_{ell}(H_0)$ stands for a set of representatives of elliptic maximal tori in $H_0$, $W(H_0,T)=Norm_{H_0}(T)/T$ is the corresponding Weyl group, $D^H(t)$ is the usual Weyl discriminant, the measure on the tori $A_G\backslash T$ are chosen to be of total mass one and the expression on the right hand side is absolutely convergent.
\end{thm}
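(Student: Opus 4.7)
The plan is to derive this multiplicity formula from a local relative trace formula for the triple $(G, H, \omega \otimes \xi)$, and then specialize it to matrix coefficients of the discrete series $\pi$. For a test function $f$ in the Harish-Chandra Schwartz space $\CC(A_G \backslash G, \chi^{-1})$, I would introduce the kernel
$$K_f(g) := \int_{A_G \backslash H} f(g^{-1} h g)\, (\omega \otimes \xi)(h)^{-1}\, dh$$
and the distribution
$$J(f) := \int_{H \backslash G} K_f(g)\, dg.$$
A preliminary analytic point, handled in the spirit of \cite{Del} and \cite{BeuGGP}, is to verify that $J$ extends to an absolutely convergent invariant distribution on $\CC(A_G \backslash G, \chi^{-1})$; this rests on the sphericity of $(G, H_0)$ together with the genericity of $\xi$ on $N$, which lets one absorb the relevant weight factors coming from $H \backslash G$.

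For the spectral side, take $f = f_\pi$ to be a suitably normalized matrix coefficient of $\pi$ (legitimate because $\pi$ has central character $\chi = \omega^n$, matching the central character of the model). Applying Schur--Harish-Chandra orthogonality as in \cite{WalGGPI, BeuGGP}, one identifies $K_{f_\pi}(g)$ with the matrix coefficient of the projector from $\pi$ onto its $(\omega \otimes \xi)$-isotypic $H$-component; integrating over $H \backslash G$ then produces
$$J(f_\pi) = d(\pi)^{-1}\, m(\pi, \omega)$$
up to an explicit non-zero normalization, where $d(\pi)$ denotes the formal degree.

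For the geometric side, apply Weyl integration on $H$ to reorganize the $dh$-integral in $K_f$ by semi-simple $H$-conjugacy classes. Elements of $H$ with non-trivial unipotent part contribute zero, because the genericity of $\xi$ on $N$ forces the average of $\xi$ over any $H_0$-orbit on $N \setminus \{1\}$ to vanish. The remaining semi-simple locus is $H_0 \cong \mathcal{A}^\times$, and a cuspidality argument for matrix coefficients of discrete series in the style of Arthur \cite{ArthurlocalTF} kills the non-elliptic tori. This leaves
$$J(f) = \sum_{T \in \mathcal{T}_{ell}(H_0)} |W(H_0, T)|^{-1} \int_{A_G \backslash T} D^H(t)\, J^H(t, f)\, \omega(t)^{-1}\, dt,$$
where $J^H(t, f)$ is a (weighted) orbital integral along the $H$-orbit of $t$ inside $G$. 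The last ingredient is the Arthur--Waldspurger evaluation of weighted orbital integrals of matrix coefficients: at strongly regular semi-simple $t$ one has $J^H(t, f_\pi) = d(\pi)^{-1}\Theta_\pi(t)$, and this value extends continuously to singular $t$ via the Harish-Chandra local character expansion, where it produces exactly $d(\pi)^{-1} c_\pi(t)$. Comparing the two expressions for $J(f_\pi)$ and cancelling $d(\pi)^{-1}$ yields the formula.

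The main obstacle will be the analytic work at the singular (central) locus: on $A_G \backslash T$ the Weyl discriminant $D^H$ degenerates at central elements while $\Theta_\pi$ has controlled singularities there (the regularized value $c_\pi$ is precisely designed to compensate the leading singular term), so absolute convergence of the inner integral requires quantitative germ estimates for $\Theta_\pi$ near the center. Similarly, showing that matrix coefficients lie in the domain of $J$ requires weighted bounds on $H \backslash G$ that must be tailored to the generalized Shalika setting. These two steps are where \emph{ad hoc} arguments, modeled on those of \cite{BeuGGP, Wan16a}, will be unavoidable.
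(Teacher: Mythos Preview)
Your overall strategy---a relative trace formula for $J(f)$ with spectral side isolating $m(\pi,\omega)$ via matrix coefficients---matches the paper. The spectral side is fine. The geometric side, however, has two genuine gaps.

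First, the claim that ``the genericity of $\xi$ on $N$ forces the average of $\xi$ over any $H_0$-orbit on $N\setminus\{1\}$ to vanish'' is false: $\xi$ is $H_0$-invariant by construction (conjugation by $\begin{pmatrix}\lambda & \\ & \lambda\end{pmatrix}$ sends $X\mapsto \lambda X\lambda^{-1}$, and $\Tr_{\mathcal{A}/F}$ is conjugation-invariant). So the $N$-integral does not disappear; it survives as an honest integral $\int_{\mathfrak{g}_1}f(\ldots)\xi(X)^{-1}dX$ that must be analyzed together with the conjugation by $H\backslash G$.

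Second, and more seriously, the elements $t\in T\subset H_0$ that remain are embedded in $G$ as $\begin{pmatrix} t & \\ & t\end{pmatrix}$, which are \emph{never} regular in $G$: their centralizer contains the full Levi $L\simeq G_1\times G_1$. So there is no regime in which the orbital integral $J^H(t,f_\pi)$ equals $d(\pi)^{-1}\Theta_\pi(t)$; every contributing point is singular. The paper handles this by introducing a truncation $\kappa_N$, replacing the naive weight by a certain $(G,M,\theta)$-orthogonal-set weight $w_{M,\theta}(g,h,X)$ (a ``change of weight'' proposition), and then proving that the resulting expression is a \emph{singular} $\theta$-weighted orbital integral $\Phi_{M,\theta}\left(\begin{pmatrix} t & \\ & t\end{pmatrix},f\right)$, computed as a limit of regular $\theta$-weighted orbital integrals $\Phi_{M,\theta}\left(\begin{pmatrix} t & \\ & \lambda t\end{pmatrix},f\right)$ as $\lambda\to 1$. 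Only after this limit, combined with a descent formula and strong cuspidality, does $c_f(t)$ (hence $c_\pi(t)$) emerge. Your sketch treats this passage as a continuity statement for $\Theta_\pi$, but it is in fact the main technical content of the proof and cannot be black-boxed from \cite{BeuGGP} or \cite{Wan16a}: the generalized Shalika case is not strongly tempered, and the authors explicitly note they could not linearize and Fourier-transform as in the Gross--Prasad setting.
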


Theorem \ref{main theorem 1} is then an easy consequence of Theorem \ref{main theorem 3} and the characters relations characterizing the local Jacquet-Langlands correspondences (see \S \ref{application 1} for details).

\begin{rmk}
In Appendix A, we will prove a slight generalization of a result of M\oe{}glin and Waldspurger a consequence of which is that the multiplicity formula above holds more generally for all irreducible admissible representations of $G$ when $\CA=\mathcal{D}$ is a division algebra. On the other hand, if $\CA$ is not a division algebra, the multiplicity formula will only hold for discrete series (see Remark \ref{multiplicity formula fails} for more details).
\end{rmk}

For its part Theorem \ref{main theorem 3} is a consequence of a certain local simple trace formula for the generalized Shalika models of the same kind as the local trace formulas developed in \cite{BeuGGP}, \cite{BeuGalP} and \cite{Wan15}. To be specific, let $f\in {}^{\circ}\CC(G)$ be an Harish-Chandra cusp form (see Section \ref{cusp forms and theta-strongly cuspidal functions} for the definition of these) and for all $x,y\in G$, set

$$\displaystyle K_f(x,y):=\int_H f(x^{-1}hy) (\omega\otimes \xi)(h)^{-1}dh.$$

\noindent We define a distribution $J$ on the space of cusp forms by

$$\displaystyle J(f):=\int_{H\backslash G}K_f(x,x) dx.$$
In later sections, we will show that both integrals above are absolutely convergent.

The aforementioned trace formula gives two expansions of $J(f)$: one geometric and one spectral. The geometric side is given by
$$\displaystyle J_{geom}(f)=\sum_{T\in \mathcal{T}_{ell}(H_0)} \lvert W(H_0,T)\rvert^{-1} \int_T D^{H}(t) c_f(t) \omega(t)^{-1}dt$$
where $\mathcal{T}_{ell}(H_0)$ denotes a set of representatives of conjugacy classes of maximal elliptic tori in $H_0$, $W(H_0,T)$ stands for the corresponding Weyl group, $D^H$ is the usual Weyl discriminant and $c_f(t)$ is a certain weighted orbital integral of $f$ in the sense of Arthur (see \S \ref{cusp forms and theta-strongly cuspidal functions} for a precise definition). The spectral side, on the other hand, is given by the following expression
$$\displaystyle J_{spec}(f)=\sum_{\pi\in \Pi_{2}(G,\chi)}  m(\pi,\omega) \Tr \pi^{\vee}(f)$$
where $\Pi_{2}(G,\chi)$ denotes the set of (isomorphism classes of) discrete series of $G$ with central character $\chi=\omega^n$ seen as a character of $A_G=F^\times$ and $\pi^{\vee}$ stands for the contragredient of $\pi$. Then the trace formula we proved in this paper is just (see Theorem \ref{theo trace formula})
\begin{thm}\label{trace formula intro}
For all $f\in {}^{\circ}\CC(G)$, we have
\begin{equation}
J_{spec}(f)=J(f)=J_{geom}(f).
\end{equation}
\end{thm}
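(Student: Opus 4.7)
My plan is to establish $J(f)=J_{geom}(f)$ and $J(f)=J_{spec}(f)$ separately, following the template of the simple local trace formulas in \cite{WalGGPI}, \cite{BeuGGP}, and \cite{Wan15}. Throughout, the cuspidality of $f\in {}^{\circ}\CC(G)$ does triple duty: it guarantees absolute convergence of $J(f)$, forces the spectral expansion to involve only discrete series with central character $\chi=\omega^n$, and ensures that only elliptic tori contribute on the geometric side. I would begin by proving absolute convergence of $J(f)=\int_{H\backslash G}K_f(x,x)dx$ via a pointwise majorization of $|K_f(x,x)|$ that combines Harish-Chandra Schwartz estimates on $f$ with the vanishing of the constant terms of $f$ along proper parabolics of $G$ (especially those containing a conjugate of $H_0$); the resulting bound should be integrable against the invariant measure on $H\backslash G$.

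For the spectral identity, I would expand $f$ using the Plancherel/Harish-Chandra decomposition for cusp forms, writing $f$ essentially as a sum $\sum_{\pi\in \Pi_2(G,\chi)}d(\pi)\Tr(\pi^{\vee}(\cdot)\pi^{\vee}(f))$. Substituting into $K_f(x,x)$ and exchanging the sum with the integrals (justified by the convergence bounds from the previous step), the inner integral over $H$ becomes, for each $\pi$, a period operator landing in the $(H,\omega\otimes \xi)$-isotypic quotient of $\pi^{\vee}$. Integrating the resulting trace over $H\backslash G$ pairs this operator against its dual and collapses into a factor $m(\pi,\omega)\Tr\pi^{\vee}(f)$ per representation, giving $J_{spec}(f)$.

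For the geometric identity, I would swap the order of integration to write
$$J(f)=\int_{H}(\omega\otimes\xi)(h)^{-1}\int_{H\backslash G}f(x^{-1}hx)\,dx\,dh,$$
and then decompose $H$ along its $H$-conjugacy classes via a Weyl integration formula adapted to the semidirect structure $H=H_0\ltimes N$. Writing $h=h_0 n$, the $H$-conjugacy class is determined up to $H_0$-conjugacy of the semisimple part $h_0$, reducing the integration to a sum over conjugacy classes of maximal tori $T\subset H_0$ of a triple integral over $T$ (against $\omega^{-1}$), over $N$ (against $\xi^{-1}$), and over $T\backslash G$. Cuspidality of $f$ kills the non-elliptic contributions; on each elliptic torus, the inner unipotent and outer $T\backslash G$ integrations should combine to produce exactly the weighted orbital integral $c_f(t)$ of Arthur defined in \S\ref{cusp forms and theta-strongly cuspidal functions}, and reassembling with the correct Weyl factors yields $J_{geom}(f)$.

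I expect the geometric side to be the main obstacle. Two points are especially delicate: first, rigorously establishing the vanishing of non-elliptic contributions, which depends on a careful description of the non-elliptic $H$-orbits on $G$ and the precise way cuspidality interacts with them; and second, identifying the elliptic strata integral with the Arthur weighted orbital integral $c_f(t)$, which requires local harmonic analysis near singular semisimple points of $H_0$ together with a Fourier-theoretic treatment of the $\xi$-integration over $N$, including a regularization where naive integrals would diverge but cancellation from the oscillation of $\xi$ saves convergence.
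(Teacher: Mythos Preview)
Your spectral-side outline matches the paper's argument (Section \ref{The spectral side}): reduce to matrix coefficients of a single $\pi\in\Pi_2(G,\chi)$ via the cusp-form decomposition, invoke nondegeneracy of the period pairing $\mathcal{B}_\pi$ (Proposition \ref{prop intertwinings}), and conclude by Schur orthogonality; absolute convergence of $J(f)$ falls out of this spectral computation rather than from a direct pointwise bound on $K_f$.

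The geometric-side plan has a genuine gap. You propose to follow the template of \cite{WalGGPI}, \cite{BeuGGP}, \cite{Wan15} and in particular to identify the elliptic contribution with $c_f(t)$ via ``a Fourier-theoretic treatment of the $\xi$-integration over $N$'' regularizing the singularities. The paper explicitly notes (\S 1.2) that this template does not carry over: the generalized Shalika model is not strongly tempered, so the linearize-and-Fourier-transform step that in the earlier papers killed the problematic singularities is unavailable here. Instead the paper (Section \ref{geom side}) introduces a truncation $J_N(f)$ of $J(f)$ via Arthur's $u(\cdot,T_N)$, applies Weyl integration on $H_0\simeq G_1$ to write $J_N(f)=\sum_T |W(G_1,T)|^{-1}J_{N,T}(f)$, and then performs a delicate ``change of weight'' (Proposition \ref{prop change of weight}) replacing the truncation weight by one built from explicitly computed $(G,M,\theta)$-orthogonal sets (Lemma \ref{lem limit of GMT family}); this step uses only $\theta$-strong cuspidality. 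The limit $\lim_N J_{N,T}(f)$ is identified with a \emph{singular} $\theta$-weighted orbital integral $\Phi_{M,\theta}(\diag(t,t),f)$, defined and analyzed in \S \ref{first application} as a limit of regular ones. Only at the end does strong cuspidality enter, via descent from $\theta$-weighted to ordinary weighted orbital integrals, to kill non-elliptic $T$ and recognize the elliptic contribution as $c_f(t)$. Your outline lacks the truncation, the $(G,M,\theta)$-orthogonal-set machinery, and the singular $\theta$-weighted orbital integrals; the oscillatory regularization you anticipate does not by itself produce the correct Arthur weight.
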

More precisely, the spectral side of the trace formula will be proved in Section \ref{The spectral side} and the geometric side will be proved in Section \ref{geom side}. Moreover, Theorem \ref{main theorem 3} is, by standard means, an easy consequence of this trace formula (see \S \ref{A formula for the multiplicities}).

In Section \ref{application 2}, we will discuss another application of the multiplicity formula. By applying Theorem \ref{main theorem 3} together with another multiplicity formula for the so-called Ginzburg-Rallis model proved in the previous papers \cite{Wan15} and \cite{Wan16a} of the second author, we are able to establish some relationship between the two kind of multiplicities (cf. Theorem \ref{GR 1} and Theorem \ref{GR 2}). This will also allow us to prove the epsilon dichotomy conjecture for the Ginzburg-Rallis model in some cases. We refer the readers to Section \ref{application 2} for details.

Finally, in Section \ref{r trace formula}, guided by the idea of beyond endoscopy, together with Theorem \ref{main theorem 2} relating the multiplicities for generalized Shalika models to poles of local exterior square $L$-function, we restate our trace formula in the form of a (local) '$r$-trace formula' for $r=\wedge^2$ the exterior square representation of the $L$-group ${}^LG=\GL_{2n}(\BC)$.

\subsection{Organization of the paper and remarks on the proofs}
In Section \ref{preliminaries}, we introduce basic notations and conventions of this paper. This include some extended discussions of ($\theta$-)weighted orbital integrals, germ expansions and the Harish-Chandra-Schwartz space. In Section \ref{A multiplicity formula for Shalika models}, we state our (simple) local trace formula (Theorem \ref{trace formula intro}) and prove that the multiplicity formula (Theorem \ref{main theorem 3}) is a consequence of it.

Sections \ref{The spectral side} and \ref{geom side} are devoted to the proof of the trace formula. More precisely, in Section \ref{The spectral side} we prove the spectral side of the trace formula. It is the easy part and moreover the arguments are very similar to \cite{BeuGalP} \S 3. Section \ref{geom side} contains the proof of the geometric side which is more involved. The general idea is inspired by the work of Waldspurger (\cite{WalGGPI}, \cite{WalGGPII}) and the first author (\cite{BeuGGP}, \cite{BeuGalP}) on the Gan-Gross-Prasad and Galois models. However, due to significant differences between generalized Shalika models and the previous cases, our proof of the geometric side is quite different. Indeed, as in the Gan-Gross-Prasad cases, singular orbits are contributing to the geometric side and these contributions are reflected in singularities of the original expression. Due to the fact that the generalized Shalika models are usually not {\it strongly tempered} in the sense of \cite{SV}, we were unable to linearize the problem in order to perform a Fourier transform as in \cite{WalGGPI},\cite{BeuGGP} where it had the effect of killing the problematic singularities. As a result, we have to deal with them directly and for that we have in particular computed explicitly certain singular weighted (or rather {\it $\theta$-weighted}) orbital integrals (see \S \ref{first application}).

Sections \ref{section Applications} and \ref{r trace formula} contain applications of the trace formula and multiplicity formula. In section \ref{application 1}, we prove the two main theorems (Theorem \ref{main theorem 1} and Theorem \ref{main theorem 2}) of this paper and in section \ref{application 2}, we study the relations between the multiplicities for the generalized Shalika model and the Ginzburg-Rallis model. Using this, we will prove new cases of the epsilon dichotomy conjecture for the Ginzburg-Rallis model. Finally, in section \ref{r trace formula}, we rewrite our local trace formula as some kind of 'local $r$-trace formula'.

Finally, Appendix \ref{appendix} contains a slight generalization of a result of M\oe{}glin and Waldspurger concerning (generalized) Whittaker models.

\subsection{Acknowledgments}
R.B.P. has benefited from a grant of Agence Nationale de la Recherche with reference ANR-13-BS01-0012 FERPLAY. Collaboration on this work started at a workshop in the Mathematisches Forschungsinstitut in Oberwolfach on harmonic analysis and the trace formula. The authors thank this institution for its hospitality and the organizers for the invitations.

\section{Preliminaries}\label{preliminaries}

\subsection{Groups, measures, notations}\label{groups, measures, notations}

Throughout this paper $F$ will denote a $p$-adic field (i.e. a finite extension of $\mathbf{Q}_p$ for a certain prime number $p$) with ring of integer $\mathcal{O}_F$ and normalized absolute value $\lvert .\rvert_F$. We will denote by $v_F$ the normalized valuation on $F$, by $q$ the cardinal of the residue field of $F$ and by $\log$ the logarithm in base $q$ (so that $v_F(\lambda)=-\log \lvert \lambda\rvert_F$ for all $\lambda\in F^\times$). Moreover, for all finite extension $K$ of $F$ we will set $v_K(\lambda):=v_F(N_{K/F}(\lambda))$ for all $\lambda\in K$ where $N_{K/F}:K\to F$ stands for the norm. We fix throughout a nontrivial additive character $\psi:F\to \mathbf{C}^\times$. We will slightly abuse notations and denote algebraic groups and Lie algebras defined over $F$ and their sets of $F$-points by the same letters.

\vspace{2mm}

Let $G$ be a connected reductive group over $F$. We will denote by $A_G$ its maximal central split torus and set

$$\displaystyle \mathcal{A}_G:=X_*(A_G)\otimes \mathbf{R}$$

\noindent whose dual naturally identifies to

$$\displaystyle \mathcal{A}^*_G:=X^*(A_G)\otimes \mathbf{R}$$

\noindent where $X_*(A_G)$ and $X^*(A_G)$ stand for the groups of cocharacters and characters of $A_G$ respectively. There is a natural morphism $H_G:G\to \mathcal{A}_G$ characterized by

$$\displaystyle \langle \chi,H_G(g)\rangle=\log(\lvert \chi(g)\rvert)$$

\noindent for all $\chi\in X^*(G)$. We set $\mathcal{A}_{G,F}:=H_G(A_G)$. It is a lattice in $\mathcal{A}_G$. The same notations will be used for the Levi subgroups of $G$ (i.e. the Levi components of parabolic subgroups of $G$): if $M$ is a Levi subgroup of $G$, we define similarly $A_M$, $\mathcal{A}_M$, $H_M$ and $\mathcal{A}_{M,F}$. For such a Levi $M$, we will set

$$\displaystyle \mathcal{A}_M^G:=\mathcal{A}_M/\mathcal{A}_G,\;\; \mathcal{A}_{M,F}^G:=\mathcal{A}_{M,F}/\mathcal{A}_{G,F}.$$

\noindent We will also use Arthur's notations: $\mathcal{P}(M)$, $\mathcal{F}(M)$ and $\mathcal{L}(M)$ will stand for the sets of parabolic subgroups with Levi component $M$, parabolic subgroups containing $M$ and Levi subgroups containing $M$ respectively. Let $K$ be a special maximal compact subgroup of $G$. Then, for all parabolic subgroup $P$ with Levi decomposition $P=MU$, the Iwasawa decomposition $G=MUK$ allows us to extend $H_M$ to a map $H_P:G\to \mathcal{A}_M$ defined by $H_P(muk):=H_M(m)$ for all $m\in M$, $u\in U$ and $k\in K$. The Lie algebra of $G$ will be denoted by $\mathfrak{g}$ and more generally for any algebraic group we will denote its Lie algebra by the corresponding Gothic letter. We will write $Ad$ for the adjoint action of $G$ on $\mathfrak{g}$. We denote by $\exp$ the exponential map which is an $F$-analytic map from an open neighborhood of $0$ in $\mathfrak{g}$ to $G$. We define $G_{reg}$ as the open subset of regular semisimple elements of $G$. The notation $\mathcal{T}(G)$ (resp. $\mathcal{T}_{ell}(G)$) will be used to denote a set of representatives for the $G$-conjugacy classes of maximal tori (resp. elliptic maximal tori) in $G$.

Let $H$ be an algebraic group over $F$. For any subset $S\subset H$, we write $Cent_H(S)$ (resp. $Norm_{H}(S)$) for the centralizer of $S$ in $H$ (resp. the normalizer of $S$ in $H$). If $S=\{x\}$ we will write $H_x$ for the neutral connected component of $Cent_H(x):=Cent_H(\{x\})$. The Weyl discriminant $D^H$ is defined by

$$\displaystyle D^H(x):=\left\lvert det(1-Ad(x)_{\mid \mathfrak{h}/\mathfrak{h}_x})\right\rvert$$

\noindent for all semisimple element $x\in H$. For every subtorus $T$ of $H$, we will denote by

$$\displaystyle W(H,T):=Norm_{H}(T)/Cent_{H}(T)$$

\noindent the corresponding Weyl group. If $A\subset H$ is a split subtorus which normalizes a unipotent subgroup $U\subset H$ we will write $R(A,U)$ for the set of roots of $A$ in $\mathfrak{u}$.

\vspace{2mm}

If $T$ is a torus over $F$, we will denote by $T^c$ its maximal compact subgroup.

\vspace{2mm}

In this paper, we will assume that all the groups that we encounter have been equipped with Haar measures (left and right invariants as we will only consider measures on unimodular groups). In the particular case of tori $T$ we normalize these Haar measures as follows: we fix on $A_T$ the unique Haar measure giving $A_T^c$ volume $1$ and we choose on $T$ the unique Haar measure such that $\vol(T/A_T)=1$. For any connected reductive group $G$, we equip $\mathcal{A}_G$ with the unique Haar measure such that $\vol(\mathcal{A}_G/\mathcal{A}_{G,F})=1$. Thus this requirement also fixes Haar measures on $\mathcal{A}_M$ for all Levi subgroup $M$ of $G$. If $M\subset L$ are two Levi subgroups then we give $\mathcal{A}_M^L\simeq \mathcal{A}_M/\mathcal{A}_L$ the quotient measure.

\vspace{2mm}

We will adopt the following slightly imprecise but convenient notations. If $f$ and $g$ are positive functions on a set $X$, we will write

\begin{center}
$f(x)\ll g(x)$ for all $x\in X$
\end{center}

\noindent and we will say that $f$ is essentially bounded by $g$, if there exists a $c>0$ such that

\begin{center}
$f(x)\leqslant cg(x)$ for all $x\in X.$
\end{center}

\noindent We will also say that $f$ and $g$ are equivalent and we will write

\begin{center}
$f(x)\sim g(x)$ for all $x\in X$
\end{center}

\noindent if both $f$ is essentially bounded by $g$ and $g$ is essentially bounded by $f$.

\vspace{2mm}

In this paper we will freely use the notion of log-norms on varieties over $F$. The concept of norm on varieties over local fields has been introduced by Kottwitz in \cite{KottHA} \S 18. A log-norm is essentially just the log of a Kottwitz's norm and we refer the readers to \cite{BeuGGP} \S 1.2 for the definition and the basic properties of these log-norms. We will assume that all the algebraic varieties $X$ over $F$ that we encounter have been equipped with log norms $\sigma_X$. And for all $C>0$, we will denote by $\mathbf{1}_{X,\leqslant C}$ (resp. $\mathbf{1}_{X,>C}$) the characteristic function of $\{x\in X; \sigma_X(x)\leqslant C \}$ (resp. $\{x\in X; \sigma_X(x)> C \}$).

\vspace{2mm}

For any connected reductive group $G$ over $F$, we will denote by $\Xi^G$ the Xi function of Harish-Chandra on $G$ (see \cite{BeuGGP} \S 1.5 for the definition and basic properties of this function) and we will denote by $\mathcal{C}(G)$ the Harish-Chandra Schwartz space of $G$. This space consists of functions $f:G\to \mathbf{C}$ which are biinvariant by a certain compact-open subgroup $J\subset G$ and such that for all $d>0$, we have an inequality

$$\displaystyle \lvert f(g)\rvert\ll \Xi^G(g)\sigma_G(g)^{-d}$$

\noindent for all $g\in G$. Let $\chi$ be a unitary character of $A_G$, then we will denote by $\mathcal{C}(G,\chi)$ the space of functions $f:G\to \mathbf{C}$ which are biinvariant by a certain compact-open subgroup $J\subset G$ such that $f(ag)=\chi(a)f(g)$ for all $a\in A_G$ and $g\in G$, and such that for all $d>0$, we have an inequality

$$\displaystyle \lvert f(g)\rvert\ll \Xi^G(g)\sigma_{A_G\backslash G}(g)^{-d}$$

\noindent for all $g\in G$. There is a natural surjective map

$$\mathcal{C}(G)\to \mathcal{C}(G,\chi):\;f\mapsto f_\chi$$

\noindent given by

$$\displaystyle f_\chi(g):=\int_{A_G} f(ag)\chi(a)^{-1}da,\;\;\; f\in \mathcal{C}(G), g\in G.$$

\vspace{2mm}

For any set $S$ we will denote by $\mathbf{1}_S$ its characteristic function.

\subsection{Representations}\label{representations}

Let $G$ be a connected reductive group over $F$. We will write $\Irr(G)$ for the set of isomorphism classes of (complex-valued) irreducible smooth representations of $G$. We will identify any element of $\Irr(G)$ with one of its representative. For $\pi\in \Irr(G)$, we will also write $\pi$ for the space on which $\pi$ acts. We will denote by $\Pi_2(G)\subset \Irr(G)$ the subset of essentially square-integrable representations. And if $\chi$ is a character of $A_G$, we will denote by $\Pi_2(G,\chi)\subset \Pi_2(G)$ the subset of representations with central character $\chi$. When $\chi$ is unitary, the matrix coefficients of any representation $\pi\in \Pi_2(G,\chi)$ lie in $\mathcal{C}(G,\chi)$. For $\pi\in \Irr(G)$, we will denote by $\pi^\vee$ its smooth contragredient; and for $\pi\in \Pi_2(G)$, we will denote by $d(\pi)$ the formal degree of $\pi$. It is the unique positive real number (depending on the Haar measure on $G$) such that

$$\displaystyle \int_{A_G\backslash G} \langle \pi(g)v_1,v_1^\vee\rangle \langle v_2,\pi^\vee(g)v_2^\vee\rangle dg=\frac{1}{d(\pi)} \langle v_1,v_2^\vee\rangle \langle v_2,v_1^\vee\rangle$$

\noindent for all $v_1,v_2\in \pi$ and $v_1^\vee,v_2^\vee\in \pi^\vee$. For any $f\in C_c^\infty(G)$ and $\pi\in \Irr(G)$, we write

$$\displaystyle \pi(f):=\int_G f(g)\pi(g)dg.$$

\noindent When $\pi\in \Pi_2(G,\chi)$ where the character $\chi$ is unitary, the map $f\mapsto \pi(f)$ extends by continuity to $\mathcal{C}(G)$ and $\mathcal{C}(G,\chi^{-1})$. In all cases, the operator $\pi(f)$ has finite rank. If $f$ is a matrix coefficient of $\pi\in \Pi_2(G,\chi)$ (with $\chi$ unitary), we have

$$\displaystyle \Tr \pi^\vee(f)=d(\pi)^{-1}f(1). \leqno (1)$$

\noindent Moreover, for any $\pi\in \Irr(G)$, Harish-Chandra has shown (\cite{HCH} Theorem 16.3) the existence of a locally integrable function $\Theta_\pi$ on $G$ which is locally constant on $G_{reg}$ and such that

$$\displaystyle \Tr \pi(f)=\int_G f(g)\Theta_\pi(g)dg$$

\noindent for all $f\in C_c^\infty(G)$. We shall refer to $\Theta_\pi$ as the \textit{Harish-Chandra character} of $\pi$. Fixing a $G$-invariant symmetric bilinear pairing $\langle .,.\rangle:\mathfrak{g}\times \mathfrak{g}\to F$. Near every semi-simple element $x\in G$, there is a local expansion (see \cite{HCH} Theorem 16.2)

$$\displaystyle \Theta_\pi(x \exp(X))=\sum_{\mathcal{O}\in Nil(\mathfrak{g}_x)} c_{\pi,\mathcal{O}}(x)\widehat{j}(\mathcal{O},X)$$

\noindent for $X\in \mathfrak{g}_{x,reg}$ sufficiently close to $0$ and where

\begin{itemize}
\item $Nil(\mathfrak{g}_x)$ stands for the set of nilpotent $G_x$-orbits in $\mathfrak{g}_x$ (for the adjoint action);

\item $c_{\pi,\mathcal{O}}(x)$ are complex numbers;

\item For all $\mathcal{O}\in Nil(\mathfrak{g}_x)$, $\widehat{j}(\mathcal{O},.)$ is the unique locally integrable function on $\mathfrak{g}_x$ (whose existence is guaranteed by \cite{HCH} Theorem 7.7. and Lemma 7.9) which is locally constant on $\mathfrak{g}_{x,reg}$, and such that

$$\displaystyle \int_{\mathfrak{g}_x}\varphi(X)\widehat{j}(\mathcal{O},X)dX=\int_{\mathcal{O}}\widehat{\varphi}(Z)dZ,\;\; \mbox{ for all } \varphi\in C_c^\infty(\mathfrak{g}_x)$$

\noindent where $dX$ is any Haar measure on $\mathfrak{g}_x$, $\varphi\in C_c^\infty(\mathfrak{g}_x)\mapsto \widehat{\varphi}$ is the Fourier transform given by $\widehat{\varphi}(Z):=\int_{\mathfrak{g}_x} \varphi(X) \psi(\langle Z,X\rangle)dX$ and $dZ$ is the $G_x$-invariant measure on $\mathcal{O}$ associated to the self-dual Haar measure on $F$ corresponding to $\psi$ and the volume form on $\mathcal{O}$ derived from the symplectic form descended from $\langle .,.\rangle$ (see \cite{MW} I.8 for more details on this).
\end{itemize}

\noindent For every semisimple element $x\in G$, we set

$$\displaystyle c_\pi(x):=\left\{
    \begin{array}{ll}
        \frac{1}{\lvert Nil_{reg}(\mathfrak{g}_x)\rvert}\sum_{\mathcal{O}\in Nil_{reg}(\mathfrak{g}_x)} c_{\pi,\mathcal{O}}(x), & \mbox{if } G_x \mbox{ is quasi-split;} \\
        0, & \mbox{ otherwise}
    \end{array}
\right.$$

\noindent where $Nil_{reg}(\mathfrak{g}_x)$ denotes the subset of regular nilpotent orbits in $\mathfrak{g}_x$ (this set is empty if $G_x$ is not quasi-split). This value does not depend on the choices of $\langle .,.\rangle$ and $\psi$. If $G_x$ is quasi-split and we fix a Borel subgroup $B_x\subset G_x$ and a maximal torus $T_{x,qd}\subset B_x$, then by Proposition 4.5.1(ii) of \cite{BeuGGP}, we have

$$\displaystyle D^G(x)^{1/2}c_\pi(x)=\lvert W(G_x,T_{x,qd})\rvert^{-1}\lim\limits_{x'\in T_{x,qd}\to x} D^G(x')^{1/2}\Theta_\pi(x'). \leqno (2)$$

\subsection{$(G,M)$- and $(G,M,\theta)$-orthogonal sets}\label{(G,M) and (G,M,theta)-orthogonal sets}

Let $G$ be a connected reductive group over $F$ and $M$ be a Levi subgroup of $G$. For all $Q\in \mathcal{F}(M)$, we will denote by $U_Q$ the unipotent radical of $Q$, $L_Q$ the unique Levi component of $Q$ such that $M\subset L_Q$ and $\overline{Q}=L_QU_{\overline{Q}}$ the parabolic subgroup opposite to $Q$ (with respect to $L_Q$). Let $A_M$ be the split center of $M$. For all $P\in \mathcal{P}(M)$, denote by $\Delta_P$ (resp. $\Sigma_P^+$) the set of simple roots (resp. of all roots) of $A_M$ in $P$. For all $\alpha\in \Sigma_P^+$, we shall denote by $\alpha^\vee\in \mathcal{A}_M$ the corresponding coroot and we set $\Delta_P^\vee:=\{\alpha^\vee; \alpha\in \Delta_P \}$. We recall the notion of $(G,M)$-orthogonal set due to Arthur: a family $\mathcal{Y}=(\mathcal{Y}_P)_{P\in \mathcal{P}(M)}$ is a \textit{$(G,M)$-orthogonal set} if for all $P,P'\in \mathcal{P}(M)$, we have

$$\displaystyle \mathcal{Y}_P-\mathcal{Y}_{P'}\in \sum_{\alpha\in \Sigma_P^+\cap -\Sigma_{P'}^+}\mathbf{R}\alpha^\vee.$$

\noindent Moreover, if the stronger relation

$$\displaystyle \mathcal{Y}_P-\mathcal{Y}_{P'}\in \sum_{\alpha\in \Sigma_P^+\cap -\Sigma_{P'}^+}\mathbf{R}_+\alpha^\vee$$

\noindent is satisfied for all $P,P'\in \mathcal{P}(M)$, then we say that the $(G,M)$-orthogonal set $\mathcal{Y}$ is \textit{positive}. To a $(G,M)$-orthogonal set $\mathcal{Y}$ we can associate a smooth function $\gamma_M(.,\mathcal{Y})$ on $\mathcal{A}_M^*$ defined by (see Lemma 1.9.3 of \cite{LW})

$$\displaystyle \gamma_M(\lambda,\mathcal{Y}):=\sum_{P\in \mathcal{P}(M)} \vol(\mathcal{A}_M^G/\mathbf{Z}[\Delta_P^\vee])\prod_{\alpha\in \Delta_P} \langle \lambda, \alpha^\vee\rangle^{-1} q^{\langle \lambda, \mathcal{Y}_P\rangle},\;\;\; \lambda\in \mathcal{A}_M^*$$

\noindent where $\mathbf{Z}[\Delta_P^\vee]\subset \mathcal{A}_M^G$ denotes the lattice generated by $\Delta_P^\vee$. And we set

$$\displaystyle v_M(\mathcal{Y}):=\gamma_M(0,\mathcal{Y}).$$

\noindent More generally we can associate to a $(G,M)$-orthogonal set $\mathcal{Y}$ smooth functions $\gamma_M^Q(.,\mathcal{Y})$ on $\mathcal{A}_M^{L_Q}$ and complex numbers $v_M^Q(\mathcal{Y}):=\gamma_M^Q(0,\mathcal{Y})$ for all $Q\in \mathcal{F}(M)$ with $\gamma_M^G(.,\mathcal{Y})=\gamma_M(.,\mathcal{Y})$ (see \cite{LW} \S 1.9). If the $(G,M)$-family $\mathcal{Y}$ is positive, then $v^Q_M(\mathcal{Y})$ is just the volume of the convex hull of the projection of $(Y_P)_{P\in \mathcal{P}(M),P\subset Q}$ onto $\mathcal{A}_M^{L_Q}$. Also to a $(G,M)$-orthogonal set $\mathcal{Y}$ we can associate a certain function $\Gamma_M^G(.,\mathcal{Y})$ on $\mathcal{A}_M$ (see \cite{LW} \S 1.8) which is just the characteristic function of the sum of the convex hull of $\mathcal{Y}$ with $\mathcal{A}_G$ if $\mathcal{Y}$ is positive.

An easy way to construct $(G,M)$-orthogonal sets is as follows. Let $M_0\subset M$ be a minimal Levi subgroup with split center $A_0$ and pick $P_0\in \mathcal{P}(M_0)$. Fix $Y\in \mathcal{A}_{M_0}$. For all $P\in \mathcal{P}(M)$, define $Y_{P}$ to be the projection of $wY$ onto $\mathcal{A}_M$ where $w\in W(G,A_0)$ is any element such that $wP_0\subset P$. Then $(Y_P)_{P\in \mathcal{P}(M)}$ is a $(G,M)$-orthogonal set.

Another way to construct $(G,M)$-orthogonal sets is as follows. Choose a maximal special compact subgroup $K$ of $G$ and use it to define maps $H_P:G\to \mathcal{A}_M$ as in \S \ref{groups, measures, notations}. Then for all $g\in G$, the family $\mathcal{Y}_M(g):=(H_{\overline{P}}(g))_{P\in \mathcal{P}(M)}$ is a positive $(G,M)$-orthogonal set. In this situation, we define

$$\displaystyle v_M(g):=v_M(\mathcal{Y}_M(g)),\;\;\; g\in G$$

\noindent and more generally

$$\displaystyle v^Q_M(g):=v^Q_M(\mathcal{Y}_M(g)),\;\;\; g\in G$$

\noindent for all $Q\in \mathcal{F}(M)$.

Assume now given an algebraic involution $\theta$ of $G$. Then we recall that a parabolic subgroup $P$ of $G$ is said to be $\theta$-split if $\theta(P)$ is opposite to $P$; and a Levi subgroup $M$ of $G$ is said to be $\theta$-split if there exists a $\theta$-split parabolic subgroup $P$ such that $M=P\cap \theta(P)$. Also, a torus $T\subset G$ is said to be $\theta$-split if $\theta(t)=t^{-1}$ for all $t\in T$. We refer the reader to \cite{BeuGalP} \S 1.7.1 for a recapitulation of the basic structure of these $\theta$-split subgroups. For $M$ a $\theta$-split Levi subgroup, we shall denote by $\mathcal{P}^\theta(M)$, resp. $\mathcal{F}^\theta(M)$, resp. $\mathcal{L}^\theta(M)$, the sets of $\theta$-split parabolic subgroups with Levi component $M$, resp. $\theta$-split parabolic subgroups containing $M$, resp. $\theta$-split Levi subgroups containing $M$. For all $Q\in \mathcal{F}^\theta(M)$, we define

$$\displaystyle \mathcal{P}^{Q,\theta}(M):=\{P\in \mathcal{P}^\theta(M);\; P\subset Q \}.$$

\noindent This set is in bijection with the set of $\theta$-split parabolic subgroups of $L_Q$ with Levi component $M$ by the map $P\mapsto P\cap L_Q$. We will denote by $A_{M,\theta}$ the maximal split and $\theta$-split central subtorus of $M$ and set

$$\displaystyle \mathcal{A}_{M,\theta}:=X_*(A_{M,\theta})\otimes \mathbf{R},$$

$$\displaystyle \mathcal{A}^L_{M,\theta}:=\mathcal{A}_{M,\theta}/\mathcal{A}_{L,\theta}, \;\;\; L\in \mathcal{L}^\theta(M).$$

\noindent Then for all $\theta$-split Levi subgroup $M$, there is a natural decomposition

$$\displaystyle \mathcal{A}_M=\mathcal{A}_{M,\theta}\oplus \mathcal{A}_M^\theta$$

\noindent where $\mathcal{A}_M^\theta$ denotes the subspace of $\theta$-invariant vectors and we define an homomorphism $H_{M,\theta}:M\to \mathcal{A}_{M,\theta}$ as the composition of $H_M$ with the projection $\mathcal{A}_M\twoheadrightarrow \mathcal{A}_{M,\theta}$. There is also a natural decomposition

$$\displaystyle \mathcal{A}_{M,\theta}=\mathcal{A}_{M,\theta}^G\oplus \mathcal{A}_{G,\theta}$$

\noindent and more generally a natural decomposition

$$\displaystyle \mathcal{A}_{M,\theta}=\mathcal{A}_{M,\theta}^L\oplus \mathcal{A}_{L,\theta}$$

\noindent for all $L\in \mathcal{L}^\theta(M)$. Set $\mathcal{A}_{M,\theta,F}:=H_{M,\theta}(A_M)$. We equip $\mathcal{A}_{M,\theta}$ with the unique Haar measure such that $\vol(\mathcal{A}_{M,\theta}/\mathcal{A}_{M,\theta,F})=1$ and $\mathcal{A}_{M,\theta}^L$ for $L\in \mathcal{L}^\theta(M)$ with the quotient Haar measure (where the Haar measure on $\mathcal{A}_{L,\theta}$ is defined similarly).

\vspace{2mm}

To every $P\in \mathcal{P}^\theta(M)$ is associated a cone $\mathcal{A}_{P,\theta}^+\subset \mathcal{A}_{M,\theta}$ defined by

$$\displaystyle \mathcal{A}_{P,\theta}^+:=\{\Lambda\in \mathcal{A}_{M,\theta};\;\; \langle \alpha, \Lambda\rangle>0 \; \forall \alpha\in R(A_{M,\theta},U_P) \}.$$

\noindent We shall denote by $\overline{\mathcal{A}_{P,\theta}^+}$ the closure of $\mathcal{A}_{P,\theta}^+$ and by $\tau_{P,\theta}^G$ the characteristic function of $\mathcal{A}_{P,\theta}^+$. For all $Q\in \mathcal{F}^\theta(M)$, we will also consider the function $\tau_{Q,\theta}^G$ as a function on $\mathcal{A}_{M,\theta}$ via the projection $\mathcal{A}_{M,\theta}\to \mathcal{A}_{L_Q,\theta}$.

\vspace{2mm}

In \cite{BeuGalP} \S 1.7.2 was defined a notion of $(G,M,\theta)$-orthogonal set which generalizes Arthur's classical notion of $(G,M)$-orthogonal set and we refer the readers to \textit{loc. cit.} for basic definitions and properties of these. A $(G,M,\theta)$-orthogonal set is a family $\mathcal{Y}=(\mathcal{Y}_P)_{P\in \mathcal{P}^\theta(M)}$ of points of $\mathcal{A}_{M,\theta}$ satisfying certain compatibility conditions. There is also a notion of positive $(G,M,\theta)$-orthogonal set. If $\mathcal{Y}_P\in \mathcal{A}_{P,\theta}^+$ for all $P\in \mathcal{P}^\theta(M)$, then the $(G,M,\theta)$-orthogonal set $\mathcal{Y}$ is positive. To any $(G,M,\theta)$-orthogonal set $\mathcal{Y}$ is associated functions

$$\displaystyle \Gamma^Q_{L,\theta}(.,\mathcal{Y}),\;\;\; L\in \mathcal{L}^\theta(M), Q\in \mathcal{F}^\theta(L)$$

\noindent on $\mathcal{A}_{L,\theta}^{L_Q}$ and complex numbers

$$\displaystyle v^Q_{L,\theta}(\mathcal{Y}),\;\;\; L\in \mathcal{L}^\theta(M), Q\in \mathcal{F}^\theta(L)$$

\noindent which are related by

$$\displaystyle v^Q_{L,\theta}(\mathcal{Y})=\int_{\mathcal{A}_{L,\theta}^{L_Q}} \Gamma^Q_{L,\theta}(\Lambda,\mathcal{Y}) d\Lambda.$$

\noindent For simplicity, when $Q=G$, we will write $v_{L,\theta}(\mathcal{Y}):=v_{L,\theta}^G(\mathcal{Y})$. When $\mathcal{Y}$ is positive, $\Gamma_{M,\theta}^G(.,\mathcal{Y})$ is the characteristic function of the sum of $\mathcal{A}_G$ with the convex hull of $\mathcal{Y}$; and more generally, for $Q\in \mathcal{F}^\theta(M)$, $\Gamma_{M,\theta}^Q(.,\mathcal{Y})$ is the characteristic function of the sum of $\mathcal{A}_{L_Q,\theta}$ with the convex hull of $(\mathcal{Y}_P)_{P\in \mathcal{P}^{Q,\theta}(M)}$. We have the basic relation (see \cite{LW} Lemme 1.8.4 (3) for the case of $(G,M)$-orthogonal sets, the proof being completely similar for $(G,M,\theta)$-orthogonal sets)

$$\displaystyle \sum_{Q\in \mathcal{F}^\theta(M)} \Gamma_{M,\theta}^Q(\Lambda,\mathcal{Y}) \tau_{Q,\theta}^G(\Lambda-\mathcal{Y}_Q)=1,\;\;\; \Lambda\in \mathcal{A}_{M,\theta} \leqno (1)$$

\noindent where for all $Q\in \mathcal{F}^\theta(M)$, we have denoted by $\mathcal{Y}_Q$ the projection of $\mathcal{Y}_P$ onto $\mathcal{A}_{L_Q,\theta}$ for any $P\in \mathcal{P}^{Q,\theta}(M)$ (the result does not depend on the choice of $P$). One basic property of the function $\Gamma_{M,\theta}^G(.,\mathcal{Y})$ that we shall use repeatedly is the following (see \cite{LW} Corollaire 1.8.5, again for the case of $(G,M)$-orthogonal sets):

\vspace{2mm}

\hspace{5mm} (2) Let $\lvert .\rvert$ be a norm on $\mathcal{A}_{M,\theta}$. Then, there exists a constant $c>0$ independent

\vspace{0.4pt}

\hspace{12mm} of the $(G,M,\theta)$-family $\mathcal{Y}$ such that for all $\Lambda\in \mathcal{A}_{M,\theta}$ in the support of $\Gamma_{M,\theta}^G(.,\mathcal{Y})$,

\vspace{0.4pt}

\hspace{12mm} we have $\lvert \Lambda^G\rvert\leqslant c\sup_{P\in \mathcal{P}^\theta(M)} \lvert \mathcal{Y}_P\rvert$ where  $\Lambda^G$ is the projection of $\Lambda$ onto $\mathcal{A}_{M,\theta}^{G}$.

\vspace{2mm}

\noindent In particular, this implies

\vspace{2mm}

\hspace{5mm} (3) There exists $k>0$ (for example $k=\dim(\mathcal{A}_{M,\theta}^G)$ would work) such that

$$\displaystyle \lvert v_{M,\theta}^G(\mathcal{Y})\rvert\ll \left( \sup_{P\in \mathcal{P}^\theta(M)} \lvert \mathcal{Y}_P\rvert\right)^k$$

\hspace{12mm} for all $(G,M,\theta)$-orthogonal set $\mathcal{Y}$.

\vspace{2mm}

\noindent We will also need the following property:

\vspace{2mm}

\hspace{5mm} (4) Let $Q\in \mathcal{F}^\theta(M)$ and $P\in \mathcal{P}^\theta(M)$. Then, for all $(G,M,\theta)$-orthogonal set $\mathcal{Y}$ such

\vspace{0.4pt}

\hspace{12mm} that $\mathcal{Y}_{P'}\in \mathcal{A}_{P',\theta}^+$ for all $P'\in \mathcal{P}^\theta(M)$, the restriction of the function

$$\Lambda\mapsto \Gamma_{M,\theta}^Q(\Lambda, \mathcal{Y})\tau_{Q,\theta}^G(\Lambda-\mathcal{Y}_Q)$$

\hspace{12mm} to $\overline{\mathcal{A}_{P,\theta}^+}$ only depends on $\mathcal{Y}_P$.

\vspace{2mm}

\noindent\ul{Proof}: Since $\mathcal{Y}$ is positive, the function $\Gamma_{M,\theta}^Q(., \mathcal{Y})\tau_{Q,\theta}^G(.-\mathcal{Y}_Q)$ is the characteristic function of the sum of $\mathcal{A}_{Q,\theta}^+$ with the convex hull of $(\mathcal{Y}_{P'})_{P'\in \mathcal{P}^{Q,\theta}(M)}$. In particular, for all $\Lambda$ in the support of this function, we have $\langle \alpha, \Lambda\rangle\geqslant \inf_{P'\in \mathcal{P}^{\theta,Q}(M)} \langle \alpha,\mathcal{Y}_{P'}\rangle>0$ for all $\alpha\in R(A_{M,\theta},U_Q)$. This implies that if $P$ is not included in $Q$, the restriction of $\Gamma_{M,\theta}^Q(., \mathcal{Y})\tau_{Q,\theta}^G(.-\mathcal{Y}_Q)$ to $\overline{\mathcal{A}_{P,\theta}^+}$ is just identically zero. Assume now that $P\subset Q$. Then, by adapting Lemma 3.1 of \cite{ArthurlocalTF} to the case of $(G,M,\theta)$-orthogonal sets, we see that the restriction of $\Gamma_{M,\theta}^Q(.,\mathcal{Y})$ to $\overline{\mathcal{A}_{P,\theta}^+}$ only depends on $\mathcal{Y}_P$. On the other hand, $\tau_{Q,\theta}^G(.-\mathcal{Y}_Q)$ only depends on $\mathcal{Y}_Q$ which is the projection of $\mathcal{Y}_P$ onto $\mathcal{A}_{L_Q,\theta}$. The claim follows.

\vspace{2mm}

Let $\mathcal{Y}_1$ and $\mathcal{Y}_2$ be two $(G,M,\theta)$-orthogonal sets. Then, we have the following splitting formula (see \cite{ArtInvTF} Corollary 7.4 for the case of $(G,M)$-orthogonal sets, the proof being again similar for $(G,M,\theta)$-orthogonal sets)

$$v_{M,\theta}(\mathcal{Y}_1+\mathcal{Y}_2)=\sum_{L_1,L_2\in \mathcal{L}^\theta(M)} d^G_{M,\theta}(L_1,L_2)v^{Q_1}_{M,\theta}(\mathcal{Y}_1)v_{M,\theta}^{Q_2}(\mathcal{Y}_2)\leqno (5)$$

\noindent where for all $L_1,L_2\in \mathcal{L}^\theta(M)$, $Q_1$ and $Q_2$ are elements of $\mathcal{P}^\theta(L_1)$ and $\mathcal{P}^\theta(L_2)$ respectively, which depend on the auxiliary choice of a generic point $\xi\in \mathcal{A}_{M,\theta}$, and $d^G_{M,\theta}(L_1,L_2)$ is a nonnegative real number which is nonzero if and only if $\mathcal{A}_{M,\theta}^G=\mathcal{A}_{M,\theta}^{L_1}\oplus \mathcal{A}_{M,\theta}^{L_2}$. Moreover, we have $d^G_{M,\theta}(G,M)=1$.

\vspace{2mm}

As for $(G,M)$-orthogonal sets, there is the following easy way to produce $(G,M,\theta)$-orthogonal sets. Let $M_0\subset M$ be a minimal $\theta$-split Levi subgroup and pick $P_0\in \mathcal{P}^\theta(M_0)$. Let $A_0$ be the maximal split and $\theta$-split central subtorus of $M_0$ and set

$$\displaystyle W_0:=\Norm_G(A_0)/M_0$$

\noindent for the \textit{little Weyl group} of $M_0$. Then, the natural action of $W_0$ on $\mathcal{P}^\theta(M_0)$ is simply transitive (see \cite{HW} Proposition 5.9). To every point $Y\in \mathcal{A}_{M_0,\theta}$, we can now associate a $(G,M,\theta)$-orthogonal set $(Y_P)_{P\in \mathcal{P}^\theta(M)}$ as follows: for each $P\in \mathcal{P}^\theta(M)$, set $Y_P$ to be the projection of $wY$ to $\mathcal{A}_{M,\theta}$ where $w\in W_0$ is any element such that $wP_0\subset P$.

\vspace{2mm}

Let $K$ be a maximal special compact subgroup of $G$. Then by using the Iwasawa decomposition $G=PK$, we can define maps $H_{P,\theta}:G\to \mathcal{A}_{M,\theta}$ for all $P\in \mathcal{P}^\theta(M)$ by setting $H_{P,\theta}(muk):=H_{M,\theta}(m)$ for all $m\in M$, $u\in U_P$ and $k\in K$. Then for all $g\in G$, the family $\mathcal{Y}_{M,\theta}(g):=(H_{\overline{P},\theta}(g))_{P\in \mathcal{P}^\theta(M)}$ is a positive $(G,M,\theta)$-orthogonal set and we will set

$$\displaystyle v_{M,\theta}(g):=v_{M,\theta}(\mathcal{Y}_{M,\theta}(g)),\;\;\; g\in G$$

\noindent and more generally

$$\displaystyle v^Q_{M,\theta}(g):=v^Q_{M,\theta}(\mathcal{Y}_{M,\theta}(g)),\;\;\; g\in G$$

\noindent for all $Q\in \mathcal{F}^\theta(M)$. We have the following descent formula which is a special case of a general result of Arthur (see \cite{ArtInvTF} Proposition 7.1 and \cite{BeuGalP} 1.7.2 (4))

$$\displaystyle v_{M,\theta}(g)=\sum_{L\in \mathcal{L}(L)} d^G_{M,\theta}(L) v_M^G(g),\;\;\; g\in G \leqno (6)$$

\noindent where for all $L\in \mathcal{L}(M)$, $Q$ is a certain parabolic subgroup with Levi component $L$ which depends on the choice of a generic point $\xi\in \mathcal{A}_M$ and $d^G_{M,\theta}(L)$ is a coefficient which is nonzero only if $\mathcal{A}_M^G=\mathcal{A}_M^{G,\theta}\oplus \mathcal{A}_M^L$. Moreover, if $\mathcal{A}_M^{G,\theta}=0$, then $d^G_{M,\theta}(G)=1$.

\subsection{Weighted and $\theta$-weighted orbital integrals}\label{weighted and theta weighted orbital integrals}

Let $G$ be a connected reductive group over $F$, $M$ be a Levi subgroup of $G$ and $f\in \mathcal{C}(G)$. Fix a special maximal compact subgroup $K$ of $G$ that we use to define weights $g\mapsto v_M^Q(g)$, for $Q\in \mathcal{F}(M)$, as in the previous section. Then, for all $x\in M\cap G_{reg}$ and $Q\in \mathcal{F}(M)$ we define, following Arthur, a \textit{weighted orbital integral} by

$$\displaystyle \Phi^Q_M(x,f):=\int_{G_x\backslash G} f(g^{-1}xg) v_M^Q(g)dg.$$

\noindent In the particular case where $Q=G$, we simply set $\Phi_M(x,f):=\Phi^G_M(x,f)$.

Assume now given an algebraic involution $\theta$ of $G$ and that $M$ is $\theta$-split. Using the same special maximal compact subgroup $K$ we associate, as in the previous paragraph, to any $Q\in \mathcal{F}^\theta(M)$, a weight $g\in G\mapsto v_{M,\theta}^Q(g)$. Then for all $x\in M\cap G_{reg}$ and $Q\in \mathcal{F}^\theta(M)$, we define a \textit{$\theta$-weighted orbital integral} by

$$\displaystyle \Phi^Q_{M,\theta}(x,f):=\int_{G_x\backslash G} f(g^{-1}xg) v_{M,\theta}^Q(g)dg.$$

\noindent In the particular case where $Q=G$, we simply set $\Phi_{M,\theta}(x,f):=\Phi^G_{M,\theta}(x,f)$.

\subsection{Cusp forms and $\theta$-strongly cuspidal functions}\label{cusp forms and theta-strongly cuspidal functions}

Let $G$ be a connected reductive group over $F$. Following \cite{WalGGPI}, we say that a function $f\in \mathcal{C}(G)$ is \textit{strongly cuspidal} if for all proper parabolic subgroup $P=MU$ of $G$, we have

$$\displaystyle \int_U f(mu)du=0$$

\noindent for all $m\in M$. By \cite{BeuGGP} Lemma 5.2.1 (i), if $f\in \mathcal{C}(G)$ is strongly cuspidal, $M$ is a Levi subgroup of $G$ and $Q\in \mathcal{F}(M)$ is different from $G$, then we have

$$\displaystyle \Phi_M^Q(x,f)=0$$

\noindent for all $x\in M\cap G_{reg}$ where the weighted orbital integral $\Phi_M^Q(x,f)$ is defined by using any special maximal compact subgroup $K$ of $G$.

\vspace{2mm}

Let $\theta$ an algebraic involution of $G$. We say that a function $f\in \mathcal{C}(G)$ is \textit{$\theta$-strongly cuspidal} if for all proper $\theta$-split parabolic subgroup $P=MU\subset G$, we have

$$\displaystyle \int_U f(g^{-1}mug)du=0$$

\noindent for all $m\in M$ and $g\in G$. By a standard change of variable, $f\in \mathcal{C}(G)$ is $\theta$-strongly cuspidal if and only if for all proper $\theta$-split parabolic subgroup $P=MU$, all $m\in M\cap G_{reg}$ and all $g\in G$, we have

$$\displaystyle \int_{U} f(g^{-1}u^{-1}mug)du=0.$$

\noindent By a proof similar to \cite{BeuGGP} Lemma 5.2.1 (i), if $f\in \mathcal{C}(G)$ is $\theta$-strongly cuspidal, $M$ is a $\theta$-split Levi subgroup of $G$ and $Q\in \mathcal{F}^{\theta}(M)$ is different from $G$, then we have

$$\displaystyle \Phi_{M,\theta}^Q(x,f)=0$$

\noindent for all $x\in M\cap G_{reg}$ where the $\theta$-weighted orbital integral $\Phi_{M,\theta}^Q(x,f)$ is defined by using any special maximal compact subgroup $K$ of $G$.

\vspace{2mm}

To a strongly cuspidal function $f\in \mathcal{C}(G)$ we associate a function $\Theta_f$ on $G_{reg}$ defined by

$$\displaystyle \Theta_f(x):=(-1)^{a_{G_x}-a_G}\Phi_{M(x)}^G(x,f)$$

\noindent where $M(x):=Cent_G(A_{G_x})$ (i.e. the minimal Levi subgroup containing $x$), $a_{G_x}:=\dim(A_{G_x})$, $a_G:=\dim(A_G)$ and the weighted orbital integral $\Phi_{M(x)}^G(x,f)$ is defined by using any special maximal compact subgroup $K$ of $G$ (the result is independent of this choice, see \cite{WalGGPI} Lemme 5.2). Then, by \cite{WalGGPI} Corollaire 5.9, $\Theta_f$ is a quasi-character in the sense of {\it loc. cit.}. This means that for all semi-simple element $x\in G$, we have a local expansion

$$\displaystyle \Theta_f(x\exp(X))=\sum_{\mathcal{O}\in Nil(\mathfrak{g}_x)}c_{f,\mathcal{O}}(x) \widehat{j}(\mathcal{O},X)$$

\noindent for all $X\in \mathfrak{g}_{x,reg}$ sufficiently near $0$, where $c_{f,\mathcal{O}}(x)$, $\mathcal{O}\in Nil(\mathfrak{g}_x)$, are complex numbers and the other notations have been defined in \S \ref{representations}. For all semi-simple element $x\in G$, we set

$$\displaystyle c_f(x):=\left\{
    \begin{array}{ll}
        \frac{1}{\lvert Nil_{reg}(\mathfrak{g}_x)\rvert}\sum_{\mathcal{O}\in Nil_{reg}(\mathfrak{g}_x)} c_{f,\mathcal{O}}(x), & \mbox{if } G_x \mbox{ is quasi-split;} \\
        0, & \mbox{ otherwise}
    \end{array}
\right.$$

\noindent where we recall that $Nil_{reg}(\mathfrak{g}_x)$ denotes the subset of regular nilpotent orbits in $\mathfrak{g}_x$. This value does not depend on the choices of $\langle .,.\rangle$ and $\psi$. If $G_x$ is quasi-split and we fix a Borel subgroup $B_x\subset G_x$ and a maximal torus $T_{x,qd}\subset B_x$, then by Proposition 4.5.1(ii) of \cite{BeuGGP}, we have

$$\displaystyle D^G(x)^{1/2}c_f(x)=\lvert W(G_x,T_{x,qd})\rvert^{-1}\lim\limits_{x'\in T_{x,qd}\to x} D^G(x')^{1/2}\Theta_f(x'). \leqno (1)$$

\noindent Moreover, by \cite{BeuGGP} Proposition 4.5.1 (iii), the function $(D^G)^{1/2}c_f$ is locally bounded on $G$.

Let $\chi$ be a unitary character of $A_G$. We say, following Harish-Chandra, that a function $f\in \mathcal{C}(G)$ or $f\in \mathcal{C}(G,\chi)$ is a {\it cusp form} if for all proper parabolic subgroup $P=MU$ of $G$, we have

$$\displaystyle \int_U f(xu)du=0$$

\noindent for all $x\in G$. Of course, for functions in $\mathcal{C}(G)$ being a cusp form implies being strongly cuspidal. We shall denote by ${}^0\mathcal{C}(G)$ and ${}^0\mathcal{C}(G,\chi)$ the spaces of cusp forms in $\mathcal{C}(G)$ and $\mathcal{C}(G,\chi)$ respectively. For each $\pi\in \Pi_2(G,\chi)$, the matrix coefficients of $\pi$ belong to ${}^0\mathcal{C}(G,\chi)$ (\cite{HCH} Theorem 29). And if $f$ is such a matrix coefficient, we have (see \cite{BeuGalP} 1.6(3))

$$\displaystyle \Theta_f=d(\pi)^{-1}f(1)\Theta_\pi. \leqno (2)$$

\noindent Moreover, any element in the space ${}^0\mathcal{C}(G,\chi)$ can be written as a finite linear combination of matrix coefficients of representations inside $\Pi_2(G,\chi)$. As a special case of Harish-Chandra-Plancherel formula (\cite{WalPlanch} Theorem VIII.4.2), for all $f\in {}^0\mathcal{C}(G,\chi)$, we have an equality

$$\displaystyle f=\sum_{\pi\in \Pi_2(G,\chi)}d(\pi)f_\pi \leqno (3)$$

\noindent where we have set $f_\pi(g):=\Tr(\pi^\vee(g^{-1})\pi^\vee(f))$ for all $\pi\in \Pi_2(G,\chi)$.

\section{A multiplicity formula for generalized Shalika models}\label{A multiplicity formula for Shalika models}

\subsection{Generalized Shalika triples}\label{Shalika triples}

From now on and until the end of the paper we fix a central simple algebra $\mathcal{A}$ over $F$ of rank $n$ (i.e. $\CA=\Mat_{m\times m}(\mathcal{D})$ where $\mathcal{D}/F$ is a division algebra of degree $r$ and $n=mr$). $\Tr_{\mathcal{A}/F}:\mathcal{A}\to F$ and $N_{\mathcal{A}/F}:\mathcal{A}\to F$ will stand for the reduced trace and norm respectively, and we will set $\nu(.):=\lvert N_{\mathcal{A}/F}(.)\rvert_F$. We also fix a maximal order $\mathcal{O}_{\mathcal{A}}$ of $\mathcal{A}$. Set $G:=GL_2(\mathcal{A})$ and define the following subgroups of $G$:

\begin{itemize}
\item $K:=GL_2(\mathcal{O}_{\mathcal{A}})$ (a maximal compact subgroup of $G$);

\item $H_0:=\{\begin{pmatrix}
\lambda & \\ & \lambda \end{pmatrix}\mid \lambda\in \mathcal{A}^\times \}$;

\item $N:=\{ \begin{pmatrix}
 1 & X \\ & 1 \end{pmatrix}\mid X\in \mathcal{A}\}$;

\item $H:=H_0\ltimes N$;

\item $L:=\{\begin{pmatrix} \lambda & \\ & \mu\end{pmatrix}\mid \lambda,\mu \in \mathcal{A}^\times \}$ (a Levi subgroup of $G$);

\item $Q:=LN=\{\begin{pmatrix} \lambda & X\\ & \mu\end{pmatrix}\mid \lambda,\mu \in \mathcal{A}^\times, X\in \mathcal{A} \}$ (a parabolic subgroup of $G$).
\end{itemize}

\noindent Note that by combining Lemme II.1.5 and Proposition II.4.5 of \cite{WalPlanch}, the subgroup $H$ has the following property (we shall say that $H$ is \textit{strongly discrete} following \cite{GO}):

$$\displaystyle \mbox{There exists } d>0 \mbox{ such that the integral } \int_{H} \Xi^G(h)\sigma_{G}(h)^{-d}dh \mbox{ converges.} \leqno (1)$$

\noindent We fix a continuous character $\omega:F^\times\to \mathbf{C}^\times$ that we identify to a character of $H_0$ through composition with $N_{\mathcal{A}/F}:H_0\to F^\times$. We then define a character $\xi:N\to \mathbf{C}^\times$ by

$$\displaystyle \xi\begin{pmatrix} 1 & X \\ & 1 \end{pmatrix}:=\psi(\Tr_{\mathcal{A}/F} X),\;\;\; X\in \mathcal{A}.$$

\noindent Then $\xi$ is invariant under the $H_0$-conjugation and thus extends to a character $\xi$ of $H$ trivial on $H_0$. We can also consider $\omega$ as a character on $H$ by composition with the projection $H\twoheadrightarrow H_0$ and we will denote by $\omega\otimes \xi$ the product of these two characters of $H$. We refer to the triple $(G,H,\omega\otimes \xi)$ as a \textit{generalized Shalika triple}. In particular, if $\CA=\Mat_n(F)$, this is the usual Shalika model. For all $\pi\in \Irr(G)$ we define the \textit{multiplicity} $m(\pi,\omega)$ to be

$$\displaystyle m(\pi, \omega):=\dim \Hom_{H}(\pi,\omega\otimes \xi).$$

\noindent By \cite{Del} Theorem 4.5 we know that this multiplicity is always finite.

\subsection{A simple local trace formula for the generalized Shalika models}\label{A simple local trace formula for Shalika models}

Here we assume that the character $\omega$ is unitary. Let $f\in \mathcal{C}(G)$. For all $x,y\in G$ we set

$$\displaystyle K_f(x,y):=\int_H f(x^{-1}hy) (\omega\otimes \xi)(h)^{-1}dh.$$

\noindent This integral is absolutely convergent by \ref{Shalika triples}(1). Moreover, whenever convergent, we define the following expression

$$\displaystyle J(f):=\int_{H\backslash G}K_f(x,x) dx.$$

\noindent One of the main results of this paper is the following theorem which might be seen as some sort of simple local trace formula in the setting of the generalized Shalika models.

\begin{thm}\label{theo trace formula}
Assume that $f\in {}^0\mathcal{C}(G)$ and $\omega$ is unitary. Then, the expression defining $J(f)$ is absolutely convergent and we have the following two expansions of it:

$$\displaystyle \sum_{T\in \mathcal{T}_{ell}(H_0)} \lvert W(H_0,T)\rvert^{-1} \int_T D^{H}(t) c_f(t) \omega(t)^{-1}dt=J(f)=\sum_{\pi\in \Pi_2(G,\chi)} m(\pi,\omega) \Tr \pi^\vee(f)$$

\noindent where $\mathcal{T}_{ell}(H_0)$ is a set of representatives of conjugacy classes of maximal elliptic tori in $H_0$, $c_f(t)$ is defined in Section \ref{cusp forms and theta-strongly cuspidal functions}, and $\chi=\omega^n$ seen as a character of $A_G=F^\times$.
\end{thm}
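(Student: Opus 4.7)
I would separate the proof into three parts: absolute convergence of $J(f)$, the spectral expansion, and the geometric expansion. The spectral part is relatively quick and follows the pattern of \cite{BeuGalP} \S 3; the geometric expansion is the core difficulty. A preliminary remark is that, since $f$ is cuspidal, $f$ is strongly cuspidal and moreover $\theta$-strongly cuspidal for the involution $\theta$ of $G$ swapping the two diagonal blocks of $L$ (so that $Q$ is $\theta$-split, and the Levi subgroups of $Q$ are precisely the standard $\theta$-split Levi subgroups of $G$). I would first decompose $f$ along the central character, reducing to $f \in {}^0\CC(G,\chi)$ with $\chi = \omega^n$; only this piece contributes, by the $H_0$-equivariance of the integrand against $\omega^{-1}$. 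Convergence of $J(f)$ then reduces to the Harish-Chandra bound $|f(g)| \ll \Xi^G(g) \sigma_{A_G\backslash G}(g)^{-d}$, the strong discreteness \ref{Shalika triples}(1), and a growth estimate for $K_f(x,x)$ as $x$ runs in a fundamental domain of $H\backslash G$.

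\textbf{Spectral side.} By Plancherel \ref{cusp forms and theta-strongly cuspidal functions}(3), $f = \sum_{\pi \in \Pi_2(G,\chi)} d(\pi) f_\pi$ with each $f_\pi$ a finite sum of matrix coefficients of $\pi$. For a matrix coefficient $f_v(g) = \langle \pi(g)v, v^\vee\rangle$, one rewrites
$$K_{f_v}(x,x) = \int_H \langle \pi(h)\pi(x)v, \pi^\vee(x)v^\vee\rangle (\omega \otimes \xi)^{-1}(h)\, dh,$$
which exhibits $K_{f_v}(x,x)$ as a pairing between translates of the period functional $v \mapsto \int_H \pi(h)v\,(\omega\otimes\xi)^{-1}(h)dh$ (an element of $\Hom_H(\pi, \omega\otimes\xi)$, convergent thanks to \ref{Shalika triples}(1) and the rapid decay of matrix coefficients of discrete series) and its contragredient analogue. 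Applying Schur orthogonality on $A_G \backslash G$ together with the formal-degree identity \ref{representations}(1), $J(f_v)$ unfolds to a quantity equal to $d(\pi)^{-1}m(\pi,\omega) \Tr \pi^\vee(f_v)$; summing over $\pi$ with the factor $d(\pi)$ then yields the spectral side.

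\textbf{Geometric side.} Unfolding by Fubini gives $J(f) = \int_H (\omega\otimes\xi)^{-1}(h) I(h,f)\,dh$ with $I(h,f) = \int_{H \backslash G} f(x^{-1}hx) dx$. Writing $h = h_0 n$ with $h_0 \in H_0$, $n \in N$ and applying Weyl integration on $H_0 \simeq \CA^\times$, the integral breaks up as a sum over conjugacy classes of maximal tori of $H_0$, each stratum further integrated over $N$ and $H\backslash G$. An Arthur-type truncation is required to handle the non-compactness of both $H_0 \backslash G$ and $N$; the combinatorial identities \ref{(G,M) and (G,M,theta)-orthogonal sets}(1) and \ref{(G,M) and (G,M,theta)-orthogonal sets}(5)--(6) then rearrange the truncated expression as a combination of weighted and $\theta$-weighted orbital integrals $\Phi_M^Q(\cdot,f)$ and $\Phi_{M,\theta}^Q(\cdot,f)$. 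Strong cuspidality and $\theta$-strong cuspidality (\S\ref{cusp forms and theta-strongly cuspidal functions}) kill all such integrals for proper $Q$, so only contributions attached to elliptic tori of $H_0$ survive, yielding $\sum_T |W(H_0,T)|^{-1} \int_T D^H(t) \Theta_f(t) \omega(t)^{-1}\,dt$ on the regular locus. The germ identity \ref{cusp forms and theta-strongly cuspidal functions}(1) then replaces $\Theta_f(t)$ by $c_f(t)$ at singular points of $T$, producing the stated geometric side.

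\textbf{Main obstacle.} The decisive difficulty is the treatment of singular semisimple contributions on the geometric side: elliptic tori of $H_0$ generally meet the non-regular locus of $G$ (e.g.\ at the center), and the naive integrand $D^H(t)\Theta_f(t)\omega(t)^{-1}$ has singularities there. Because the generalized Shalika pair is not \emph{strongly tempered} in the sense of \cite{SV}, the Fourier-transform linearization of \cite{WalGGPI}, \cite{BeuGGP} that kills such singularities is unavailable. I would therefore evaluate the relevant singular $\theta$-weighted orbital integrals directly, combining the Harish-Chandra germ expansion of $\Theta_f$, a limiting argument resting on the local boundedness of $(D^G)^{1/2} c_f$, and the $(G,M,\theta)$-combinatorics of \S\ref{(G,M) and (G,M,theta)-orthogonal sets}. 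This explicit handling of singular orbital integrals should be the main technical novelty of the geometric side.
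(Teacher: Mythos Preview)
Your spectral side is essentially the paper's argument and is correct.

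The geometric outline has a genuine conceptual gap. You write that after truncation and cuspidality one obtains $\sum_T |W(H_0,T)|^{-1} \int_T D^H(t)\,\Theta_f(t)\,\omega(t)^{-1}\,dt$ ``on the regular locus'', and then propose to replace $\Theta_f$ by $c_f$ at singular points via the germ identity. But for $T$ elliptic in $H_0\simeq\mathcal{A}^\times$, the diagonal embedding $t\mapsto\begin{pmatrix} t & \\ & t\end{pmatrix}$ lands \emph{entirely} in the non-regular locus of $G$: the centralizer of such an element always contains the diagonal copy of $\mathcal{A}^\times$, so $\Theta_f$ is nowhere defined along this torus. There is no regular locus to land on first; the computation must produce the singular value $c_f\begin{pmatrix} t & \\ & t\end{pmatrix}$ directly.

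The paper's route is accordingly different at the decisive step. After truncating $J(f)$ to $J_N(f)$ (the truncation is on the $G_1$-variable from the Iwasawa decomposition, not on $H_0\backslash G$ and $N$ separately) and applying Weyl integration on $G_1$, the core of the argument is a \emph{change of weight} (Proposition~\ref{prop change of weight}): the truncation weight $\kappa_{N,T,\xi}$ is replaced, up to rapidly decaying error, by a weight $\widetilde{w}_{M,\theta}$ built from an explicit $(G,M,\theta)$-orthogonal set $\mathcal{Z}(g,h,X,Y)$. This orthogonal set is determined by an asymptotic computation of the Iwasawa projections $H_{P,\theta}$ as the unipotent coordinate blows up (Lemma~\ref{lem limit of GMT family}); the replacement further requires a smoothing of the partition \ref{(G,M) and (G,M,theta)-orthogonal sets}(1) by carefully chosen functions $\varphi_N$ on $T$ (Lemma~\ref{lem varphi}) and the technical Lemma~\ref{lem tech}. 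Only after this change of weight does $\lim_N J_{N,T}(f)$ exist and equal a multiple of $\int_T D^{G_1}(t)^2\,\Phi_{M,\theta}\!\begin{pmatrix} t & \\ & t\end{pmatrix}\omega(t)^{-1}dt$, where $\Phi_{M,\theta}\!\begin{pmatrix} t & \\ & t\end{pmatrix}$ is a \emph{directly defined} singular $\theta$-weighted orbital integral (\S\ref{first application}). Proposition~\ref{prop first application} then identifies this singular integral as the limit of regular $\theta$-weighted orbital integrals at the nearby \emph{regular} points $\begin{pmatrix} t & \\ & \lambda t\end{pmatrix}$ as $\lambda\to 1$, and only at that final stage do the descent formula \ref{(G,M) and (G,M,theta)-orthogonal sets}(6), strong cuspidality, and \ref{cusp forms and theta-strongly cuspidal functions}(1) convert the answer to $c_f$. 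Your outline collapses this entire change-of-weight mechanism into ``the combinatorial identities then rearrange the truncated expression''; those identities alone do not effect the passage from $\kappa_N$-truncation to any $\theta$-weighted orbital integral, and the missing step is precisely where the bulk of the work lies.
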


Note that the summation on the right hand side of the equality above is a finite sum (by \cite{WalPlanch} Th\'eor\`eme VIII.1.2), hence it is convergent. The integrals on the left hand side are absolutely convergent by the following lemma.

\begin{lem}\label{convergence of geometric side}
With the same assumptions as in Theorem \ref{theo trace formula}, the integral
$$\int_T D^{H}(t) c_f(t) \omega(t)^{-1}dt$$
is absolutely convergent for all $T\in \CT_{ell}(H_0)$.
\end{lem}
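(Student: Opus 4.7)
The plan is to combine three ingredients: the identity $D^H(t) = D^G(t)^{1/2}$ for $t \in T$, a reduction to decay in the central direction via ellipticity of $T$, and a Schwartz-type global bound on $c_f$ inherited from $f$.

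\textit{Step 1.} For $t = \mathrm{diag}(\lambda, \lambda) \in T$ I would first compute the discriminants explicitly. Let $C_\lambda \subset \mathcal{A}$ denote the centralizer of $\lambda$; then $\mathfrak{g}_t = \mathfrak{gl}_2(C_\lambda)$ and $\mathrm{Ad}(t)$ acts as $\mathrm{Ad}(\lambda)$ on each of the four copies of $\mathcal{A}/C_\lambda$ in $\mathfrak{g}/\mathfrak{g}_t$. Setting $\delta(\lambda) := |\det((1-\mathrm{Ad}(\lambda))|_{\mathcal{A}/C_\lambda})|_F$, this gives $D^G(t) = \delta(\lambda)^4$. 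The analogous computation on $\mathfrak{h} = \mathfrak{h}_0 \oplus \mathfrak{n} \cong \mathcal{A}^{\oplus 2}$, with $\mathfrak{h}_t = C_\lambda^{\oplus 2}$, gives $D^H(t) = \delta(\lambda)^2$, and hence $D^H(t) = D^G(t)^{1/2}$. Combined with $|\omega(t)^{-1}| = 1$ and the local boundedness of $(D^G)^{1/2}c_f$ on $G$ (\S\ref{cusp forms and theta-strongly cuspidal functions} and \cite{BeuGGP} Proposition 4.5.1(iii)), this shows the integrand is locally bounded on $T$.

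\textit{Step 2.} Since $T \subset H_0$ is elliptic, $A_T = A_G$. The vanishing of $H^1(F, \mathbb{G}_m)$ combined with anisotropy of $T/A_G$ shows that $T(F)/A_G(F) \cong (T/A_G)(F)$ is compact. Writing $T = A_G \cdot T^c$ for a compact set $T^c$, and using $A_G$-invariance of $D^G$ (since $a \in A_G$ acts trivially by $\mathrm{Ad}$), convergence of the integral reduces to showing
\begin{equation*}
\int_{A_G} |D^G(at_0)^{1/2} c_f(at_0)| \, da < \infty
\end{equation*}
uniformly for $t_0$ in $T^c$.

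\textit{Step 3.} The heart of the proof is establishing, for every $d > 0$, a global bound
\begin{equation*}
|D^G(x)^{1/2} c_f(x)| \ll (1 + \sigma_G(x))^{-d}
\end{equation*}
on the semisimple locus of $G$, with implicit constant depending on $f$ and $d$. On the regular semisimple locus this follows from $\Theta_f(x') = (-1)^{a_{M(x')} - a_G} \Phi^G_{M(x')}(x', f)$, the Harish--Chandra--Schwartz bound $|f(g)| \ll \Xi^G(g) \sigma_G(g)^{-d'}$ valid for all $d'$, and standard convergence estimates for the weighted orbital integrals of Schwartz functions: for $x' = at_0'$ with $a \in A_G$ central, $\sigma_G(a g^{-1} t_0' g) \gtrsim |v_F(a)| - O(1)$ uniformly in $g$, converting Schwartz decay of $f$ into decay in $|v_F(a)|$. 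The bound extends to singular semisimple points by passing to the limit through the formula \S\ref{cusp forms and theta-strongly cuspidal functions}(1); ensuring uniformity of this extension near the singular stratum is the main technical task and the principal obstacle. Granted the estimate, using $\sigma_G(at_0) \gtrsim |v_F(a)|$ uniformly for $t_0 \in T^c$, the integral over $T$ is dominated by $\sum_{n \in \mathbb{Z}}(1+|n|)^{-d}$, which converges for $d > 1$.
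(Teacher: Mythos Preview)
Your Steps 1 and 2 are correct and match the paper: the identity $D^H(t)=D^G(t)^{1/2}$ is exactly what is used, and the compactness of $A_G\backslash T$ for elliptic $T$ is the right structural observation.

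The divergence is in Step 3. You aim for a \emph{global} Schwartz-type bound $|D^G(x)^{1/2}c_f(x)|\ll (1+\sigma_G(x))^{-d}$ on the full semisimple locus, and you correctly flag that extending this uniformly across the singular stratum is the principal obstacle --- which you do not resolve. The paper sidesteps this entirely. Rather than proving decay of $c_f$ in the $A_G$-direction, it absorbs the $A_G$-integral into the test function: writing $f_\chi(g)=\int_{A_G}f(ag)\chi(a)^{-1}da$ (which lands in $\mathcal{C}(G,\chi)$ since $f\in\mathcal{C}(G)$), one has
\[
\int_T D^H(t)c_f(t)\omega(t)^{-1}\,dt=\int_{A_G\backslash T} D^H(t)c_{f_\chi}(t)\omega(t)^{-1}\,dt.
\]
Now the domain is compact, $\omega$ is unitary, and the only input needed is that $(D^G)^{1/2}c_{f_\chi}$ is \emph{locally} bounded --- which is exactly \cite{BeuGGP} Proposition~4.5.1(iii) applied to $f_\chi$. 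No global decay estimate is required.

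So your route would work if the global bound were established, but that is genuinely harder than what the lemma needs. The paper's trick of pushing the central integration into $f$ first converts the problem from ``global decay'' to ``local boundedness over a compact set'', which is already available off the shelf.
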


\begin{proof}
We can rewrite the integral as
$$\int_{A_G\backslash T} D^{H}(t) c_{f_\chi}(t) \omega(t)^{-1}dt$$
where we recall that $f_\chi(g):=\int_{A_G} f(ag)\chi(a)^{-1}da$. Since $T$ is elliptic, $A_G\backslash T$ is compact. Together with the assumption that $\omega$ is unitary, it is enough to show that the function
$$t\in T_{reg}\to D^{H}(t) c_{f_\chi}(t)$$
is locally bounded on $T$. This just follows from the fact that the function $(D^G)^{1/2}c_f$ is locally bounded on $G$ (Proposition 4.5.1 (iii) of \cite{BeuGGP}), and $D^{H}(t)=D^{G}(t)^{1/2}$ for all $t\in T_{reg}$.
\end{proof}

\noindent The proof of this Theorem will occupy Sections \ref{The spectral side} and \ref{geom side} entirely. In section \ref{The spectral side}, we will prove the absolute convergence of $J(f)$ when $f\in {}^0\mathcal{C}(G)$ together with the spectral expansion (that is the second equality of the Theorem). It is the easy part and moreover the arguments are very similar to \cite{BeuGalP} \S 3. Section \ref{geom side} on the other hand contains the proof of the geometric side (i.e. the first equality of the Theorem) which is more involved than that of the spectral side.

\subsection{The multiplicity formula}\label{A formula for the multiplicities}

The main interest of Theorem \ref{theo trace formula} is the following consequence of it.

\begin{prop}\label{prop multiplicities}
Let $\chi=\omega^n$ seen as a character of $A_G=F^\times$. Then, for all $\pi\in \Pi_2(G,\chi)$, we have

$$\displaystyle m(\pi,\omega)=\sum_{T\in \mathcal{T}_{ell}(H_0)} \lvert W(H_0,T)\rvert^{-1} \int_{A_G\backslash T} D^{H}(t) c_\pi(t) \omega(t)^{-1}dt$$

\noindent where we recall that the Haar measures on the tori $A_G\backslash T$, $T\in \mathcal{T}_{ell}(H_0)$, are chosen so that $\vol(A_G\backslash T)=1$ (see \S \ref{groups, measures, notations}). By a similar argument as in Lemma \ref{convergence of geometric side}, we know that the integrals on the right hand side are absolutely convergent. Moreover, if $\mathcal{A}=\mathcal{D}$ is a division algebra, then the same formula holds for all $\pi\in \Irr(G,\chi)$.
\end{prop}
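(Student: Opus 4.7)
The plan is to deduce the multiplicity formula from Theorem \ref{theo trace formula} applied to a matrix coefficient of $\pi$. Given $\pi\in \Pi_2(G,\chi)$, choose $v\in \pi$ and $v^{\vee}\in \pi^{\vee}$ with $\langle v,v^{\vee}\rangle = d(\pi)$, and set $f_{\pi}(g):=\langle \pi(g)v,v^{\vee}\rangle$. By a theorem of Harish-Chandra, $f_{\pi}\in {}^{0}\mathcal{C}(G,\chi)$ and $f_{\pi}(1)=d(\pi)$. The observation that makes the argument go through is that both sides of the trace formula depend on $f\in {}^{0}\mathcal{C}(G)$ only through its projection $f_{\chi}\in {}^{0}\mathcal{C}(G,\chi)$: on the spectral side because $(\pi')^{\vee}(f)=(\pi')^{\vee}(f_{\chi})$ as operators for every $\pi'\in \Pi_{2}(G,\chi)$; on the geometric side by the $A_G$-change of variables carried out below. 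Combined with the (standard) surjectivity of $f\mapsto f_{\chi}$ from ${}^{0}\mathcal{C}(G)$ onto ${}^{0}\mathcal{C}(G,\chi)$, this reduces the proof to applying Theorem \ref{theo trace formula} to any lift $f\in {}^{0}\mathcal{C}(G)$ of $f_{\pi}$.

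On the spectral side, the Schur orthogonality relations in the form of equation (1) of \S\ref{representations}, applied to the matrix coefficient $f_{\pi}$, yield
\[
\Tr (\pi')^{\vee}(f_{\pi})=\delta_{\pi',\pi}\cdot d(\pi)^{-1}f_{\pi}(1)=\delta_{\pi',\pi}
\]
for all $\pi'\in \Pi_{2}(G,\chi)$, so the spectral side collapses to $m(\pi,\omega)$.

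For the geometric side, note that for every $T\in \mathcal{T}_{ell}(H_0)$ one has $A_G\subset T$ with $A_T=A_G$ (the split center of $H_0\simeq \mathcal{A}^{\times}$ is $F^{\times}=A_G$). Writing $t=at'$ with $a\in A_G$ and $t'\in A_G\backslash T$, and using that $D^{H}$ is invariant under central translation together with $\omega|_{A_G}=\chi$, the integral $\int_{T}D^{H}(t)c_f(t)\omega(t)^{-1}dt$ rewrites as
\[
\int_{A_G\backslash T}D^{H}(t')\omega(t')^{-1}\Bigl(\int_{A_G}c_f(at')\chi(a)^{-1}\,da\Bigr)dt'.
\]
A direct manipulation of the Harish-Chandra germ expansion of $\Theta_f$ (which commutes with the $A_G$-averaging, since the nilpotent orbits of $\mathfrak{g}_{at'}=\mathfrak{g}_{t'}$ are unchanged) identifies the inner integral with $c_{f_{\chi}}(t')=c_{f_{\pi}}(t')$. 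Finally, equation (2) of \S\ref{cusp forms and theta-strongly cuspidal functions} applied to $f_{\pi}$ gives $\Theta_{f_{\pi}}=d(\pi)^{-1}f_{\pi}(1)\Theta_{\pi}=\Theta_{\pi}$, hence $c_{f_{\pi}}=c_{\pi}$, and the multiplicity formula drops out.

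The extension to arbitrary $\pi\in \Irr(G,\chi)$ when $\mathcal{A}=\mathcal{D}$ is a division algebra rests on Appendix \ref{appendix}, which generalizes a theorem of M\oe glin-Waldspurger on generalized Whittaker models. The obstruction is that non-tempered representations have matrix coefficients outside $\mathcal{C}(G,\chi)$, so the matrix-coefficient argument above does not apply directly; however, when $\mathcal{A}=\mathcal{D}$, the restricted parabolic structure of $G=GL_2(\mathcal{D})$ means that the remaining representations to be handled are parabolic inductions from $Q=LN$ of (essentially) square-integrable data on $L\simeq \mathcal{D}^{\times}\times \mathcal{D}^{\times}$, and the appendix delivers the required control on the generalized Whittaker integrals associated with $\xi$. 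I expect the main technical point in the proof to be the clean verification that the central-character averaging commutes with the germ regularization $f\mapsto c_f$, and in the division-algebra case the careful adaptation of the M\oe glin-Waldspurger machinery to the generalized Shalika context.
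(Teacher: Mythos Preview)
Your treatment of the discrete-series case is essentially the paper's own argument: lift a matrix coefficient $f_\pi\in{}^0\mathcal{C}(G,\chi)$ to $f\in{}^0\mathcal{C}(G)$, apply Theorem~\ref{theo trace formula}, collapse the spectral side by Schur orthogonality, and identify $c_{f_\chi}=c_{f_\pi}=c_\pi$ via \S\ref{cusp forms and theta-strongly cuspidal functions}(2). One small omission: Theorem~\ref{theo trace formula} is stated only for unitary $\omega$, and matrix coefficients of $\pi\in\Pi_2(G,\chi)$ lie in $\mathcal{C}(G,\chi)$ only when $\chi$ is unitary. The paper handles this by first twisting $\pi$ by $\lvert\chi\rvert^{-1/2n}$ and $\omega$ by $\lvert\chi\rvert^{-1}$ to reduce to the unitary case; you should insert this reduction before invoking the trace formula.

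Your account of the division-algebra case, however, misidentifies how the appendix is used. There is no separate handling of ``remaining'' parabolic inductions. The point is purely structural: when $\mathcal{A}=\mathcal{D}$ is division, the parabolic $Q=LN$ is already \emph{minimal}, so $N=N_0$, $M_0=L$, and the stabilizer $M_{0,\xi}$ of the generic character $\xi$ is exactly $H_0$. Thus $(G,H,\omega\otimes\xi)$ is literally an instance of the setup of Appendix~\ref{appendix}, and Corollary~\ref{cor appendix} gives
\[
m(\pi,\omega)=\int_{H_0/A_G} D^{G/M_0}(x)^{1/2}\,c_{\pi,\mathcal{O}_x}(x)\,\omega(x)^{-1}\,dx
\]
for \emph{every} $\pi\in\Irr(G,\chi)$ in one stroke, bypassing the trace formula entirely. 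The stated multiplicity formula then follows from Weyl integration on $H_0\simeq\mathcal{D}^\times$ (all of whose maximal tori are elliptic) together with the identification $c_{\pi,\mathcal{O}_x}(x)=c_\pi(x)$ for $x=\begin{pmatrix}t&\\&t\end{pmatrix}$ with $t$ regular elliptic (here $G_x\simeq GL_2(K)$ for a field $K$, so there is a unique regular nilpotent orbit and it is the orbit of $Y$). Your speculation about controlling generalized Whittaker integrals for specific induced representations is not the mechanism at work.
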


\vspace{2mm}

\noindent\ul{Proof}: The case when $\mathcal{A}$ is a division algebra directly follows from Corollary \ref{cor appendix}. Assume now that $\pi\in \Pi_2(G,\chi)$. The absolute value $\lvert \chi \rvert$ of $\chi$ extends uniquely to a positive valued character on $G$ that we shall denote the same way. Then, up to multiplying $\omega$ by $\lvert \chi\rvert^{-1}$ and replacing $\pi$ by $\pi\otimes \lvert \chi\rvert^{-\frac{1}{2n}}$ we may assume that $\chi$ is unitary and then so is $\omega$. We can then use the equality of Theorem \ref{theo trace formula} which may be rewritten as

$$\displaystyle \sum_{T\in \mathcal{T}_{ell}(H_0)} \lvert W(H_0,T)\rvert^{-1} \int_{A_G\backslash T} D^{H}(t) c_{f_\chi}(t) \omega(t)^{-1}dt=\sum_{\pi'\in \Pi_2(G,\chi)} m(\pi',\omega) \Tr (\pi')^\vee(f_\chi)$$

\noindent for all $f\in {}^0\mathcal{C}(G)$. Let $\pi\in \Pi_2(G,\chi)$. We choose $f\in {}^0\mathcal{C}(G)$ so that $f_\chi$ is a matrix coefficient of $\pi$ with $f_\chi(1)\neq 0$. Then by Schur's orthogonality relations, the spectral side reduces to

$$\displaystyle m(\pi,\omega) \Tr \pi^\vee(f_\chi)=d(\pi)^{-1}m(\pi,\omega) f_\chi(1).$$

\noindent On the other hand, by \ref{cusp forms and theta-strongly cuspidal functions}(2), the geometric side equals

$$\displaystyle d(\pi)^{-1}f_\chi(1)\sum_{T\in \mathcal{T}_{ell}(H_0)} \lvert W(H_0,T)\rvert^{-1} \int_{A_G\backslash T} D^{H}(t) c_{\pi}(t) \omega(t)^{-1}dt.$$

\noindent This proves the proposition. $\blacksquare$

\begin{rmk}\label{multiplicity formula fails}
In general, if $\CA$ is not a division algebra, then the multiplicity formula will not holds for all tempered (or generic) representations. For example, let $\CA=\Mat_{m\times m}(D)$ with $m>1$, and let $\pi$ be a tempered representation of $G=\GL_{2m}(D)$ with central character $\chi$. Assume that $\pi$ is the parabolic induction of some discrete series $\tau=\tau_1\otimes \cdots\otimes \tau_{2m}$ of the minimal parabolic subgroup $P_0=M_0N_0$ of $G$ (here $M_0=(\GL_1(D))^{2m}$). By Lemma 2.3 of \cite{WalGGPII}, the right hand side of the multiplicity formula is always equal to zero. On the mean time, we can choose some nice $\tau$ such that $m(\pi)\neq 0$ (e.g. when the character $\omega$ is trivial, we just need to let $\tau=\tau_1\otimes \cdots\otimes \tau_{2m}$ with $\tau_{2i-1}\simeq \tau_{2i}^{\vee}$ for $1\leq i\leq m$).
\end{rmk}

\section{The spectral side}\label{The spectral side}

In this section we prove, following \cite{BeuGalP} \S 3, the absolute convergence as well as the spectral side of Theorem \ref{theo trace formula}.

For $\pi\in \Pi_2(G,\chi)$, let

$$\displaystyle \mathcal{B}_\pi:\pi\times \pi^\vee\to \mathbf{C}$$

\noindent be the bilinear form defined by

$$\displaystyle \mathcal{B}_\pi(v,v^\vee):=\int_{A_G\backslash H} \langle \pi(h)v,v^\vee\rangle (\omega\otimes \xi)(h)^{-1}dh$$

\noindent for all $(v,v^\vee)\in \pi\times \pi^\vee$. Note that the integral above is always absolutely convergent by \ref{Shalika triples}(1). Obviously $\mathcal{B}_\pi$ descents to a bilinear pairing

$$\displaystyle \mathcal{B}_\pi: \pi_{\omega\otimes\xi} \times \pi^\vee_{(\omega\otimes \xi)^{-1}}\to \mathbf{C}$$

\noindent where $\pi_{\omega\otimes\xi}$ and $\pi^\vee_{(\omega\otimes \xi)^{-1}}$ denote the $(H,\omega\otimes \xi)$- and $(H,(\omega\otimes \xi)^{-1})$-coinvariant spaces of $\pi$ and $\pi^\vee$ respectively. As in \cite{BeuGalP} \S 4 the main ingredient of the proof is the following proposition which is a variation of \cite{SV} Theorem 6.4.1 (a similar idea also appears in \cite{WalGGPII} Proposition 5.6):

\begin{prop}\label{prop intertwinings}
$\mathcal{B}_\pi$ induces a perfect pairing between $\pi_{\omega\otimes\xi}$ and $\pi^\vee_{(\omega\otimes \xi)^{-1}}$.
\end{prop}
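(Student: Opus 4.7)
My plan is to adapt closely the proof of Proposition 4.1.1 in \cite{BeuGalP}, which in turn is a variation of \cite{SV} Theorem 6.4.1 and \cite{WalGGPII} Proposition 5.6. The adaptation to the generalized Shalika setting should require only notational modifications.

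First, I would verify absolute convergence of the defining integral. Since $\pi\in \Pi_2(G,\chi)$ with $\chi$ unitary, any matrix coefficient $g\mapsto \langle \pi(g)v,v^\vee\rangle$ belongs to $\mathcal{C}(G,\chi)$, and thus convergence follows from the strong discreteness bound \ref{Shalika triples}(1) combined with the standard estimate $|\langle \pi(g)v,v^\vee\rangle|\ll \Xi^G(g)\sigma_{A_G\backslash G}(g)^{-d}$. The pairing $\mathcal{B}_\pi$ tautologically descends to coinvariants, and by \cite{Del} Theorem 4.5 applied to both $\pi$ and $\pi^\vee$ together with the identification $\Hom_H(\pi,\omega\otimes\xi)\simeq (\pi_{\omega\otimes\xi})^*$, both $\pi_{\omega\otimes\xi}$ and $\pi^\vee_{(\omega\otimes\xi)^{-1}}$ are finite-dimensional, of respective dimensions $m(\pi,\omega)$ and $m(\pi^\vee,\omega^{-1})$.

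Since both coinvariant spaces are finite-dimensional, perfectness reduces to non-degeneracy on one side: the $H$-averaging map $\beta:\pi^\vee_{(\omega\otimes\xi)^{-1}}\to \Hom_H(\pi,\omega\otimes\xi)$, $v^\vee\mapsto \mathcal{B}_\pi(\cdot,v^\vee)$, should be surjective. To realize a given $\ell\in \Hom_H(\pi,\omega\otimes\xi)$ as such an $H$-average, the plan is to choose $v_0\in\pi$ with $\ell(v_0)\ne 0$, fix $v^\vee_0\in\pi^\vee$ dual to $v_0$, and construct the desired $v^\vee$ via a regularization procedure based on Harish-Chandra's formal degree identity \ref{representations}(1) and Schur orthogonality. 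Concretely, after suitable normalization by $d(\pi)$, one verifies that the $H$-average of the matrix coefficient $g\mapsto \langle \pi(g)v_0,v^\vee\rangle$ recovers $\ell$; this is the heart of the argument and mirrors \cite{BeuGalP} \S 4 step by step.

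The main obstacle will be the analytic control required to justify the formal manipulations in the construction of $v^\vee$: one has to interchange the $H$-integration with the matrix-coefficient pairings and establish convergence uniformly in all parameters. These analytic inputs rest on the strong discreteness bound \ref{Shalika triples}(1) together with the Schwartz-type decay of discrete series matrix coefficients, both available from Section \ref{preliminaries}. Once these estimates are in place, the perfectness assertion follows exactly as in \cite{BeuGalP} Proposition 4.1.1.
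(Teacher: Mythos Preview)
Your overall strategy matches the paper's: the proposition is reduced to the analogous statement in \cite{BeuGalP} (the paper cites it as Proposition 3.21), and the real work is the analytic input specific to the Shalika setting. However, you misidentify what that analytic input is. The strong discreteness bound \ref{Shalika triples}(1) and Schwartz decay of matrix coefficients only give you that $\mathcal{B}_\pi$ is well-defined (an integral over $A_G\backslash H$). What the argument in \cite{BeuGalP} actually needs is the content of Lemma \ref{lemma intertwinings}: for every $\ell\in \Hom_H(\pi,\omega\otimes\xi)$, the function $x\mapsto \ell(\pi(x)v)$ is square-integrable on $H\backslash G$, and moreover $\ell$ extends continuously to $\mathcal{C}(G)$ in the sense that $\int_G f(g)\ell(\pi(g)v)dg=\ell(\pi(f)v)$. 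This is a statement about integrability over $H\backslash G$, not over $H$, and it does not follow from strong discreteness.

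The paper's proof of this lemma is the substantive step and is specific to the generalized Shalika model: one uses that $\xi$ restricts nontrivially to every root subspace $N_\alpha\subset N$ to force the support of $x\mapsto \ell(\pi(x)v)$ into a set of the form $HA_1^{++}C$ for a compact $C$ and a certain positive chamber $A_1^{++}$; one then constructs a Harish-Chandra-type function $\Xi^{H\backslash G}$ on $H\backslash G$ and proves the analogues of the usual $\Xi$-estimates (points (5)--(10) in the proof). Finally, a smoothing trick shows $\ell(\pi(a)v)=\langle \pi(a)v, e_J\ast \ell\rangle$ on $A_1^{++}$, reducing the required decay to standard estimates for coefficients of discrete series. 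Your proposal skips all of this, and without it the ``regularization'' you describe (interchanging $H$-averages with Schur orthogonality over $A_G\backslash G$) cannot be justified; in fact the very integral $\int_{H\backslash G}\mathcal{B}_\pi(\pi(x)v,v^\vee)\ell(\pi(x)v')dx$ that underlies the surjectivity argument has no a priori meaning without the $L^2$ bound of Lemma \ref{lemma intertwinings}.
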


\noindent This proposition can be proved by the exactly the same way as \cite{BeuGalP} Proposition 3.21 once we establish the next lemma.

\begin{lem}\label{lemma intertwinings}
For all $\ell\in Hom_H(\pi,\omega\otimes \xi)$ and all $v\in \pi$, we have

$$\displaystyle \int_{H\backslash G} \lvert \ell(\pi(x)v)\rvert^2 dx<\infty.$$

\noindent Moreover, for all $f\in \mathcal{C}(G)$, the integral

$$\displaystyle \int_{G} f(g) \ell(\pi(g)v) dg$$

\noindent is absolutely convergent and equals $\ell(\pi(f)v)$.
\end{lem}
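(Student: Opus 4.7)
The plan is to reduce both assertions to a pointwise estimate on the smooth function $\phi_v : g \mapsto \ell(\pi(g)v)$ on $G$.

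Fix a compact open subgroup $J \subset G$ fixing $v$; then $\phi_v$ is right $J$-invariant, and the equivariance of $\ell$ yields $\phi_v(hg) = (\omega\otimes\xi)(h)\phi_v(g)$ for all $h \in H$, so that $|\phi_v|$ descends to a function on $H\backslash G$. The key technical step would be to establish a pointwise majorization
$$
|\phi_v(g)| \leqslant C\, \hc(g)\, \nor(g)^{N}, \quad g \in G,
$$
for some constants $C, N > 0$ depending on $\ell$, $v$, and $J$. The idea for the proof is to exploit the admissibility of $\pi$ to express $\pi(g)v$ (locally in $g$, relative to a Cartan-type decomposition of $G$) as a linear combination of translates of finitely many fixed vectors, then invoke the strong matrix-coefficient decay of the discrete series $\pi$ (namely $|\langle \pi(g)w, w^\vee\rangle| \ll \Xi^G(g)\sigma_{A_G\backslash G}(g)^{-d}$ for every $d > 0$, valid for any $w \in \pi$, $w^\vee \in \pi^\vee$) together with the $H$-equivariance of $\ell$ to convert such matrix coefficient bounds into a bound involving $\hc$ on the quotient.

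Granted this pointwise bound, the first assertion follows at once from the standard fact that $\int_{H\backslash G} \hc(g)^{2}\nor(g)^{-d} dg < \infty$ for $d$ sufficiently large, a classical consequence of the strong discreteness property \ref{Shalika triples}(1) (compare \cite{BeuGGP} \S 1.5). For the second assertion, when $f \in C_c^\infty(G)$ the identity is trivial since $\pi(f)v$ is a finite linear combination of translates of $v$ and $\ell$ is linear. For general $f \in \mathcal{C}(G)$, one combines the Harish-Chandra Schwartz decay $|f(g)| \ll \Xi^G(g)\sigma_G(g)^{-d}$ (valid for every $d > 0$) with the pointwise bound on $\phi_v$ to produce an integrable majorant for $|f(g)\phi_v(g)|$ on $G$; absolute convergence follows, and the identity $\int_G f(g)\ell(\pi(g)v) dg = \ell(\pi(f)v)$ is then obtained by approximating $f$ by compactly supported smooth functions in the Schwartz topology and invoking dominated convergence.

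The main obstacle is establishing the pointwise bound, since $\ell$ is not \emph{a priori} continuous for any natural global norm on $\pi$. The essential point is that the majorant must involve $\hc$ and $\nor$, i.e.\ quantities adapted to the quotient $H\backslash G$, rather than only $\Xi^G$ and $\sigma_G$; this forces one to use the $H$-equivariance of $\ell$ in a genuinely essential way, well beyond merely descending $|\phi_v|$ to $H\backslash G$. Once such a majorization is in place, both claims follow by routine Schwartz-type estimates.
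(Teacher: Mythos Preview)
Your outline has the right architecture (pointwise bound on $\phi_v$ in terms of $\Xi^{H\backslash G}$, then integrate), but the bound you propose is too weak and the mechanism you sketch to prove it is not the one that works. With $\lvert\phi_v(g)\rvert\leqslant C\,\Xi^{H\backslash G}(g)\,\sigma_{H\backslash G}(g)^{N}$ for some fixed $N>0$, the square integrability does \emph{not} follow from $\int_{H\backslash G}\Xi^{H\backslash G}(x)^2\sigma_{H\backslash G}(x)^{-d}dx<\infty$: you would need that integral with exponent $+2N$, which diverges. What is actually required (and what the paper proves) is the stronger estimate $\lvert\phi_v(a)\rvert\ll \Xi^G(a)\sigma_G(a)^{-d}$ for \emph{every} $d>0$, along a suitable set of representatives $a$ for $H\backslash G$.

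Two concrete ingredients are missing from your sketch. First, a support restriction: one must show that $g\mapsto\ell(\pi(g)v)$ is supported in $H A_1^{++} C$ for a compact $C$, where $A_1^{++}\subset A_1^+$ is cut out by the extra inequalities $\lvert\alpha(a)\rvert\leqslant 1$ for $\alpha\in R(A_1,N)$. This comes from the nontriviality of $\xi$ on each root subspace $N_\alpha$: if $\lvert\alpha(a)\rvert$ is too large then $aK_va^{-1}\cap N_\alpha$ already detects $\xi$ and forces $\ell(\pi(a)v)=0$. You never use $\xi$, and without this the support could spill over all of $A_1^+$, where the needed bound fails. Second, your proposed route to the pointwise bound (``express $\pi(g)v$ as a linear combination of translates of fixed vectors and invoke matrix-coefficient decay'') does not by itself control $\ell$, which is not continuous. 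The actual trick is in the other direction: on $A_1^{++}$ there is a single compact open $J$ with $J\subset H\,aK_va^{-1}$ for all such $a$, so averaging $\ell$ over $J$ produces a genuine smooth vector $e_J\ast\ell\in\pi^\vee$ with $\ell(\pi(a)v)=\langle\pi(a)v,\,e_J\ast\ell\rangle$. Now this \emph{is} a matrix coefficient of the discrete series $\pi$, and the required decay for every $d$ follows. Once this is in place, the square integrability and the Schwartz extension of $f\mapsto\ell(\pi(f)v)$ go through as you indicate.
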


\vspace{2mm}

\noindent Proof of Lemma \ref{lemma intertwinings}: Let $\ell\in Hom_H(\pi,\omega\otimes \xi)$ and $v\in \pi$. Set $G_1:=\mathcal{A}^\times$ and embed $G_1$ in $G$ via $g\mapsto \begin{pmatrix} g & \\ & 1\end{pmatrix}$. Let $P_1=L_1U_1\subset G_1$ be a minimal parabolic subgroup and $A_1\subset L_1$ the maximal split torus. Let $\Delta_1$ be the set of simple roots of $A_1$ in $P_1$ and set

$$\displaystyle A_1^+:=\left\{a\in A_1; \lvert \alpha(a)\rvert\leqslant 1 \;\forall \alpha\in \Delta_1 \right\},$$

$$\displaystyle A_1^{++}:=\left\{a\in A_1^+;\;  \lvert \alpha(a)\rvert\leqslant 1 \;\forall \alpha\in R(A_1,N)\right\}.$$

\noindent Then, by the Iwasawa decomposition $G=QK$ and Cartan decomposition for $G_1$, there exists a compact subset $C_0\subset G$ such that

$$\displaystyle G=HA_1^+C_0. \leqno (1)$$

\noindent Moreover, there exists a compact subset $C_A \subset A_1$ such that for all $a\in A_1^+$ with $\ell(\pi(a)v)\neq 0$, we have

$$\displaystyle a\in A_1^{++} C_A.$$

\noindent Indeed, it suffices to show that for all $\alpha\in R(A_1,N)$, there exists $c_\alpha>0$ such that for all $a\in A_1$ with $\ell(\pi(a)v)\neq 0$, we have $\lvert \alpha(a)\rvert\leqslant c_\alpha$. Let $\alpha$ be such a root and note that the character $\xi$ has a nontrivial restriction to the corresponding root subspace $N_\alpha$. Let $K_\alpha\subset N_\alpha$ be a compact-open subgroup which leaves $v$ invariant. Then, there exists $c_\alpha>0$ such that for all $a\in A_1$ with $\lvert \alpha(a)\rvert>c_\alpha$, the restriction of $\xi$ to $aK_{\alpha}a^{-1}$ is nontrivial which easily implies that $\ell(\pi(a)v)=0$. This proves the claim.

\vspace{2mm}

\noindent Thus there exists a compact-open subset $C\subset G$ such that

$$\displaystyle \mbox{The function } g\in G\mapsto \ell(\pi(g)v) \mbox{ has support in } HA_1^{++}C. \leqno (2)$$

\noindent Let $\overline{P}_1=L_1\overline{U}_1$ be the parabolic subgroup opposite to $P_1$ and introduce the following parabolic subgroup of $G$:

$$\displaystyle P:=\left\{\begin{pmatrix} p_1 & 0 \\ X & g_1 \end{pmatrix};\; p_1\in \overline{P}_1, g_1\in G_1, X\in \mathcal{A} \right\}.$$

\noindent It has a Levi decomposition $P=L_PU_P$ where

$$\displaystyle L_P:=\left\{\begin{pmatrix} l_1 & 0 \\ 0 & g_1 \end{pmatrix};\; l_1\in L_1, g_1\in G_1 \right\}$$

\noindent and

$$\displaystyle U_P:=\left\{\begin{pmatrix} u_1 & 0 \\ X & 1 \end{pmatrix};\; u_1\in \overline{U}_1, X\in \mathcal{A} \right\}.$$

\noindent Note that $A_1$ is contained in the center of $L_P$ and

$$\displaystyle A_1^{++}=\left\{a\in A_1;\; \lvert \alpha(a)\rvert \geqslant 1\; \forall \alpha\in R(A_1,U_P) \right\}. \leqno (3)$$

\noindent Moreover, we have

$$\displaystyle HP \mbox{ is open in } G. \leqno (4)$$

\noindent Define a function $\Xi^{H\backslash G}$ on $H\backslash G$ by

$$\displaystyle \Xi^{H\backslash G}(x):=\vol_{H\backslash G}(xC)^{-1/2},\;\;\; x\in H\backslash G.$$

\noindent Then as in Proposition 6.7.1 of \cite{BeuGGP}, we can show in turn that

\begin{enumerate}[(1)]
\setcounter{enumi}{4}
\item $\Xi^{H\backslash G}(xk)\sim \Xi^{H\backslash G}(x)$ and $\sigma_{H\backslash G}(xk)\sim \sigma_{H\backslash G}(x)$ for all $x\in H\backslash G$ and all $k\in C$;

\item There exists $d>0$ such that $\Xi^G(a)\ll \Xi^{H\backslash G}(a)\sigma_G(a)^d$ for all $a\in A_1^{++}$ (this uses (3) and (4));

\item $\sigma_{H\backslash G}(a)\sim \sigma_G(a)$ for all $a\in A_1$ (this is because the regular map $G_1\to H\backslash G$ is a closed embedding);

\item There exists $d>0$ such that the integral

$$\displaystyle \int_{H\backslash G} \Xi^{H\backslash G}(x)^2 \sigma_{H\backslash G}(x)^{-d}dx$$

\noindent converges (this uses decomposition (1));

\item For all $d>0$, there exists $d'>0$ such that

$$\displaystyle \int_H \Xi^G(hx)\sigma_G(hx)^{-d'}dx\ll \Xi^{H\backslash G}(x) \sigma_{H\backslash G}(x)^{-d}$$

\noindent for all $x\in H\backslash G$ (this uses (4) together with decomposition (1)).
\end{enumerate}

\noindent Finally, by the above points and (2), in order to prove the lemma, it remains to show that

$$\displaystyle \mbox{For all } d>0, \mbox{ we have } \lvert \ell(\pi(a)v)\rvert \ll \Xi^G(a)\sigma_G(a)^{-d} \mbox{ for all } a\in A_1^{++}. \leqno (10)$$

\noindent Let $K_v\subset G$ be an open subgroup which stabilizes $v$. Then, from (3) and (4) we deduce the existence of a compact-open subgroup $J$ of $G$ such that $J\subset HaK_va^{-1}$ for all $a\in A_1^{++}$. It follows that

$$\displaystyle \ell(\pi(a)v)=\langle \pi(a)v,e_J\ast \ell\rangle$$

\noindent for all $a\in A_1^{++}$ where $e_J\ast \ell$ denotes the element of $\pi^\vee$ defined by

$$\displaystyle \langle w,e_J\ast \ell\rangle:=\vol(J)^{-1}\int_J \ell(\pi(k)w)dk,\;\;\; w\in \pi.$$

\noindent (10) now follows from standard estimates for coefficients of square-integrable representations. $\blacksquare$

\vspace{2mm}

We now prove the absolute convergence and the spectral side of Theorem \ref{theo trace formula}. Let $f\in {}^0\mathcal{C}(G)$. Then, we have

$$\displaystyle K_f(x,y)=\int_{A_G\backslash H} f_\chi(x^{-1}hy) (\omega\otimes \xi)(h)^{-1}dh$$

\noindent for all $x,y\in G$ where we recall that $\chi$ denotes the restriction of $\omega$ to $A_G$ and $f_\chi(g):=\int_{A_G} f(ag) \chi(a)^{-1} da$ for all $g\in G$. By \ref{cusp forms and theta-strongly cuspidal functions}(3), we may assume that there exists $\pi\in \Pi_2(G,\chi)$ such that $f_\chi$ is a matrix coefficient of $\pi$, i.e. there exist $(v,v^\vee)\in \pi\times \pi^\vee$ such that $f_\chi(g)=\langle \pi(g)v,v^\vee\rangle$ for all $g\in G$. In this case, we have

$$\displaystyle K_f(x,y)=\mathcal{B}_\pi(\pi(y)v,\pi(x)v^\vee)$$

\noindent for all $x,y\in G$. Let $N:=m(\pi,\omega)$ and let $v_1,\ldots, v_N$ be vectors in $\pi$ whose images in $\pi_{\omega\otimes \xi}$ form a basis. Let $v_1^\vee,\ldots,v_N^\vee$ be vectors in $\pi^\vee$ whose images in $\pi^\vee_{(\omega\otimes \xi)^{-1}}$ form the dual basis with respect to $\mathcal{B}_\pi$. Then, we have

$$\displaystyle K_f(x,y)=\sum_{i=1}^N \mathcal{B}_\pi(\pi(y)v,v_i^\vee)\mathcal{B}_\pi(v_i,\pi(x)v^\vee)$$

\noindent for all $x,y\in G$. From there and Lemma \ref{lemma intertwinings} we deduce the absolute convergence of $J(f)$. Now the rest part of the proof is the same as that of \cite{BeuGalP} Theorem 3.11: using Schur's orthogonality relations we have

\[\begin{aligned}
\displaystyle J(f) & =\sum_{i=1}^N \int_{H\backslash G} \mathcal{B}_\pi(\pi(x)v,v_i^\vee)\mathcal{B}_\pi(v_i,\pi^\vee(x)v^\vee) dx=\sum_{i=1}^N \int_{A_G\backslash G}\langle \pi(g)v,v_i^\vee\rangle \mathcal{B}_\pi(v_i,\pi^\vee(g)v^\vee) dg \\
 & =\sum_{i=1}^N \frac{\langle v,v^\vee\rangle}{d(\pi)} \mathcal{B}_\pi(v_i,v_i^\vee) =m(\pi,\omega) \frac{\langle v,v^\vee\rangle}{d(\pi)} =m(\pi,\omega) \Tr \pi(f).
\end{aligned}\]

$\blacksquare$

\section{The geometric side}\label{geom side}

The goal of this chapter is to prove the geometric side of Theorem \ref{theo trace formula}. This proof will be given in Section \ref{proof of the geometric side}. We will continue to use the notations introduced in Chapter \ref{A multiplicity formula for Shalika models} and we will assume as in Theorem \ref{theo trace formula} that $\omega$ is unitary. We will also need the following extra notations:

\begin{itemize}
\item $\theta:=Ad\begin{pmatrix} & 1 \\1 & \end{pmatrix}$ (an involution of $G$);

\item $G_1:=\mathcal{A}^\times$, $K_1:=\mathcal{O}_{\mathcal{A}}^\times$ and $\mathfrak{g}_1:=\mathcal{A}$ (the Lie algebra of $G_1$), note that we have a natural identification $L\simeq G_1\times G_1$;

\item There is a natural open embedding $G_1\hookrightarrow \mathfrak{g}_1$ and we will denote by $\mathfrak{g}_1^*$ its image. Similarly, for any maximal torus $T$ of $G_1$ the previous embedding restricts to an open embedding $T\hookrightarrow \mathfrak{t}$, where $\mathfrak{t}$ denotes the Lie algebra of $T$, and we will denote by $\mathfrak{t}^*$ its image (i.e. the open subset of all $X\in \mathfrak{t}$ such that $\nu(X)\neq 0$);

\item $\langle .,.\rangle:\mathfrak{g}_1\times \mathfrak{g}_1\to F$ the bilinear pairing given by $\langle X,Y\rangle:=\Tr_{\mathcal{A}/F}(XY)$;
\end{itemize}

Recall that in this paper we are assuming that every algebraic variety $X$ over $F$ that we encounter has been equipped with a log-norm $\sigma_X$. For simplicity we will assume, as we may, that $\sigma_{G_1}$ and $\sigma_{\mathfrak{g}_1^*}$ are both left and right invariant by $K_1$. Also, by Proposition 18.3 of \cite{KottHA}, we may, and will, assume that for all maximal torus $T$ of $G_1$ and all $g\in G_1$, we have

$$\displaystyle \sigma_{T\backslash G_1}(g)=\inf_{t\in T}\sigma_{G_1}(tg). \leqno (1)$$

\noindent Let $T\subset G$ be a maximal torus, then we recall the following inequality from \cite{BeuGalP}1.2(2):

$$\displaystyle \sigma_{T\backslash G}(g)\ll \sigma_G(g^{-1}tg)\log\left(2+D^G(t)^{-1}\right) \leqno (2)$$

\noindent for all $g\in G$ and all $t\in T\cap G_{reg}$. We will also need the following easy-to-check lemma:

\begin{lem}\label{lem basic estimate}
Let $K$ be a finite extension of $F$ and set $v_K:=v_F\circ N_{K/F}$. Then for all $k>0$, the inequality

$$\displaystyle \int_{y\in K;\; v_K(y)\geqslant C} \max(1,v_K(y))^k dy\ll C^k\times vol\{ y\in K;\; v_K(y)\geqslant C\}$$

\noindent holds for all $C>0$.
\end{lem}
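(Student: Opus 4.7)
The plan is to use the discrete structure of $v_K$ to decompose the domain as concentric balls (or annuli) and then perform a simple geometric-series estimate. Write $e$ for the ramification index and $f$ for the residue degree of $K/F$, and let $\pi_K$ be a uniformizer of $K$. Then $v_K$ takes values in $f\mathbb{Z}_{\geq 0}\cup\{+\infty\}$ on $K$; setting $C':=\lceil C/f\rceil$, the set $\{y\in K:v_K(y)\geq C\}$ equals the ball $\pi_K^{C'}\mathcal{O}_K$ of volume $q^{-fC'}\mathrm{vol}(\mathcal{O}_K)$. In particular, for $C\geq 1$ one has $v_K(y)\geq fC'\geq 1$ on this set, so the integrand equals $v_K(y)^k$.

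The key step is a change of variables. Writing $y=\pi_K^{C'}y'$ with $y'\in\mathcal{O}_K$, we have $dy=q^{-fC'}\,dy'$ and $v_K(y)=v_K(y')+fC'$, so
$$\int_{v_K(y)\geq C}v_K(y)^k\,dy \;=\; \frac{\mathrm{vol}\{v_K\geq C\}}{\mathrm{vol}(\mathcal{O}_K)}\int_{\mathcal{O}_K}\bigl(v_K(y')+fC'\bigr)^k\,dy'.$$
It remains to estimate the integral over $\mathcal{O}_K$. Using the elementary inequality $(a+b)^k\leq 2^{k}(a^k+b^k)$, this integral is bounded by a constant times $(fC')^k\,\mathrm{vol}(\mathcal{O}_K)$ plus the constant $\int_{\mathcal{O}_K}v_K(y')^k\,dy' = (1-q^{-f})\mathrm{vol}(\mathcal{O}_K)\sum_{j\geq 0}(fj)^k q^{-fj}$, the latter being a convergent geometric–polynomial sum depending only on $K$ and $k$. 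Since $fC'\leq C+f\ll C$ for $C$ bounded away from $0$, combining these yields the bound $\ll C^k\,\mathrm{vol}\{v_K\geq C\}$ required in the statement.

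I do not expect any serious obstacle here: the only delicate point is ensuring that the factor $(fC')^k$ produced by the change of variables is comparable to $C^k$, which is immediate for $C\geq 1$ and trivially absorbed into the implicit constant in the bounded range $C\in(0,1)$ (where both sides are essentially constants depending only on $K$ and $k$). The essential observation is simply that translation by $\pi_K^{C'}$ identifies $\{v_K\geq C\}$ with $\mathcal{O}_K$ while producing exactly the normalising volume factor appearing in the claim, and that any polynomial weight $v_K(y')^k$ on $\mathcal{O}_K$ has finite integral because of the geometric decay of the volumes of the level sets of $v_K$.
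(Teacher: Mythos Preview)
Your argument is correct and essentially complete for $C\geq 1$, which is the only range actually needed in the paper (in the application one has $C=A\sigma_{G_1}(g)\sigma_{G_1}(h)$ with $\sigma_{G_1}\geq 1$). The paper itself gives no proof, labelling the lemma ``easy-to-check'', so your change-of-variables plus geometric-series argument is exactly in the intended spirit.

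One small correction: your treatment of the range $C\in(0,1)$ is not quite right. For $0<C\leq f$ the set $\{v_K\geq C\}$ equals $\pi_K\mathcal{O}_K$, independent of $C$, so the left-hand side of the claimed inequality is a fixed positive constant, while the right-hand side $C^k\cdot\mathrm{vol}(\pi_K\mathcal{O}_K)$ tends to $0$ as $C\to 0^+$. Hence the inequality as literally stated cannot hold uniformly for all $C>0$; it should be read as holding for $C\geq 1$ (or for $C$ bounded below by any fixed positive constant), and that is precisely what your argument establishes. This is a minor imprecision in the lemma's statement rather than a flaw in your proof.
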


We now describe roughly how we will prove the geometric side of Theorem \ref{theo trace formula}. In Section \ref{def of an expression} we will introduce a sequence of truncations $(J_N(f))_{N\geqslant 1}$ of $J(f)$ such that $\lim\limits_{N\to \infty}J_N(f)=J(f)$ whenever $J(f)$ is absolutely convergent. Then, in Section \ref{computation of the limit} we will show that $J_N(f)$ admits a limit whenever $f$ is {\it $\theta$-strongly cuspidal} (see \S \ref{cusp forms and theta-strongly cuspidal functions}) and we compute this limit. In the particular case where $f$ is strongly cuspidal (in particular if $f$ is a cusp form), we prove in Section \ref{proof of the geometric side} that this limit is equal to the geometric side of Theorem \ref{theo trace formula}. The bulk of the proof is contained in Section \ref{change of weight}, where we show that we can replace certain weights appearing naturally from our truncations by other weights that are related to certain (singular) $\theta$-weighted orbital integrals.

\subsection{Definition of a truncation}\label{def of an expression}

Fix a maximal split torus $A_1$ of $G_1$ such that $K_1$ is the fixator of a special point in the apartment associated to $A_1$. Let $M_1:=Cent_{G_1}(A_1)$ and $P_1$ be a minimal parabolic subgroup with Levi component $M_1$. Let $\Delta_1$ denotes the set of simple roots of $A_1$ in $P_1$. We have a Cartan decomposition

$$\displaystyle G_1=K_1 M_1^+K_1$$

\noindent with $M_1^+:=\{m\in M_1; \langle \alpha, H_{M_1}(m)\rangle\geqslant 0 \; \forall \alpha\in \Delta_1 \}$. Let $N\geqslant 1$ be an integer and let $T_N\in \mathcal{A}_{M_1,F}^{G_1}$ be the point characterized by $\langle \alpha, T_N\rangle=N$ for all $\alpha\in \Delta_1$. In \cite{ArthurlocalTF}\S 3, Arthur has defined a certain characteristic function $u(.,T_N)$ associated to $T_N$. More precisely, if we denote for all $\alpha\in \Delta_1$ by $\varpi_\alpha\in \mathcal{A}_{M_1}^*$ the corresponding simple weight and set

$$\displaystyle M_1^+(N):=\{m\in M_1^+;\; \langle H_{M_1}(m)-T_N,\varpi_\alpha\rangle\leqslant 0 \; \forall \alpha\in \Delta_1  \}.$$

\noindent Then $u(.,T_N)$ is the characteristic function of $K_1M_1^+(N)K_1$ (see \cite{ArthurlocalTF} Lemma 3.1). Because of the center, the set $K_1M_1^+(N)K_1$ is not compact and we define another truncation function $\kappa_N:G_1\to \{0,1 \}$, this time of compact support, by setting

$$\displaystyle \kappa_N(g)=\mathbf{1}_{[q^{-N},q^N]}(\nu(g))u(g,T_N),\;\;\; g\in G_1$$

\noindent where $\mathbf{1}_{[q^{-N},q^N]}$ stands for the characteristic function of the segment $[q^{-N},q^N]$. The next lemma summarizes some basic properties of the sequence $(\kappa_N)_{N\geqslant 1}$ that we will need.

\begin{lem}\label{lemma Arthur}
\begin{enumerate}[(i)]
\item There exist $c_1,c_2>0$ such that for all $g\in G_1$ and all $N\geqslant 1$, if $\sigma_G(g)\leqslant c_1 N$, then $\kappa_N(g)=1$; and if $\kappa_N(g)=1$, then $\sigma_G(g)\leqslant c_2 N$.

\item Let $T\subset G_1$ be a maximal torus. Then, there exist $c>0$ and $N_0\geqslant 1$ such that for all $N\geqslant N_0$ and all $g,h\in G_1$ with $\max\left(\sigma_{T\backslash G_1}(g),\sigma_{T\backslash G_1}(h)\right)\leqslant cN$, the function

$$\displaystyle a\in T\mapsto \kappa_N(h^{-1}ag)$$

\noindent is invariant by the maximal compact subgroup $T^c$ of $T$.
\end{enumerate}
\end{lem}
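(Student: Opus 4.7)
The proof of (i) uses the Cartan decomposition $G_1 = K_1 M_1^+ K_1$ together with the standard comparison $\sigma_G(k_1 m k_2) \sim \|H_{M_1}(m)\|$ for any fixed norm on $\mathcal{A}_{M_1}$. The condition $\kappa_N(g) = 1$ amounts to the two conditions $m \in M_1^+(N)$ and $\nu(g) \in [q^{-N}, q^N]$. The first restricts the $\mathcal{A}_{M_1}^{G_1}$-projection of $H_{M_1}(m)$ to a simplex-like region (carved out by the inequalities $\langle \alpha, H_{M_1}(m)\rangle\geq 0$ and $\langle H_{M_1}(m)-T_N,\varpi_\alpha\rangle \leq 0$) of diameter $O(N)$, while the second bounds the $\mathcal{A}_{G_1}$-projection by $O(N)$; together they force $\|H_{M_1}(m)\|= O(N)$, hence $\sigma_G(g) \leq c_2 N$ for an appropriate $c_2$. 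Conversely, if $\sigma_G(g)$ is a small enough multiple of $N$, then the strict forms of all these inequalities hold automatically, giving $\kappa_N(g) = 1$.

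For (ii), the plan proceeds in three steps. \textbf{Step 1 (Reduction).} Using the identity $\sigma_{T\backslash G_1}(g) = \inf_{t\in T}\sigma_{G_1}(tg)$ stated in \ref{geom side}(1), choose $t_g,t_h\in T$ with $\sigma_{G_1}(t_g g), \sigma_{G_1}(t_h h)\leq cN+1$. Then $h^{-1}ag = (t_h h)^{-1}(t_hat_g^{-1})(t_g g)$ and, as $a$ ranges over $T$, the element $a'' := t_h a t_g^{-1}$ also ranges over $T$; moreover $T$ being abelian, the map $a\mapsto a''$ preserves $T^c$-cosets. Hence we may replace the pair $(g,h)$ by $(t_g g,\, t_h h)$ and assume $\sigma_{G_1}(g),\sigma_{G_1}(h) \leq c'N$. \textbf{Step 2 ($\nu$-factor).} For $a'\in T^c$, the reduced norm $N_{\mathcal{A}/F}(a')$ lies in $\mathcal{O}_F^\times$, so $\nu(a') = 1$ and $\nu(h^{-1}aa'g) = \nu(h^{-1}ag)$. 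Thus the indicator $\mathbf{1}_{[q^{-N},q^N]}\circ \nu$ is automatically $T^c$-invariant. \textbf{Step 3 ($u$-factor).} One must show $u(h^{-1}aa'g, T_N) = u(h^{-1}ag, T_N)$. Write $h^{-1}aa'g = (h^{-1}ag)\cdot(g^{-1}a'g)$. The perturbation $y := g^{-1}a'g$ is conjugate to the compact element $a'$, satisfies $\nu(y)=1$, and has log-norm at most $2c'N + O(1)$. Adapting the arguments of Lemma 3.2 of \cite{ArthurlocalTF} to the present setting, we aim to show that right multiplication by such a $y$ cannot move the Cartan-height data $H_{M_1}(m_{h^{-1}ag})$ across any of the boundary inequalities defining $M_1^+(N)$, provided $c$ is taken small enough and $N\geq N_0$: the slack in those inequalities grows linearly in $N$ with coefficient independent of $c$, whereas the perturbation's effect on $H_{M_1}$ is controlled by a fixed multiple of $c N$.

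The main obstacle is clearly Step 3 of (ii), namely the quantitative control of how the Cartan-height $H_{M_1}(m)$ of a product $xy$ differs from that of $x$ when $y=g^{-1}a'g$ is a conjugate of a compact element by an element of log-norm $\leq c'N$. The subtlety is that $y$ itself need not be small: although $y$ is a compact element of $G_1$, it can have nonzero Cartan invariants of order $\sigma_{G_1}(g)=O(N)$. The saving grace is that $\nu(y)=1$ so only the $\mathcal{A}_{M_1}^{G_1}$-part of the Cartan height is perturbed, and this perturbation can be bounded proportionally to $c$, which is the parameter we get to choose. Pinning down this proportionality—which amounts to the appropriate convexity/monotonicity property of the function $u(\cdot,T_N)$ under $K_1$-bi-invariant perturbations of bounded log-norm—is the main technical work and requires a careful revisit of Arthur's arguments in \cite{ArthurlocalTF}, adapted to cover the present hypothesis $\sigma_{T\backslash G_1}(g),\sigma_{T\backslash G_1}(h)\leq cN$ (rather than merely $\sigma_{G_1}(g),\sigma_{G_1}(h)=O(1)$).
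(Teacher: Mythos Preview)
Your treatment of (i) and Steps~1--2 of (ii) is fine. The gap is in Step~3. The claim that ``the slack in those inequalities grows linearly in $N$ with coefficient independent of $c$'' is simply false: nothing prevents $h^{-1}ag$ from having Cartan component lying exactly on a wall of the region defining $M_1^+(N)$, e.g.\ with $\langle \alpha, H_{M_1}(m_{h^{-1}ag})\rangle = 0$ for some $\alpha\in\Delta_1$, in which case the slack is zero. Meanwhile the perturbation $y=g^{-1}a'g$, though conjugate to a compact element, genuinely has Cartan height of order $cN$ (conjugation by $g$ with $\sigma_{G_1}(g)\sim cN$ does not keep it near $K_1$). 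So your balancing argument cannot close, and Lemma~3.2 of \cite{ArthurlocalTF} does not supply the missing slack.

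The paper avoids any perturbation estimate. After conjugating $T$ so that $T^c\subset K_1$ and $A_T\subset A_1$ (a normalization you omit but which is essential), one uses commutativity of $T$ to write $h^{-1}(a'a)g=(a'^{-1}h)^{-1}ag$, absorbing $a'\in T^c$ into $h$; a finite decomposition $T=\bigsqcup_i A_T T^c t_i$ reduces to $a\in A_T$. The key input is then the proof of Lemma~4.4 (not 3.2) of \cite{ArthurlocalTF}: for $a\in A_T$ and $\max(\sigma_{G_1}(g),\sigma_{G_1}(h))\leqslant cN$ with $c$ small enough, one has the \emph{exact} identity
\[
u(h^{-1}ag,T_N)=\Gamma_M^{G_1}\bigl(H_M(a),\,\mathcal{Y}_M(h,g,T_N)\bigr),\qquad M:=\Cent_{G_1}(A_T),
\]
where $\mathcal{Y}_M(h,g,T_N)_P=T_{N,P}+H_P(h)-H_{\overline{P}}(g)$. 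The right-hand side depends on $(h,g)$ only through the Iwasawa heights $(H_P(h))_{P\in\mathcal{P}(M)}$ and $(H_P(g))_{P\in\mathcal{P}(M)}$; since $T^c\subset M$ is compact these are left $T^c$-invariant, and (as $\sigma_{G_1}$ is also left $K_1$-invariant, so $a'^{-1}h$ remains in range) the $T^c$-invariance follows at once, with no comparison of Cartan data of nearby points.
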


\noindent\ul{Proof}:
\begin{enumerate}[(i)]
\item is obvious;
\item Up to conjugating $T$ we may assume that $T^c\subset K_1$ and $A_T\subset A_1$. Since the property we want to prove is invariant by left translation of both $g$ and $h$ by $T$, by \ref{geom side}(1) we may replace the condition $\max\left(\sigma_{T\backslash G_1}(g),\sigma_{T\backslash G_1}(h)\right)\leqslant cN$ by the seemingly stronger one $\max\left(\sigma_{G_1}(g),\sigma_{G_1}(h)\right)\leqslant cN$. Also, there exists a finite family $(t_i)_{i=1,\ldots,k}$ of elements of $T$ such that $\displaystyle T=\bigsqcup_{i=1}^k A_TT^ct_i$. Hence, it suffices to show the existence of $c>0$ and $N_0\geqslant 1$ such that for all $N\geqslant N_0$ and all $g,h\in G_1$ with $\max\left(\sigma_{G_1}(g),\sigma_{G_1}(h)\right)\leqslant cN$, we have

$$\displaystyle \kappa_N(h^{-1}tag)=\kappa_N(h^{-1}ag)$$

\noindent for all $t\in T^c$ and all $a\in A_T$. This property is obviously satisfied by the function $g\mapsto \mathbf{1}_{[q^{-N},q^N]}(\nu(g))$ and thus we may replace $\kappa_N$ by $u(.,T_N)$. Set $M:=Cent_{G_1}(A_T)$ and define the maps $H_P:G_1\to \mathcal{A}_M$, $P\in \mathcal{P}(M)$, by using the maximal compact subgroup $K_1$. These maps are $T^c$-invariant on the left. As $\sigma_{G_1}$ is also $T^c$-invariant on the left (since we are assuming $T^c\subset K_1$), it suffices to show that for some $c>0$ and $N_0\geqslant 1$, the following holds: for all $N\geqslant N_0$ and $g,h\in G_1$ with $\max\left(\sigma_{G_1}(g),\sigma_{G_1}(h)\right)\leqslant cN$, the function

$$\displaystyle a\in A_T\mapsto u(h^{-1}ag,T_N)$$

\noindent only depends on the families $(H_P(h))_{P\in \mathcal{P}(M)}$ and $(H_P(g))_{P\in \mathcal{P}(M)}$. Such a property is provided by the proof of Lemma 4.4 of \cite{ArthurlocalTF}. Indeed, by the equation on the last line of p.38 of {\it loc. cit.}, there exists $c>0$ and $N_0\geqslant 1$ such that for all $N$, $g$ and $h$ as before, we have

$$\displaystyle u(h^{-1}ag,T_N)=\Gamma_M^{G_1}(H_M(a),\mathcal{Y}_M(h,g,T_N))$$

\noindent for all $a\in A_T(F)$ and where the $(G_1,M)$-orthogonal set $\mathcal{Y}_M(h,g,T_N)$ is defined by $\mathcal{Y}_M(h,g,T_N)_P:=T_{N,P}+H_P(h)-H_{\overline{P}}(g)$, $P\in \mathcal{P}(M)$. Obviously this $(G_1,M)$-orthogonal set only depends on the sets $(H_P(h))_{P\in \mathcal{P}(M)}$ and $(H_P(g))_{P\in \mathcal{P}(M)}$ and this proves the claim. $\blacksquare$
\end{enumerate}

\vspace{2mm}

\noindent Choose Haar measures on $G_1$, $\mathfrak{g}_1$ and $K$ such that

$$\displaystyle \int_G \varphi(g)dg=\int_{\mathfrak{g}_1}\int_{G_1\times G_1}\int_{K_1} \varphi\left(\begin{pmatrix} 1 & X \\ & 1 \end{pmatrix}\begin{pmatrix} g & \\ & h \end{pmatrix} k  \right)dk \frac{dg}{\nu(g)^n} \nu(h)^n dh dX$$

\noindent and

$$\displaystyle \int_H \varphi(h)dh=\int_{G_1}\int_{\mathfrak{g}_1}\varphi\left(\begin{pmatrix} h_0 & \\ & h_0 \end{pmatrix} \begin{pmatrix} 1 & X \\ & 1\end{pmatrix} \right) dX dh_0$$

\noindent for all $\varphi\in C_c(G)$ (resp. for all $\varphi\in C_c(H)$).

\vspace{2mm}

\noindent For every integer $N\geqslant 1$ and every function $f\in \mathcal{C}(G)$, we define the following expression:

$$\displaystyle J_N(f):=\int_{G_1}\int_{G_1} \int_{\mathfrak{g}_1} f^K\left( \begin{pmatrix} g^{-1} & \\ & 1 \end{pmatrix} \begin{pmatrix} h & \\ & h \end{pmatrix} \begin{pmatrix} 1 & X \\ & 1\end{pmatrix}\begin{pmatrix} g & \\ & 1 \end{pmatrix}\right) \xi(X)^{-1} dX \omega(h)^{-1} dh \kappa_N(g)\frac{dg}{\nu(g)^n}$$

\noindent where we have set

$$\displaystyle f^K(g):=\int_K f(k^{-1}gk)dk$$

\noindent for all $g\in G$. Note that if the integral defining $J(f)$ (see \S \ref{A simple local trace formula for Shalika models}) is absolutely convergent, then we have

$$\displaystyle J(f)=\lim\limits_{N\to \infty} J_N(f).$$

\noindent By Weyl's integration formula, we have

$$\displaystyle J_N(f)=\sum_{T\in \mathcal{T}(G_1)} \lvert W(G_1,T)\rvert^{-1} J_{N,T}(f) \leqno (1)$$

\noindent for all $N\geqslant 1$ and all $f\in \mathcal{C}(G)$ where $\mathcal{T}(G_1)$ is a set of representatives of the $G_1$-conjugacy classes of maximal tori in $G_1$, and for all $T\in \mathcal{T}(G_1)$ we have set

\[\begin{aligned}
\displaystyle J_{N,T}(f):=\int_{T} D^{G_1}(t)\int_{T\times T\backslash G_1\times G_1} \int_{\mathfrak{g}_1} & f^K\left(\begin{pmatrix} g^{-1} & \\ & h^{-1} \end{pmatrix}\begin{pmatrix} t & \\ & t \end{pmatrix} \begin{pmatrix} 1 & X \\ & 1 \end{pmatrix}\begin{pmatrix} g & \\ & h \end{pmatrix}\right) \\
 & \kappa_{N,T,\xi}(g,h,X) dX \frac{dg}{\nu(g)^n}\nu(h)^n dh \omega(t)^{-1} dt
\end{aligned}\]

\noindent with

$$\displaystyle \kappa_{N,T,\xi}(g,h,X):=\int_{T} \xi(aX)^{-1} \kappa_N(h^{-1}ag) da.$$

\noindent From now on and until the end of section \ref{computation of the limit} we fix a torus $T\in \mathcal{T}(G_1)$. Without loss of generality (i.e. up to conjugating $T$ by an element of $G_1$), we may assume that $A_T\subset A_1$ and $T\cap K_1=T^c$. From Lemma \ref{lemma Arthur}(i) and \ref{geom side}(1) we easily infer that there exists $k>0$ such that the following estimate

$$\displaystyle \lvert \kappa_{N,T,\xi}(g,h,X)\rvert\ll \sigma_{T\backslash G_1}(g)^k\sigma_{T\backslash G_1}(h)^kN^k \leqno (2)$$

\noindent holds for all $N\geqslant 1$, all $g,h\in G_1$ and all $X\in \mathfrak{g}_1$. More precisely, there exists $k>0$ such that

$$\displaystyle \int_T \kappa_N(h^{-1}ag)da\ll \sigma_{T\backslash G_1}(g)^k\sigma_{T\backslash G_1}(h)^kN^k \leqno (3)$$

\noindent for all $N\geqslant 1$ and all $g,h\in G_1$.

\subsection{Concrete description of $T$ and of related objects}\label{concrete T}

Set $M_\natural:=Cent_{G_1}(A_T)$ (a Levi subgroup of $G_1$), $M:=M_\natural \times M_\natural$ (a Levi subgroup of $G$). Then, the Levi $M$ is $\theta$-split. Moreover, there exist field extensions $K_1,\ldots, K_d$ of $F$ such that

$$\displaystyle T\simeq K_1^\times\times\ldots\times K_d^\times . \leqno (1)$$

\noindent More precisely, there exist:

\begin{itemize}
\item A division algebra $\mathcal{D}$, central over $F$ and of degree $r$ dividing $n$;
\item A right $\mathcal{D}$-module $V$ free of rank $n/r$;
\item An isomorphism of $F$-algebras

$$\displaystyle \mathcal{A}\simeq End_{\mathcal{D}}(V)$$

\noindent inducing an isomorphism

$$\displaystyle G_1\simeq GL_{\mathcal{D}}(V).$$

\item For all $1\leqslant i\leqslant d$, a degree $r$ subextension $F_i$ of $K_i$ together with an embedding $F_i\hookrightarrow \mathcal{D}$ and an isomorphism of (right) $\mathcal{D}$-modules

$$\displaystyle V\simeq K_1\otimes_{F_1}\mathcal{D}\oplus\ldots\oplus K_d\otimes_{F_d}\mathcal{D} \leqno (2)$$

\noindent through which the action of $T$ is given by multiplication by $K_i^\times$ on the $i$-th factor.
\end{itemize}

\noindent We fix such data (and isomorphisms) once and for all and we fix a basis of $V$ compatible with the decomposition (2). Doing so we will identify $G_1$ (resp. $G$) with $GL_{n/r}(\mathcal{D})$ (resp. $GL_{2n/r}(\mathcal{D})$). Besides (1), we also get identifications

$$\displaystyle \mathfrak{t}\simeq K_1\oplus\ldots \oplus K_d \leqno (3)$$

\noindent and

$$\displaystyle M\simeq GL_{k_1}(\mathcal{D})\times\ldots\times GL_{k_d}(\mathcal{D})\times GL_{k_1}(\mathcal{D})\times\ldots\times GL_{k_d}(\mathcal{D}) \leqno (4)$$

\noindent where we have set $k_i:=[K_i:F_i]$ for all $1\leqslant i\leqslant d$. We will denote by $\mathfrak{t}^\perp$ the orthogonal of $\mathfrak{t}$ in $\Fg_1$ with respect to the symmetric bilinear form $\langle .,.\rangle$ and by $X\mapsto X_{\mathfrak{t}}$ (resp. $X\mapsto X_{\mathfrak{t}^\perp}$) the projection $\mathfrak{g}_1\to \mathfrak{t}$ (resp. $\mathfrak{g}_1\to \mathfrak{t}^\perp$) relative to the decomposition $\mathfrak{g}_1=\mathfrak{t}\oplus \mathfrak{t}^\perp$.

\subsection{An estimate}\label{an estimate}

For all $f\in \mathcal{C}(G)$, all $t_1,t_2\in T\cap G_{1,reg}$ and all $k>0$, we define the following expressions:

\[\begin{aligned}
\displaystyle I_k(f,t_1,t_2):= & \int_{T\times T\backslash G_1\times G_1} \int_{\mathfrak{g}_1} f\left(\begin{pmatrix} g^{-1} & \\ & h^{-1} \end{pmatrix} \begin{pmatrix}t_1 & \\ & t_2 \end{pmatrix} \begin{pmatrix} 1 & X \\ & 1\end{pmatrix} \begin{pmatrix} g & \\ & h \end{pmatrix} \right) \\
 & \sigma_{T\backslash G_1}(g)^k \sigma_{T\backslash G_1}(h)^k \sigma_{\mathfrak{g}_1^*}(g^{-1}X_{\mathfrak{t}}h)^k \sigma_{\mathfrak{g}_1}(g^{-1}X_{\mathfrak{t}^\perp}h)^k dX \frac{dg}{\nu(g)^n} \nu(h)^n dh
\end{aligned}\]

\noindent and

$$\displaystyle I_{T,k}(f):=\int_{T} D^{G_1}(t) I_k(f,t,t)dt$$

\noindent where we recall that for all $X\in \mathfrak{g}_1$, $X_{\mathfrak{t}}$ and $X_{\mathfrak{t}^\perp}$ stand for the projections of $X$ onto $\mathfrak{t}$ and $\mathfrak{t}^\perp$ respectively with respect to the decomposition $\mathfrak{g}_1=\mathfrak{t}\oplus \mathfrak{t}^\perp$. Similarly, for all $f\in \mathcal{C}(G)$, all $t_1,t_2\in T\cap G_{1,reg}$, all $k>0$ and all $C>0$, we define the following expressions:

\[\begin{aligned}
\displaystyle I_{k,\leqslant C}(f,t_1,t_2):= & \int_{T\times T\backslash G_1\times G_1} \int_{\mathfrak{g}_1} f\left(\begin{pmatrix} g^{-1} & \\ & h^{-1} \end{pmatrix} \begin{pmatrix}t_1 & \\ & t_2 \end{pmatrix} \begin{pmatrix} 1 & X \\ & 1\end{pmatrix} \begin{pmatrix} g & \\ & h \end{pmatrix} \right) \\
 & \sigma_{T\backslash G_1}(g)^k \sigma_{T\backslash G_1}(h)^k \sigma_{\mathfrak{g}_1^*}(g^{-1}X_{\mathfrak{t}}h)^k \sigma_{\mathfrak{g}_1}(g^{-1}X_{\mathfrak{t}^\perp}h)^k \\
 & \mathbf{1}_{T\backslash G_1,\leqslant C}(g)\mathbf{1}_{T\backslash G_1,\leqslant C}(h)\mathbf{1}_{\mathfrak{g}_1^*,\leqslant C}(g^{-1}X_{\mathfrak{t}}h)\mathbf{1}_{\mathfrak{g}_1,\leqslant C}(g^{-1}X_{\mathfrak{t}^\perp}h)dX \frac{dg}{\nu(g)^n} \nu(h)^n dh
\end{aligned}\]

\noindent and

$$\displaystyle I_{T,k,\leqslant C}(f):=\int_{T} D^{G_1}(t) I_{k,\leqslant C}(f,t,t)dt.$$

\begin{prop}\label{prop estimate}
\begin{enumerate}[(i)]
\item For all $f\in \mathcal{C}(G)$, all $t_1,t_2\in T\cap G_{1,reg}$ and all $k>0$, the expressions defining $I_k(f,t_1,t_2)$ and $I_{T,k}(f)$ are absolutely convergent.

\item For all $f\in \mathcal{C}(G)$, all $t_1,t_2\in T\cap G_{1,reg}$, all $k>0$ and all $r>0$, we have inequalities

$$\displaystyle \left\lvert I_k(f,t_1,t_2)-I_{k,\leqslant C}(f,t_1,t_2)\right\rvert\ll C^{-r}$$

\noindent and

$$\displaystyle \left\lvert I_{T,k}(f)-I_{T,k,\leqslant C}(f)\right\rvert\ll C^{-r}$$

\noindent for all $C>0$.
\end{enumerate}
\end{prop}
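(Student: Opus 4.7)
Part (ii) will follow from the analysis of (i) by extracting a decay factor $C^{-r}$ in the tail region, achieved by choosing the Schwartz-exponent sufficiently large; I therefore focus on part (i). The starting point is the defining bound for $f \in \mathcal{C}(G)$: for every $d > 0$ one has $|f(x)| \ll_d \Xi^G(x) \sigma_G(x)^{-d}$. A direct matrix computation shows that the argument
$$e(g,h,X) = \begin{pmatrix} g^{-1}t_1 g & g^{-1}t_1 X h \\ & h^{-1}t_2 h \end{pmatrix}$$
lies in the parabolic $Q = LN$ and factors as $\ell(g,h)\cdot n_{g^{-1}Xh}$ with $\ell(g,h) := \mathrm{diag}(g^{-1}t_1 g,\, h^{-1}t_2 h) \in L$. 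The change of variable $Y = g^{-1}Xh$ has Jacobian $\nu(g)^{-n}\nu(h)^{n}$, which exactly cancels the measure factors $\nu(g)^{-n}$ and $\nu(h)^{n}$ appearing in the definition of $I_k$; the combined measure becomes simply $dY\,dg\,dh$.

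The crucial analytic step is to dominate the polynomial weight
$$W := \sigma_{T\backslash G_1}(g)^k \sigma_{T\backslash G_1}(h)^k \sigma_{\mathfrak{g}_1^*}(g^{-1}X_{\mathfrak{t}}h)^k \sigma_{\mathfrak{g}_1}(g^{-1}X_{\mathfrak{t}^\perp}h)^k$$
by a suitable power of $\sigma_G(e(g,h,X))$, up to constants depending on $t_1, t_2$. For the first two factors this is immediate from \S 4(2) applied to the diagonal blocks: since $t_1, t_2$ are fixed and regular, $\sigma_{T\backslash G_1}(g) \ll \sigma_{G_1}(g^{-1}t_1 g)$ and this in turn is dominated by $\sigma_G(e)$. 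For the $X$-dependent factors, $\sigma_G(e)$ controls the log-norm of the upper-right block $g^{-1}t_1 X h$, hence (via $F$-linear projections of bounded norm) the $\mathfrak{g}_1$-norms of $g^{-1}X_{\mathfrak{t}}h$ and $g^{-1}X_{\mathfrak{t}^\perp}h$; the additional $\mathfrak{g}_1^*$-flavor on the $\mathfrak{t}$-part requires bounding $|(g^{-1}X_{\mathfrak{t}}h)^{-1}|$, which is handled using the $T$-centralizer decomposition $\mathfrak{g}_1 = \mathfrak{t} \oplus \mathfrak{t}^\perp$.

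With this domination in place, choose $d$ large enough in the Schwartz bound. Standard estimates for $\Xi^G$ on the parabolic $Q$ (cf.\ \cite{WalPlanch}, Proposition II.4.5), of the form $\Xi^G(\ell n) \ll \delta_Q(\ell)^{1/2}\Xi^L(\ell)$ together with enough $\sigma_G$-decay to render the $N$-part integrable, reduce the $Y$-integral to $\int_{\mathfrak{g}_1}(1+\sigma(Y))^{-d'} dY$ for $d' > n^2$. The remaining $(g,h)$-integral then factors as a product of two classical integrals
$$\int_{T\backslash G_1} \Xi^{G_1}(g^{-1}t_i g)\, \sigma_{G_1}(g^{-1}t_i g)^{-d''}\, dg, \qquad i = 1,2,$$
whose convergence for large $d''$ is standard (see \cite{BeuGGP} \S 1.5). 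The absolute convergence of $I_{T,k}(f)$ follows by Weyl's integration formula, with the factor $D^{G_1}(t)$ controlling the singular locus of $T$. Finally, for part (ii), one writes $\sigma_G(e)^{-d} = \sigma_G(e)^{-(d+r)} \cdot \sigma_G(e)^{r}$ and observes that on the complement of the $\leqslant C$ region at least one of the four log-norms on the left hand side of the lower bound for $\sigma_G(e)$ exceeds $C$, so $\sigma_G(e)^{-r} \ll C^{-r}$ uniformly.

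The main technical obstacle is the lower bound on $\sigma_G(e)$ involving $\sigma_{\mathfrak{g}_1^*}(g^{-1}X_{\mathfrak{t}}h)$: linking the analytic condition of non-invertibility of the $\mathfrak{t}$-projection $g^{-1}X_{\mathfrak{t}}h$ to the group-theoretic size $\sigma_G(e)$ is not straightforward, as non-invertibility of this projection does not in itself force degeneration of $e$ inside $G$. Controlling this requires exploiting the interaction of the upper-right block and the diagonal block of $e$ under the centralizer decomposition $\mathfrak{g}_1 = \mathfrak{t} \oplus \mathfrak{t}^\perp$; this is the step where the specific choice of the $\mathfrak{g}_1^*$-norm on the $\mathfrak{t}$-part (rather than the cruder $\mathfrak{g}_1$-norm) becomes indispensable.
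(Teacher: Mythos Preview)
Your approach has a genuine gap at exactly the point you flag in your final paragraph. The weight factor $\sigma_{\mathfrak{g}_1^*}(g^{-1}X_{\mathfrak{t}}h)^k$ contains a term comparable to $\log\big(2+\nu(X_{\mathfrak{t}}^{-1})\big)^k$, which blows up as $X_{\mathfrak{t}}\to 0$. But the element $e(g,h,X)$ stays perfectly bounded in $G$ along such a degeneration (take, say, $g,h$ fixed and $X\in\mathfrak{t}$ tending to $0$). Hence no inequality of the form $W\ll \sigma_G(e)^N$ can hold, and your scheme of absorbing all four weight factors into a single power of $\sigma_G(e)$ collapses. The sentence ``handled using the $T$-centralizer decomposition $\mathfrak{g}_1=\mathfrak{t}\oplus\mathfrak{t}^\perp$'' does not supply a mechanism; the decomposition is what produces the singularity, not what removes it.

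The paper does something entirely different here. After separating out the harmless factors (those genuinely dominated by $\sigma_G(e)$), it isolates the residual singular weight $\log\big(2+\nu(X_{\mathfrak{t}}^{-1})\big)^k$, bounds it via the product structure $\mathfrak{t}\simeq K_1\oplus\cdots\oplus K_d$ by $\prod_i\max(1,v_{K_i}(X_{\mathfrak{t},i}))^k$, and uses H\"older to reduce to a single factor $\max(1,v_{K_i}(Y))^{dk}$ with $Y\in\mathfrak{t}_i$. The key idea is then to exploit the \emph{smoothness} of $f$: there is a compact open $J\subset G$ with $f$ right $J$-invariant, so for $v_{K_i}(Y)\geqslant A\sigma_{G_1}(g)\sigma_{G_1}(h)$ the unipotent $\begin{pmatrix}1 & g^{-1}Yh\\ & 1\end{pmatrix}$ lies in $J$ and drops out of $f$. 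Splitting the $Y$-integral at this threshold and applying Lemma~\ref{lem basic estimate} to the deep region converts the logarithmic weight into a harmless polynomial factor $\sigma_{G_1}(g)^{dk}\sigma_{G_1}(h)^{dk}$, after which Proposition~II.4.5 of \cite{WalPlanch} finishes the job. A second point: your treatment of $I_{T,k}(f)$ (``follows by Weyl's integration formula'') is also insufficient; one needs the above estimate \emph{uniformly} in $t_1,t_2\in T$ with the correct factor $(\nu(t_2)/\nu(t_1))^{n/2}\Xi^{G_1}(g^{-1}t_1g)\Xi^{G_1}(h^{-1}t_2h)$, and the paper invokes a specific weighted-orbital-integral convergence lemma (Lemma~1.8.4 of \cite{BeuGalP}) for this.
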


\vspace{2mm}

\noindent\ul{Proof}:
\begin{enumerate}[(i)]
\item Fix $f\in \mathcal{C}(G)$, $t_1,t_2\in T\cap G_{1,reg}$ and $k>0$. Up to replacing $f$ by its absolute value, we will assume that $f$ only takes nonnegative values. By Corollary 2 of \cite{Clo}, to prove the absolute convergence of $I_k(f,t_1,t_2)$, it suffices to prove that for all $r>0$, we have

\[\begin{aligned}
\displaystyle \left(\frac{\nu(h)}{\nu(g)}\right)^n & \int_{\mathfrak{g}_1} f\left(\begin{pmatrix} g^{-1} & \\ & h^{-1} \end{pmatrix} \begin{pmatrix}t_1 & \\ & t_2 \end{pmatrix} \begin{pmatrix} 1 & X \\ & 1\end{pmatrix} \begin{pmatrix} g & \\ & h \end{pmatrix} \right) \sigma_{T\backslash G_1}(g)^k \sigma_{T\backslash G_1}(h)^k \\
 & \sigma_{\mathfrak{g}_1^*}(g^{-1}X_{\mathfrak{t}}h)^k \sigma_{\mathfrak{g}_1}(g^{-1}X_{\mathfrak{t}^\perp}h)^k dX \ll \Xi^{G_1}(g^{-1}t_1g)\Xi^{G_1}(h^{-1}t_2h)\sigma_{G_1}(g^{-1}t_1g)^{-r}\sigma_{G_1}(h^{-1}t_2h)^{-r}
\end{aligned}\]

\noindent for all $g,h\in G_1$. Since $\sigma_{T\backslash G_1}(g)\ll \sigma_{G_1}(g^{-1}t_1g)$ and $\sigma_{T\backslash G_1}(h)\ll \sigma_{G_1}(h^{-1}t_2h)$ for all $g,h\in G_1$ (this is because $g\in T\backslash G_1\mapsto g^{-1}t_1g\in G_1$ is a finite morphism and similarly for $t_2$), we are reduced to prove that for all $r>0$, we have

\[\begin{aligned}
\displaystyle \left(\frac{\nu(h)}{\nu(g)}\right)^n & \int_{\mathfrak{g}_1} f\left(\begin{pmatrix} g^{-1} & \\ & h^{-1} \end{pmatrix} \begin{pmatrix}t_1 & \\ & t_2 \end{pmatrix} \begin{pmatrix} 1 & X \\ & 1\end{pmatrix} \begin{pmatrix} g & \\ & h \end{pmatrix} \right) \sigma_{\mathfrak{g}_1^*}(g^{-1}X_{\mathfrak{t}}h)^k \sigma_{\mathfrak{g}_1}(g^{-1}X_{\mathfrak{t}^\perp}h)^k dX \\
 &  \ll \Xi^{G_1}(g^{-1}t_1g)\Xi^{G_1}(h^{-1}t_2h)\sigma_{G_1}(g^{-1}t_1g)^{-r}\sigma_{G_1}(h^{-1}t_2h)^{-r}\sigma_{T\backslash G_1}(g)^{(d+4)k} \sigma_{T\backslash G_1}(h)^{(d+4)k}
\end{aligned}\]

\noindent for all $g,h\in G_1$, where we recall that $d$ denotes the rank of $A_T$ (see \S \ref{concrete T}). As the left hand side of the above inequality is invariant by left translations of both $g$ and $h$ by $T$, by \ref{geom side}(1) we see that we may replace $\sigma_{T\backslash G_1}(g)$ and $\sigma_{T\backslash G_1}(h)$ by $\sigma_{G_1}(g)$ and $\sigma_{G_1}(h)$ respectively in the right hand side of the inequality. Moreover, we have

\[\begin{aligned}
\displaystyle \sigma_{\mathfrak{g}_1^*}(g^{-1}X_{\mathfrak{t}}h) \sigma_{\mathfrak{g}_1}(g^{-1}X_{\mathfrak{t}^\perp}h) & \ll \log\left(2+\nu(X_{\mathfrak{t}}^{-1})\right)\sigma_{\mathfrak{g}_1}(X)^2 \sigma_{G_1}(g)^2\sigma_{G_1}(h)^2 \\
 & \ll \log\left(2+\nu(X_{\mathfrak{t}}^{-1})\right)\sigma_{\mathfrak{g}_1}(g^{-1}Xh)^2 \sigma_{G_1}(g)^4\sigma_{G_1}(h)^4 \\
 & \ll \log\left(2+\nu(X_{\mathfrak{t}}^{-1})\right) \sigma_{G_1}(g)^4\sigma_{G_1}(h)^4 \\
 & \times \sigma_G\left(\begin{pmatrix} g^{-1} & \\ & h^{-1} \end{pmatrix} \begin{pmatrix}t_1 & \\ & t_2 \end{pmatrix} \begin{pmatrix} 1 & X \\ & 1\end{pmatrix} \begin{pmatrix} g & \\ & h \end{pmatrix} \right)^2
\end{aligned}\]

\noindent for all $g,h\in G_1$ and all $X\in \mathfrak{g}_1$. Thus, as the function $\gamma\in G\mapsto f(\gamma)\sigma_G(\gamma)^{2k}$ is again Harish-Chandra Schwartz, up to replacing $f$ by this function, we just need to show that for all $r>0$, we have

\[\begin{aligned}
\displaystyle \left(\frac{\nu(h)}{\nu(g)}\right)^n & \int_{\mathfrak{g}_1} f\left(\begin{pmatrix} g^{-1} & \\ & h^{-1} \end{pmatrix} \begin{pmatrix}t_1 & \\ & t_2 \end{pmatrix} \begin{pmatrix} 1 & X \\ & 1\end{pmatrix} \begin{pmatrix} g & \\ & h \end{pmatrix} \right) \log\left(2+\nu(X_{\mathfrak{t}}^{-1})\right)^k dX \\
 &  \ll \Xi^{G_1}(g^{-1}t_1g)\Xi^{G_1}(h^{-1}t_2h)\sigma_{G_1}(g^{-1}t_1g)^{-r}\sigma_{G_1}(h^{-1}t_2h)^{-r}\sigma_{G_1}(g)^{dk} \sigma_{G_1}(h)^{dk}
\end{aligned}\]

\noindent for all $g,h\in G_1$. Similarly, by using Lemma 1.8.4 of \cite{BeuGalP} (together with the same reductions as before), we see that the absolute convergence of $I_{T,k}(f)$ is implied by the following stronger inequality for all $r>0$:

\[\begin{aligned}
\displaystyle & \left(\frac{\nu(h)}{\nu(g)}\right)^n \int_{\mathfrak{g}_1} f\left(\begin{pmatrix} g^{-1} & \\ & h^{-1} \end{pmatrix} \begin{pmatrix}a_1 & \\ & a_2 \end{pmatrix} \begin{pmatrix} 1 & X \\ & 1\end{pmatrix} \begin{pmatrix} g & \\ & h \end{pmatrix} \right) \log\left(2+\nu(X_{\mathfrak{t}}^{-1})\right)^k dX \\
 &  \ll \left(\frac{\nu(a_2)}{\nu(a_1)}\right)^{n/2} \Xi^{G_1}(g^{-1}a_1g)\Xi^{G_1}(h^{-1}a_2h)\sigma_{G_1}(g^{-1}a_1g)^{-r}\sigma_{G_1}(h^{-1}a_2h)^{-r}\sigma_{G_1}(g)^{dk} \sigma_{G_1}(h)^{dk}
\end{aligned}\]

\noindent for all $g,h\in G_1$ and all $a_1,a_2\in T$. Fix $r>0$ and for all $Y\in \mathfrak{t}$, let $Y=Y_1+\ldots+Y_d$ be the decomposition of $Y$ according to the identification \ref{concrete T}.(3) (so that $Y_i\in K_i$ for all $1\leqslant i\leqslant d$). Then, we have

$$\displaystyle \log\left(2+\nu(Y^{-1})\right)\ll \prod_{i=1}^d \max(1,v_{K_i}(Y_i))$$

\noindent for all $Y\in \mathfrak{t}^*$. By H\"older inequality, we have

\[\begin{aligned}
\displaystyle & \int_{\mathfrak{g}_1} f\left(\begin{pmatrix} g^{-1} & \\ & h^{-1} \end{pmatrix} \begin{pmatrix}a_1 & \\ & a_2 \end{pmatrix} \begin{pmatrix} 1 & X \\ & 1\end{pmatrix} \begin{pmatrix} g & \\ & h \end{pmatrix} \right) \log\left(2+\nu(X_{\mathfrak{t}}^{-1})\right)^k dX \\
 & \ll \left( \prod_{i=1}^d \int_{\mathfrak{g}_1} f\left(\begin{pmatrix} g^{-1} & \\ & h^{-1} \end{pmatrix} \begin{pmatrix}a_1 & \\ & a_2 \end{pmatrix} \begin{pmatrix} 1 & X \\ & 1\end{pmatrix} \begin{pmatrix} g & \\ & h \end{pmatrix} \right) \max(1,v_{K_i}(X_{\mathfrak{t},i}))^{dk} dX\right)^{1/d}.
\end{aligned}\]

\noindent Hence, we just need to establish that for all $1\leqslant i\leqslant d$, we have

\[\begin{aligned}
\displaystyle (1)\;\;\; & \left(\frac{\nu(h)}{\nu(g)}\right)^n \int_{\mathfrak{g}_1} f\left(\begin{pmatrix} g^{-1} & \\ & h^{-1} \end{pmatrix} \begin{pmatrix}a_1 & \\ & a_2 \end{pmatrix} \begin{pmatrix} 1 & X \\ & 1\end{pmatrix} \begin{pmatrix} g & \\ & h \end{pmatrix} \right) \max(1,v_{K_i}(X_{\mathfrak{t},i}))^{dk} dX \\
 &  \ll \left(\frac{\nu(a_2)}{\nu(a_1)}\right)^{n/2} \Xi^{G_1}(g^{-1}a_1g)\Xi^{G_1}(h^{-1}a_2h)\sigma_{G_1}(g^{-1}a_1g)^{-r}\sigma_{G_1}(h^{-1}a_2h)^{-r}\sigma_{G_1}(g)^{dk} \sigma_{G_1}(h)^{dk}
\end{aligned}\]

\noindent for all $g,h\in G_1$ and all $a_1,a_2\in T$. Fix $1\leqslant i\leqslant d$ and let $\mathfrak{t}_i$ be the subspace of $\mathfrak{t}$ corresponding to $K_i$ via the identification \ref{concrete T}.(3). Denote by $\mathfrak{t}_i^\perp$ the orthogonal complement of $\mathfrak{t}_i$ in $\Fg_1$ with respect to the bilinear pairing $\langle .,.\rangle$. We can write

\[\begin{aligned}
\displaystyle & \int_{\mathfrak{g}_1} f\left(\begin{pmatrix} g^{-1} & \\ & h^{-1} \end{pmatrix} \begin{pmatrix}a_1 & \\ & a_2 \end{pmatrix} \begin{pmatrix} 1 & X \\ & 1\end{pmatrix} \begin{pmatrix} g & \\ & h \end{pmatrix} \right) \max(1,v_{K_i}(X_{\mathfrak{t},i}))^{dk} dX \\
 & =\int_{\mathfrak{t}_i^\perp}\int_{\mathfrak{t}_i} f\left(\begin{pmatrix} g^{-1} & \\ & h^{-1} \end{pmatrix} \begin{pmatrix}a_1 & \\ & a_2 \end{pmatrix} \begin{pmatrix} 1 & X+Y \\ & 1\end{pmatrix} \begin{pmatrix} g & \\ & h \end{pmatrix} \right) \max(1,v_{K_i}(Y))^{dk} dYdX
\end{aligned}\]

\noindent for all $g,h\in G_1$ and all $a_1,a_2\in T$. Let $J\subset G$ be an open subgroup by which $f$ is biinvariant. Then, there exists $A>0$ such that $\displaystyle \begin{pmatrix} 1 & g^{-1}Yh \\ & 1\end{pmatrix}\in J$ for all $g,h\in G_1$ and all $Y\in \mathfrak{t}_i$ with $v_{K_i}(Y)\geqslant A\sigma_{G_1}(g)\sigma_{G_1}(h)$. For all $g,h\in G_1$, set

$$\displaystyle \mathfrak{t}_i[<,g,h]:=\{Y\in \mathfrak{t}_i\mid v_{K_i}(Y)<A\sigma_{G_1}(g)\sigma_{G_1}(h) \},$$

$$\displaystyle \mathfrak{t}_i[\geqslant,g,h]:=\{Y\in \mathfrak{t}_i\mid v_{K_i}(Y)\geqslant A\sigma_{G_1}(g)\sigma_{G_1}(h) \}.$$

\noindent Then by further decomposing the above integral, we have

\[\begin{aligned}
\displaystyle & \int_{\mathfrak{g}_1} f\left(\begin{pmatrix} g^{-1} & \\ & h^{-1} \end{pmatrix} \begin{pmatrix}a_1 & \\ & a_2 \end{pmatrix} \begin{pmatrix} 1 & X \\ & 1\end{pmatrix} \begin{pmatrix} g & \\ & h \end{pmatrix} \right) \max(1,v_{K_i}(X_{\mathfrak{t},i}))^{dk} dX \\
 & =\int_{\mathfrak{t}_i^\perp}\int_{\mathfrak{t}_i[<,g,h]} f\left(\begin{pmatrix} g^{-1} & \\ & h^{-1} \end{pmatrix} \begin{pmatrix}a_1 & \\ & a_2 \end{pmatrix} \begin{pmatrix} 1 & X+Y \\ & 1\end{pmatrix} \begin{pmatrix} g & \\ & h \end{pmatrix} \right) \max(1,v_{K_i}(Y))^{dk} dYdX \\
 & + \int_{\mathfrak{t}_i^\perp}\int_{\mathfrak{t}_i[\geqslant,g,h]} f\left(\begin{pmatrix} g^{-1} & \\ & h^{-1} \end{pmatrix} \begin{pmatrix}a_1 & \\ & a_2 \end{pmatrix} \begin{pmatrix} 1 & X \\ & 1\end{pmatrix} \begin{pmatrix} g & \\ & h \end{pmatrix} \right) \max(1,v_{K_i}(Y))^{dk} dYdX
\end{aligned}\]

\noindent for all $g,h\in G_1$ and all $a_1,a_2\in T$. By applying Lemma \ref{lem basic estimate}, we see that this last expression is essentially bounded by the product of $\sigma_{G_1}(g)^{dk}\sigma_{G_1}(h)^{dk}$ times

\[\begin{aligned}
\displaystyle & \int_{\mathfrak{t}_i^\perp}\int_{\mathfrak{t}_i[<,g,h]} f\left(\begin{pmatrix} g^{-1} & \\ & h^{-1} \end{pmatrix} \begin{pmatrix}a_1 & \\ & a_2 \end{pmatrix} \begin{pmatrix} 1 & X+Y \\ & 1\end{pmatrix} \begin{pmatrix} g & \\ & h \end{pmatrix} \right) dYdX \\
 & + \int_{\mathfrak{t}_i^\perp}\int_{\mathfrak{t}_i[\geqslant,g,h]} f\left(\begin{pmatrix} g^{-1} & \\ & h^{-1} \end{pmatrix} \begin{pmatrix}a_1 & \\ & a_2 \end{pmatrix} \begin{pmatrix} 1 & X \\ & 1\end{pmatrix} \begin{pmatrix} g & \\ & h \end{pmatrix} \right) dYdX \\
 & =\int_{\mathfrak{t}} f\left(\begin{pmatrix} g^{-1} & \\ & h^{-1} \end{pmatrix} \begin{pmatrix}a_1 & \\ & a_2 \end{pmatrix} \begin{pmatrix} 1 & X \\ & 1\end{pmatrix} \begin{pmatrix} g & \\ & h \end{pmatrix} \right) dX
\end{aligned}\]

\noindent for all $g,h\in G_1$ and all $a_1,a_2\in T$. Finally, it follows from Proposition II.4.5 of \cite{WalPlanch} that

\[\begin{aligned}
\displaystyle \left(\frac{\nu(h)}{\nu(g)}\right)^n  & \int_{\mathfrak{t}} f\left(\begin{pmatrix} g^{-1} & \\ & h^{-1} \end{pmatrix} \begin{pmatrix}a_1 & \\ & a_2 \end{pmatrix} \begin{pmatrix} 1 & X \\ & 1\end{pmatrix} \begin{pmatrix} g & \\ & h \end{pmatrix} \right) dX \\
 & =\int_{\mathfrak{t}} f\left(\begin{pmatrix}g^{-1}a_1g & \\ & h^{-1}a_2h \end{pmatrix} \begin{pmatrix} 1 & X \\ & 1\end{pmatrix} \right) dX \\
& \ll \left(\frac{\nu(a_2)}{\nu(a_1)}\right)^{n/2} \Xi^{G_1}(g^{-1}a_1g)\Xi^{G_1}(h^{-1}a_2h)\sigma_{G_1}(g^{-1}a_1g)^{-r}\sigma_{G_1}(h^{-1}a_2h)^{-r}
\end{aligned}\]

\noindent for all $g,h\in G_1$ and all $a_1,a_2\in T$. This proves inequality (1) and ends the proof of (i).

\item We prove the first inequality, the proof of the second one being similar. Fix $f\in \mathcal{C}(G)$, $t_1,t_2\in T\cap G_{1,reg}$, $k>0$ and $r>0$. For all $C>0$ we have

\[\begin{aligned}
\displaystyle \displaystyle \left\lvert I_k(f,t_1,t_2)-I_{k,\leqslant C}(f,t_1,t_2)\right\rvert\leqslant & I^1_{k,>C}(\lvert f\rvert,t_1,t_2)+I^2_{k,>C}(\lvert f\rvert,t_1,t_2)+I^3_{k,>C}(\lvert f\rvert,t_1,t_2) \\
 & +I^4_{k,>C}(\lvert f\rvert,t_1,t_2)
\end{aligned}\]

\noindent where

\[\begin{aligned}
\displaystyle I^1_{k,>C}(\lvert f\rvert,t_1,t_2):= & \int_{T\times T\backslash G_1\times G_1} \int_{\mathfrak{g}_1} \left\lvert f\left(\begin{pmatrix} g^{-1} & \\ & h^{-1} \end{pmatrix} \begin{pmatrix}t_1 & \\ & t_2 \end{pmatrix} \begin{pmatrix} 1 & X \\ & 1\end{pmatrix} \begin{pmatrix} g & \\ & h \end{pmatrix} \right)\right\rvert \\
 & \sigma_{T\backslash G_1}(g)^k \sigma_{T\backslash G_1}(h)^k \sigma_{\mathfrak{g}_1^*}(g^{-1}X_{\mathfrak{t}}h)^k \sigma_{\mathfrak{g}_1}(g^{-1}X_{\mathfrak{t}^\perp}h)^k \\
 & \mathbf{1}_{T\backslash G_1,> C}(g)dX \frac{dg}{\nu(g)^n} \nu(h)^n dh,
\end{aligned}\]

\noindent and $I^2_{k,>C}(\lvert f\rvert,t_1,t_2)$, $I^3_{k,>C}(\lvert f\rvert,t_1,t_2)$ and $I^4_{k,>C}(\lvert f\rvert,t_1,t_2)$ are defined similarly by replacing $\mathbf{1}_{T\backslash G_1,> C}(g)$ by $\mathbf{1}_{T\backslash G_1,> C}(h)$, $\mathbf{1}_{\mathfrak{g}_1^*,> C}(g^{-1}X_{\mathfrak{t}}h)$ and $\mathbf{1}_{\mathfrak{g}_1,> C}(g^{-1}X_{\mathfrak{t}^\perp}h)$ respectively. Then for all $1\leqslant j\leqslant 4$ and all $C>0$, we have

$$\displaystyle I^j_{k,>C}(\lvert f\rvert,t_1,t_2)\leqslant C^{-r} I_{k+r}(\lvert f\rvert,t_1,t_2).$$

\noindent By (i) $I_{k+r}(\lvert f\rvert,t_1,t_2)$ is absolutely convergent. This proves the claimed inequality and finishes the proof of the Proposition. $\blacksquare$
\end{enumerate}

\subsection{Computation of certain $(G,M,\theta)$-orthogonal sets}\label{Computation}

\noindent Recall that we have fixed a basis of $V$ compatible with the decomposition \ref{concrete T} (2). In this basis, the maximal $(\theta,F)$-split central torus $A_{M,\theta}$ of $M$ is the subgroup of matrices of the form

$$\displaystyle \begin{pmatrix} \lambda_1I_{k_1} & & & & & \\ & \ddots & & & & \\ & & \lambda_d I_{k_d} & & & \\ & & & \lambda_1^{-1} I_{k_1}& & \\ & & & & \ddots & \\ & & & & & \lambda_d^{-1} I_{k_d}\end{pmatrix}$$

\noindent where $\lambda_i\in F^\times$ for all $1\leqslant i\leqslant d$ and we recall that $k_i:=[K_i:F_i]$. Thus we have an identification of $\mathcal{A}_{M,\theta}$ with $\mathbf{R}^d$ such that for all $m=(m_1,\ldots,m_{2d})\in M$ (decomposition according to the identification \ref{concrete T}.(4)), we have

$$\displaystyle H_{M,\theta}(m)=\left(\frac{v_F(N_{M_{k_1}(\mathcal{D})/F} m_1)-v_{F}(N_{M_{k_1}(\mathcal{D})/F} m_{d+1})}{2},\ldots,  \frac{v_{F}(N_{M_{k_d}(\mathcal{D})/F} m_d)-v_{F}(N_{M_{k_d}(\mathcal{D})/F} m_{2d})}{2}\right)$$

\noindent where for all $1\leqslant i\leqslant d$, $N_{M_{k_i}(\mathcal{D})/F}:M_i(\mathcal{D})\to F$ denotes the reduced norm. For all $X=(X_1,\ldots, X_d)\in \mathfrak{t}^*$, we will also set

$$\displaystyle V(X):=\left(v_{K_1}(X_1),\ldots, v_{K_d}(X_d) \right)$$

\noindent where $v_{K_i}:=v_F\circ N_{K_i/F}$ for all $1\leqslant i\leqslant d$. Note that we have

$$\displaystyle H_{M,\theta}\begin{pmatrix} X & 0 \\ 0 & X^{-1} \end{pmatrix}=V(X) \leqno (1)$$

\noindent for all $X\in \mathfrak{t}^*$. We extend the map $H_{M,\theta}:M\to \mathcal{A}_{M,\theta}$ to the maps $H_{P,\theta}:G\to\mathcal{A}_{M,\theta}$ for all $P\in \mathcal{P}^\theta(M)$ by using the maximal compact subgroup $K$.

\vspace{2mm}

Set $W:=\left(\mathbf{Z}/2\mathbf{Z}\right)^d$. We identify $W$ with a subgroup of $G$ by sending the $i$-th element $e_i$ of the canonical basis of $W$ to the element of $G$ which switch the two copies of $K_i\otimes_{F_i}\mathcal{D}$ in $V\oplus V$. By the assumption that $T^c\subset K_1$, we have $W\subset K$. Conjugation by $W$ preserves $M$ and commutes with $\theta$. Hence it induces an action on the set $\mathcal{F}^\theta(M)$ of $\theta$-split parabolic subgroup containing $M$ and also on $\mathcal{A}_{M,\theta}$ that we shall simply denote by $(w,P)\mapsto wP$ and $(w,X)\mapsto wX$ respectively. We have

$$\displaystyle \mathcal{P}^\theta(M)=\bigsqcup_{w\in W} w\mathcal{P}^{\theta,Q}(M) \leqno (2)$$

\noindent where we recall that $\mathcal{P}^\theta(M)$ is the set of $\theta$-split parabolic subgroups with Levi component $M$ and $\mathcal{P}^{\theta,Q}(M)$ is the subset of $P\in \mathcal{P}^\theta(M)$ such that $P\subset Q$.

\vspace{2mm}

\noindent\ul{Proof of (2)}: It suffices to show that

$$\displaystyle \bigcup_{P\in \mathcal{P}^{\theta,Q}(M)} \overline{\mathcal{A}_{P,\theta}^+}$$

\noindent is a fundamental domain for the action of $W$ on $\mathcal{A}_{M,\theta}$. With the identification above, we have

$$\displaystyle \bigcup_{P\in \mathcal{P}^{\theta,Q}(M)} \overline{\mathcal{A}_{P,\theta}^+}=\left\{X\in \mathcal{A}_{M,\theta};\; \langle \alpha,X\rangle\geqslant 0\; \forall \alpha\in R(A_{M,\theta},U_Q) \right\}=\mathbf{R}_-^d.$$

\noindent Now the claim follows easily from the fact that the action of $W$ on $\mathcal{A}_{M,\theta}$ is given by sign changes of the coordinates.

\begin{lem}\label{lem limit of GMT family}
There exist $c>0$ and $N_0\geqslant 1$ such that for all $X\in \mathfrak {t}^*$, all $Y\in \mathfrak{t}^\perp$, all $g,h\in G_1$ and all $\lambda\in F^\times$ satisfying

\begin{itemize}
\item $v_F(\lambda)\geqslant N_0$;

\item $\sigma_{G_1}(g)\leqslant c v_F(\lambda)$, $\sigma_{G_1}(h)\leqslant c v_F(\lambda)$, $\sigma_{\mathfrak{t}^*}(X)\leqslant c v_F(\lambda)$ and $\sigma_{\mathfrak{g}_1}(Y)\leqslant c v_F(\lambda)$,
\end{itemize}

\noindent we have

$$\displaystyle H_{wP,\theta}\left(\begin{pmatrix} 1 & \lambda^{-1}X+Y \\ & 1 \end{pmatrix} \begin{pmatrix} g & \\ & h \end{pmatrix}\right)-\frac{1}{2}V(\lambda^{-1}X)=w(H_{P,\theta}\begin{pmatrix} g & \\ & h \end{pmatrix}-\frac{1}{2}V(\lambda^{-1}X))$$

\noindent for all $P\in \mathcal{P}^{\theta,Q}(M)$ and all $w\in W$.
\end{lem}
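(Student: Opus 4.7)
The plan is to explicitly compute the Iwasawa decomposition of $g_0 := n(\lambda^{-1}X + Y) \cdot \diag(g, h)$ with respect to the parabolic $wP$ and read off its $M$-height.

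The central computational tool is the $SL_2$-style factorization
$$\begin{pmatrix} 1 & Z_0 \\ 0 & 1 \end{pmatrix} = \diag(Z_0, Z_0^{-1}) \cdot \begin{pmatrix} 1 & 0 \\ Z_0 & 1 \end{pmatrix} \cdot k(Z_0), \qquad k(Z_0) := \begin{pmatrix} Z_0^{-1} & I \\ -I & 0 \end{pmatrix},$$
which is valid for any $Z_0 \in \mathfrak{g}_1^*$ satisfying $Z_0^{-1} \in \mathcal{O}_{\mathcal{A}}$ (ensuring $k(Z_0) \in K$). Applied with $Z_0 = \lambda^{-1}X$, the hypotheses $v_F(\lambda) \geq N_0$ and $\sigma_{\mathfrak{t}^*}(X) \leq c\, v_F(\lambda)$ (for $c < 1$ and $N_0$ large enough) guarantee the identity applies; the leading factor $\diag(\lambda^{-1}X, \lambda X^{-1})$ lies in $M$ and contributes $V(\lambda^{-1}X)$ to the $M$-height via identity (1) of Section \ref{Computation}. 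To absorb the $Y$-perturbation I use the conjugation identity $n(\lambda^{-1}X + Y) = \diag(\alpha, I) \cdot n(\lambda^{-1}X) \cdot \diag(\alpha, I)^{-1}$ with $\alpha := I + \lambda Y X^{-1}$, which under the size bounds (and small enough $c$) places $\alpha$ inside $K_1$ and hence $\diag(\alpha, I)$ inside $L \cap K$ as a compact ``nuisance'' factor. This reduces the problem to the $Y = 0$ case modulo a compact conjugation whose effect on the Iwasawa height must be carefully tracked.

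With $Y = 0$, I then exploit the block-diagonality of $X \in \mathfrak{t}$ with respect to the decomposition $V = \bigoplus_{i=1}^d K_i \otimes_{F_i}\mathcal{D}$: the middle factor $n^-(\lambda^{-1}X)$ decomposes as a commuting product $\prod_i n^-_i(\lambda^{-1}X_i)$ indexed by blocks. Analyzing the $W$-action on roots of $A_{M,\theta}$, one verifies that the $i$-th block of $\bar N$ lies inside $U_{wP}$ precisely when the $i$-th coordinate of $w$ is nontrivial. For such $i$, $n^-_i(\lambda^{-1}X_i)$ is absorbed into the $U_{wP}$-part of the Iwasawa at no height cost. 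For the remaining indices, the reverse factorization
$$\begin{pmatrix} 1 & 0 \\ Z_0 & 1 \end{pmatrix} = \diag(Z_0^{-1}, Z_0) \cdot \begin{pmatrix} 1 & Z_0 \\ 0 & 1 \end{pmatrix} \cdot \tilde k(Z_0)$$
(with $\tilde k(Z_0) \in K$ under the same size conditions) pushes $n^-_i$ into $N_i \subset U_{wP}$ at the cost of an $M$-height contribution of $-V(\lambda^{-1}X)_i$ that exactly cancels the leading contribution in position $i$. Summing block-wise yields $\tfrac{1}{2}(I - w) V(\lambda^{-1}X)$ as the net contribution of $n(\lambda^{-1}X)$, which combined with the Iwasawa of the remaining $\diag(g, h)$-factor recovers the right-hand side of the claim.

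The main obstacle is the rigorous treatment of the $Y$-perturbation: for $d = 1$ the conjugating element $\diag(\alpha, I)$ lies directly in $M \cap K$ and contributes zero to the $M$-height automatically; for general $d$ one must further Iwasawa-decompose $\alpha$ within $G_1$ relative to $P_\natural$, carefully bookkeeping the resulting factors through the full decomposition of $g_0$. The orthogonality $Y \in \mathfrak{t}^\perp$ together with the uniform size bounds $\sigma_{\mathfrak{g}_1}(Y), \sigma_{\mathfrak{t}^*}(X), \sigma_{G_1}(g), \sigma_{G_1}(h) \leq c\, v_F(\lambda)$ are the key inputs ensuring that every compact factor introduced along the way contributes zero to the Iwasawa height of $g_0$.
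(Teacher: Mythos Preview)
Your block-by-block $SL_2$ strategy is a reasonable alternative route, but there is a genuine gap in how you treat the $\diag(g,h)$ factor. After your factorization $n(\lambda^{-1}X)=m_S\,u\,k_S$ (with $m_S\in M$ giving the contribution $\tfrac{1}{2}(1-w)V(\lambda^{-1}X)$, $u\in U_{wP}$, and $k_S\in K$), the element whose $wP$-height you must still compute is $k_S\diag(g,h)$, not $\diag(g,h)$: left multiplication by $K$ does \emph{not} preserve $H_{wP,\theta}$. The compact factor $k_S=\prod_{i:\,w_i\neq 0}k_i(\lambda^{-1}X_i)$ is far from negligible: each $k_i(Z_0)=\begin{pmatrix}Z_0^{-1}&I\\-I&0\end{pmatrix}$ is the block Weyl reflection $e_i$ times a near-identity lower-unipotent times an element of $M\cap K$, so that $k_S=w\cdot\nu\cdot\delta$ with $\nu\in\bar N$ close to $1$ and $\delta\in M\cap K$. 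Only after recognizing this decomposition, and then conjugating $\nu$ past $\diag(g,h)$ into $K$ under the size hypotheses, does one obtain $H_{wP,\theta}(k_S\diag(g,h))=w\,H_{P,\theta}(\diag(g,h))$. This hidden copy of $w$ inside $k_S$ is precisely where the Weyl element enters the right-hand side of the lemma, and your proposal does not identify it; simply ``combining with the Iwasawa of the remaining $\diag(g,h)$-factor'' would produce $H_{wP,\theta}(\diag(g,h))$, which differs from the correct answer by a swap of certain $g$- and $h$-blocks. A parallel issue afflicts your $Y$-conjugation trick: the factor $\diag(\alpha,I)$ with $\alpha=I+\lambda YX^{-1}\in K_1\setminus M_\natural$ (for $d>1$) sits on the \emph{left} and thus also perturbs the Iwasawa height in a way your sketch does not control.

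The paper sidesteps both difficulties by first using $W\subset K$ to write $H_{wP,\theta}(g_0)=w\cdot H_{P,\theta}(wg_0)$, so that only the single parabolic $P\subset Q$ is in play, and then performing one explicit block-Gaussian factorization of $w\cdot n(\lambda^{-1}X+Y)$ as (element of $L$)$\times$(element of $N\subset U_P$)$\times$(near-identity matrix). The near-identity factor is shown to conjugate into $K$ under $\diag(g,h)$ via the size bounds, and the $L$-factor times $\diag(g,h)$ is simplified directly; $X$ and $Y$ are handled simultaneously with no separate perturbation step.
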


\vspace{2mm}

\noindent\ul{Proof}: Let $P\in \mathcal{P}^{\theta,Q}(M)$, $w\in W$ and fix $N_0\geqslant 1$ and $c>0$. Let $X,Y,g,h,\lambda$ satisfying the conditions of the lemma. We will show that the claimed equality is true provided $N_0$ is sufficiently large and $c$ sufficiently small. For simplicity, we will set $z=\lambda^{-1}$.

\vspace{2mm}

\noindent Since $W\subset K$, we have

$$\displaystyle H_{wP,\theta}\left(\begin{pmatrix} 1 & zX+Y \\ & 1 \end{pmatrix} \begin{pmatrix} g & \\ & h \end{pmatrix}\right)=w.H_{P,\theta}\left(w\begin{pmatrix} 1 & zX+Y \\ & 1 \end{pmatrix} \begin{pmatrix} g & \\ & h \end{pmatrix}\right).$$

\noindent Up to reordering the $K_i$'s, we may assume that $w=e_1+\ldots+e_t$ for some $1\leqslant t\leqslant d$. In this case we have

$$\displaystyle w=\begin{pmatrix} 0_k & 0 & I_k  & 0 \\ 0 & I_{m-k} & 0 & 0_{m-k} \\ I_k & 0 & 0_k & 0 \\ 0 & 0_{m-k} & 0 & I_{m-k}\end{pmatrix}$$

\noindent where $k:=k_1+\ldots+k_t$ and $m=n/r$. Write $X=\begin{pmatrix} X_k & \\ & X_{m-k}\end{pmatrix}$ and $Y=\begin{pmatrix} Y_k & Z \\ W & Y_{m-k}\end{pmatrix}$ where $(X_k,Y_k)\in M_k(\mathcal{D})^2$, $(X_{m-k},Y_{m-k})\in M_{m-k}(\mathcal{D})^2$, $W\in M_{m-k,k}(\mathcal{D})$ and $Z\in M_{k,m-k}(\mathcal{D})$. Note that if $N_0$ is sufficiently large and $c$ sufficiently small, the matrix $zX_k+Y_k=zX_k(I_k+\lambda X_k^{-1}Y_k)$ is invertible. Direct computation then gives

\[\begin{aligned}
\displaystyle & w\begin{pmatrix} 1 & zX+Y \\ & 1 \end{pmatrix}= \\
 & \begin{pmatrix}-(zX_k+Y_k)^{-1} & & & \\ & I_{m-k} & & \\ & & zX_k+Y_k & \\ & & & I_{m-k}\end{pmatrix} \begin{pmatrix} I_k & & -(zX_k+Y_k) & Z \\ & I_{m-k} & W & zX_{m-k}+Y_{m-k} \\ & & I_k & \\ & & & I_{m-k} \end{pmatrix} \\
 & \times \begin{pmatrix} I_k & 0 & 0_k & 0 \\ -W(zX_k+Y_k)^{-1} & I_{m-k} & 0 & -W(zX_k+Y_k)^{-1}Z \\ (zX_k+Y_k)^{-1} & 0 & I_k & (zX_k+Y_k)^{-1}Z \\ 0 & 0_{m-k} & 0 & I_{m-k}\end{pmatrix}.
\end{aligned}\]

\noindent For $N_0$ sufficiently large and $c$ sufficiently small, the matrix

$$\displaystyle \begin{pmatrix} 0_k & 0 & 0_k & 0 \\ -W(zX_k+Y_k)^{-1} & 0_{m-k} & 0 & -W(zX_k+Y_k)^{-1}Z \\ (zX_k+Y_k)^{-1} & 0 & 0_k & (zX_k+Y_k)^{-1}Z \\ 0 & 0_{m-k} & 0 & 0_{m-k}\end{pmatrix}$$

\noindent is so small compared to $g$ and $h$ that

$$\displaystyle \begin{pmatrix} g^{-1} & \\ & h^{-1} \end{pmatrix}\begin{pmatrix} I_k & 0 & 0_k & 0 \\ -W(zX_k+Y_k)^{-1} & I_{m-k} & 0 & -W(zX_k+Y_k)^{-1}Z \\ (zX_k+Y_k)^{-1} & 0 & I_k & (zX_k+Y_k)^{-1}Z \\ 0 & 0_{m-k} & 0 & I_{m-k}\end{pmatrix}\begin{pmatrix} g & \\ & h \end{pmatrix}$$

\noindent belongs to $K$. Hence, for $N_0$ sufficiently large and $c$ sufficiently small, we have

$$\displaystyle H_{P,\theta}\left(w\begin{pmatrix} 1 & zX+Y \\ & 1 \end{pmatrix} \begin{pmatrix} g & \\ & h \end{pmatrix}\right)=H_{P,\theta}\begin{pmatrix}\begin{pmatrix}-(zX_k+Y_k)^{-1} & \\ & I_{m-k}\end{pmatrix} g & \\ & \begin{pmatrix} zX_k+Y_k & \\ & I_{m-k}\end{pmatrix} h\end{pmatrix}.$$

\noindent Once again, for $N_0$ sufficiently large and $c$ sufficiently small, the matrix $z^{-1}X_k^{-1}Y_k$ is so small compared to $g$ and $h$ that we have

$$\displaystyle g^{-1}\begin{pmatrix}(I_k+z^{-1}X_k^{-1}Y_k)^{-1} & \\ & I_{m-k}\end{pmatrix}g\in K_1, \;\;\; h^{-1}\begin{pmatrix} I_k+z^{-1}X_k^{-1}Y_k & \\ & I_{m-k}\end{pmatrix}h\in K_1.$$

\noindent So finally we get that

\[\begin{aligned}
\displaystyle H_{wP,\theta}\left(\begin{pmatrix} 1 & zX+Y \\ & 1 \end{pmatrix} \begin{pmatrix} g & \\ & h \end{pmatrix}\right)=w.H_{M,\theta}\begin{pmatrix} -z^{-1}X_k^{-1} & & & \\ & I_{m-k} & & \\ & & zX_k & \\ & & & I_{m-k}\end{pmatrix}+w.H_{P,\theta}\begin{pmatrix} g & \\ & h \end{pmatrix}
\end{aligned}\]

\noindent for $N_0$ sufficiently large and $c$ sufficiently small. Now, from (1) we easily check that

$$\displaystyle w.H_{M,\theta}\begin{pmatrix} -z^{-1}X_k^{-1} & & & \\ & I_{m-k} & & \\ & & zX_k & \\ & & & I_{m-k}\end{pmatrix}=\frac{V(zX)-wV(zX)}{2}$$

\noindent and this ends the proof of the lemma. $\blacksquare$

\subsection{Computation of certain singular $\theta$-weighted orbital integrals}\label{first application}

To all $g,h\in G_1$ and $X\in \mathfrak{t}^*\oplus \mathfrak{t}^\perp$ we associate a $(G,M,\theta)$-orthogonal set $\mathcal{Z}(g,h,X)=(\mathcal{Z}(g,h,X)_P)_{P\in \mathcal{P}^\theta(M)}$ defined by

$$\displaystyle \mathcal{Z}(g,h,X)_{w\overline{P}}:=wH_{P,\theta}\begin{pmatrix} g & \\ & h\end{pmatrix}-\frac{1}{2}(V(X_{\mathfrak{t}})-wV(X_{\mathfrak{t}}))$$

\noindent for all $P\in \mathcal{P}^{\theta,Q}(M)$ and $w\in W$ where we recall that $X_{\mathfrak{t}}$ denotes the projection of $X$ onto $\mathfrak{t}$. The fact that $\mathcal{Z}(g,h,X)$ is indeed a $(G,M,\theta)$-orthogonal set easily follows from Lemma \ref{lem limit of GMT family}. We define

$$\displaystyle w_{M,\theta}(g,h,X):=v_{M,\theta}(\mathcal{Z}(g,h,X))$$

\noindent for all $g,h\in G_1$ and $X\in \mathfrak{t}^*\oplus \mathfrak{t}^\perp$. Note that $w_{M,\theta}(g,h,X)$ is NOT always the volume of the convex hull of the set $\mathcal{Z}(g,h,X)$ (this is because this $(G,M,\theta)$-orthogonal set is not always positive). However, there exists $k>0$ such that

$$\displaystyle \lvert w_{M,\theta}(g,h,X)\rvert\ll \sigma_{T\backslash G_1}(g)^k\sigma_{T\backslash G_1}(h)^k\sigma_{\mathfrak{g}_1^*}(g^{-1}X_{\mathfrak{t}}h)^k \leqno (1)$$

\noindent for all $g,h\in G_1$ and $X\in \mathfrak{t}^*\oplus \mathfrak{t}^\perp$. Indeed, by \ref{(G,M) and (G,M,theta)-orthogonal sets}(3), there exists $k>0$ such that

$$\displaystyle \lvert w_{M,\theta}(g,h,X)\rvert\ll \sigma_{G_1}(g)^k\sigma_{G_1}(h)^k\sigma_{\mathfrak{t}^*}(X_{\mathfrak{t}})^k\ll \sigma_{G_1}(g)^k\sigma_{G_1}(h)^k\sigma_{\mathfrak{g}_1^*}(g^{-1}X_{\mathfrak{t}}h)^k$$

\noindent for all $(g,h,X)\in G_1^2\times (\mathfrak{t}^*\oplus \mathfrak{t}^\perp)$. We easily check that $w_{M,\theta}(t_1g,t_2h,t_1Xt_2^{-1})=w_{M,\theta}(g,h,X)$ for all $t_1,t_2\in T$ and thus we get

$$\displaystyle \lvert w_{M,\theta}(g,h,X)\rvert\ll \sigma_{G_1}(t_1g)^k\sigma_{G_1}(t_2h)^k\sigma_{\mathfrak{g}_1^*}(g^{-1}X_{\mathfrak{t}}h)^k$$

\noindent for all $g,h\in G_1$, $X\in \mathfrak{t}^*\oplus \mathfrak{t}^\perp$ and $t_1,t_2\in T$. Taking the infimum over $t_1$ and $t_2$ of the right hand side yields the desired inequality by \ref{geom side}(1).

\vspace{2mm}

\noindent Recall that in \S \ref{weighted and theta weighted orbital integrals}, we have defined a $\theta$-weighted orbital integral $f\in \mathcal{C}(G)\mapsto \Phi_{M,\theta}(x,f)$ for all $x\in M\cap G_{reg}$ (where we again use the maximal compact subgroup $K$ to define the maps $H_{P,\theta}:G\to \mathcal{A}_{M,\theta}$ for $P\in \mathcal{P}^\theta(M)$). For all $t\in T\cap G_{1,reg}$, we define a $\theta$-weighted orbital integral at the non-regular point $\begin{pmatrix}t & \\ & t\end{pmatrix}$ by

\[\begin{aligned}
\displaystyle \Phi_{M,\theta}\left(\begin{pmatrix} t & \\ & t\end{pmatrix},f\right):=D^{G_1}(t)^{-1}\int_{T\times T\backslash G_1\times G_1} \int_{\mathfrak{g}_1}  & f^K\left(\begin{pmatrix} g^{-1} & \\ & h^{-1} \end{pmatrix}\begin{pmatrix} t & \\ & t \end{pmatrix} \begin{pmatrix} 1 & X \\ & 1 \end{pmatrix}\begin{pmatrix} g & \\ & h \end{pmatrix}\right) \\
 & w_{M,\theta}(g,h,X)dX \frac{dg}{\nu(g)^n}\nu(h)^n dh
\end{aligned}\]

\noindent for all $f\in \mathcal{C}(G)$. Note that this integral is absolutely convergent by (1) and Lemma \ref{prop estimate}(i).

\begin{prop}\label{prop first application}
For all $\theta$-strongly cuspidal function $f\in \mathcal{C}(G)$ and all $t\in T\cap G_{1,reg}$, we have

\[\begin{aligned}
\displaystyle \lim\limits_{\lambda\to 1} D^G\begin{pmatrix} t & \\ & \lambda t\end{pmatrix}^{1/2}\Phi_{M,\theta}\left(\begin{pmatrix} t & \\ & \lambda t\end{pmatrix}, f\right)= D^{G_1}(t)^2 \Phi_{M,\theta}\left(\begin{pmatrix} t & \\ & t\end{pmatrix},f\right).
\end{aligned}\]

\noindent (In particular the limit exists).
\end{prop}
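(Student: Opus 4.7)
The strategy will be to expand the regular $\theta$-weighted orbital integral at $x_\lambda := \begin{pmatrix} t & \\ & \lambda t\end{pmatrix}$ via the integration formula from \S\ref{def of an expression}, perform a change of variables that absorbs the singularity at $\lambda = 1$, and then use Lemma~\ref{lem limit of GMT family} together with the $\theta$-strong cuspidality of $f$ to identify the limit with the singular $\theta$-weighted orbital integral at $\begin{pmatrix} t & \\ & t\end{pmatrix}$.

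For $\lambda \in F^\times$ close to $1$ with $\lambda \neq 1$, the element $x_\lambda$ is regular semi-simple with centraliser $T \times T$. The first step is to parametrise $\gamma = n_X \begin{pmatrix} g & \\ & h\end{pmatrix} k$ and check the elementary identity $n_{-X}\, x_\lambda\, n_X = x_\lambda\, n_{X_\lambda}$ with $X_\lambda := (1 - \lambda\,\Ad(t^{-1}))X$, whence $X_\lambda|_{\Ft} = (1-\lambda)X_{\Ft}$ while $X_\lambda|_{\Ft^\perp}$ is an invertible perturbation of $(1 - \Ad(t^{-1}))X_{\Ft^\perp}$. Substituting $Y = X_\lambda$ introduces a Jacobian
\[
J_\lambda := |1-\lambda|_F^{\dim_F \Ft}\cdot\bigl|\det(1 - \lambda\,\Ad(t^{-1}))_{\Ft^\perp}\bigr|,
\]
while a block-diagonal computation of the Weyl discriminant on $\Fg/(\Ft\oplus\Ft)$ gives
\[
D^G(x_\lambda) = D^{G_1}(t)^2\cdot |\det(1-\lambda\,\Ad(t))_{\Fg_1}|\cdot|\det(1-\lambda^{-1}\,\Ad(t))_{\Fg_1}|,
\]
from which it is immediate that $D^G(x_\lambda)^{1/2} J_\lambda^{-1}$ converges to $D^{G_1}(t)$ as $\lambda\to 1$, the divergent $|1-\lambda|^{\dim_F \Ft}$-factors cancelling exactly.

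The core of the argument is to analyse the limit of the weight $v_{M,\theta}\bigl(n_{X(Y,\lambda)}\begin{pmatrix} g & \\ & h\end{pmatrix}\bigr)$, where $X(Y,\lambda) := (1-\lambda\,\Ad(t^{-1}))^{-1}Y$. For $(g,h,Y)$ in the bounded region where the integrand is nonzero, the $\Ft$-component $X(Y,\lambda)_{\Ft} = (1-\lambda)^{-1}Y_{\Ft}$ has valuation going to $+\infty$ as $\lambda\to 1$, which is precisely the regime of Lemma~\ref{lem limit of GMT family}. I would apply the lemma (or rather its analogue obtained by replacing $Q$ with $\overline Q$, to match the $H_{\overline P,\theta}$ convention used in defining $\CY_{M,\theta}$) to obtain an explicit formula exhibiting $\CY_{M,\theta}(n_{X(Y,\lambda)}\begin{pmatrix} g & \\ & h\end{pmatrix})$ as the sum of $\CZ(g,h,Y)$ with an auxiliary $(G,M,\theta)$-orthogonal family $\CY'_\lambda$ built from $V((1-\lambda)^{-1}Y_{\Ft})$. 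Plugging this decomposition into the splitting formula~\ref{(G,M) and (G,M,theta)-orthogonal sets}(5) and re-inserting it into the full integral, each term indexed by $(L_1,L_2)\in \CL^\theta(M)^2$ with $L_1\neq G$ becomes, after interchanging orders of integration, a $\theta$-weighted orbital integral of $f$ attached to a proper $\theta$-split parabolic, which vanishes by $\theta$-strong cuspidality (as in \S\ref{cusp forms and theta-strongly cuspidal functions}). Only the term $(L_1,L_2)=(G,M)$ survives, contributing $v^G_{M,\theta}(\CZ(g,h,Y)) = w_{M,\theta}(g,h,Y)$. Combined with the prefactor $D^G(x_\lambda)^{1/2} J_\lambda^{-1}\to D^{G_1}(t)$ and the $D^{G_1}(t)^{-1}$ appearing in the definition of the singular orbital integral, a dominated convergence argument (justified by Proposition~\ref{prop estimate} and the estimate \S\ref{first application}(1)) yields the claimed identity.

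The hardest part will be the splitting step: proving that the divergent contributions carried by $\CY'_\lambda$ combine under the splitting formula with $\CZ(g,h,Y)$ in such a way that every $L_1\neq G$ summand can be recognised as a $Q_1$-weighted $\theta$-orbital integral of $f$, and then annihilated by $\theta$-strong cuspidality. Executing this requires careful bookkeeping of signs -- reconciling the $+\tfrac12(V(\cdot) - w\,V(\cdot))$ produced by Lemma~\ref{lem limit of GMT family} with the $-\tfrac12(V(Y_\Ft) - w\,V(Y_\Ft))$ in the definition of $\CZ$ -- and of the combinatorial structure of $\CL^\theta(M)$.
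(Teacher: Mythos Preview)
Your proposal has the correct ingredients (the change of variable, Lemma~\ref{lem limit of GMT family}, the splitting formula~\ref{(G,M) and (G,M,theta)-orthogonal sets}(5), $\theta$-strong cuspidality, and Proposition~\ref{prop estimate} for the truncation) but the order in which you apply them creates a genuine gap. You want to first invoke Lemma~\ref{lem limit of GMT family} to write $\mathcal{Y}_{M,\theta}\bigl(n_{X(Y,\lambda)}\begin{pmatrix} g & \\ & h\end{pmatrix}\bigr)=\mathcal{Z}(g,h,Y)+\mathcal{Y}'_\lambda$ and only afterwards apply the splitting formula, expecting the $L_1\neq G$ summands to become proper $\theta$-weighted orbital integrals. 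This fails for two reasons. First, Lemma~\ref{lem limit of GMT family} is only valid on the region $\sigma_{G_1}(g),\sigma_{G_1}(h),\sigma_{\mathfrak{t}^*}(X_{\mathfrak t}),\sigma_{\mathfrak{g}_1}(X_{\mathfrak t^\perp})\leqslant c\,v_F(1-\lambda)$, so the decomposition is not global; if you then split and integrate, the $L_1\neq G$ summands are \emph{truncated} integrals, not full $\Phi^{Q_1}_{M,\theta}(x_\lambda,f)$, and $\theta$-strong cuspidality does not kill them. Second, your $\mathcal{Y}'_\lambda$ ``built from $V((1-\lambda)^{-1}Y_{\mathfrak t})$'' would depend on $Y$, so in the splitting formula the factor $v^{Q_1}_{M,\theta}(\mathcal{Z}(g,h,Y))$ is \emph{not} the standard weight $v^{Q_1}_{M,\theta}(\gamma)$, and the $L_1\neq G$ integrals are not of the form $\Phi^{Q_1}_{M,\theta}(x_\lambda,f)$ at all.

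The paper avoids both issues by reversing the order. It first defines a $(G,M,\theta)$-orthogonal set $\mathcal{X}(1-\lambda)$ depending \emph{only on $\lambda$} (namely $\mathcal{X}(1-\lambda)_{w\overline P}=\tfrac12(V(1-\lambda)-wV(1-\lambda))$) and applies the splitting formula to $\mathcal{Y}_{M,\theta}(\gamma)+\mathcal{X}(1-\lambda)$ \emph{globally}, before any truncation or change of variable. Because one factor is the genuine $v^{Q_1}_{M,\theta}(\gamma)$ and the other is a constant $v^{Q_2}_{M,\theta}(\mathcal{X}(1-\lambda))$, the $L_1\neq G$ terms are honest $\Phi^{Q_1}_{M,\theta}(x_\lambda,f)$ and vanish by $\theta$-strong cuspidality; only the $(G,M)$ term survives, yielding $\Phi_{M,\theta}(x_\lambda,f)=\int f(\gamma^{-1}x_\lambda\gamma)\,v_{M,\theta}\bigl(\mathcal{Y}_{M,\theta}(\gamma)+\mathcal{X}(1-\lambda)\bigr)\,d\gamma$. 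Only \emph{then} does the paper perform Iwasawa, the change of variable, and a truncation at level $v_F(\lambda-1)^\epsilon$; Lemma~\ref{lem limit of GMT family} now shows that on the truncated region the modified weight $v_{M,\theta}\bigl(\mathcal{Y}_{M,\theta}(\ldots)+\mathcal{X}(1-\lambda)\bigr)$ equals $w_{M,\theta}(g,h,X)$ exactly (the $V(X_{\mathfrak t})$ part lands in $\mathcal{Z}$, the $V(1-\lambda)$ part in $\mathcal{X}$, and they add correctly). The truncation error is controlled by Proposition~\ref{prop estimate}(ii). So the ``hardest part'' you anticipate disappears once you swap the order: apply splitting before Lemma~\ref{lem limit of GMT family}, not after.
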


\noindent\ul{Proof}: Recall that by definition

$$\displaystyle \Phi_{M,\theta}\left(\begin{pmatrix} t & \\ & \lambda t\end{pmatrix}, f\right)=\int_{T\times T\backslash G} f\left(g^{-1}\begin{pmatrix} t & \\ & \lambda t\end{pmatrix} g\right) v_{M,\theta}(\mathcal{Y}(g)) dg$$

\noindent where for $g\in G$, $\mathcal{Y}(g)$ is the $(G,M,\theta)$-orthogonal set defined by

$$\displaystyle \mathcal{Y}(g)_P=H_{\overline{P},\theta}(g),\;\;\; P\in \mathcal{P}^\theta(M).$$

\noindent For all $z\in F^\times$, we define a $(G,M,\theta)$-orthogonal set $\mathcal{X}(z)=(\mathcal{X}(z)_P)_{P\in \mathcal{P}^\theta(M)}$ by

$$\displaystyle \mathcal{X}(z)_{w\overline{P}}:=\frac{1}{2}(V(z)-wV(z)), \mbox{ for all } P\in \mathcal{P}^{Q,\theta}(M) \mbox{ and all } w\in W.$$

\noindent Then for all $\lambda\in F^\times\backslash \{ 1 \}$, we have

$$\displaystyle \Phi_{M,\theta}\left(\begin{pmatrix} t & \\ & \lambda t\end{pmatrix}, f\right)=\int_{T\times T\backslash G} f\left(g^{-1}\begin{pmatrix} t & \\ & \lambda t\end{pmatrix} g\right) v_{M,\theta}(\mathcal{Y}(g)+\mathcal{X}(1-\lambda)) dg.$$

\noindent Indeed, this follows from \ref{(G,M) and (G,M,theta)-orthogonal sets}(5) and the equality

$$\displaystyle \Phi_{M,\theta}^Q\left(\begin{pmatrix} t & \\ & \lambda t\end{pmatrix},f \right)=0$$

\noindent for all $Q\in \mathcal{F}^\theta(M)$ with $Q\neq G$, which is a consequence of the fact that $f$ is $\theta$-strongly cuspidal.

\vspace{2mm}

\noindent Now by the Iwasawa decomposition $G=(G_1\times G_1)NK$, we can write

\[\begin{aligned}
\displaystyle \Phi_{M,\theta}\left(\begin{pmatrix} t & \\ & \lambda t\end{pmatrix}, f\right)= & \int_{T\times T\backslash G_1\times G_1} \int_{\mathfrak{g}_1} f^K\left(\begin{pmatrix}g^{-1} & \\ & h^{-1}\end{pmatrix}\begin{pmatrix} 1 & -X \\ & 1 \end{pmatrix}\begin{pmatrix} t & \\ & \lambda t\end{pmatrix} \begin{pmatrix} 1 & X \\ & 1 \end{pmatrix}\begin{pmatrix}g & \\ & h \end{pmatrix}\right)  \\
 & v_{M,\theta}\left(\mathcal{Y}\left(\begin{pmatrix} 1 & X \\ & 1 \end{pmatrix}\begin{pmatrix}g & \\ & h \end{pmatrix}\right)+\mathcal{X}(1-\lambda)\right) dX \frac{dg}{\nu(g)^n} \nu(h)^n dh.
\end{aligned}\]

\noindent After the variable change $X\mapsto (1-\lambda Ad(t^{-1}))^{-1}X$, for $\lambda$ sufficiently close to $1$, we get

\[\begin{aligned}
\displaystyle & D^G\begin{pmatrix} t & \\ & \lambda t\end{pmatrix}^{1/2}\Phi_{M,\theta}\left(\begin{pmatrix} t & \\ & \lambda t\end{pmatrix}, f\right)= \\
 & D^{G_1}(t) \int_{T\times T\backslash G_1\times G_1} \int_{\mathfrak{g}_1} f^K\left(\begin{pmatrix}g^{-1} & \\ & h^{-1}\end{pmatrix}\begin{pmatrix} t & \\ & \lambda t\end{pmatrix} \begin{pmatrix} 1 & X \\ & 1 \end{pmatrix}\begin{pmatrix}g & \\ & h \end{pmatrix}\right) \widetilde{w}(g,h,X,\lambda) dX \frac{dg}{\nu(g)^n} \nu(h)^n dh
\end{aligned}\]

\noindent where we have set

$$\displaystyle \widetilde{w}(g,h,X,\lambda):=v_{M,\theta}\left(\mathcal{Y}\left(\begin{pmatrix} 1 & (1-\lambda Ad(t^{-1}))^{-1}X \\ & 1 \end{pmatrix}\begin{pmatrix}g & \\ & h \end{pmatrix}\right)+\mathcal{X}(1-\lambda)\right).$$

\noindent By \ref{(G,M) and (G,M,theta)-orthogonal sets}(3), there exists $k>0$ such that

$$\displaystyle \lvert\widetilde{w}(g,h,X,\lambda)\rvert\ll \sigma_{T\backslash G_1}(g)^k\sigma_{T\backslash G_1}(h)^k\sigma_{\mathfrak{g}_1}(g^{-1}Xh)^k v_F(1-\lambda)^k \leqno (2)$$

\noindent for all $(g,h,X,\lambda)\in G^2_1\times (\mathfrak{t}^*\oplus \mathfrak{t}^\perp)\times F^\times$ with $\lambda$ sufficiently close to $1$.

\vspace{2mm}

\noindent For all $C>0$, set

\[\begin{aligned}
\displaystyle & \Phi_{M,\theta,\leqslant C}\left(\begin{pmatrix} t & \\ & \lambda t\end{pmatrix}, f\right):= D^G\begin{pmatrix} t & \\ & \lambda t\end{pmatrix}^{-1/2}D^{G_1}(t) \times \\
 & \int_{T\times T\backslash G_1\times G_1} \int_{\mathfrak{g}_1} f^K\left(\begin{pmatrix}g^{-1} & \\ & h^{-1}\end{pmatrix}\begin{pmatrix} t & \\ & \lambda t\end{pmatrix} \begin{pmatrix} 1 & X \\ & 1 \end{pmatrix}\begin{pmatrix} g & \\ & h \end{pmatrix}\right) \widetilde{w}(g,h,X,\lambda) \\
 & \mathbf{1}_{T\backslash G_1,\leqslant C}(g) \mathbf{1}_{T\backslash G_1,\leqslant C}(h) \mathbf{1}_{\mathfrak{g}_1^*,\leqslant C}(g^{-1}X_{\mathfrak{t}}h) \mathbf{1}_{\mathfrak{g}_1,\leqslant C}(g^{-1}X_{\mathfrak{t}^\perp}h)dX \frac{dg}{\nu(g)^n} \nu(h)^n dh
\end{aligned}\]

\noindent and

\[\begin{aligned}
\displaystyle \Phi_{M,\theta,\leqslant C} & \left(\begin{pmatrix} t & \\ & t\end{pmatrix}, f\right):=  D^{G_1}(t)^{-1}\int_{T\times T\backslash G_1\times G_1} \int_{\mathfrak{g}_1} f^K\left(\begin{pmatrix}g^{-1} & \\ & h^{-1}\end{pmatrix}\begin{pmatrix} t & \\ & t\end{pmatrix} \begin{pmatrix} 1 & X \\ & 1 \end{pmatrix}\begin{pmatrix}g & \\ & h \end{pmatrix}\right)  \\
 & w_{M,\theta}\left(g,h,X\right) \mathbf{1}_{T\backslash G_1,\leqslant C}(g) \mathbf{1}_{T\backslash G_1,\leqslant C}(h) \mathbf{1}_{\mathfrak{g}_1^*,\leqslant C}(g^{-1}X_{\mathfrak{t}}h)\mathbf{1}_{\mathfrak{g}_1,\leqslant C}(g^{-1}X_{\mathfrak{t}^\perp}h)dX \frac{dg}{\nu(g)^n} \nu(h)^n dh.
\end{aligned}\]

\noindent Fix $0<\epsilon<1$. By (1), (2) and Proposition \ref{prop estimate}(ii), we have (Note that for $\lambda$ sufficiently close to $1$, $f^K$ is invariant by $\begin{pmatrix} 1 & \\ & \lambda\end{pmatrix}$ on the left):

$$\displaystyle \lim\limits_{\lambda\to 1}D^G\begin{pmatrix} t & \\ & \lambda t\end{pmatrix}^{1/2}\left(\Phi_{M,\theta}\left(\begin{pmatrix}t & \\ & \lambda t\end{pmatrix}, f\right)-\Phi_{M,\theta,\leqslant v_F(\lambda-1)^\epsilon}\left(\begin{pmatrix}t & \\ & \lambda t\end{pmatrix}, f\right) \right)=0, \leqno (3)$$

$$\displaystyle \lim\limits_{\lambda\to 1}\Phi_{M,\theta}\left(\begin{pmatrix}t & \\ &  t\end{pmatrix}, f\right)-\Phi_{M,\theta,\leqslant v_F(\lambda-1)^\epsilon}\left(\begin{pmatrix}t & \\ &  t\end{pmatrix}, f\right)=0. \leqno (4)$$

\noindent Moreover, it follows from Lemma \ref{lem limit of GMT family} that for all $\lambda$ sufficiently close to $1$ and all $g,h\in G_1$, $X\in \mathfrak{g}_1$ with $\sigma_{T\backslash G_1}(g)\leqslant v_F(\lambda-1)^\epsilon$, $\sigma_{T\backslash G_1}(h)\leqslant v(\lambda-1)^\epsilon$, $\sigma_{\mathfrak{g}_1^*}(g^{-1}X_{\mathfrak{t}}h)\leqslant v_F(\lambda-1)^\epsilon$ and $\sigma_{\mathfrak{g}_1}(g^{-1}X_{\mathfrak{t}^\perp}h)\leqslant v_F(\lambda-1)^\epsilon$, we have

$$\displaystyle \mathcal{Y}\left(\begin{pmatrix} 1 & (1-\lambda Ad(t^{-1}))^{-1}X \\ & 1 \end{pmatrix}\begin{pmatrix} g & \\ & h \end{pmatrix}\right)+\mathcal{X}(1-\lambda)=\mathcal{Z}(g,h,X),$$

\noindent and so

$$\displaystyle \widetilde{w}(g,h,X,\lambda)=w_{M,\theta}(g,h,X).$$

\noindent Since $f^K$ is left invariant by $\begin{pmatrix} \lambda & \\ & 1 \end{pmatrix}$ for $\lambda$ sufficiently close to $1$, it follows that

$$\displaystyle D^G\begin{pmatrix} t & \\ & \lambda t\end{pmatrix}^{1/2}\Phi_{M,\theta,\leqslant v(\lambda-1)^\epsilon}\left(\begin{pmatrix}t & \\ & \lambda t\end{pmatrix}, f\right)=D^{G_1}(t)^2\Phi_{M,\theta,\leqslant v(\lambda-1)^\epsilon}\left(\begin{pmatrix}t & \\ &  t\end{pmatrix}, f\right)$$

\noindent for $\lambda$ sufficiently close to $1$ and hence, by (3) and (4),

$$\displaystyle \lim\limits_{\lambda\to 1} D^G\begin{pmatrix} t & \\ & \lambda t\end{pmatrix}^{1/2}\Phi_{M,\theta}\left(\begin{pmatrix} t & \\ & \lambda t\end{pmatrix}, f\right)= D^{G_1}(t)^2 \Phi_{M,\theta}\left(\begin{pmatrix} t & \\ & t\end{pmatrix},f\right).$$

$\blacksquare$

\subsection{Change of weight}\label{change of weight}

Choose a minimal $\theta$-split Levi subgroup $M_0$ contained in $M$ and fix $P_0\in \mathcal{P}^\theta(M_0)$. Using these data we can associate to any point $Y\in \mathcal{A}_{M_0,\theta}$ a $(G,M,\theta)$-orthogonal set $(Y_P)_{P\in \mathcal{P}^\theta(M)}$ as in \S \ref{(G,M) and (G,M,theta)-orthogonal sets}. For every $Y\in \mathcal{A}_{M_0,\theta}$, every $g,h\in G_1$ and every $X\in \mathfrak{t}^*\oplus \mathfrak{t}^\perp$, we define a $(G,M,\theta)$-orthogonal set $\mathcal{Z}(g,h,X,Y):=(\mathcal{Z}(g,h,X,Y)_P)_{P\in \mathcal{P}^\theta(M)}$ by setting

$$\displaystyle \mathcal{Z}(g,h,X,Y)_P:=Y_P-\mathcal{Z}(g,h,X)_P$$

\noindent for all $P\in \mathcal{P}^\theta(M)$ where $\mathcal{Z}(g,h,X)_P$ was defined in section \ref{first application}. To such a $(G,M,\theta)$-orthogonal set is associated a function $\Gamma_{M,\theta}^G(.,\mathcal{Z}(g,h,X,Y))$ on $\mathcal{A}_{M,\theta}$ (see \S \ref{(G,M) and (G,M,theta)-orthogonal sets}). If the $(G,M,\theta)$-orthogonal set $\mathcal{Z}(g,h,X,Y)$ is positive, this is just the characteristic function of its convex hull.

\vspace{2mm}

\noindent For all $Y\in \mathcal{A}_{M_0,\theta}$, we define a new weight $\widetilde{w}_{M,\theta}(.,.,.,Y)$ on $G_1\times G_1\times \mathfrak{g}_1$ by setting

$$\displaystyle \widetilde{w}_{M,\theta}(g,h,X,Y):=\int_{T} \Gamma_{M,\theta}^G\left(H_{M,\theta}(a), \mathcal{Z}(g,h,X,Y) \right)da$$

\noindent where for simplicity we have written $H_{M,\theta}(a)$ for $H_{M,\theta}\begin{pmatrix} a & \\ & 1\end{pmatrix}$. A proof similar to that of \ref{first application}.(1) shows that for some $k>0$, we have

$$\displaystyle \lvert \widetilde{w}_{M,\theta}(g,h,X,Y)\rvert \ll \sigma_{T\backslash G_1}(g)^k\sigma_{T\backslash G_1}(h)^k\sigma_{\mathfrak{g}_1^*}(g^{-1}X_{\mathfrak{t}}h)^k \left(1+\lvert Y\rvert\right)^k \leqno (1)$$

\noindent and even

$$\displaystyle \int_{T} \lvert\Gamma_{M,\theta}^G\left(H_{M,\theta}(a), \mathcal{Z}(g,h,X,Y) \right)\rvert da \ll \sigma_{T\backslash G_1}(g)^k\sigma_{T\backslash G_1}(h)^k\sigma_{\mathfrak{g}_1^*}(g^{-1}X_{\mathfrak{t}}h)^k \left(1+\lvert Y\rvert\right)^k \leqno (2)$$

\noindent for all $g,h\in G_1$, $X\in \mathfrak{t}^*\oplus \mathfrak{t}^\perp$ and all $Y\in \mathcal{A}_{M_0,\theta}$, where $\lvert .\rvert$ denotes a norm on $\mathcal{A}_{M_0,\theta}$. Using this weight we define a new expression

\[\begin{aligned}
\displaystyle J_{Y,T}(f):=\int_{T} D^{G_1}(t)\int_{T\times T\backslash G_1\times G_1} \int_{\mathfrak{g}_1} & f^K\left(\begin{pmatrix} g^{-1} & \\ & h^{-1} \end{pmatrix}\begin{pmatrix} t & \\ & t \end{pmatrix} \begin{pmatrix} 1 & X \\ & 1 \end{pmatrix}\begin{pmatrix} g & \\ & h \end{pmatrix}\right) \\
 & \widetilde{w}_{M,\theta}(g,h,X,Y) dX \frac{dg}{\nu(g)^n}\nu(h)^n dh \omega(t)^{-1} dt
\end{aligned}\]

\noindent which is absolutely convergent by (1) and Proposition \ref{prop estimate}(ii).

\vspace{2mm}

\noindent Let $A_0$ be the maximal central $(\theta,F)$-split subtorus of $M_0$ and let $\Delta_0$ be the set of simple roots of $A_0$ in $P_0$. The goal of this section is to show the following:

\begin{prop}\label{prop change of weight}
Let $0<\epsilon_1<\epsilon_2<1$ and assume that $f\in \mathcal{C}(G)$ is $\theta$-strongly cuspidal. Then for all $r>0$, we have

$$\displaystyle \left\lvert J_{N,T}(f)-\frac{1}{2^d}J_{Y,T}(f)\right\rvert\ll N^{-r} $$

\noindent for all $N\geqslant 1$ and all $Y\in \mathcal{A}_{P_0,\theta}^+$ satisfying the following two inequalities

$$\displaystyle N^{\epsilon_1}\leqslant \inf_{\alpha\in \Delta_0} \alpha(Y), \leqno (A)$$

$$\displaystyle \sup_{\alpha\in \Delta_0} \alpha(Y)\leqslant N^{\epsilon_2}. \leqno (B)$$
\end{prop}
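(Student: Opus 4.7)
The plan is to run the change-of-weight argument from Arthur's local trace formula, adapted to the $\theta$-split setting as in \cite{BeuGalP}, with the extra complication that the Arthur-truncation weight $\kappa_{N,T,\xi}$ carries the Whittaker twist $\xi(aX)^{-1}$. The goal is to show that, up to an $O(N^{-r})$ error, both $\kappa_{N,T,\xi}(g,h,X)$ and $\frac{1}{2^d}\widetilde{w}_{M,\theta}(g,h,X,Y)$ compute, against the $f^K$-integrand, the same weight attached to a $(G,M,\theta)$-orthogonal set. Once this is done, the proposition follows.

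First, I would use Lemma \ref{lemma Arthur}(ii) together with the formula appearing at the end of its proof to replace $\kappa_N(h^{-1}ag)$ by Arthur's $\Gamma_{M_\natural}^{G_1}\bigl(H_{M_\natural}(a),\mathcal{Y}_{M_\natural}(h,g,T_N)\bigr)$ on the region where $\max(\sigma_{T\backslash G_1}(g),\sigma_{T\backslash G_1}(h))\leqslant cN$; the complementary region contributes $O(N^{-r})$ by Proposition \ref{prop estimate}(ii) combined with the crude bound \ref{def of an expression}(3) and the Harish-Chandra--Schwartz decay of $f$. The character twist $\xi(aX)^{-1}$ is absorbed geometrically: a matrix computation in the Iwasawa decomposition of $\begin{pmatrix}1 & X\\ & 1\end{pmatrix}\begin{pmatrix}g & \\ & h\end{pmatrix}$ converts the phase $\xi(aX)^{-1}$ into a shift of $H_{M,\theta}$ by $\tfrac{1}{2}V(X_\mathfrak{t})$. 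This is exactly the geometric statement of Lemma \ref{lem limit of GMT family}, and is what forces the orthogonal set $\mathcal{Z}(g,h,X)$ to carry the terms $\tfrac{1}{2}(V(X_\mathfrak{t})-wV(X_\mathfrak{t}))$. Replacing $T_N$ by the free parameter $Y$ costs at most a boundary term controlled by condition (B), since $|Y|\leqslant N^{\epsilon_2}$ with $\epsilon_2<1$.

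Second, I would transfer the resulting $(G_1,M_\natural)$-weight to a $(G,M,\theta)$-weight via the decomposition $\mathcal{P}^\theta(M)=\bigsqcup_{w\in W}w\mathcal{P}^{\theta,Q}(M)$ from \ref{Computation}(2): the single Cartan chamber supporting $\kappa_N$ corresponds to one Weyl chamber in $\mathcal{A}_{M,\theta}$, while the new weight $\widetilde{w}_{M,\theta}$ symmetrizes over all $|W|=2^d$ chambers, explaining the factor $1/2^d$. Now I apply the partition of unity \ref{(G,M) and (G,M,theta)-orthogonal sets}(1) with respect to $\mathcal{Z}(g,h,X,Y)$, which expresses the difference $J_{N,T}(f)-\frac{1}{2^d}J_{Y,T}(f)$, up to an $O(N^{-r})$ truncation error, as a sum over $Q\in\mathcal{F}^\theta(M)$ with $Q\ne G$. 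For each proper $Q$, the factor $\tau_{Q,\theta}^G(\Lambda-\mathcal{Z}(g,h,X,Y)_Q)$ forces $H_{M,\theta}(a)$ to sit deep inside the positive cone for $Q$, which, after unfolding, produces a $Q$-weighted $\theta$-orbital integral $\Phi_{M,\theta}^Q(\,\cdot\,,f)$. These vanish identically by the $\theta$-strong cuspidality of $f$ (end of \S\ref{cusp forms and theta-strongly cuspidal functions}), so only the $Q=G$ piece survives, which is precisely $\frac{1}{2^d}J_{Y,T}(f)$.

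The main technical obstacle is the first step: simultaneously aligning the $G_1$-side Cartan truncation $\kappa_N$ with the $\theta$-side Arthur function $\Gamma_{M,\theta}^G$, while absorbing the oscillation $\xi(aX)^{-1}$ into a shift of the orthogonal set, and controlling all the boundary regions where these identifications break down. Condition (A) is needed so that the $Y$-shifted family $\mathcal{Z}(g,h,X,Y)$ is positive on the geometrically relevant range, ensuring $\Gamma_{M,\theta}^G$ behaves as an honest characteristic function; condition (B) keeps everything inside the region $\sigma_{T\backslash G_1}\lesssim cN$ where Lemma \ref{lemma Arthur}(ii) and Lemma \ref{lem limit of GMT family} both apply. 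The remaining error bounds then follow from the weight estimate \ref{first application}(1), the bound (2) of \S\ref{change of weight}, and Proposition \ref{prop estimate}(ii) applied with a sufficiently large exponent.
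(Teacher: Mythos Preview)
Your outline has the right high-level architecture (truncate, apply a partition of unity over $\mathcal{F}^\theta(M)$, kill the proper parabolics by $\theta$-strong cuspidality), but it contains a genuine gap in the first step, and this gap is exactly where the nontrivial content of the proof lies.

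You write that ``a matrix computation in the Iwasawa decomposition \ldots\ converts the phase $\xi(aX)^{-1}$ into a shift of $H_{M,\theta}$ by $\tfrac12 V(X_{\mathfrak t})$.'' This is not correct. Lemma~\ref{lem limit of GMT family} computes $H_{wP,\theta}$ of the element $\begin{pmatrix}1 & X\\ & 1\end{pmatrix}\begin{pmatrix}g & \\ & h\end{pmatrix}$ and explains why the $(G,M,\theta)$-orthogonal set $\mathcal{Z}(g,h,X)$ carries the terms $\tfrac12(V(X_{\mathfrak t})-wV(X_{\mathfrak t}))$; it says nothing about the oscillatory factor $\xi(aX)^{-1}=\psi(\Tr_{\mathcal A/F}(aX))^{-1}$, which is a genuine character in the integration variable $a\in T$ and cannot be absorbed into a shift of any orthogonal set. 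In the paper the integral $\int_T \xi(aX)^{-1}\kappa_N(h^{-1}ag)\,da$ is handled by a Fourier-analytic device you have not mentioned: one first uses the $T^c$-invariance of $\kappa_N$ (Lemma~\ref{lemma Arthur}(ii)) to replace $\xi(aX)^{-1}$ by $\widehat{\varphi^\circ}(aX_{\mathfrak t})$, and then introduces an auxiliary sequence $(\varphi_N)$ of compactly supported functions on $T/T^c$ whose Fourier transforms approximate a specific step function $L$ on $\mathfrak t$ (Lemma~\ref{lem varphi}). The factor $1/2^d$ does not come from ``symmetrizing over Weyl chambers'' in the way you describe; it comes from the exact identity $\sum_{w\in W} L(w\,\cdot)=1$, applied after the change of variable $a\mapsto aX_{\mathfrak t}^{-1}$ (this is equation (10) in the paper's proof). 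Without $L$ and its $W$-symmetry, there is no mechanism to turn the oscillatory weight $\kappa_{N,T,\xi}$ into $\tfrac{1}{2^d}\widetilde w_{M,\theta}$.

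The treatment of the proper parabolics is also too optimistic. The pieces for $R\neq G$ do not literally ``produce a $Q$-weighted $\theta$-orbital integral $\Phi_{M,\theta}^Q$''. The paper splits into two cases according to whether $U_{\overline R}\subset Q$. When $U_{\overline R}\not\subset Q$, the vanishing comes from a support argument: the compact support of $\widehat{\varphi^\circ}$ on $\mathfrak t$ and the positivity of $\mathcal{Z}(g,h,X,Y)$ force the two factors in the integrand to have disjoint support in $a$; $\theta$-strong cuspidality plays no role here. When $U_{\overline R}\subset Q$, one needs the technical Lemma~\ref{lem tech} to show that the weight $\kappa_{N,T,\xi}^{R,Y}$ is constant along $U_1\times U_2$ and depends only on $X_{\overline R}^\perp$, after which an inner integral over $U_{\overline R}$ vanishes by $\theta$-strong cuspidality. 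Both cases rely essentially on the $\varphi_N$-convolution and on $\widehat{\varphi^\circ}$, so without that construction the argument for proper $R$ does not go through either.
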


\noindent\ul{Proof}: For all $C>0$, all $N\geqslant 1$ and all $Y\in \mathcal{A}_{P_0,\theta}^+$, we set

\[\begin{aligned}
\displaystyle J_{N,T,\leqslant C}(f):=\int_{T} & D^{G_1}(t) \int_{T\times T\backslash G_1\times G_1} \int_{\mathfrak{g}_1} f^K\left(\begin{pmatrix} g^{-1} & \\ & h^{-1} \end{pmatrix} \begin{pmatrix} t & \\ & t\end{pmatrix} \begin{pmatrix} 1 & X \\ & 1\end{pmatrix} \begin{pmatrix} g & \\ & h\end{pmatrix}\right) \\
 & \kappa_{N,T,\xi}(g,h,X)\mathbf{1}_{T\backslash G_1,\leqslant C}(g) \mathbf{1}_{T\backslash G_1,\leqslant C}(h) \mathbf{1}_{\mathfrak{g}_1^*,\leqslant C}(g^{-1}X_{\mathfrak{t}}h) dX \frac{dg}{\nu(g)^n} \nu(h)^n dh\omega(t)^{-1} dt
\end{aligned}\]

\noindent and

\[\begin{aligned}
\displaystyle J_{Y,T,\leqslant C}(f):=\int_{T} & D^{G_1}(t) \int_{T\times T\backslash G_1\times G_1} \int_{\mathfrak{g}_1} f^K\left(\begin{pmatrix} g^{-1} & \\ & h^{-1} \end{pmatrix} \begin{pmatrix} t & \\ & t\end{pmatrix} \begin{pmatrix} 1 & X \\ & 1\end{pmatrix} \begin{pmatrix} g & \\ & h\end{pmatrix}\right) \\
 & \widetilde{w}_{M,\theta}(g,h,X,Y)\mathbf{1}_{T\backslash G_1,\leqslant C}(g) \mathbf{1}_{T\backslash G_1,\leqslant C}(h) \mathbf{1}_{\mathfrak{g}_1^*,\leqslant C}(g^{-1}X_{\mathfrak{t}}h) dX \frac{dg}{\nu(g)^n} \nu(h)^n dh\omega(t)^{-1} dt.
\end{aligned}\]

\noindent We fix henceforth an $\epsilon>0$ satisfying $\epsilon<\epsilon_1$. It follows from (1), \ref{def of an expression}.(2) and Proposition \ref{prop estimate}(ii) that for all $r>0$, we have

$$\displaystyle \left\lvert J_{N,T}(f)-J_{N,T,\leqslant N^\epsilon}(f)\right\rvert\ll N^{-r}$$

\noindent and

$$\displaystyle \left\lvert J_{Y,T}(f)-J_{Y,T,\leqslant N^\epsilon}(f)\right\rvert\ll N^{-r}$$

\noindent for all $N\geqslant 1$ and all $Y\in \mathcal{A}_{P_0,\theta}^+$ satisfying inequality (B). Thus it suffices to establish that for all $r>0$, we have

$$\displaystyle \left\lvert J_{N,T,\leqslant N^\epsilon}(f)-\frac{1}{2^d}J_{Y,T,\leqslant N^\epsilon}(f)\right\rvert\ll N^{-r} \leqno (3)$$

\noindent for all $N\geqslant 1$ and all $Y\in \mathcal{A}_{P_0,\theta}^+$ satisfying inequalities (A) and (B).

\vspace{2mm}

\noindent For all $g,h\in G_1$, $X\in \mathfrak{t}^*\oplus \mathfrak{t}^\perp$ and $Y\in \mathcal{A}_{M_0,\theta}$, we have the identity (see \ref{(G,M) and (G,M,theta)-orthogonal sets}(1))

$$\displaystyle \sum_{R\in \mathcal{F}^\theta(M)} \Gamma_{M,\theta}^R(\Lambda, \mathcal{Z}(g,h,X,Y))\tau_{R,\theta}^G(\Lambda-\mathcal{Z}(g,h,X,Y)_R)=1 \leqno (4)$$

\noindent for all $\Lambda\in \mathcal{A}_{M,\theta}$. We are going to modify this decomposition slightly by taking its convolution with a function with integral $1$ and of small support (relative to $N^{\epsilon_1}$) in $C_c^\infty(T)$. The function we need is provided by the next lemma. Before we state the lemma, we need to introduce some more notations. For $\varphi\in C_c^\infty(T)$, we define its ``Fourier transform" (a function on $\mathfrak{t}$) by

$$\displaystyle \widehat{\varphi}(X):=\int_{T} \varphi(a) \xi(aX)^{-1} da,\;\;\;X\in \mathfrak{t}.$$

\noindent We also define a function $L\in C_c^\infty(\mathfrak{t})$ by

$$\displaystyle L(X):=L_{K_1}(X_1)\ldots L_{K_d}(X_d),\;\;\; X\in \mathfrak{t}$$

\noindent where $X=X_1+\ldots+X_d$ is the decomposition of $X$ according to the identification \ref{concrete T}.(3), and for all $1\leqslant i\leqslant d$, the function $L_{K_i}(\cdot)$ is defined to be

$$\displaystyle L_{K_i}(X_i):=\left\{
    \begin{array}{ll}
        1, & \mbox{if } v_{K_i}(X_i)>0; \\
        \frac{1}{2}, & \mbox{if } v_{K_i}(X_i)=0; \\
        0, & \mbox{otherwise.}
    \end{array}
\right.
$$

\noindent Note this function factorizes through the map $X\in \mathfrak{t}\mapsto V(X)\in \mathcal{A}_{M,\theta}$ introduced in \S \ref{Computation}. Thus we can define $L(V(X)):=L(X)$ for all $X\in \mathfrak{t}$. A crucial property of the function $L$ is that it satisfies the identity

$$\displaystyle \sum_{w\in W}L(wX)=1 \leqno (5)$$

\noindent for all $X\in \mathfrak{t}$, where $W$ is the subgroup of the normalizer of $M$ that was introduced in \S \ref{Computation}.

\begin{lem}\label{lem varphi}
There exists a sequence of nonnegative functions $(\varphi_N)_{N\geqslant N_0}$ in $C_c^\infty(T/T^c)$ with the following properties

\begin{itemize}
\item $Supp(\varphi_N)\subseteq \left\{a\in T;\; \sigma_T(a)\leqslant N^\epsilon \right\}$ for all $N\geqslant N_0$;

\item $\displaystyle \int_{T} \varphi_N=1$ for all $N\geqslant N_0$;

\item There exists $c>0$ such that $\displaystyle \left\lvert \widehat{\varphi}_N(X)-L(X)\right\rvert\leqslant e^{-cN^\epsilon}$ for all $X\in \mathfrak{t}$ and all $N\geqslant N_0$.
\end{itemize}
\end{lem}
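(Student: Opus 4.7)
The plan is to exploit the factorization of all relevant data across the fields $K_i$. Using the identifications $T \simeq \prod_i K_i^\times$ and $\mathfrak{t} \simeq \bigoplus_i K_i$, the reduced trace pairing takes the form $\Tr_{\mathcal{A}/F}(aX) = \sum_i c_i \Tr_{K_i/F}(a_i X_i)$ for certain positive rational constants $c_i$ (arising from the embedding of each $K_i$ inside $M_{k_i}(\mathcal{D})$), so $\xi$ factorizes as $\xi(aX) = \prod_i \psi_{K_i}(a_i X_i)$ where $\psi_{K_i} := \psi \circ c_i \Tr_{K_i/F}$ is a nontrivial additive character of $K_i$. Since $L = \prod_i L_{K_i}$ and the Haar measure on $T$ is a product measure, it suffices to construct for each $K = K_i$ a family of nonnegative functions $\varphi_{N,K} \in C_c^\infty(K^\times/\mathcal{O}_K^\times)$ with support in $\{a \in K^\times : \sigma_K(a) \leqslant N^\epsilon\}$, integral one, and one-variable Fourier transform
\[
\widehat{\varphi_{N,K}}(X) := \int_{K^\times} \varphi_{N,K}(a)\psi_K(-aX)\,da
\]
satisfying $\lvert \widehat{\varphi_{N,K}}(X) - L_{K}(X)\rvert \leqslant e^{-cN^\epsilon}$ for all $X \in K$. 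Setting $\varphi_N := \prod_i \varphi_{N,K_i}$ then yields the lemma, using uniform boundedness of the factors $\widehat{\varphi_{N,K_i}}$ and $L_{K_i}$ to pass from a product of exponentially small pointwise errors to a single exponentially small error.

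Fix $K = K_i$, pick a uniformizer $\varpi$, and let $\mathfrak{p}_K^{-n_0}$ denote the conductor of $\psi_K$. Any element of $C_c^\infty(K^\times/\mathcal{O}_K^\times)$ decomposes uniquely as $\varphi = \sum_n c_n \mathbf{1}_{\varpi^n \mathcal{O}_K^\times}$ with finitely many nonzero $c_n$, and a change of variable $a = \varpi^n u$ gives $\widehat{\varphi}(X) = \sum_n c_n \eta(\varpi^n X)$ where $\eta(Y) := \int_{\mathcal{O}_K^\times} \psi_K(-uY)\,du$. Splitting $\mathcal{O}_K^\times = \mathcal{O}_K \setminus \mathfrak{p}_K$ and invoking orthogonality of characters, one checks that $\eta(Y)$ depends only on $v_K(Y)$ and takes three explicit values (positive, negative, or zero) according as $v_K(Y) \geqslant -n_0$, $v_K(Y) = -n_0-1$, or $v_K(Y) \leqslant -n_0-2$. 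In particular $\widehat{\varphi}(X)$ is a step function of $v_K(X)$, and the equation $\widehat{\varphi}(X) = L_K(X)$ reduces to a linear recursion on the sequence $(c_n)$.

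The stabilization of $L_K$ at $v_K(X) \to +\infty$ forces $c_n = 0$ for $n \leqslant -n_0 - 2$, after which the recursion admits a unique solution with all coefficients strictly positive: the two boundary values $c_{-n_0-1}$ and $c_{-n_0}$ are of order one, while $c_{-n_0+k}$ decays geometrically like $q^{-k}$ for $k \geqslant 1$. Truncate this sequence at $n = \lfloor c_0 N^\epsilon \rfloor$ for a sufficiently small absolute constant $c_0 > 0$, and let $\varphi_{N,K}$ be the resulting finite sum. The discrepancy $\widehat{\varphi_{N,K}} - L_K$ equals the tail of a geometric series and is $O(q^{-c_0 N^\epsilon})$ uniformly in $X$, giving the required exponential decay. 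A harmless renormalization by $\bigl(\int \varphi_{N,K}\bigr)^{-1} = 1 + O(e^{-cN^\epsilon})$ restores the integral to $1$ while preserving positivity, and the support condition follows from $\sigma_K(a) \sim \lvert v_K(a)\rvert$ on $K^\times$ once $c_0$ is small enough.

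The main bookkeeping obstacle will be verifying positivity of the two boundary coefficients $c_{-n_0-1}$ and $c_{-n_0}$ directly from the recursion, and then tracking the conductor constants $n_{0,i}$ and log-norm normalizations uniformly across the factors $K_i$ so that a single choice of $c_0$ works for all $i$ and all $N \geqslant N_0$. Apart from this, the entire argument is one-dimensional harmonic analysis on the discrete quotient $K^\times/\mathcal{O}_K^\times \simeq \mathbb{Z}$.
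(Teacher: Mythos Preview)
Your argument is correct and reaches the same destination as the paper --- namely, you construct the exact inverse Fourier transform $\check{L}$ of $L$, observe it has nonnegative coefficients with geometric decay, and then truncate and renormalize. The difference lies in how positivity of the coefficients is obtained. The paper works globally on $T$ and simply observes that $L$ is a positive linear combination of characteristic functions of lattices in $\mathfrak{t}$ (each $L_{K_i}$ being an average of $\mathbf{1}_{\mathcal{O}_{K_i}}$ and $\mathbf{1}_{\mathfrak{p}_{K_i}}$), whence $\check{L}$ is automatically a positive combination of cosets of $T^c$; the decay $\lambda_i \leqslant e^{-c_1\sigma_T(a_i)}$ is then read off from the fact that $\nu^{-1}\check{L}$ is smooth and compactly supported on $\mathfrak{t}$. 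You instead factorize across the $K_i$ and solve the one-dimensional recursion explicitly, checking positivity of each $c_n$ by hand. Your route is more concrete and makes the geometric decay rate $q_K^{-n}$ visible, at the cost of the bookkeeping you flag (conductors, normalizations of $v_K$ versus the paper's $v_F\circ N_{K/F}$, and uniform choice of $c_0$); the paper's route sidesteps all of that with a one-line structural remark but is less explicit about what $\check{L}$ actually looks like.
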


\noindent\ul{Proof}: The Fourier transform $\varphi\mapsto \widehat{\varphi}$ extends to an isomorphism betweem the space of functions $f:T\to \mathbf{C}$ such that $\nu^{-1}f$ extends to a smooth function with compact support on $\mathfrak{t}$ and $C_c^\infty(\mathfrak{t})$. Let $\check{L}$ be the inverse of $L$ for this isomorphism. Then $\check{L}$ is $T^c$-invariant and we can write

$$\displaystyle \check{L}=\sum_{i\in I} \lambda_i \mathbf{1}_{a_iT^c}$$

\noindent where $(a_i)_{i\in I}$ is a family of distinct elements of $T/T^c$ and $(\lambda_i)_{i\in I}$ is a family of positive real numbers (this is because $L$ is a positive linear combination of characteristic functions of lattices in $\mathfrak{t}$) summing to $\vol(T^c)^{-1}$ (since $L(0)=1$). Moreover, there exists $c_1>0$ such that $\lambda_i\leqslant e^{-c_1\sigma_T(a_i)}$ for all $i\in I$ (this follows from the fact that $\nu^{-1}\check{L}$ extends to a smooth function with compact support on $\mathfrak{t}$). Set

$$\displaystyle \varphi_N=\left(\vol(T^c)\sum_{i;\sigma_T(a_i)\leqslant N^\epsilon} \lambda_i \right)^{-1} \sum_{i;\sigma_T(a_i)\leqslant N^\epsilon} \lambda_i \mathbf{1}_{a_iT^c}$$

\noindent for all positive integer $N$ big enough so that $\sum_{i;\sigma_T(a_i)\leqslant N^\epsilon} \lambda_i\neq 0$. Then this sequence of functions obviously satisfies the first and second point of the lemma. Moreover, denoting by $\lVert .\rVert_{\infty}$ the sup-norm on $\mathfrak{t}$, we have

\[\begin{aligned}
\displaystyle \lVert \widehat{\varphi}_N-L\rVert_{\infty}  & \leqslant \left( \left(\vol(T^c)\sum_{i;\sigma_T(a_i)\leqslant N^\epsilon} \lambda_i\right)^{-1}-1\right)\sum_{i\in I;\sigma_T(a_i)\leqslant N^\epsilon} \lambda_i \lVert \widehat{\mathbf{1}_{a_iT^c}}\rVert_{\infty}+ \sum_{i\in I;\sigma_T(a_i)> N^\epsilon} \lambda_i \lVert \widehat{\mathbf{1}_{a_iT^c}}\rVert_{\infty} \\
 & \leqslant 2\vol(T^c) \sum_{i\in I;\sigma_T(a_i)> N^\epsilon} e^{-c_1\sigma_T(a_i)}\leqslant 2 e^{-c_1N^\epsilon/2} \int_T e^{-c_1\sigma_T(a)/2}da
\end{aligned}\]

\noindent for all $N\gg 1$. Since the last integral above is convergent, this shows the last point of the lemma for $c<c_1/2$ and $N_0$ large enough. $\blacksquare$

\vspace{2mm}

\noindent We choose a sequence of functions $(\varphi_N)_{N\geqslant N_0}$ as in the lemma. For all $g,h\in G_1$, all $X\in \mathfrak{t}^*\oplus \mathfrak{t}^\perp$, all $Y\in \mathcal{A}_{M_0,\theta}$ and all $R\in\mathcal{F}^\theta(M)$, we set

$$\displaystyle \Delta_R(\Lambda,g,h,X,Y):=\Gamma^R_{M,\theta}(\Lambda,\mathcal{Z}(g,h,X,Y))\tau_{R,\theta}^G(\Lambda-\mathcal{Z}(g,h,X,Y)_R),\; \Lambda\in \mathcal{A}_{M,\theta}.$$

\noindent And for all $\varphi\in C_c^\infty(T)$, we define the convolution $\varphi\ast \Delta_R(.,g,h,X,Y)$ as usual by

$$\displaystyle \varphi\ast \Delta_R(\Lambda,g,h,X,Y):=\int_{T} \varphi(a)\Delta_R(\Lambda-H_{M,\theta}(a),g,h,X,Y)da,\; \Lambda\in \mathcal{A}_{M,\theta}.$$

\noindent By (4) and the second property satisfied by the sequence $(\varphi_N)_{N\geqslant N_0}$, we have

$$\displaystyle \sum_{R\in \mathcal{F}^\theta(M)} \varphi_N\ast \Delta_R(\Lambda,g,h,X,Y)=1 \leqno (6)$$

\noindent for all $N\geqslant N_0$, all $\Lambda\in \mathcal{A}_{M,\theta}$ and all $(g,h,X,Y)\in G_1^2\times (\mathfrak{t}^*\oplus \mathfrak{t}^\perp)\times \mathcal{A}_{P_0,\theta}^+$.

\vspace{2mm}

\noindent Set

\[\begin{aligned}
\displaystyle J^{R,Y}_{N,T,\leqslant C}(f):= & \int_{T} D^{G_1}(t) \int_{T\times T\backslash G_1\times G_1} \int_{\mathfrak{g}_1} f^K\left(\begin{pmatrix} g^{-1} & \\ & h^{-1} \end{pmatrix} \begin{pmatrix} t & \\ & t\end{pmatrix} \begin{pmatrix} 1 & X \\ & 1\end{pmatrix} \begin{pmatrix} g & \\ & h\end{pmatrix}\right) \\
 & \kappa^{R,Y}_{N,T,\xi}(g,h,X)\mathbf{1}_{T\backslash G_1,\leqslant C}(g) \mathbf{1}_{T\backslash G_1,\leqslant C}(h) \mathbf{1}_{\mathfrak{g}_1^*,\leqslant C}(g^{-1}X_{\mathfrak{t}}h) dX \frac{dg}{\nu(g)^n} \nu(h)^n dh\omega(t)^{-1}dt
\end{aligned}\]

\noindent for all $N\geqslant N_0$, all $Y\in \mathcal{A}_{M_0,\theta}$, all $R\in \mathcal{F}^\theta(M)$ and all $C>0$, where

$$\displaystyle \kappa_{N,T,\xi}^{R,Y}(g,h,X):=\int_{T} \xi(aX)^{-1}\kappa_N(h^{-1}ag)\varphi_N\ast \Delta_R(H_{M,\theta}(a),g,h,X,Y)da$$

\noindent for all $g,h\in G_1$ and $X\in \mathfrak{t}^*\oplus \mathfrak{t}^\perp$. Then, by (6) we have

$$\displaystyle J_{N,T,\leqslant C}(f)=\sum_{R\in \mathcal{F}^\theta(M)} J^{R,Y}_{N,T,\leqslant C}(f)$$

\noindent for all $C>0$, all $N\geqslant N_0$ and all $Y\in \mathcal{A}_{M_0,\theta}$. Thus in order to prove (3), it suffices to establish the following two facts

\vspace{2mm}

\noindent (7) \hspace{5mm} There exists $c>0$ such that

$$\displaystyle \left\lvert J^{G,Y}_{N,T,\leqslant N^\epsilon}(f)-\frac{1}{2^d}J_{Y,T,\leqslant N^\epsilon}(f)\right\rvert\ll e^{-cN^\epsilon}$$

\hspace{10mm} for all $N\geqslant N_0$ and all $Y\in \mathcal{A}_{P_0,\theta}^+$ satisfying inequality (B).

\vspace{2mm}

\noindent (8) \hspace{5mm} For all $R\in \mathcal{F}^\theta(M)$ with $R\neq G$ and all $r>0$, we have

$$\displaystyle \left\lvert J^{R,Y}_{N,T,\leqslant N^\epsilon}(f)\right\rvert\ll N^{-r}$$

\hspace{10mm} for all $N\geqslant N_0$ and all $Y\in \mathcal{A}_{P_0,\theta}^+$ satisfying inequality (A).

\vspace{2mm}

\noindent From now on and until the end of the proof, when $N\geqslant 1$ is fixed, we will say that $(g,h,X)\in G_1^2\times \mathfrak{g}_1$ is {\it in the good range} if $X\in \mathfrak{t}^*\oplus \mathfrak{t}^\perp$ and we have the inequalities $\sigma_{T\backslash G_1}(g)\leqslant N^\epsilon$, $\sigma_{T\backslash G_1}(h)\leqslant N^\epsilon$ and $\sigma_{\mathfrak{g}_1^*}(g^{-1}X_{\mathfrak{t}}h)\leqslant N^\epsilon$.

\vspace{2mm}

\noindent\ul{Proof of (7)}: By Proposition \ref{prop estimate}(i), it suffices to show the existence of $c>0$ and $k>0$ such that

$$\displaystyle \left\lvert \kappa_{N,T,\xi}^{G,Y}(g,h,X)-\frac{1}{2^d}\widetilde{w}_{M,\theta}(g,h,X,Y)\right\rvert\ll e^{-cN^\epsilon}\sigma_{T\backslash G_1}(g)^k\sigma_{T\backslash G_1}(h)^k\sigma_{\mathfrak{g}_1^*}(g^{-1}X_{\mathfrak{t}}h)^k (1+\lvert Y\rvert)^k\leqno (9)$$

\noindent for all $N\geqslant N_0$, all $Y\in \mathcal{A}_{P_0,\theta}^+$ satisfying inequality (B) and all $(g,h,X)\in G_1\times G_1\times \mathfrak{g}_1$ in the good range.

\vspace{2mm}

\noindent By \ref{(G,M) and (G,M,theta)-orthogonal sets}(2) and the first property satisfied by the sequence $(\varphi_N)_{N\geqslant N_0}$, there exists $C>0$ such that for all $(g,h,X,Y)\in G_1\times G_1\times (\mathfrak{t}^*\oplus \mathfrak{t}^\perp)\times \mathcal{A}^+_{P_0,\theta}$ and all $N\geqslant 1$, we have

$$\displaystyle \varphi_N\ast \Gamma_{M,\theta}^G(H_{M,\theta}(a),\mathcal{Z}(g,h,X,Y))\neq 0 \Rightarrow \sigma_{G_1}(a)\leqslant C\left( \sigma_{G_1}(g)+\sigma_{G_1}(h)+\sigma_{\mathfrak{g}_1^*}(g^{-1}X_{\mathfrak{t}}h)+\lvert Y\rvert+N^\epsilon\right)$$

\vspace{1mm}

\noindent for all $a\in T$. Hence by Lemma \ref{lemma Arthur}(i), together with the inequality $\epsilon<\epsilon_2<1$, for $N$ sufficiently large, we have

$$\displaystyle \varphi_N\ast \Gamma_{M,\theta}^G(H_{M,\theta}(a),\mathcal{Z}(g,h,X,Y))\neq 0 \Rightarrow \kappa_N(h^{-1}ag)=1$$

\noindent for all $Y\in \mathcal{A}_{P_0,\theta}^+$ satisfying inequality (B) and all $(g,h,X)\in G_1\times G_1\times \mathfrak{g}_1$ in the good range. This implies, again for $N$ sufficiently large, that

\[\begin{aligned}\displaystyle \kappa_{N,T,\xi}^{G,Y}(g,h,X) & =\int_T \xi(aX)^{-1}\varphi_N\ast \Gamma_{M,\theta}^G(H_{M,\theta}(a),\mathcal{Z}(g,h,X,Y))da \\
 & =\int_T \widehat{\varphi}_N(aX_{\mathfrak{t}}) \Gamma_{M,\theta}^G(H_{M,\theta}(a),\mathcal{Z}(g,h,X,Y))da
\end{aligned}\]

\noindent for all $Y\in \mathcal{A}_{P_0,\theta}^+$ satisfying inequality (B) and all $(g,h,X)\in G_1\times G_1\times \mathfrak{g}_1$ in the good range. By (2) and the third property satisfied by the sequence $(\varphi_N)_{N\geqslant N_0}$, there exist $c>0$ and $k>0$ such that

\[\begin{aligned}
\displaystyle & \left\lvert\int_T \widehat{\varphi}_N(aX_{\mathfrak{t}}) \Gamma_{M,\theta}^G(H_{M,\theta}(a),\mathcal{Z}(g,h,X,Y))da-\int_T L(aX_{\mathfrak{t}}) \Gamma_{M,\theta}^G(H_{M,\theta}(a),\mathcal{Z}(g,h,X,Y))da\right\rvert \\
 & \leqslant e^{-cN^\epsilon}\int_T \lvert\Gamma_{M,\theta}^G(H_{M,\theta}(a),\mathcal{Z}(g,h,X,Y))\rvert da \ll e^{-cN^\epsilon} \sigma_{T\backslash G_1}(g)^k \sigma_{T\backslash G_1}(h)^k \sigma_{\mathfrak{g}_1^*}(g^{-1}X_{\mathfrak{t}}h)^k (1+\lvert Y\rvert)^k
\end{aligned}\]

\noindent for all $N\geqslant N_0$ and all $(g,h,X,Y)\in G_1\times G_1\times (\mathfrak{t}^*\oplus \mathfrak{t}^\perp)\times \mathcal{A}_{M_0,\theta}$. Thus in order to show (9), it only remains to establish the following identity

$$\displaystyle \int_T L(aX_{\mathfrak{t}}) \Gamma_{M,\theta}^G(H_{M,\theta}(a),\mathcal{Z}(g,h,X,Y))da=\frac{1}{2^d}\widetilde{w}_{M,\theta}(g,h,X,Y) \leqno (10)$$

\noindent for all $(g,h,X,Y)\in G_1\times G_1\times (\mathfrak{t}^*\oplus \mathfrak{t}^\perp)\times \mathcal{A}_{M_0,\theta}$. Fix such $(g,h,X,Y)$. After the variable change $a\mapsto aX_{\mathfrak{t}}^{-1}$, we get

$$\displaystyle \int_{T} L(aX_{\mathfrak{t}})\Gamma^G_{M,\theta}(H_{M,\theta}(a),\mathcal{Z}(g,h,X,Y)) da=\int_{T} L(a)\Gamma^G_{M,\theta}(H_{M,\theta}(a),\mathcal{Z}(g,h,X,Y)+\frac{1}{2}V(X_{\mathfrak{t}}))da$$

\noindent (Note that $H_{M,\theta}(X_{\mathfrak{t}})=\frac{1}{2}V(X_{\mathfrak{t}})$). Moreover, we easily check from the definitions that the $(G,M,\theta)$-orthogonal set $\mathcal{X}(g,h,X,Y):=\mathcal{Z}(g,h,X,Y)+\frac{1}{2}V(X_{\mathfrak{t}})$ has the property that $\mathcal{X}(g,h,X,Y)_{wP}=w\mathcal{X}(g,h,X,Y)_P$ for all $w\in W$ and all $P\in \mathcal{P}^\theta(M)$. This implies that the function $\Gamma^G_{M,\theta}(.,\mathcal{X}(g,h,X,Y))$ is $W$-invariant and by (5) it follows that

\[\begin{aligned}
\displaystyle \int_{T} L(a)\Gamma^G_{M,\theta}(H_{M,\theta}(a),\mathcal{X}(g,h,X,Y))da & =\frac{1}{\lvert W\rvert}\sum_{w\in W}\int_{T} L(wa) \Gamma^G_{M,\theta}(H_{M,\theta}(a),\mathcal{X}(g,h,X,Y))da \\
 & =\frac{1}{2^d}\int_{T} \Gamma^G_{M,\theta}(H_{M,\theta}(a),\mathcal{X}(g,h,X,Y))da \\
 & =\frac{1}{2^d}\int_{T} \Gamma^G_{M,\theta}(H_{M,\theta}(a),\mathcal{Z}(g,h,X,Y))da
\end{aligned}\]

\noindent where to get the last equality we have performed the variable change $a\mapsto aX_{\mathfrak{t}}$. This proves (10) and ends the proof of (7).

\vspace{2mm}

\noindent \ul{Proof of (8)}: Let $R\in \mathcal{F}^\theta(M)$ with $R\neq G$. We will need to rewrite slightly the weight $\kappa_{N,T,\xi}^{R,Y}(g,h,X)$. By Lemma \ref{lemma Arthur}(ii) and since the functions $(\varphi_N)_{N\geqslant N_0}$ are $T^c$-invariant, for $N$ big enough, all $Y\in \mathcal{A}_{M_0,\theta}$ and all $(g,h,X)\in G_1\times G_1 \times \mathfrak{g}_1$ in the good range, we have

$$\displaystyle \kappa_{N,T,\xi}^{R,Y}(g,h,X)=\vol(T^c)^{-1}\int_{T} \int_{T^c} \xi(a_0aX)^{-1}da_0 \kappa_N(h^{-1}ag)\varphi_N\ast\Delta_R(H_{M,\theta}(a),g,h,X,Y) da$$

\noindent or equivalently

$$\displaystyle \kappa_{N,T,\xi}^{R,Y}(g,h,X)=\int_{T} \widehat{\varphi^\circ}(aX_{\mathfrak{t}}) \kappa_N(h^{-1}ag)\varphi_N\ast\Delta_R(H_{M,\theta}(a),g,h,X,Y) da$$

\noindent where $\varphi^\circ:=\vol(T^c)^{-1}\mathbf{1}_{T^c}$.

\vspace{2mm}

\noindent In what follows, for all $a\in T$, we will write $a=(a_1,\ldots,a_d)$ for the decomposition of $a$ according to the identification \ref{concrete T}.(1) (so that $a_i\in K_i^\times$ for all $1\leqslant i\leqslant d$). Recall that we have identified $\mathcal{A}_{M,\theta}$ with $\mathbf{R}^d$ (see \S \ref{Computation}), the identification being so that

$$\displaystyle H_{M,\theta}(a)=\frac{1}{2}(v_{K_1}(a_1),\ldots,v_{K_d}(a_d))$$

\noindent for all $a\in T$ where $v_{K_i}:=v_F\circ N_{K_i/F}$ for all $1\leqslant i\leqslant d$. Moreover the set of roots of $A_{M,\theta}$ in $U_Q$, identified to a subset of $\mathcal{A}_{M,\theta}^*$, can be explicitly described as

$$\displaystyle R(A_{M,\theta},U_Q)=\left\{-\lambda_i-\lambda_j\mid 1\leqslant i,j\leqslant d \right\}$$

\noindent where for all $1\leqslant i\leqslant d$, we have denoted by $\lambda_i$ the functional $\mathcal{A}_{M,\theta}=\mathbf{R}^d\to \mathbf{R}$ given by $\lambda_i(x_1,\ldots,x_d)=f_ix_i$ with $f_i:=[K_i:F]$.

\noindent Write $R=L_RU_R$ for the unique Levi decomposition of $R$ with $M\subset L_R$ and let $\overline{R}=L_RU_{\overline{R}}$ be the parabolic subgroup opposite to $R$ (with respect to $L_R$). We now distinguish two cases:

\begin{itemize}
\item First assume that $U_{\overline{R}}$ is not included in the parabolic subgroup $Q$. Then, we will actually prove that for $N$ big enough, we have

$$\displaystyle \kappa_{N,T,\xi}^{R,Y}(g,h,X)=0$$

\noindent for all $Y\in \mathcal{A}_{P_0,\theta}^+$ satisfying inequality (A) and all $(g,h,X)\in G_1\times G_1\times \mathfrak{g}_1$ in the good range. This would implies that

$$\displaystyle J^{R,Y}_{N,T,\leqslant N^\epsilon}(f)=0$$

\noindent for $N$ big enough and all $Y\in \mathcal{A}_{P_0,\theta}^+$ satisfying inequality (A).

By our assumption, we have $U_{R}\cap U_{Q}\neq \{ 1\}$ which implies that $R(A_{M,\theta},U_{R})\cap R(A_{M,\theta},U_{Q})\neq \emptyset$. Let $\alpha \in R(A_{M,\theta},U_{R})\cap R(A_{M,\theta},U_{Q})$. Then, by the previous concrete description of $R(A_{M,\theta},U_Q)$, there exist $1\leqslant i,j\leqslant d$ such that

$$\displaystyle \langle \alpha,H_{M,\theta}(a)\rangle=-\frac{f_iv_{K_i}(a_i)+f_jv_{K_j}(a_j)}{2} \leqno (11)$$

\noindent for all $a\in T$. As $\epsilon<\epsilon_1$, there exists $c_1>0$ such that for all $N$ sufficiently large, all $Y\in \mathcal{A}_{P_0,\theta}^+$ satisfying inequality (A) and all $(g,h,X)\in G_1\times G_1\times \mathfrak{g}_1$ in the good range, we have $\langle \beta,\mathcal{Z}(g,h,X,Y)_P\rangle\geqslant c_1N^{\epsilon_1}$ for all $P\in \mathcal{P}^\theta(M)$ and all $\beta\in R(A_{M,\theta},U_P)$. In particular, $\mathcal{Z}(g,h,X,Y)_P\in \mathcal{A}_{P,\theta}^+$ for all $P\in \mathcal{P}^\theta(M)$ and the $(G,M,\theta)$-orthogonal set $\mathcal{Z}(g,h,X,Y)$ is positive. This implies that $\Delta_R(.,\mathcal{Z}(g,h,X,Y))$ is the characteristic function of the sum of $\mathcal{A}_{R,\theta}^+$ with the convex hull of the family $(\mathcal{Z}(g,h,X,Y)_P)_{P\subset R}$ and hence that

$$\displaystyle \Delta_R(\Lambda,\mathcal{Z}(g,h,X,Y))\neq 0\Rightarrow \langle \alpha,\Lambda\rangle\geqslant c_1N^{\epsilon_1}$$

\noindent for all $\Lambda\in \mathcal{A}_{M,\theta}$. Again since $\epsilon<\epsilon_1$ and by the first property satisfied by the sequence $(\varphi_N)_{N\geqslant N_0}$, it follows that there exists $c_2>0$ such that

$$\displaystyle \varphi_N\ast \Delta_R(\Lambda,\mathcal{Z}(g,h,X,Y))\neq 0\Rightarrow \langle \alpha,\Lambda\rangle\geqslant c_2N^{\epsilon_1}$$

\noindent for all $\Lambda\in \mathcal{A}_{M,\theta}$, all $N$ sufficiently large, all $Y\in \mathcal{A}_{P_0,\theta}^+$ satisfying inequality (A) and all $(g,h,X)\in G_1\times G_1\times \mathfrak{g}_1$ in the good range. On the other hand, since the function $\widehat{\varphi^\circ}$ is compactly supported on $\mathfrak{t}$, by (11) there exists $c_3>0$ such that

$$\displaystyle \widehat{\varphi^\circ}(aX_{\mathfrak{t}})\neq 0\Rightarrow \langle \alpha, H_{M,\theta}(a)\rangle \leqslant c_3 N^{\epsilon}$$

\noindent for all $N\geqslant 1$, all $a\in T$ and all $(g,h,X)\in G_1\times G_1\times \mathfrak{g}_1$ in the good range. As $\epsilon<\epsilon_1$, it follows that for $N$ sufficiently large, the supports of the functions

$$\displaystyle a\in T\mapsto \varphi_N\ast \Delta_R(H_{M,\theta}(a),\mathcal{Z}(g,h,X,Y))$$

\noindent and

$$\displaystyle a\in T\mapsto \widehat{\varphi^\circ}(aX_{\mathfrak{t}})$$

\noindent are disjoint and hence

$$\displaystyle \kappa_{N,T,\xi}^{R,Y}(g,h,X)=0$$

\noindent for all $Y\in \mathcal{A}_{P_0,\theta}^+$ satisfying inequality (A) and all $(g,h,X)\in G_1\times G_1\times \mathfrak{g}_1$ in the good range. This proves the claim and ends the proof of (8) in this case.

\item Now assume that $U_{\overline{R}}\subset Q$ or equivalently $U_Q\subset \overline{R}$. Let $V_{\overline{R}}$ and $V_{\overline{R}}^\perp$ be the subspaces of $\mathfrak{g}_1$ such that

$$\displaystyle U_{\overline{R}}\cap U_Q=\left\{\begin{pmatrix} 1 & X \\ & 1 \end{pmatrix}\mid X\in V_{\overline{R}} \right\},$$

$$\displaystyle L_R\cap U_Q=\left\{\begin{pmatrix} 1 & X \\ & 1 \end{pmatrix}\mid X\in V^\perp_{\overline{R}} \right\}$$

\noindent (we can show that $V_{\overline{R}}^\perp$ is the orthogonal of $V_{\overline{R}}$ with respect to $\langle .,.\rangle$ but we won't need it). Let $U_1$, $U_2$ be the unipotent subgroups of $G_1$ such that $U_{\overline{R}}\cap L=U_1\times U_2$ and let $L_1$ be the Levi subgroup of $G_1$ such that $L_R\cap L=L_1\times L_1$ (that $L_R\cap L$ is of this form follows from the fact that $R$ is $\theta$-split). Then, we have $\mathfrak{g}_1=V_{\overline{R}}\oplus V_{\overline{R}}^\perp$ (this follows from the fact that $U_Q=(L_R\cap U_Q)(U_{\overline{R}}\cap U_Q)$) and $u_1Xu_2^{-1}-X\in V_{\overline{R}}^\perp$ for all $(u_1,u_2)\in U_1\times U_2$ and all $X\in V_{\overline{R}}^\perp$ (this follows from the fact that $l^{-1}ulu^{-1}\in U_{\overline{R}}$ for all $(l,u)\in L_R\times U_{\overline{R}}$). We will denote by $X\mapsto X_{\overline{R}}^\perp$ the projection $\mathfrak{g}_1\to V_{\overline{R}}^\perp$ relative to the decomposition $\mathfrak{g}_1=V_{\overline{R}}\oplus V_{\overline{R}}^\perp$.

\vspace{2mm}

\noindent For all $(g,h,Y)\in G_1\times G_1\times \mathcal{A}_{P_0,\theta}^+$, we introduce a new $(G,M,\theta)$-orthogonal set $\mathcal{Z}(g,h,Y)$ defined by

$$\displaystyle \mathcal{Z}(g,h,Y)_{w\overline{P}}:=Y_{w\overline{P}}-wH_{P,\theta}\begin{pmatrix} g &\\ & h \end{pmatrix}$$

\noindent for all $P\in \mathcal{P}^{\theta,Q}(M)$ and all $w\in W$ (that it indeed defines a $(G,M,\theta)$-orthogonal set follows from Lemma \ref{lem limit of GMT family}). As before, we set

$$\displaystyle \Delta_R(\Lambda,\mathcal{Z}(g,h,Y)):=\Gamma^R_{M,\theta}(\Lambda,\mathcal{Z}(g,h,Y))\tau^G_{R,\theta}(\Lambda-\mathcal{Z}(g,h,Y)),\;\;\; \Lambda\in \mathcal{A}_{M,\theta}.$$

\noindent For all $N\geqslant N_0$ and all $Y\in \mathcal{A}_{P_0,\theta}^+$, we define a new weight by

$$\displaystyle \kappa_{N,T,\xi}^{R,Y,\star}(g,h,X):=\int_T \xi(aX_{\mathfrak{t}})^{-1}\kappa_N(h^{-1}ag) \varphi_N\ast \Delta_R(H_{M,\theta}(a),\mathcal{Z}(g,h,Y))da$$

\noindent for all $(g,h,X)\in G_1\times G_1\times \mathfrak{g}_1$. We will now need the following lemma whose proof is postponed to the next section:

\begin{lem}\label{lem tech}
For $N$ big enough, we have

$$\displaystyle \kappa_{N,T,\xi}^{R,Y}(u_1l_1k_1,u_2l_2k_2,X)=\kappa_{N,T,\xi}^{R,Y,\star}(l_1,l_2,X_{\overline{R}}^\perp)$$

\noindent for all $(u_1,u_2)\in U_1\times U_2$, $(l_1,l_2)\in L_1\times L_1$, $(k_1,k_2)\in K_1\times K_1$, $X\in \mathfrak{g}_1$ and $Y\in \mathcal{A}_{P_0,\theta}^+$ such that $Y$ satisfies inequality (A) and $(u_1l_1k_1,u_2l_2k_2,X)$ is in the good range.
\end{lem}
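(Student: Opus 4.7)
The plan is to verify, termwise and on the effective support of the integrand over $a\in T$, that each of the three factors appearing in $\kappa_{N,T,\xi}^{R,Y}(u_1l_1k_1,u_2l_2k_2,X)$---namely the character $\xi(aX)^{-1}$, the truncation $\kappa_N((u_2l_2k_2)^{-1}a(u_1l_1k_1))$, and the combinatorial weight $\varphi_N\ast \Delta_R(H_{M,\theta}(a),u_1l_1k_1,u_2l_2k_2,X,Y)$---can be replaced by the corresponding factor of $\kappa_{N,T,\xi}^{R,Y,\star}(l_1,l_2,X_{\overline{R}}^\perp)$. A trivial first reduction disposes of $k_1,k_2$: $\kappa_N$ is $K_1$-biinvariant, the maps $H_{P,\theta}$ are right $K$-invariant by construction, and $\xi(aX)$ does not involve $k_1,k_2$; hence one may assume $k_1=k_2=1$.

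The core step addresses the unipotent factors $u_1,u_2$ together with the $X$-dependence of the weight. Under inequality $(A)$ and the good-range assumption, the orthogonal set $\mathcal{Z}(g,h,X,Y)$ is positive, with all root pairings of order at least $N^{\epsilon_1}$. By property \ref{(G,M) and (G,M,theta)-orthogonal sets}(4), the function $\Delta_R(\Lambda,\mathcal{Z}(g,h,X,Y))$ restricted to the chamber $\overline{\mathcal{A}_{P,\theta}^+}$ containing $H_{M,\theta}(a)$ depends only on $\mathcal{Z}(g,h,X,Y)_P$ for the corresponding $P\in\mathcal{P}^{\theta,R}(M)$. For such $P$ the Weyl element $w$ appearing in the definition of $\mathcal{Z}(g,h,X,Y)$ is trivial, so the $X$-dependent shift $\tfrac{1}{2}(V(X_\mathfrak{t})-wV(X_\mathfrak{t}))$ vanishes and $\mathcal{Z}(g,h,X,Y)_P = \mathcal{Z}(g,h,Y)_P$. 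Moreover, the inclusion $P\subset R$, combined with the standing assumption $U_{\overline{R}}\subset Q$, forces $U_P\supset U_{\overline{R}}\cap L = U_1\times U_2$, so that left-$U_P$-invariance of $H_{P,\theta}$ yields
\[
H_{P,\theta}\begin{pmatrix} u_1l_1 & \\ & u_2l_2 \end{pmatrix}
= H_{P,\theta}\begin{pmatrix} l_1 & \\ & l_2 \end{pmatrix}.
\]
A parallel analysis of the truncation $\kappa_N$, using the description $\kappa_N(h^{-1}ag)=\Gamma_M^{G_1}(H_M(a),\mathcal{Y}_M(h,g,T_N))$ available in the good range (cf.\ the proof of Lemma \ref{lemma Arthur}(ii)) together with the analogous inclusion of $U_P$ in $G_1$-parabolics, gives $\kappa_N((u_2l_2)^{-1}a(u_1l_1))=\kappa_N(l_2^{-1}al_1)$.

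For the character factor, the orthogonality $\Tr(aX_{\mathfrak{t}^\perp})=0$ rewrites $\xi(aX)^{-1}=\xi(aX_\mathfrak{t})^{-1}$; splitting $X_\mathfrak{t}=(X_{\overline{R}})_\mathfrak{t}+(X_{\overline{R}}^\perp)_\mathfrak{t}$, one must absorb the spurious factor $\xi(a(X_{\overline{R}})_\mathfrak{t})^{-1}$. This is done by the change of variables $u_2^{-1}au_1 = a\cdot v$ with $v=a^{-1}u_2^{-1}au_1$ in the unipotent subgroup corresponding to $V_{\overline{R}}$; the $K_1$-biinvariance of $\kappa_N$ together with the good range eliminates $v$ from the truncation, while the shift in $\xi$ produced by moving $a$ past $u_2^{-1}$ yields precisely the compensating factor $\xi(a(X_{\overline{R}})_\mathfrak{t})$.

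The main obstacle is combinatorial-geometric bookkeeping: one must check that the three support conditions---the support of $\kappa_N$, of size $O(N)$ by Lemma \ref{lemma Arthur}(i); the support of $\varphi_N\ast\Delta_R$, of size at most $O(N^{\epsilon_2})$ by inequality $(B)$ and the first property of $\varphi_N$; and the good range, of size $O(N^\epsilon)$---are simultaneously compatible, so that all the identifications above hold \emph{literally} and not merely up to an exponentially small error. The crucial hypothesis $\epsilon<\epsilon_1$, together with inequality $(A)$ on $Y$, separates the scales enough to make all the restrictions of $\Delta_R$ and of the Arthur function $\Gamma_M^{G_1}$ land in regions where the claimed invariances are exact, yielding the desired equality for $N$ sufficiently large.
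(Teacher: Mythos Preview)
Your proposal has the right overall shape---split into the three factors and show each can be replaced---but it misses the single mechanism that makes all three reductions possible, and as a consequence two of your three arguments do not go through.

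The essential first step in the paper's proof, which you omit, is to use the $T^c$-invariance of $\kappa_N$ (Lemma \ref{lemma Arthur}(ii)) and of $\varphi_N$ to average the integrand over $T^c$. This replaces the oscillatory factor $\xi(aX)^{-1}$ by $\widehat{\varphi^\circ}(aX_{\mathfrak t})$, the Fourier transform of $\vol(T^c)^{-1}\mathbf 1_{T^c}$, which is \emph{compactly supported} on $\mathfrak t$. Only after this replacement does one obtain a genuine constraint on $a$: namely $v_{K_i}(a_i)\geq -c_3N^\epsilon$, forcing $H_{M,\theta}(a)$ into the region $\mathbf R_+^d+c_3Z_N=\bigcup_{P\in\mathcal P^{Q,\theta}(M)}\overline{\mathcal A_{\overline P,\theta}^+}+c_3Z_N$. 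Without this step your ``support conditions'' paragraph is empty, because $\xi(aX)^{-1}$ has modulus $1$ everywhere and imposes no localization.

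This has two concrete consequences. First, your chamber argument misidentifies the relevant $P$: the paper restricts to $\overline{\mathcal A_{\overline P,\theta}^+}$ with $P\in\mathcal P^{Q,\theta}(M)$ (coming from the support of $\widehat{\varphi^\circ}$), for which $w=1$ automatically and hence $\mathcal Z(u_1l_1,u_2l_2,X,Y)_{\overline P}=Y_{\overline P}-H_{P,\theta}\left(\begin{smallmatrix}u_1l_1&\\&u_2l_2\end{smallmatrix}\right)$; your claim that $P\in\mathcal P^{\theta,R}(M)$ forces $w=1$ is not correct in general. Second, your treatments of $\kappa_N$ and of the character are both off. For $\kappa_N$, the paper does \emph{not} use the description $\kappa_N=\Gamma_M^{G_1}(\cdot,\mathcal Y_M)$; instead it shows that on the support of $\varphi_N\ast\Delta_R$ one has $\langle\alpha,H_T(a)\rangle\leq -c_2N^{\epsilon_1}$ for every $\alpha\in R(A_T,U_1)\cup R(A_T,V_{\overline R,\mathfrak t})$, which makes $l_2^{-1}(u_2^{-1}au_1a^{-1})l_2$ and $l_1^{-1}(a^{-1}u_2^{-1}a)l_1$ land in $K_1$ and simultaneously pushes $a(X_{\mathfrak t}-X_{\overline R,\mathfrak t}^\perp)$ into a lattice of invariance for $\widehat{\varphi^\circ}$. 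Your ``change of variables $u_2^{-1}au_1=a\cdot v$'' producing a compensating $\xi$-factor does not make sense: the integral is over $a$, not over $u_1,u_2$, and $\xi(aX)$ involves neither $u_1$ nor $u_2$, so nothing is shifted by ``moving $a$ past $u_2^{-1}$''.
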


\noindent Assuming the above lemma, by the Iwasawa decompositions $G_1=U_1L_1K_1=U_2L_1K_1$ and since $K_1\times K_1\subset K$, for $N$ sufficiently large, we have

\[\begin{aligned}
\displaystyle J^{R,Y}_{N,T,\leqslant N^\epsilon}(f)= & \int_{T} D^{G_1}(t) \int_{T\times T\backslash L_1\times L_1} \int_{U_1\times U_2} \int_{\mathfrak{g}_1}  \\
 & f^K\left(\begin{pmatrix} l_1^{-1}u_1^{-1} & \\ & l_2^{-1}u_2^{-1} \end{pmatrix} \begin{pmatrix} t & \\ & t\end{pmatrix} \begin{pmatrix} 1 & X \\ & 1\end{pmatrix} \begin{pmatrix} u_1l_1 & \\ & u_2l_2\end{pmatrix}\right) \\
 & \kappa^{R,Y,\star}_{N,T,\xi}(l_1,l_2,X_{\overline{R}}^\perp)\mathbf{1}_{T\backslash G_1,\leqslant N^\epsilon}(u_1l_1) \mathbf{1}_{T\backslash G_1,\leqslant N^\epsilon}(u_2l_2) \mathbf{1}_{\mathfrak{g}_1^*,\leqslant N^\epsilon}(l_1^{-1}u_1^{-1}X_{\mathfrak{t}}u_2l_2) \\
 & dXdu_1du_2\delta_{P_1}(l_1)^{-1}\frac{dl_1}{\nu(l_1)^n} \delta_{P_2}(l_2)^{-1}\nu(l_2)^n dl_2\omega(t)^{-1}dt
\end{aligned}\]

\noindent for all $Y\in \mathcal{A}_{P_0,\theta}^+$ satisfying inequality (A) where we have denoted by $\delta_{P_1}$ and $\delta_{P_2}$ the modular characters of the parabolic subgroups $P_1=L_1U_1$ and $P_2=L_1U_2$ of $G_1$ respectively.

\vspace{2mm}

\noindent Introduce the following expression:

\[\begin{aligned}
\displaystyle J_{N,T}^{R,Y,\star}(f):=& \int_{T} D^{G_1}(t) \int_{T\times T\backslash L_1\times L_1} \int_{U_1\times U_2} \int_{\mathfrak{g}_1}  \\
 & f^K\left(\begin{pmatrix} l_1^{-1}u_1^{-1} & \\ & l_2^{-1}u_2^{-1} \end{pmatrix} \begin{pmatrix} t & \\ & t\end{pmatrix} \begin{pmatrix} 1 & X \\ & 1\end{pmatrix} \begin{pmatrix} u_1l_1 & \\ & u_2l_2\end{pmatrix}\right) \\
 & \kappa^{R,Y,\star}_{N,T,\xi}(l_1,l_2,X_{\overline{R}}^\perp) dX du_1du_2\delta_{P_1}(l_1)^{-1}\frac{dl_1}{\nu(l_1)^n} \delta_{P_2}(l_2)^{-1}\nu(l_2)^n dl_2\omega(t)^{-1}dt
\end{aligned}\]

\noindent for all $N\geqslant N_0$ and all $Y\in \mathcal{A}_{P_0,\theta}^+$. By \ref{def of an expression}.(3), for some $k>0$, we have

$$\displaystyle \left\lvert \kappa^{R,Y,\star}_{N,T,\xi}(l_1,l_2,X_{\overline{R}}^\perp)\right\rvert\ll \sigma_{T\backslash G_1}(u_1l_1k_1)^k\sigma_{T\backslash G_1}(u_2l_2k_2)^k$$

\noindent for all $(l_1,l_2)\in L_1\times L_1$, $(u_1,u_2)\in U_1\times U_2$ and $(k_1,k_2)\in K_1\times K_1$. Hence, by Proposition \ref{prop estimate} (and using the Iwasawa decompositions $G_1=U_1L_1K_1=U_2L_1K_1$ backwards), the expression defining $J_{N,T}^{R,Y,\star}(f)$ is absolutely convergent. And for all $r>0$, we have

$$\displaystyle \left\lvert J^{R,Y}_{N,T,\leqslant N^\epsilon}(f) -J_{N,T}^{R,Y,\star}(f)\right\rvert\ll N^{-r}$$

\noindent for all $N\geqslant N_0$ and all $Y\in \mathcal{A}_{P_0,\theta}^+$ satisfying inequality (A). Thus to prove (8) in this case, it suffices to establish that

$$\displaystyle J_{N,T}^{R,Y,\star}(f)=0$$

\noindent for all $N\geqslant N_0$ and all $Y\in \mathcal{A}_{P_0,\theta}^+$. This vanishing follows from the fact that $f$ is $\theta$-strongly cuspidal by the following sequence of equalities:

\[\begin{aligned}
\displaystyle & \int_{U_1\times U_2} \int_{\mathfrak{g}_1}  f^K\left(\begin{pmatrix} l_1^{-1}u_1^{-1} & \\ & l_2^{-1}u_2^{-1} \end{pmatrix} \begin{pmatrix} t & \\ & t\end{pmatrix} \begin{pmatrix} 1 & X \\ & 1\end{pmatrix} \begin{pmatrix} u_1l_1 & \\ & u_2l_2\end{pmatrix}\right) \kappa^{R,Y,\star}_{N,T,\xi}(l_1,l_2,X_{\overline{R}}^\perp) dX du_1du_2 \\
 & =\int_{U_1\times U_2} \int_{\mathfrak{g}_1}  f^K\left(\begin{pmatrix} l_1^{-1} & \\ & l_2^{-1} \end{pmatrix} \begin{pmatrix} u_1^{-1}tu_1 & \\ & u_2^{-1}tu_2\end{pmatrix} \begin{pmatrix} 1 & X \\ & 1\end{pmatrix} \begin{pmatrix} l_1 & \\ & l_2\end{pmatrix}\right) \kappa^{R,Y,\star}_{N,T,\xi}(l_1,l_2,X_{\overline{R}}^\perp) dX du_1du_2 \\
 & =\Delta(t)\int_{U_1\times U_2} \int_{\mathfrak{g}_1}  f^K\left(\begin{pmatrix} l_1^{-1} & \\ & l_2^{-1} \end{pmatrix} \begin{pmatrix} tu_1 & \\ & tu_2\end{pmatrix} \begin{pmatrix} 1 & X \\ & 1\end{pmatrix} \begin{pmatrix} l_1 & \\ & l_2\end{pmatrix}\right) \kappa^{R,Y,\star}_{N,T,\xi}(l_1,l_2,X_{\overline{R}}^\perp) dX du_1du_2 \\
 & =\Delta(t) \int_{V_{\overline{R}}^\perp} \int_{U_{\overline{R}}} f^K\left(\begin{pmatrix} l_1^{-1} & \\ & l_2^{-1} \end{pmatrix} \begin{pmatrix} t & \\ & t\end{pmatrix} U\begin{pmatrix} 1 & X_{\overline{R}}^\perp \\ & 1\end{pmatrix} \begin{pmatrix} l_1 & \\ & l_2\end{pmatrix}\right) \kappa^{R,Y,\star}_{N,T,\xi}(l_1,l_2,X_{\overline{R}}^\perp) dUdX_{\overline{R}}^\perp \\
 & =0
\end{aligned}\]

\noindent for all $N\geqslant N_0$, all $t\in T\cap G_{1,reg}$, all $l_1,l_2\in L_1$ and all $Y\in \mathcal{A}_{P_0,\theta}^+$, where $\Delta(t):=D^{G_1}(t)^{-1}D^{L_1}(t)$ and where the first equality follows from the variable change $X\mapsto u_1Xu_2^{-1}$ (recall that $(u_1Xu_2^{-1})_{\overline{R}}^\perp=X_{\overline{R}}^\perp$), the second equality is a consequence of the classical change of variable inverse to $(u_1,u_2)\mapsto (t^{-1}u_1^{-1}tu_1,t^{-1}u_2^{-1}tu_2)$ (whose jacobian equals $D^{G_1/L_1}(t)^{-1}$ because $\delta_{P_1}(t)=\delta_{P_2}(t)^{-1}$), in the third equality we have merge an integral over $U_1\times U_2$ and an integral over $V_{\overline{R}}$ into an integral over $U_{\overline{R}}$, and finally the last equality follows from the fact that $f$ is $\theta$-strongly cuspidal (note that $\begin{pmatrix} 1 & X_{\overline{R}}^\perp \\ & 1\end{pmatrix}$ belongs to $L_R$) so that the inner integral already vanish identically. This finishes the proof of (8) in this case and the proof of the proposition. $\blacksquare$
\end{itemize}

\subsection{Proof of Lemma \ref{lem tech}}

For convenience we recall the statement here.

\begin{lem}
Let $0<\epsilon<\epsilon_1<1$ and let $R\in \mathcal{F}^\theta(M)$ with $U_{\overline{R}}\subset Q$. Then, for $N$ big enough, we have

$$\displaystyle \kappa_{N,T,\xi}^{R,Y}(u_1l_1k_1,u_2l_2k_2,X)=\kappa_{N,T,\xi}^{R,Y,\star}(l_1,l_2,X_{\overline{R}}^\perp)$$

\noindent for all $(u_1,u_2)\in U_1\times U_2$, $(l_1,l_2)\in L_1\times L_1$, $(k_1,k_2)\in K_1\times K_1$, $X\in \mathfrak{g}_1$ and $Y\in \mathcal{A}_{P_0,\theta}^+$ satisfying the inequalities
\begin{itemize}
\item $N^{\epsilon_1}\leqslant \inf_{\alpha\in \Delta_0} \alpha(Y)$;
\item $\sigma_{T\backslash G_1}(u_1l_1)\leqslant N^\epsilon$, $\sigma_{T\backslash G_1}(u_2l_2)\leqslant N^\epsilon$;
\item $\sigma_{\mathfrak{g}_1^*}(l_1^{-1}u_1^{-1}X_{\mathfrak{t}}u_2l_2)\leqslant N^\epsilon$.
\end{itemize}
\end{lem}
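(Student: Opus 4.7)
The strategy is to show that, as a function of $a\in T$, the integrand defining $\kappa^{R,Y}_{N,T,\xi}(u_1l_1k_1,u_2l_2k_2,X)$ in the rewritten form
\[
\int_T\widehat{\varphi^\circ}(aX_{\mathfrak{t}})\,\kappa_N((u_2l_2k_2)^{-1}a(u_1l_1k_1))\,\varphi_N\ast\Delta_R(H_{M,\theta}(a),u_1l_1k_1,u_2l_2k_2,X,Y)\,da
\]
matches the integrand of $\kappa^{R,Y,\star}_{N,T,\xi}(l_1,l_2,X_{\overline{R}}^\perp)$. The comparison breaks into three pieces: the $\kappa_N$ factor; the $\varphi_N\ast\Delta_R$ factor; and the character factor together with the substitution $X\to X_{\overline{R}}^\perp$.

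For the $\kappa_N$ factor, bi-$K_1$-invariance immediately removes $k_1,k_2$. Then the formula $u(h^{-1}ag,T_N)=\Gamma^{G_1}_{M_\natural}(H_{M_\natural}(a),\mathcal{Y}_{M_\natural}(h,g,T_N))$ from the proof of Lemma \ref{lemma Arthur}(ii)---extended from $a\in A_T$ to $a\in T$ by $T^c$-invariance and a finite set of representatives of $T/A_TT^c$---reduces the question to comparing the $(G_1,M_\natural)$-orthogonal sets attached to $(u_2l_2,u_1l_1)$ and to $(l_2,l_1)$. These differ by $O(N^\epsilon)$ in each coordinate, whereas $T_N$ sits at scale $N$; since $\epsilon<1$, the standard combinatorial stability analysis for $\Gamma^{G_1}_{M_\natural}$ (as in the proof of \cite{ArthurlocalTF}, Lemma 4.4) yields the desired equality for $N$ large enough. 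For the $\varphi_N\ast\Delta_R$ factor, an Iwasawa decomposition in each $G_1$-factor of $L$ relative to $P\cap L=P^{(1)}\times P^{(2)}$ (in the spirit of Lemma \ref{lem limit of GMT family}), combined with property \ref{(G,M) and (G,M,theta)-orthogonal sets}(4) restricting attention to those chambers $w\overline{P}$ that actually contribute to $\Gamma^R_{M,\theta}\cdot\tau^G_{R,\theta}$ and with the constraint $U_{\overline{R}}\subset Q$, shows that on these chambers the $u_i,k_i$-dependence disappears and the $(G,M,\theta)$-orthogonal set collapses to $\mathcal{Z}(l_1,l_2,Y)$ modulo the residual $X$-shift $\tfrac{1}{2}(V(X_{\mathfrak{t}})-wV(X_{\mathfrak{t}}))$.

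This residual shift is then absorbed, together with the $\widehat{\varphi^\circ}\to\xi$ passage and the substitution $X\to X_{\overline{R}}^\perp$, via the change of variable $a\mapsto aX_{\mathfrak{t}}^{-1}$, mirroring the derivation of identity (10) in the proof of (7) above: this trades the $V(X_{\mathfrak{t}})$-shift of $\mathcal{Z}$ against a shift in the argument of the character factor; and the substitution $X\to X_{\overline{R}}^\perp$ is obtained by observing that the $V_{\overline{R}}$-component $X_{\overline{R}}$ contributes trivially to the $\xi$-integral against $\kappa_N\cdot\varphi_N\ast\Delta_R$ on the support of the integrand, owing to the $A_{M,\theta}$-weight structure of $V_{\overline{R}}$ forced by $U_{\overline{R}}\subset Q$. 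The \textbf{main obstacle} is the combinatorial stability analysis for the first two matchings: the reference arguments $H_{M_\natural}(a)$ and $H_{M,\theta}(a)$ themselves range over scales $\sim N$ and $\sim N^{\epsilon_1}$ respectively, so invariance under $O(N^\epsilon)$ perturbations is not a soft continuity statement but must be read from the explicit chamber structure of $\Gamma^{G_1}_{M_\natural}$, $\Gamma^R_{M,\theta}$ and $\tau^G_{R,\theta}$; the hypothesis (A), $N^{\epsilon_1}\leqslant\inf_{\alpha\in\Delta_0}\alpha(Y)$, is essential here, as it pushes $\mathcal{Z}(l_1,l_2,Y)$ deep into the interior of its positive chamber and ensures that these characteristic functions are stable under the relevant $O(N^\epsilon)$ perturbations (using $\epsilon<\epsilon_1$).
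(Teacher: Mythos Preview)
Your plan tries to match the three factors $\kappa_N$, $\varphi_N\ast\Delta_R$, and the character factor \emph{independently}, but this is precisely what cannot be done, and the gap shows up in your treatment of $\kappa_N$. You propose to prove $\kappa_N(l_2^{-1}u_2^{-1}au_1l_1)=\kappa_N(l_2^{-1}al_1)$ by noting that the $(G_1,M_\natural)$-orthogonal sets attached to $(u_2l_2,u_1l_1)$ and $(l_2,l_1)$ differ by $O(N^\epsilon)$ while $T_N$ sits at scale $N$. But $\Gamma^{G_1}_{M_\natural}(H_{M_\natural}(a),\mathcal{Y})$ is a characteristic function: an $O(N^\epsilon)$ shift of the vertices moves the boundary by $O(N^\epsilon)$, and the two functions disagree on a shell of that thickness. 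Since the integration variable $a$ ranges over a region of diameter comparable to $N$, nothing keeps $a$ away from that shell. The ``stability under $O(N^\epsilon)$ perturbations'' you invoke is false pointwise, and a pointwise statement is what you need.

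The paper's argument is not a perturbation argument but a \emph{contraction} argument, and it works only by playing the factors against each other. The key structural input you are missing is a root comparison: every $\alpha\in R(A_T,U_1)\cup R(A_T,V_{\overline{R},\mathfrak{t}})$ is, up to a factor of $1$ or $2$, the pullback along $H_{M,\theta}$ of some $\beta\in R(A_{M,\theta},U_{\overline{R}})$. On the support of $\varphi_N\ast\Delta_R(H_{M,\theta}(a),\mathcal{Z}(l_1,l_2,X,Y))$, inequality (A) forces $\langle\beta,H_{M,\theta}(a)\rangle\leqslant -cN^{\epsilon_1}$ for all such $\beta$, so $a$ contracts $U_1$, $U_2$ and $V_{\overline{R},\mathfrak{t}}$ so hard that $l_2^{-1}(u_2^{-1}au_1a^{-1}u_2)l_2\in K_1$, $l_1^{-1}(a^{-1}u_2^{-1}a)l_1\in K_1$, and $a(X_{\mathfrak{t}}-X^\perp_{\overline{R},\mathfrak{t}})$ lands in a lattice of invariance for $\widehat{\varphi^\circ}$. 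This yields the $\kappa_N$-equality and $\widehat{\varphi^\circ}(aX_{\mathfrak{t}})=\widehat{\varphi^\circ}(aX^\perp_{\overline{R},\mathfrak{t}})$ \emph{only on that support}, which suffices. In the other direction, on the support of $\widehat{\varphi^\circ}(aX_{\mathfrak{t}})$ one has $H_{M,\theta}(a)\in\mathbf{R}^d_++O(N^\epsilon)$; your idea of invoking \S\ref{(G,M) and (G,M,theta)-orthogonal sets}(4) is then correct, and for $P\in\mathcal{P}^{Q,\theta}(M)$ one computes directly $\mathcal{Z}(u_1l_1,u_2l_2,X,Y)_{\overline{P}}=Y_{\overline{P}}-H_{P,\theta}\begin{pmatrix} l_1 & \\ & l_2\end{pmatrix}=\mathcal{Z}(l_1,l_2,Y)_{\overline{P}}$: the $X$-shift vanishes because $w=1$ on these chambers, and the $u_i$ disappear because $U_1\times U_2\subset U_P$. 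No change of variable $a\mapsto aX_{\mathfrak{t}}^{-1}$ is used.
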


\noindent\ul{Proof}: We will use all the notations already introduced in the proof of Proposition \ref{prop change of weight}. In particular, we have a decomposition $\mathfrak{g}_1=V_{\overline{R}}\oplus V_{\overline{R}}^\perp$. Recall that the linear map $X\mapsto X_{\overline{R}}^\perp$ is just the projection onto $V_{\overline{R}}^\perp$ relative to this decomposition. We will set $V_{\overline{R},\mathfrak{t}}:=V_{\overline{R}}\cap \mathfrak{t}$ and $V_{\overline{R},\mathfrak{t}}^\perp:=V_{\overline{R}}^\perp\cap \mathfrak{t}$. Then, we have $\mathfrak{t}=V_{\overline{R},\mathfrak{t}}\oplus V_{\overline{R},\mathfrak{t}}^\perp$ (this follows from the fact that the adjoint action of $T$ preserves the decomposition $\mathfrak{g}_1=V_{\overline{R}}\oplus V_{\overline{R}}^\perp$). For all $X\in \mathfrak{g}_1$, we will set $X_{\overline{R},\mathfrak{t}}^\perp:=(X_{\overline{R}}^\perp)_{\mathfrak{t}}=(X_{\mathfrak{t}})_{\overline{R}}^\perp$. Note that we have $X_{\mathfrak{t}}-X_{\overline{R},\mathfrak{t}}^\perp\in V_{\overline{R},\mathfrak{t}}$ for all $X\in \mathfrak{g}_1$.

\vspace{2mm}

\noindent By the definition of $\kappa_{N,T,\xi}^{R,Y}$, it is invariant by right translation by $K_1$ in both the first and the second variables. Thus, we just need to prove the lemma in the case where $k_1=k_2=1$. Moreover, since $\kappa_{N,T,\xi}^{R,Y}(g,h,X)=\kappa_{N,T,\xi}^{R,Y}(t_1g,t_2h,t_1Xt_2^{-1})$ for all $(g,h,X,t_1,t_2)\in G_1^2\times \mathfrak{g}_1\times T^2$, by \ref{geom side}(1) we may replace the conditions that $\sigma_{T\backslash G_1}(u_1l_1)\leqslant N^\epsilon$ and $\sigma_{T\backslash G_1}(u_2l_2)\leqslant N^\epsilon$ by the conditions that $\sigma_{G_1}(u_1l_1)\leqslant N^\epsilon$ and $\sigma_{G_1}(u_2l_2)\leqslant N^\epsilon$. Note that there exists $c>0$ such that for all $(l_1,l_2,u_1,u_2,X)\in L_1^2\times U_1\times U_2\times \mathfrak{g}_1$, the inequalities $\sigma_{G_1}(u_1l_1)\leqslant N^\epsilon$, $\sigma_{G_1}(u_2l_2)\leqslant N^\epsilon$ and $\sigma_{\mathfrak{g}_1^*}(l_1^{-1}u_1^{-1}X_{\mathfrak{t}}u_2l_2)\leqslant N^\epsilon$ imply that

$$\displaystyle \max\left(\sigma_{G_1}(l_1),\sigma_{G_1}(l_2),\sigma_{G_1}(u_1),\sigma_{G_1}(u_2),\sigma_{\mathfrak{t}^*}(X_{\mathfrak{t}}) \right)\leqslant cN^\epsilon.$$

\noindent Fix such a $c$. From now on and until the end of the proof, for a fixed $N$, we will say that $(l_1,l_2,u_1,u_2,X,Y)\in L_1^2\times U_1\times U_2\times \mathfrak{g}_1\times \mathcal{A}_{P_0,\theta}^+$ is in the {\it good range} if it satisfies the inequality above and moreover $N^{\epsilon_1}\leqslant \inf_{\alpha\in \Delta_0} \alpha(Y)$. Then in order to prove the lemma, we just need to establish the following fact:

\vspace{4mm}

\flushleft{(1)}\hspace{5mm} For $N$ big enough and all $(l_1,l_2,u_1,u_2,X,Y)$ in the good range, we have

$$\displaystyle \kappa_{N,T,\xi}^{R,Y}(u_1l_1,u_2l_2,X)=\kappa_{N,T,\xi}^{R,Y,\star}(l_1,l_2,X_{\overline{R}}^\perp).$$

\vspace{4mm}

\noindent First of all, by Lemma \ref{lemma Arthur}(ii) and since the functions $(\varphi_N)_{N\geqslant N_0}$ are $T^c$-invariant, for $N$ sufficiently large and all $(l_1,l_2,u_1,u_2,X,Y)$ in the good range, we have

$$\displaystyle \kappa_{N,T,\xi}^{R,Y}(u_1l_1,u_2l_2,X)=\int_T \widehat{\varphi^\circ}(a X_{\mathfrak{t}}) \kappa_N(l_2^{-1}u_2^{-1}au_1l_1) \varphi_N\ast \Delta_R(H_{M,\theta}(a),\mathcal{Z}(u_1l_1,u_2l_2,X,Y))da$$

\noindent and

$$\displaystyle \kappa_{N,T,\xi}^{R,Y,\star}(l_1,l_2,X_{\overline{R}}^\perp)=\int_T \widehat{\varphi^\circ}(a X_{\overline{R},\mathfrak{t}}^\perp) \kappa_N(l_2^{-1}al_1) \varphi_N\ast \Delta_R(H_{M,\theta}(a),\mathcal{Z}(l_1,l_2,Y))da$$

\noindent where $\varphi^\circ:=\vol(T^c)^{-1}\mathbf{1}_{T^c}$. Therefore, (1) is a consequence of the two following facts:

\vspace{4mm}

\flushleft{(2)}\hspace{5mm} For $N$ sufficiently large and all $(l_1,l_2,u_1,u_2,X,Y)$ in the good range, we have

$$\displaystyle \kappa_N(l_2^{-1}u_2^{-1}au_1l_1)=\kappa_N(l_2^{-1}al_1)$$

\hspace{12mm} and

$$\displaystyle \widehat{\varphi^\circ}(a X_{\mathfrak{t}})=\widehat{\varphi^\circ}(a X_{\overline{R},\mathfrak{t}}^\perp)$$

\hspace{12mm} for all $a\in T$ with $\varphi_N\ast \Delta_R(H_{M,\theta}(a),\mathcal{Z}(l_1,l_2,X,Y))\neq 0$.

\vspace{4mm}

\flushleft{(3)}\hspace{5mm} For $N$ sufficiently large and all $(l_1,l_2,u_1,u_2,X,Y)$ in the good range, we have

$$\displaystyle \varphi_N\ast \Delta_R(H_{M,\theta}(a),\mathcal{Z}(u_1l_1,u_2l_2,X,Y))=\varphi_N\ast \Delta_R(H_{M,\theta}(a),\mathcal{Z}(l_1,l_2,Y))$$

\hspace{12mm} for all $a\in T$ such that $\widehat{\varphi^\circ}(aX_{\mathfrak{t}})\neq 0$.

\vspace{4mm}

\noindent\ul{Proof of (2)}: As $\overline{R}$ is $\theta$-split, the parabolic subgroups $P_1=L_1U_1$ and $P_2=L_2U_2$ of $G_1$ are opposite to each other. Therefore, $R(A_T,U_2)=-R(A_T,U_1)$. Let us denote by $R(A_T,V_{\overline{R},\mathfrak{t}})$ the set of roots of $A_T$ in $V_{\overline{R},\mathfrak{t}}$ for the linear action $(a,X)\mapsto aX$. Then, we will need the following

\vspace{4mm}

\flushleft{(4)}\hspace{5mm} For all root $\alpha\in R(A_T,U_1)$ (resp. $\alpha\in R(A_T,V_{\overline{R},\mathfrak{t}})$), there exists a root

\vspace{0.4pt}

\hspace{11mm} $\beta\in R(A_{M,\theta},U_{\overline{R}})$ such that

$$\displaystyle \langle \alpha,H_T(a)\rangle=2\langle \beta, H_{M,\theta}(a)\rangle \; (\mbox{resp. }\langle \alpha,H_T(a)\rangle=\langle \beta, H_{M,\theta}(a)\rangle)$$

\hspace{11mm} for all $a\in T$.

\vspace{4mm}

\noindent Let $\mathfrak{u}_{1,\alpha}$ (resp. $V_{\overline{R},\mathfrak{t},\alpha}$) be the root subspace corresponding to $\alpha$. Then, we easily check that $A_M=A_T\times A_T$ acts on this root subspace by a character $\gamma$ (in the case where $\alpha\in R(A_T,V_{\overline{R},\mathfrak{t}})$, we consider the action given by $(a_1,a_2).X=a_1Xa_2^{-1}=a_1a_2^{-1}X$). Denoting by $\beta$ the restriction of $\gamma$ to $A_{M,\theta}$ (so that in particular $\beta\in R(A_{M,\theta},U_{\overline{R}})$), we have

$$\displaystyle \langle \beta, H_{M,\theta}(a)\rangle=\frac{1}{2}\langle \beta, H_{M,\theta}\left(\begin{pmatrix} a & \\ & 1 \end{pmatrix}\theta \begin{pmatrix} a & \\ & 1 \end{pmatrix}^{-1}\right)\rangle=\frac{1}{2}\langle \beta, H_{M,\theta}\begin{pmatrix} a & \\ & a^{-1} \end{pmatrix}\rangle=\frac{1}{2}\log \left\lvert \beta\begin{pmatrix} a & \\ & a^{-1} \end{pmatrix}\right\rvert$$

\noindent for all $a\in T$. On the other hand, the action of $\begin{pmatrix} a & \\ & a^{-1} \end{pmatrix}$ on $\mathfrak{u}_{1,\alpha}$ (resp. $V_{\overline{R},\mathfrak{t},\alpha}$) coincides with the action of $a$ (resp. $a^2$). Hence, $\beta\begin{pmatrix} a & \\ & a^{-1} \end{pmatrix}=\alpha(a)$ (resp. $\beta\begin{pmatrix} a & \\ & a^{-1} \end{pmatrix}=\alpha(a)^2$) and this shows the claim.

\vspace{2mm}

\noindent Now, since $\epsilon<\epsilon_1$, there exists $c_1>0$ such that for $N$ sufficiently large and all $(l_1,l_2,u_1,u_2,X,Y)$ in the good range, the $(G,M,\theta)$-orthogonal set $\mathcal{Z}(l_1,l_2,X,Y)$ is positive and $\langle \beta, \mathcal{Z}(l_1,l_2,X,Y)_P\rangle\leqslant -c_1N^{\epsilon_1}$ for all $P\in \mathcal{P}^\theta(M)$ with $P\subset R$ and all $\beta\in R(A_{M,\theta},U_{\overline{R}})$. Since the $(G,M,\theta)$-orthogonal set $\mathcal{Z}(l_1,l_2,X,Y)$ is positive, the function $\Delta_R(.,\mathcal{Z}(l_1,l_2,X,Y))$ is the characteristic function of the sum of $\mathcal{A}_{R,\theta}^+$ with the convex hull of $(\mathcal{Z}(l_1,l_2,X,Y)_P)_{P\subset R}$ and it follows that $\langle \beta, \Lambda\rangle\leqslant -c_1N^{\epsilon_1}$ for all $\Lambda\in \mathcal{A}_{M,\theta}$ in the support of this function and all $\beta\in R(A_{M,\theta},U_{\overline{R}})$. Again by using the fact that $\epsilon<\epsilon_1$, we deduce from this and the first property satisfied by the sequence $(\varphi_N)_{N\geqslant N_0}$ (see Lemma \ref{lem varphi}), that there exists $c_2>0$ such that for $N$ sufficiently large and all $(l_1,l_2,u_1,u_2,X,Y)$ in the good range, we have $\langle \beta, \Lambda\rangle\leqslant -c_2N^{\epsilon_1}$ for all $\Lambda\in \mathcal{A}_{M,\theta}$ in the support of the function $\varphi_N\ast \Delta_R(.,\mathcal{Z}(l_1,l_2,X,Y))$ and all $\beta\in R(A_{M,\theta},U_{\overline{R}})$. By (4), it follows that we have, again for $N$ sufficiently large and all $(l_1,l_2,u_1,u_2,X,Y)$ in the good range,

$$\displaystyle \langle \alpha, H_T(a)\rangle\leqslant -c_2N^{\epsilon_1}$$

\noindent for all $\alpha\in R(A_T,U_1)\cup R(A_T,V_{\overline{R},\mathfrak{t}})$ and all $a\in T$ such that

$$\varphi_N\ast \Delta_R(H_{M,\theta}(a),\mathcal{Z}(l_1,l_2,X,Y))\neq 0$$

\noindent Still assuming that $(l_1,l_2,u_1,u_2,X,Y)$ is in the good range, this implies (recall that $R(A_T,U_2)=-R(A_T,U_1)$ and $X_{\mathfrak{t}}-X_{\overline{R},\mathfrak{t}}^\perp\in V_{\overline{R},\mathfrak{t}}$)

$$\displaystyle l_2^{-1}u_2^{-1}au_1a^{-1}u_2l_2\in K_1, \;\; l_1^{-1}a^{-1}u_2^{-1}al_1\in K_1 \mbox{ and } a(X_{\mathfrak{t}}-X_{\overline{R},\mathfrak{t}}^\perp)\in \mathcal{L}$$

\noindent for $N$ sufficiently large, where $\mathcal{L}$ is a lattice in $\mathfrak{t}$ by which $\widehat{\varphi^\circ}$ is invariant. Hence, we get

$$\displaystyle \kappa_N(l_2^{-1}u_2^{-1}au_1l_1)=\kappa_N(l_2^{-1}u_2^{-1}al_1)=\kappa_N(l_2^{-1}al_1)$$

\noindent and

$$\displaystyle \widehat{\varphi^\circ}(aX_{\mathfrak{t}})=\widehat{\varphi^\circ}(aX_{\overline{R},\mathfrak{t}}^\perp)$$

\noindent for all $N$ sufficiently large, all $(l_1,l_2,u_1,u_2,X,Y)$ in the good range and all $a\in T$ such that $\varphi_N\ast \Delta_R(H_{M,\theta}(a),\mathcal{Z}(l_1,l_2,X,Y))\neq 0$.

\vspace{2mm}

\noindent\ul{Proof of (3)}: Since the function $\widehat{\varphi^\circ}$ is compactly supported on $\mathfrak{t}$, there exists $c_3>0$ such that for all $N$, all $(l_1,l_2,u_1,u_2,X,Y)$ in the good range and all $a\in T$, if $\widehat{\varphi^\circ}(aX_{\mathfrak{t}})\neq 0$, then $v_{K_i}(a_i)\geqslant -c_3 N^{\epsilon}$ for all $1\leqslant i\leqslant d$ (recall that we are denoting by $(a_1,\ldots,a_d)$ the decomposition of $a$ according to the identification \ref{concrete T}.(1)). These last inequalities mean that $H_{M,\theta}(a)\in \mathbf{R}_+^d+c_3Z_N$ where $Z_N:=(-N^{\epsilon},\ldots,-N^\epsilon)$ (recall that we are identifying $\mathcal{A}_{M,\theta}$ with $\mathbf{R}^d$ as in \S \ref{Computation}). Moreover, by the first property satisfied by the sequence $(\varphi_N)_{N\geqslant N_0}$ (see Lemma \ref{lem varphi}), there exists $c_4>0$ such that, for all $N\geqslant N_0$ and all function $\Delta$ on $\mathcal{A}_{M,\theta}$, the restriction of the function $\varphi_N\ast \Delta$ to $c_3Z_N+\mathbf{R}_+^d$ only depends on the restriction of $\Delta$ to $c_4Z_N+\mathbf{R}_+^d$. Note that

$$\displaystyle \mathbf{R}_+^d=\bigcup_{P\in \mathcal{P}^{Q,\theta}(M)} \overline{\mathcal{A}_{\overline{P},\theta}^+}$$

\noindent (this follows from the fact that $\mathbf{R}_+^d=\{ \Lambda\in\mathcal{A}_{M,\theta}\mid \langle \alpha,\Lambda\rangle\leqslant 0\; \forall \alpha\in R(A_{M,\theta},U_Q) \}$). Hence, it suffices to show that for all $P\in \mathcal{P}^{Q,\theta}(M)$, all $N$ sufficiently large and all $(l_1,l_2,u_1,u_2,X,Y)$ in the good range, the two functions

$$\displaystyle \Delta_R(.,\mathcal{Z}(u_1l_1,u_2l_2,X,Y))$$

\noindent and

$$\displaystyle \Delta_R(.,\mathcal{Z}(l_1,l_2,Y))$$

\noindent coincide on $c_4Z_N+\overline{\mathcal{A}_{\overline{P},\theta}^+}$. Or equivalently that the two functions

$$\displaystyle \Delta_R(.,\mathcal{Z}(u_1l_1,u_2l_2,X,Y)-c_4Z_N)$$

\noindent and

$$\displaystyle \Delta_R(.,\mathcal{Z}(l_1,l_2,Y)-c_4Z_N)$$

\noindent coincide on $\overline{\mathcal{A}_{\overline{P},\theta}^+}$. Fix $P\in \mathcal{P}^{Q,\theta}(M)$. As $\epsilon<\epsilon_1$, for $N$ sufficiently large and all $(l_1,l_2,u_1,u_2,X,Y)$ in the good range, we have

$$\mathcal{Z}(u_1l_1,u_2l_2,X,Y)_{P'}-c_4Z_N\in \mathcal{A}_{P',\theta}^+ \mbox{ and } \mathcal{Z}(l_1,l_2,Y)_{P'}-c_4Z_N\in \mathcal{A}_{P',\theta}^+$$

\noindent for all $P'\in \mathcal{P}^\theta(M)$. By \ref{(G,M) and (G,M,theta)-orthogonal sets}(4), it follows that the restrictions of the functions

$$\Delta_R(.,\mathcal{Z}(u_1l_1,u_2l_2,X,Y)-c_4Z_N) \mbox{ and } \Delta_R(.,\mathcal{Z}(l_1,l_2,Y)-c_4Z_N)$$

\noindent to $\overline{\mathcal{A}_{\overline{P},\theta}^+}$ only depend on $\mathcal{Z}(u_1l_1,u_2l_2,X,Y)_{\overline{P}}$ and $\mathcal{Z}(l_1,l_2,Y)_{\overline{P}}$ respectively. Returning to the definitions, we see that

$$\displaystyle \mathcal{Z}(u_1l_1,u_2l_2,X,Y)_{\overline{P}}=Y_{\overline{P}}-H_{P,\theta}\begin{pmatrix} u_1l_1 & \\ & u_2l_2 \end{pmatrix}=Y_{\overline{P}}-H_{P,\theta}\begin{pmatrix} l_1 & \\ & l_2 \end{pmatrix}=\mathcal{Z}(l_1,l_2,Y)_{\overline{P}}$$

\noindent for all $(l_1,l_2,u_1,u_2,X,Y)\in L_1^2\times U_1\times U_2\times \mathfrak{g}_1\times \mathcal{A}_{P_0,\theta}^+$. Hence, for $N$ sufficiently large and all $(l_1,l_2,u_1,u_2,X,Y)$ in the good range, the functions $\Delta_R(.,\mathcal{Z}(u_1l_1,u_2l_2,X,Y)-c_4Z_N)$ and $\Delta_R(.,\mathcal{Z}(l_1,l_2,Y)-c_4Z_N)$ indeed coincide on $\overline{\mathcal{A}_{\overline{P},\theta}^+}$. This finishes the proof of (3) and  the proof of the lemma. $\blacksquare$

\subsection{Computation of $\lim\limits_{N\to \infty} J_{N,T}(f)$ for $\theta$-strongly cuspidal functions}\label{computation of the limit}

\begin{prop}\label{prop limit theta cuspidal}
Let $f\in \mathcal{C}(G)$ be a $\theta$-strongly cuspidal function. Then, we have

$$\displaystyle \lim\limits_{N\to \infty} J_{N,T}(f)=\frac{(-1)^d}{2^d} \int_{T} D^{G_1}(t)^2 \Phi_{M,\theta}\left(\begin{pmatrix} t & \\ & t\end{pmatrix}, f \right) \omega(t)^{-1} dt$$

\noindent where $\Phi_{M,\theta}\left(\begin{pmatrix} t & \\ & t\end{pmatrix}, f \right)$ is the 'singular $\theta$-weighted orbital integral' introduced in \S \ref{first application}.
\end{prop}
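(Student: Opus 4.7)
The plan is to combine Proposition \ref{prop change of weight} with a splitting-formula analysis of the modified weight $\widetilde{w}_{M,\theta}$.

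First, I apply Proposition \ref{prop change of weight}: I fix $0 < \epsilon_1 < \epsilon_2 < 1$ and choose a sequence $(Y_N)_{N \geqslant 1}$ in $\mathcal{A}_{P_0,\theta}^+$ satisfying $N^{\epsilon_1} \leqslant \inf_{\alpha \in \Delta_0} \alpha(Y_N)$ and $\sup_{\alpha \in \Delta_0} \alpha(Y_N) \leqslant N^{\epsilon_2}$ (for instance $Y_N := N^{(\epsilon_1+\epsilon_2)/2} Y_0$ for a fixed $Y_0$ in the interior of the chamber). Proposition \ref{prop change of weight} then yields
$$\lim_{N \to \infty} J_{N,T}(f) = \frac{1}{2^d}\, \lim_{N \to \infty} J_{Y_N,T}(f)$$
(provided the right-hand limit exists), so it suffices to analyze the limit of $J_{Y_N,T}(f)$.

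Second, I examine the weight $\widetilde{w}_{M,\theta}(g,h,X,Y_N)$ when $Y_N$ is large. For $(g,h,X)$ in the ``good range'' $\sigma_{T\backslash G_1}(g), \sigma_{T\backslash G_1}(h), \sigma_{\mathfrak{g}_1^*}(g^{-1}X_{\mathfrak{t}} h) \leqslant N^\epsilon$ with $\epsilon < \epsilon_1$, the $(G,M,\theta)$-orthogonal set $\mathcal{Z}(g,h,X,Y_N) = Y_N - \mathcal{Z}(g,h,X)$ is positive for $N$ large, so $\Gamma^G_{M,\theta}(\cdot, \mathcal{Z}(g,h,X,Y_N))$ is the characteristic function of the convex hull of $\{\mathcal{Z}(g,h,X,Y_N)_P\}_{P \in \mathcal{P}^\theta(M)}$. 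By smoothing the integrand defining $\widetilde{w}_{M,\theta}$ using the functions $\varphi_N$ of Lemma \ref{lem varphi} and reasoning as in the proof of Proposition \ref{prop change of weight}, I would convert the lattice sum $\int_T \Gamma^G_{M,\theta}(H_{M,\theta}(a),\cdot)\,da$ into the Lebesgue volume $v_{M,\theta}(\mathcal{Z}(g,h,X,Y_N))$, up to boundary errors of size $O(e^{-cN^\epsilon})$ which are absorbable in the limit thanks to the estimates of Proposition \ref{prop estimate}.

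Third, I apply the splitting formula \ref{(G,M) and (G,M,theta)-orthogonal sets}(5) to the decomposition $\mathcal{Z}(g,h,X,Y_N) = Y_N + (-\mathcal{Z}(g,h,X))$, yielding
$$v_{M,\theta}\bigl(\mathcal{Z}(g,h,X,Y_N)\bigr) = \sum_{L_1, L_2 \in \mathcal{L}^\theta(M)} d^G_{M,\theta}(L_1,L_2)\, v^{Q_1}_{M,\theta}(Y_N)\, v^{Q_2}_{M,\theta}\bigl(-\mathcal{Z}(g,h,X)\bigr),$$
with $Q_i \in \mathcal{P}^\theta(L_i)$ depending on a generic $\xi$. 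Substituted into $J_{Y_N,T}(f)$, each summand produces an inner integral against $f(\cdots)$ whose weight in $(g,h,X)$ is $v^{Q_2}_{M,\theta}(-\mathcal{Z}(g,h,X))$. For $L_2 \neq G$, the correspondence provided by Lemma \ref{lem limit of GMT family} identifies this weight with that of a $\theta$-weighted orbital integral $\Phi^{Q_2}_{M,\theta}$ attached to a proper $\theta$-split parabolic $Q_2$, and such an integral vanishes identically because $f$ is $\theta$-strongly cuspidal (see \S \ref{cusp forms and theta-strongly cuspidal functions}). Only the term with $(L_1, L_2) = (M, G)$ survives; there $d^G_{M,\theta}(M,G) = 1$, $v^M_{M,\theta}(Y_N) = 1$, and
$$v^G_{M,\theta}\bigl(-\mathcal{Z}(g,h,X)\bigr) = (-1)^d\, v_{M,\theta}\bigl(\mathcal{Z}(g,h,X)\bigr) = (-1)^d\, w_{M,\theta}(g,h,X),$$
the sign $(-1)^d$ coming from the scaling identity $v_M(-\mathcal{Y}) = (-1)^{\dim \mathcal{A}^G_M} v_M(\mathcal{Y})$ together with the equality $\dim \mathcal{A}^G_{M,\theta} = d$ (since $\mathcal{A}_{G,\theta} = 0$). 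Matching the resulting expression with the definition of $\Phi_{M,\theta}\bigl(\begin{pmatrix} t & \\ & t \end{pmatrix}, f\bigr)$ recalled in \S \ref{first application} delivers the claimed formula.

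The main obstacle will be the second step: rigorously justifying the conversion between the lattice-sum integral $\int_T \Gamma^G_{M,\theta}(H_{M,\theta}(a),\cdot)\,da$ and the Lebesgue volume $v_{M,\theta}$, with the correct multiplicative constants tracked through the measure normalizations and with uniform control of the boundary contributions across the integration range. This technicality is structurally parallel to, but finer than, the change-of-weight argument in the proof of Proposition \ref{prop change of weight}; an alternative route is to develop a direct analog of the splitting formula \ref{(G,M) and (G,M,theta)-orthogonal sets}(5) at the level of the smoothed weight $\varphi_N \ast \Gamma^G_{M,\theta}$, so that the splitting and the passage to the limit occur within a single framework and all error terms can be bounded uniformly.
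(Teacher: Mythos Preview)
Your overall strategy---reduce to $J_{Y_N,T}(f)$ via Proposition~\ref{prop change of weight}, then isolate the $v_{M,\theta}(-\mathcal{Z}(g,h,X))$ contribution by a splitting formula and $\theta$-strong cuspidality---is exactly the route the paper takes (the paper itself defers the details to Proposition~4.4.1 of \cite{BeuGalP}). The sign computation $v_{M,\theta}(-\mathcal{Z}) = (-1)^d w_{M,\theta}$ and the identification with the singular orbital integral are correct.

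There are, however, two genuine gaps.

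\textbf{Step 2 is not correct as written.} The smoothing by $\varphi_N$ in Lemma~\ref{lem varphi} does \emph{not} convert a lattice sum into a Lebesgue volume; in the proof of Proposition~\ref{prop change of weight} it is used to compare two different weighted sums over $T$ (namely $\kappa^{G,Y}_{N,T,\xi}$ and $\frac{1}{2^d}\widetilde{w}_{M,\theta}$), both of which remain lattice objects. A naive lattice-point-versus-volume comparison for a polytope of size $\lvert Y_N\rvert$ has error of order $\lvert Y_N\rvert^{d-1}$, which grows with $N$, not $O(e^{-cN^\epsilon})$. So you cannot simply replace $\widetilde{w}_{M,\theta}(g,h,X,Y_N)$ by $v_{M,\theta}(\mathcal{Z}(g,h,X,Y_N))$ and then split. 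The ``alternative route'' you mention at the end---performing the splitting directly at the level of $\widetilde{w}_{M,\theta}$ (or of $\Gamma^G_{M,\theta}$) so that the $Y$-dependent factors separate off \emph{before} any Lebesgue conversion---is in fact the correct one, and is what the argument in \cite{BeuGalP} does: after the splitting, only the $(L_1,L_2)=(M,G)$ term survives, its $Y$-factor is the constant $1$, and its $(g,h,X)$-factor is already $v_{M,\theta}(-\mathcal{Z}(g,h,X))$, so no lattice-to-Lebesgue passage is ever needed.

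\textbf{Step 3 is underjustified.} The weight $v^{Q_2}_{M,\theta}(-\mathcal{Z}(g,h,X))$ is \emph{not} the weight $v^{Q_2}_{M,\theta}(\gamma)$ appearing in the definition of $\Phi^{Q_2}_{M,\theta}$, so the vanishing in \S\ref{cusp forms and theta-strongly cuspidal functions} does not apply directly; Lemma~\ref{lem limit of GMT family} alone does not bridge this. What one needs is the analog of Proposition~\ref{prop first application} at level $Q_2$: for $\theta$-strongly cuspidal $f$ and $Q_2\neq G$, the integral with weight $v^{Q_2}_{M,\theta}(\mathcal{Z}(g,h,X))$ is the limit as $\lambda\to 1$ of (a constant times) the regular integral $\Phi^{Q_2}_{M,\theta}\bigl(\begin{smallmatrix} t & \\ & \lambda t\end{smallmatrix},f\bigr)$, and the latter is identically zero. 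This works because the splitting formula inside $L_{Q_2}$ shows that adding the correction $\mathcal{X}(1-\lambda)$ to $\mathcal{Y}(g)$ leaves the $Q_2$-weighted integral unchanged (all proper sub-terms vanish), and then the same change-of-variable and limiting argument as in Proposition~\ref{prop first application} goes through.
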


\noindent\ul{Proof}: By the same argument as in the proof of Proposition 4.4.1 of \cite{BeuGalP}, except that we replace Lemma 1.8.4 and Proposition 4.3.1 of {\it loc. cit.} by Proposition \ref{prop estimate} and Proposition \ref{prop change of weight} of this paper, we have

\[\begin{aligned}\displaystyle \lim\limits_{N\to \infty} J_{N,T}(f) & =\frac{1}{2^d}\int_T D^{G_1}(t)\int_{T\times T\backslash G_1\times G_1}\int_{\mathfrak{g}_1} f^K\left(\begin{pmatrix} g^{-1} & \\ & h^{-1} \end{pmatrix}\begin{pmatrix} t & \\ & t \end{pmatrix} \begin{pmatrix} 1 & X \\ & 1 \end{pmatrix}\begin{pmatrix} g & \\ & h \end{pmatrix}\right) \\
 & v_{M,\theta}(-\mathcal{Z}(g,h,X)) dX \frac{dg}{\nu(g)^n}\nu(h)^n dh \omega(t)^{-1} dt \\
 & =\frac{(-1)^d}{2^d}\int_T D^{G_1}(t)\int_{T\times T\backslash G_1\times G_1}\int_{\mathfrak{g}_1} f^K\left(\begin{pmatrix} g^{-1} & \\ & h^{-1} \end{pmatrix}\begin{pmatrix} t & \\ & t \end{pmatrix} \begin{pmatrix} 1 & X \\ & 1 \end{pmatrix}\begin{pmatrix} g & \\ & h \end{pmatrix}\right) \\
  & w_{M,\theta}(g,h,X) dX \frac{dg}{\nu(g)^n}\nu(h)^n dh \omega(t)^{-1} dt \\
  & =\frac{(-1)^d}{2^d} \int_{T} D^{G_1}(t)^2 \Phi_{M,\theta}\left(\begin{pmatrix} t & \\ & t\end{pmatrix}, f \right) \omega(t)^{-1} dt.
\end{aligned}\] $\blacksquare$

\subsection{Proof of the geometric side}\label{proof of the geometric side}

The geometric side of Theorem \ref{theo trace formula} now follows immediately from \ref{def of an expression}(1), Proposition \ref{prop limit theta cuspidal} and the following proposition:

\begin{prop}
Assume that the function $f\in \mathcal{C}(G)$ is strongly cuspidal. Then,

\begin{enumerate}[(i)]
\item If $T$ is not elliptic in $G_1$ (i.e. $d>1$), then

$$\displaystyle \Phi_{M,\theta}\left(\begin{pmatrix} t & \\ & t\end{pmatrix}, f \right)=0$$

\noindent for all $t\in T\cap G_{1,reg}$.

\item If $T$ is elliptic in $G_1$ (i.e. $d=1$), then

$$\displaystyle \Phi_{M,\theta}\left(\begin{pmatrix} t & \\ & t\end{pmatrix}, f \right)=-2c_f\begin{pmatrix} t & \\ & t\end{pmatrix}$$

\noindent for all $t\in T\cap G_{1,reg}$.
\end{enumerate}
\end{prop}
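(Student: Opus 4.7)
Since every $\theta$-split proper parabolic subgroup is a proper parabolic subgroup, strong cuspidality of $f$ implies $\theta$-strong cuspidality, so Proposition \ref{prop first application} applies to $f$ and gives
\[
D^{G_1}(t)^2\, \Phi_{M,\theta}\!\left(\begin{pmatrix} t & \\ & t\end{pmatrix}, f\right) = \lim_{\lambda\to 1} D^G(x_\lambda)^{1/2}\, \Phi_{M,\theta}(x_\lambda, f),
\]
where $x_\lambda := \begin{pmatrix} t & \\ & \lambda t\end{pmatrix}$. It therefore suffices to compute the right-hand side. For $\lambda\neq 1$ sufficiently close to $1$, $x_\lambda$ is regular semisimple in $G$: since $t$ is regular in $G_1$, $G_{x_\lambda} = T\times T$ is a maximal torus of $G$ and $M(x_\lambda) = M_\natural\times M_\natural = M$. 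The descent formula \ref{(G,M) and (G,M,theta)-orthogonal sets}(6) expresses $v_{M,\theta}(g)$ as a combination of weights $v_M^Q(g)$, and after integration against $g\mapsto f(g^{-1}x_\lambda g)$, strong cuspidality of $f$ kills every term with $Q\neq G$. Hence
\[
\Phi_{M,\theta}(x_\lambda, f) = d^G_{M,\theta}(G)\, \Phi_M(x_\lambda, f).
\]
Moreover $a_{G_{x_\lambda}} = \dim A_{T\times T} = 2d$ and $a_G = 1$, so the definition of $\Theta_f$ yields $\Phi_M(x_\lambda,f) = -\Theta_f(x_\lambda)$.

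Next I would apply the germ formula \ref{cusp forms and theta-strongly cuspidal functions}(1) at $x := \begin{pmatrix} t & \\ & t\end{pmatrix}$. Since $t$ is regular in $G_1$, $Z_{\mathfrak{g}_1}(t) = \mathfrak{t}\simeq K_1\times\cdots\times K_d$, whence
\[
G_x \simeq \prod_{i=1}^d GL_2(K_i),
\]
which is split. Choosing the split maximal torus $T_{x,qd} := T\times T$, one has $|W(G_x,T_{x,qd})| = 2^d$; the elements $x_\lambda$ lie in $T_{x,qd}$ and converge to $x$ as $\lambda\to 1$, so
\[
\lim_{\lambda\to 1} D^G(x_\lambda)^{1/2}\, \Theta_f(x_\lambda) = 2^d\, D^G(x)^{1/2}\, c_f(x) = 2^d\, D^{G_1}(t)^2\, c_f\!\left(\begin{pmatrix} t & \\ & t\end{pmatrix}\right),
\]
where the last equality uses the elementary identity $D^G(x) = D^{G_1}(t)^4$ obtained by decomposing $\mathfrak{g}/\mathfrak{g}_x$ into four copies of $\mathfrak{g}_1/\mathfrak{t}$. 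Assembling these computations,
\[
\Phi_{M,\theta}\!\left(\begin{pmatrix} t & \\ & t\end{pmatrix}, f\right) = -\,2^d\, d^G_{M,\theta}(G)\, c_f\!\left(\begin{pmatrix} t & \\ & t\end{pmatrix}\right).
\]

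It remains to evaluate $d^G_{M,\theta}(G)$. According to \S\ref{(G,M) and (G,M,theta)-orthogonal sets}, this coefficient vanishes unless $\mathcal{A}_M^{G,\theta} = 0$, in which case it equals $1$. Using the identification $\mathcal{A}_M = \mathbf{R}^d\oplus \mathbf{R}^d$ arising from the block decomposition \ref{concrete T}(4), the involution $\theta$ acts by switching the two factors and $\mathcal{A}_G$ embeds diagonally inside $\mathcal{A}_M^\theta$; hence $\dim(\mathcal{A}_M^G)^\theta = d-1$. Therefore $d^G_{M,\theta}(G) = 1$ when $d = 1$ and $d^G_{M,\theta}(G) = 0$ when $d > 1$, which yields exactly the assertions (i) and (ii).

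The main obstacle is in correctly identifying $G_x$, the quasi-split torus $T_{x,qd}$, and the order of its Weyl group; once this is in place, all remaining ingredients—the descent formula, the sign from $a_{G_{x_\lambda}} - a_G$, the germ formula for $c_f$, and the vanishing of $d^G_{M,\theta}(G)$ for $d > 1$—are immediate consequences of the machinery already established in \S\ref{preliminaries} and Proposition \ref{prop first application}.
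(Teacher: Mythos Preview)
Your proof is correct and follows essentially the same route as the paper: reduce to regular elements via Proposition \ref{prop first application}, use the descent formula \ref{(G,M) and (G,M,theta)-orthogonal sets}(6) together with strong cuspidality to collapse $\Phi_{M,\theta}$ to $d^G_{M,\theta}(G)\,\Phi_M$, and then invoke the germ formula \ref{cusp forms and theta-strongly cuspidal functions}(1). Your unified treatment of both cases via the factor $-2^d\,d^G_{M,\theta}(G)$ is a slight cosmetic improvement over the paper's case-by-case presentation. One terminological slip: $G_x\simeq\prod_i \mathrm{Res}_{K_i/F}\GL_2$ is quasi-split, not split in general; but this is precisely what is needed for the formula for $c_f(x)$ to apply, so the argument is unaffected.
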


\noindent\ul{Proof}:
\begin{enumerate}[(i)]
\item By Proposition \ref{prop first application}, it suffices to show that

$$\displaystyle \Phi_{M,\theta}\left(\begin{pmatrix} t_1 & \\ & t_2\end{pmatrix}, f \right)=0$$

\noindent for all $\begin{pmatrix} t_1 & \\ & t_2\end{pmatrix}\in (T\times T)\cap G_{reg}$. This follows from the descent formula \ref{(G,M) and (G,M,theta)-orthogonal sets}(6) which gives

$$\displaystyle \Phi_{M,\theta}\left(\begin{pmatrix} t_1 & \\ & t_2\end{pmatrix}, f \right)=\sum_{L\in \mathcal{L}(M)} d^G_{M,\theta}(L) \Phi^Q_M\left(\begin{pmatrix} t_1 & \\ & t_2\end{pmatrix}, f \right);$$

\noindent and the fact that for $Q\neq G$, since $f$ is strongly cuspidal, we have (see \S \ref{cusp forms and theta-strongly cuspidal functions})

$$\displaystyle \Phi^Q_M\left(\begin{pmatrix} t_1 & \\ & t_2\end{pmatrix}, f \right)=0$$

\noindent for all $\begin{pmatrix} t_1 & \\ & t_2\end{pmatrix}\in (T\times T)\cap G_{reg}$ (Note that here $d^G_{M,\theta}(G)=0$ since $\mathcal{A}_M^{G,\theta}\neq 0$).

\item By Proposition \ref{prop first application} and \ref{cusp forms and theta-strongly cuspidal functions}(1), it suffices to notice that

$$\displaystyle \Phi_{M,\theta}\left(\begin{pmatrix} t_1 & \\ & t_2\end{pmatrix}, f \right)=\Phi_{M}\left(\begin{pmatrix} t_1 & \\ & t_2\end{pmatrix}, f \right), D^G\begin{pmatrix} t & \\ & t \end{pmatrix}^{1/2}=D^{G_1}(t)^2 \mbox{ and } \lvert W(G_x,T_{x,qd})\rvert=2$$

\noindent for all $\begin{pmatrix} t_1 & \\ & t_2\end{pmatrix}\in (T\times T)\cap G_{reg}$ and all $t\in T\cap G_{1,reg}$ where we have set $x=\begin{pmatrix} t & \\ & t \end{pmatrix}$.
$\blacksquare$
\end{enumerate}

\section{Applications}\label{section Applications}
In this section, we will give applications of the multiplicity formula we proved in Proposition \ref{prop multiplicities}. In Section \ref{application 1}, we study the behavior of the multiplicities under the local Jacquet-Langlands correspondences (i.e. over Vogan $L$-packets). We will show that the multiplicities are constant on every (extended) discrete L-packet. Then in Section \ref{application 2}, we will study the relations between the generalized Shalika model and the Ginzburg-Rallis model. We keep the notations introduced in Section \ref{Shalika triples}.

\subsection{The multiplicities over L-packets}\label{application 1}
In this subsection, we are going to prove the main theorems (Theorem \ref{main theorem 1} and Theorem \ref{main theorem 2}) of this paper. Let $\mathcal{A}'$ be another degree $n$ central simple algebra over $F$. Set $G':=GL_2(\mathcal{A}')$ and define subgroups $H_0'$, $N'$, $H':=H_0'\rtimes N'$ analogous to the subgroups $H_0$, $N$ and $H$ of $G$. We define a character $\xi':N'\to \mathbf{C}^\times$ in the same way as the character $\xi$ of $N$ and we extend it to a character of $H'$ trivial on $H_0'$. We also identify the character $\omega:F^\times\to \mathbf{C}^\times$ with a character of $H'$ by composition with projection $H'\to H_0'$ and the reduced norm. This gives us a character $\omega\otimes \xi'$ of $H'$. For all $\pi'\in \Irr(G')$, we set

$$\displaystyle m(\pi', \omega):=\dim\Hom_{H'}(\pi',\omega\otimes \xi').$$

\noindent We want to prove the following theorem.

\begin{thm}\label{main 2}
Let $\pi\in \Pi_2(G)$ and $\pi'\in \Pi_2(G')$ which correspond to each other under the local Jacquet-Langlands correspondence (see \cite{DKV84}). Then, we have

$$\displaystyle m(\pi,\omega)=m(\pi',\omega).$$
\end{thm}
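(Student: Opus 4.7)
The plan is to derive $m(\pi,\omega)=m(\pi',\omega)$ by applying the multiplicity formula of Proposition~\ref{prop multiplicities} to both sides, and then matching the two integrals term by term via the Jacquet--Langlands character identity, using the regularization formula \ref{representations}(2) to transfer the identity from regular semisimple points down to the singular point $\mathrm{diag}(\lambda,\lambda)$.

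First I would set up the bijection between the indexing data of the two multiplicity formulas. Maximal elliptic tori of $H_0=\mathcal{A}^\times$ (resp.\ of $H'_0=(\mathcal{A}')^\times$) are parametrized by $F$-isomorphism classes of degree-$n$ field extensions $E/F$, and every such $E$ embeds in every central simple $F$-algebra of degree $n$, so one obtains a canonical bijection $T\leftrightarrow T'$ under which $T\simeq E^\times\simeq T'$. Through this bijection the measures of total mass one on $A_G\backslash T$ and $A_{G'}\backslash T'$ coincide, the Weyl groups $W(H_0,T)$ and $W(H'_0,T')$ coincide, and the characters agree on matching elements since both $N_{\mathcal{A}/F}$ and $N_{\mathcal{A}'/F}$ restrict to $N_{E/F}$ on $E^\times$. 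The Weyl discriminants $D^{H}(t)$ and $D^{H'}(t')$ also agree on matching elements $t=\mathrm{diag}(\lambda,\lambda)$, $t'=\mathrm{diag}(\lambda,\lambda)$, because after base change to $\overline F$ the endomorphism $\mathrm{Ad}(\lambda)$ on $\mathcal{A}/E$ has the same eigenvalues $\lambda_i\lambda_j^{-1}$ ($i\neq j$), determined by the minimal polynomial of $\lambda$, independently of the ambient algebra.

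The crux is the identity
\[
c_\pi(t)=c_{\pi'}(t')
\]
for matching elliptic $t\leftrightarrow t'$. For such $t=\mathrm{diag}(\lambda,\lambda)$ the centralizer $G_t=GL_2(E)$ (with $E=F[\lambda]$) is already quasi-split, and the same holds on the primed side with the canonically isomorphic group $G'_{t'}=GL_2(E)$. Choosing on both sides the diagonal split torus $T_{t,qd}=E^\times\times E^\times\subset GL_2(E)$, formula \ref{representations}(2) gives
\[
D^G(t)^{1/2}c_\pi(t)=\tfrac12\lim_{x\to t}D^G(x)^{1/2}\Theta_\pi(x),\qquad D^{G'}(t')^{1/2}c_{\pi'}(t')=\tfrac12\lim_{x'\to t'}D^{G'}(x')^{1/2}\Theta_{\pi'}(x'),
\]
the limits being taken along regular semisimple $x=\mathrm{diag}(\lambda_1,\lambda_2)$ with $\lambda_1\neq\lambda_2$ in $E^\times$. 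For any such $x\in G$ and $x'\in G'$ with the same coordinates $(\lambda_1,\lambda_2)$ the characteristic polynomials match, so the standard Jacquet--Langlands character identity of \cite{DKV84} gives $\Theta_\pi(x)=\Theta_{\pi'}(x')$; the dimensional sign in the identity is trivial in our setting because both groups have the form $GL_2(\cdot)$ over a central simple $F$-algebra, so the relevant parity invariants agree. Combined with the equality $D^G(x)=D^{G'}(x')$ (again by the eigenvalue argument above), the two limits coincide, whence $c_\pi(t)=c_{\pi'}(t')$. Substituting into Proposition~\ref{prop multiplicities} yields the theorem.

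The main obstacle is making precise the matching of stable conjugacy classes between $G$ and $G'$ along the chosen tori and pinning down the correct form (and absence of sign) of the Jacquet--Langlands character identity in this inner-form setting, together with a careful compatibility check on Haar measures on the various tori. Once these normalizations are fixed, the rest of the argument is formal: the multiplicity formula combined with the termwise identity $c_\pi(t)=c_{\pi'}(t')$ delivers $m(\pi,\omega)=m(\pi',\omega)$.
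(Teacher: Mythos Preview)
Your proposal is correct and follows essentially the same route as the paper's proof: apply Proposition~\ref{prop multiplicities} on both sides, match the elliptic tori, and use \ref{representations}(2) together with the Jacquet--Langlands character identity to obtain $c_\pi(t)=c_{\pi'}(t')$. The paper carries out the limit along the specific path $x=\mathrm{diag}(t,\lambda t)$ with $\lambda\in F^\times\to 1$ and does not explicitly discuss the sign in the character relation (your observation that the parity is $(-1)^{2m-2m'}=+1$ is the right justification), but otherwise the arguments coincide.
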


\begin{proof}
If the central characters of $\pi$ and $\pi'$ does not coincide with $\omega^n$, then by looking at the action of the center, we see that $m(\pi,\omega)=m(\pi',\omega)=0$. Assume now that the central characters of $\pi$ and $\pi'$ are equal to $\omega^n$. Set $G_1:=\mathcal{A}^\times$ and $G_1':=(\mathcal{A}')^\times$. Then, by the multiplicity formula of Proposition \ref{prop multiplicities}, we have

$$\displaystyle m(\pi,\omega)=\sum_{T\in \CT_{ell}(G_1)}\lvert W(G_1,T)\rvert^{-1}\int_{A_{G_1}\backslash T} D^{G_1}(t)^2 c_{\pi}\begin{pmatrix} t & \\ & t\end{pmatrix} \omega(t)^{-1}dt,$$

$$\displaystyle m(\pi',\omega)=\sum_{T'\in \CT_{ell}(G'_1)}\lvert W(G'_1,T')\rvert^{-1}\int_{A_{G_1'}\backslash T'} D^{G'_1}(t')^2 c_{\pi'}\begin{pmatrix} t' & \\ & t'\end{pmatrix} \omega(t')^{-1}dt'.$$

There is a natural bijection $\CT_{ell}(G_1)\simeq \CT_{ell}(G_1')$ which sends $T\in \CT_{ell}(G_1)$ to the unique torus $T'\in \CT_{ell}(G_1')$ such that $T\simeq T'$. Moreover, such an isomorphism can be obtained as conjugation by an element in $G_{1}(\overline{F})\simeq G'_{1}(\overline{F})$ where $G_{1}(\overline{F})$ and $G_{1}'(\overline{F})$ denote the groups of points of $G_1$, $G_1'$ over a fixed algebraic closure $\overline{F}$ of $F$ and the isomorphism $G_{1}(\overline{F})\simeq G_{1}'(\overline{F})$ is induced by an $\overline{F}$-isomorphism $\mathcal{A}\otimes_F \overline{F}\simeq \mathcal{A}'\otimes_F\overline{F}$. Let $T\in \CT_{ell}(G_1)$ and $T'\in \CT_{ell}(G_1')$ corresponding to each other and fix an isomorphism $T\simeq T'$ as before (i.e. induced from conjugation by an element over the algebraic closure). Then, conjugation by the same element induces an isomorphism $W(G_1,T)\simeq W(G_1',T')$. Let $t\in T$ and denote by $t'$ its image in $T'$. We easily check that $D^{G_1}(t)^2=D^{G_1'}(t')^2$ and $\omega(t)=\omega(t')$. Moreover, if $t\in T\cap G_{1,reg}$, then $t'\in T'\cap G_{1,reg}'$; and by \ref{representations}(2), we have

$$\displaystyle c_{\pi}\begin{pmatrix} t & \\ & t \end{pmatrix}=\frac{1}{2}D^{G} \begin{pmatrix}t & \\ & t \end{pmatrix}^{-1/2} \lim\limits_{\lambda\in F^{\times}\to 1} D^G\begin{pmatrix}t & \\ & \lambda t\end{pmatrix}^{1/2} \Theta_{\pi} \begin{pmatrix}t & \\ & \lambda t\end{pmatrix},$$

$$\displaystyle c_{\pi'}\begin{pmatrix} t' & \\ & t'\end{pmatrix}=\frac{1}{2}D^{G'}\begin{pmatrix}t' & \\ & t'\end{pmatrix}^{-1/2}  \lim\limits_{\lambda\in F^{\times}\to 1} D^{G'}\begin{pmatrix}t' & \\ & \lambda t'\end{pmatrix}^{1/2} \Theta_{\pi'}\begin{pmatrix} t' & \\ & \lambda t'\end{pmatrix}.$$

\noindent Once again we can easily check that

$$D^{G}\begin{pmatrix}t & \\ & t\end{pmatrix}=D^{G'}\begin{pmatrix}t' & \\ & t' \end{pmatrix},\;\;\; D^{G}\begin{pmatrix}t & \\ & \lambda t\end{pmatrix}=D^{G'}\begin{pmatrix}t' & \\ & \lambda t'\end{pmatrix}$$

\noindent for all $\lambda\in F^\times$. Furthermore, by the relations of the distribution characters under the local Jacquet-Langlands correspondence (see \cite{DKV84}), we also have

$$\Theta_{\pi}\begin{pmatrix}t & \\ & \lambda t\end{pmatrix}=\Theta_{\pi'}\begin{pmatrix} t' & \\ & \lambda t'\end{pmatrix}$$

\noindent for all $\lambda\in F^\times$. This shows that

$$c_\pi\begin{pmatrix} t & \\ & t \end{pmatrix}=c_\pi\begin{pmatrix} t' & \\ & t' \end{pmatrix}.$$

\noindent All in all, we get the equality

\[\begin{aligned}
\displaystyle & \lvert W(G_1,T)\rvert^{-1}\int_{A_{G_1}\backslash T} D^{G_1}(t)^2 c_{\pi}\begin{pmatrix} t & \\ & t\end{pmatrix} \omega(t)^{-1}dt \\
 =& \lvert W(G'_1,T')\rvert^{-1}\int_{A_{G_1'}\backslash T'} D^{G'_1}(t')^2 c_{\pi'}\begin{pmatrix} t' & \\ & t'\end{pmatrix} \omega(t')^{-1}dt'.
\end{aligned}\]

\noindent Summing this over all $T\in \CT_{ell}(G_1)$, we get the desired equality.
\end{proof}

Then we study the relations between the multiplicity and the local exterior square L-function. Assume that $\mathcal{A}'=M_n(F)$, $\pi'\in \Pi_2(G')$ and $\omega=\mathbf{1}$ is the trivial character of $H_0'$ in which case we will simply set

$$\displaystyle m(\pi'):=m(\pi',\mathbf{1})$$

\noindent The following theorem seems to be well-known but apparently only half of its proof has appeared in the literature (by combining results of different sources). For sake of completeness, we provide a proof of the other half. The exterior square $L$-function appearing in the statement is the Artin local $L$-function defined through the local Langlands correspondence of Harris-Taylor \cite{HT}, Henniart \cite{He00} and Scholze \cite{Sc}.

\begin{thm}\label{L function}
With the notations above, the local exterior square $L$-function $L(s,\pi',\wedge^2)$ has a pole at $s=0$ if and only if $m(\pi')=1$.
\end{thm}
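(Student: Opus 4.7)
The plan is to reduce Theorem~\ref{L function} to existing results in the literature via the Zelevinsky classification. Any $\pi'\in \Pi_2(G')$ is a generalized Steinberg representation $\mathrm{St}(\tau,k)$ attached to a supercuspidal representation $\tau$ of $GL_{2n/k}(F)$ and an integer $k\geqslant 1$ with $k\mid 2n$; its Langlands parameter is $\phi_{\pi'}=\phi_\tau\boxtimes \rho_k$, where $\rho_k$ denotes the irreducible $k$-dimensional representation of $SL_2(\mathbb{C})$.

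First, I would translate the $L$-function condition into a condition on the parameter. Using that $\rho_k$ is orthogonal for $k$ odd and symplectic for $k$ even (equivalently, of sign $(-1)^{k-1}$), the tensor product $\phi_\tau\boxtimes \rho_k$ is symplectic if and only if one of the following holds: (a) $k$ is odd and $\phi_\tau$ is symplectic; (b) $k$ is even and $\phi_\tau$ is orthogonal. Decomposing $\wedge^2(\phi_\tau\boxtimes \rho_k)$ into $SL_2(\mathbb{C})$-isotypic components then shows that $L(s,\pi',\wedge^2)$ has a pole at $s=0$ if and only if (a) or (b) is satisfied.

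Next, I would assemble the existing results. In the supercuspidal case $k=1$, the equivalence ``$\phi_{\pi'}$ symplectic $\Leftrightarrow$ $m(\pi')=1$'' is the main result of \cite{JNY}. For $k\geqslant 2$, the implication ``$m(\pi')=1\Rightarrow$ pole at $s=0$'' follows from the Jacquet--Shalika integral representation of the exterior square $L$-function, spelled out in \cite{K11}: a nonzero Shalika functional produces a local zeta integral whose pole structure at $s=0$ matches that of $L(s,\pi',\wedge^2)$, forcing a pole there. The converse implication for $k\geqslant 2$, i.e.\ the existence of a Shalika functional on $\mathrm{St}(\tau,k)$ whenever (a) or (b) holds, is addressed in \cite{KR}; the complementary argument I would supply realizes $\mathrm{St}(\tau,k)$ inside the parabolic induction $\mathrm{Ind}_P^{G'}(\tau\nu^{-(k-1)/2}\otimes\cdots\otimes \tau\nu^{(k-1)/2})$, constructs a candidate Shalika functional on this induction from the open $H'$-orbit on the associated flag variety via the Bernstein--Zelevinsky geometric lemma, and verifies that it descends non-trivially to the Steinberg subquotient.

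The main obstacle is precisely this last non-vanishing step: the ``big cell'' intertwining easily defines a Shalika functional on the full induced representation, but one must prove it does not vanish on the discrete subquotient $\mathrm{St}(\tau,k)$. I expect to handle this by pairing against an explicit Whittaker-type test vector, tracking the integral through the Jacquet module of $\mathrm{St}(\tau,k)$ relative to the Shalika parabolic, and invoking the multiplicity-one theorem of Jacquet--Rallis \cite{JR} to reduce non-vanishing to a single numerical identity whose sign is governed exactly by the parity conditions in (a) and (b).
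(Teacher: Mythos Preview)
Your proposal reverses the logical attribution of the two implications relative to what the literature actually provides, and this leaves a genuine gap. The direction ``pole $\Rightarrow$ Shalika functional'' for discrete series is the one covered by \cite{K11} (Theorem~4.3, Corollary~4.4) together with \cite{KR} (Corollary~1.4); your ``main obstacle'' (non-vanishing of a big-cell functional on the Steinberg subquotient) is therefore aimed at the direction already in the literature. The implication that is \emph{not} in the cited sources is ``Shalika functional $\Rightarrow$ pole of the Artin $L(s,\pi',\wedge^2)$ at $s=0$'', and your argument for it is too quick: the existence of a nonzero Shalika functional lets you form local Jacquet--Shalika zeta integrals, but there is no automatic reason those integrals themselves have a pole at $s=0$, and even if they did, you would be producing a pole of the Jacquet--Shalika $L$-function, not directly of the Artin one. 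Bridging that gap requires either the full comparison $L_{JS}=L_{\mathrm{Artin}}$ for discrete series plus an argument that the integral really detects the pole, or a different mechanism altogether.

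The paper supplies that different mechanism. Rather than going through zeta integrals, it uses a functoriality-style argument: by \cite{JQ} and \cite{JNY}, a nonzero Shalika functional forces the Siegel-induced representation $\Pi(1/2)=I_P^{SO_{4n}}(\pi'\otimes \lvert\det\rvert^{1/2})$ to be reducible; then, via the standard-module conjecture \cite{HO}, the Gan--Ichino criterion \cite{GI}, and Henniart's identification of Shahidi's exterior-square $\gamma$-factor with the Artin one \cite{He10}, reducibility at $s=1/2$ is equivalent to a pole of the Artin $L(s,\pi',\wedge^2)$ at $s=0$. This route bypasses entirely the comparison of Jacquet--Shalika and Artin $L$-functions and the delicate non-vanishing analysis you were anticipating.
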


\begin{proof}
By the multiplicity one result of \cite{JR} (see \cite{CS} for a more general result), in order to prove the theorem, it is enough to show that the local exterior square $L$-function $L(s,\pi',\wedge^2)$ has a pole at $s=0$ if and only if $m(\pi')\neq 0$. This has been proved in \cite{JNY} when $\pi'$ is supercuspidal. When $\pi'$ is a discrete series, by Theorem 4.3, Corollary 4.4 of \cite{K11} and Corollary 1.4 of \cite{KR}, we know that if the local exterior square $L$-function $L(s,\pi',\wedge^2)$ has a pole at $s=0$, then $m(\pi')\neq 0$. Hence it remains to prove the other direction. We will follow the method introduced in \cite{JNY}.

\vspace{1em}
Assume that $m(\pi')\neq 0$. Then, of course, the central character of $\pi'$ is trivial. Let $\SO_{4n}(F)$ be the $F$-split special orthogonal group of rank $2n$, and let $P=MN$ be a Siegel parabolic subgroup of $\SO_{4n}(F)$ with Levi component $M$ isomorphic to $GL_{2n}(F)$. For $s\in \BC$, let $\Pi(s)=I_{P}^{SO_{4n}(F)}(\pi'\otimes \lvert\det\rvert^{s})$ be the normalized parabolic induction of $\pi'\otimes \lvert\det\rvert^s$ to $\SO_{4n}(F)$. By Theorem 3.1 of \cite{JQ} and Proposition 2.3 of \cite{JNY}, together with the assumption that $m(\pi')\neq 0$, we know that the representation $\Pi(\frac{1}{2})$ of $SO_{4n}(F)$ is reducible. Hence it remains to prove the following lemma.

\begin{lem}
With the notations above, the representation $\Pi(\frac{1}{2})$ is reducible if and only if then the local exterior square $L$-function $L(s,\pi',\wedge^2)$ has a pole at $s=0$.
\end{lem}

\begin{proof}
Let $N_M\subset M$ be the unipotent radical of a Borel subgroup of $M$ and set $N_0:=N_M\ltimes N$. Then, $N_0$ is the unipotent radical of a Borel subgroup of $SO_{4n}(F)$. Let $\xi_0$ be a generic character on $N_0$ and set $\xi_M:=\xi_0\mid_{N_M}$ for the restriction of $\xi_0$ to $N_M$. As $\pi'$ is square-integrable it is in particular generic with respect to $(N_M,\xi_M)$. By Lemma B.2 of \cite{GI} and the Standard Module Conjecture \cite{HO}, we have that $\Pi(\frac{1}{2})$ is reducible if and only if $\gamma^{Sh}(2s,\pi',\wedge^2,\psi)$ has a pole at $s=1/2$ where $\gamma^{Sh}(s,\pi',\wedge^2,\psi)$ denotes the gamma factor defined by Shahidi (with respect to any non-trivial additive character $\psi$ since it doesn't change the existence or not of a pole at $s=1/2$). Since $\pi'$ is square-integrable, $L(s,\pi',\wedge^2)$ has no pole with positive real part and in particular at $s=1$. Hence, by the main result of \cite{He10}, $\Pi(\frac{1}{2})$ is reducible if and only if $L(s,\pi',\wedge^2)$ has a pole at $s=0$.
\end{proof}
\end{proof}

As the Jacquet-Langlands correspondence preserves $L$-function, we get a Corollary from the two theorems above which proves Theorem \ref{main theorem 2}.

\begin{cor}
For all $\pi\in \Pi_2(G)$, the local exterior square $L$-function $L(s,\pi,\wedge^2)$ has a pole at $s=0$ if and only if $m(\pi)=1$.
\end{cor}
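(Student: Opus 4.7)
The plan is to deduce this corollary by simply combining the two previous results, using the Jacquet--Langlands correspondence as the bridge between the inner form $G=GL_2(\mathcal{A})$ and the split form $GL_{2n}(F)=GL_2(M_n(F))$. First, given $\pi\in\Pi_2(G)$, I would let $\pi'\in\Pi_2(GL_{2n}(F))$ be the representation corresponding to $\pi$ under the local Jacquet--Langlands correspondence of \cite{DKV84}. Theorem \ref{main 2} (applied with $\omega=\mathbf{1}$, noting that the character $\omega$ is allowed to be trivial throughout \S\ref{application 1}) then yields the equality $m(\pi)=m(\pi')$.

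Next, Theorem \ref{L function} applied to $\pi'\in\Pi_2(GL_{2n}(F))$ gives the equivalence $m(\pi')=1\iff L(s,\pi',\wedge^2)$ has a pole at $s=0$. To transfer this equivalence back from $\pi'$ to $\pi$, I would invoke the fact that the exterior square $L$-function is defined via the local Langlands parameter (through the correspondences of Harris--Taylor, Henniart, and Scholze on the split side, and through Jacquet--Langlands on the inner form side), and that $\pi$ and $\pi'$ share the same Langlands parameter by construction. Therefore $L(s,\pi,\wedge^2)=L(s,\pi',\wedge^2)$, and in particular one has a pole at $s=0$ if and only if the other does.

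Chaining these three equalities together yields
\[
m(\pi)=m(\pi')=1 \iff L(s,\pi',\wedge^2) \text{ has a pole at } s=0 \iff L(s,\pi,\wedge^2) \text{ has a pole at } s=0,
\]
which is the content of the corollary. The only thing to note for the complementary statement ($m(\pi)=0$ otherwise) is the multiplicity one result of \cite{JR} and \cite{CS} recalled in the remark of \S\ref{Shalika triples}, which ensures $m(\pi)\in\{0,1\}$, so that "not equal to $1$" forces $m(\pi)=0$.

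There is essentially no obstacle: all the substantial work has already been done, namely proving the multiplicity formula (Proposition \ref{prop multiplicities}), deducing from it the Jacquet--Langlands invariance (Theorem \ref{main 2}), and establishing the characterization of nonvanishing multiplicity in the split case through the reducibility of the induced representation $\Pi(\tfrac{1}{2})$ on $SO_{4n}(F)$ (Theorem \ref{L function}). The only mild subtlety is the identification of $L$-functions under Jacquet--Langlands, but this is by now standard and can be cited directly.
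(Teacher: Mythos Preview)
Your proposal is correct and follows exactly the paper's own approach: the corollary is stated immediately after Theorems~\ref{main 2} and~\ref{L function} with the one-line justification that the Jacquet--Langlands correspondence preserves $L$-functions, so the two theorems combine to give the result. Your additional remark about multiplicity one ensuring $m(\pi)\in\{0,1\}$ is a reasonable clarification but is not strictly needed, since Theorem~\ref{L function} already gives the full dichotomy on the split side and Theorem~\ref{main 2} transports it.
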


\begin{rmk}
If $\omega$ is the square of a character of $F^\times$, then by twisting $\pi$ by a suitable character of $G$ together with the above corollary, we see that for $\pi\in \Pi_2(G)$, the local twisted exterior square $L$-function $L(s,\pi,\wedge^2\otimes \omega^{-1})$ has a pole at $s=0$ if and only if $m(\pi,\omega)=1$. The same should be true in general although it would necessitate to redo the works of \cite{JNY}, \cite{JQ}, \cite{K11} and \cite{KR} by introducing this twist. However, note that in the case where $n=2$, this more general result has been proved in Theorem 1.5 of \cite{GT10}.
\end{rmk}

\subsection{The relations between the generalized Shalika model and the Ginzburg-Rallis model}\label{application 2}

In this subsection, we will consider the particular cases where $\mathcal{A}=M_2(F)$ or a non-split quaternion algebra $D$ over $F$. More precisely, we will give some relations between the multiplicities for the Ginzburg-Rallis model of $\GL_3(D)$ (resp. $GL_6(F)$) and for the generalized Shalika model for $\GL_2(D)$ (resp. $GL_4(F)$).
\vspace{1em}

Assume $\omega$ unitary. Let $\pi_D$ be a tempered representation of $\GL_2(D)$ with central character $\omega^2$ and let $\Pi_D$ be the normalized parabolic induction of the representation $\pi_D\boxtimes \omega^{-1}$ of the Lei subgroup $GL_2(D)\times GL_1(D)$ to $GL_3(D)$ (where again we have identified $\omega^{-1}$ with a character of $GL_1(D)$ by composition with the reduced norm). Then $\Pi_D$ is a tempered representation of $\GL_3(D)$ with trivial central character. Let $m_{GR}(\Pi_D)$ be the multiplicity of the Ginzburg-Rallis model (with trivial character). We refer the readers to the previous papers \cite{Wan15} and \cite{Wan16a} of the second author for the definition of the Ginzburg-Rallis model. The following theorem tells us a relation between the Ginzburg-Rallis model for $\GL_3(D)$ and the generalized Shalika model for $\GL_2(D)$.

\begin{thm}\label{GR 1}
With the notations above, we have
$$m(\pi_D,\omega)=m_{GR}(\Pi_D).$$
\end{thm}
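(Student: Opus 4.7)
The plan is to apply the multiplicity formula of Proposition \ref{prop multiplicities} to $m(\pi_D,\omega)$ and the analogous multiplicity formula for the Ginzburg-Rallis model established by the second author in \cite{Wan15} and \cite{Wan16a} to $m_{GR}(\Pi_D)$, and then to identify the two resulting integrands pointwise via the van Dijk formula for characters of parabolically induced representations. Explicitly, the two multiplicity formulas read
$$m(\pi_D,\omega)=\sum_{T\in \CT_{ell}(D^\times)}|W(D^\times,T)|^{-1}\int_{A_G\backslash T} D^H(t)\, c_{\pi_D}\begin{pmatrix}t & \\ & t\end{pmatrix}\omega(t)^{-1}\,dt,$$
$$m_{GR}(\Pi_D)=\sum_{T\in \CT_{ell}(D^\times)}|W(D^\times,T)|^{-1}\int_{A_{G'}\backslash T} D^{H_{GR}}(t)\, c_{\Pi_D}\begin{pmatrix}t & & \\ & t & \\ & & t\end{pmatrix}\,dt,$$
where $D^\times$ is embedded diagonally in both $H$ and $H_{GR}$. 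Since $A_G=A_{G'}=F^\times$ as subgroups of $D^\times$ and both sums are indexed by the same set with the same Weyl groups, it will suffice to prove the pointwise identity
$$D^{H_{GR}}(t)\, c_{\Pi_D}\!\begin{pmatrix}t & & \\ & t & \\ & & t\end{pmatrix}=\omega(t)^{-1}D^H(t)\, c_{\pi_D}\!\begin{pmatrix}t & \\ & t\end{pmatrix}, \qquad t\in T\cap G_{1,reg}.$$

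The heart of the argument is to relate $c_{\Pi_D}$ at the non-regular point $\diag(t,t,t)$ to $c_{\pi_D}$ at $\diag(t,t)$. Let $P=LU_P$ be the parabolic with Levi $L=\GL_2(D)\times \GL_1(D)$ from which $\Pi_D$ is induced, and set $K:=F(t)$; this is a quadratic extension embedded in $D$. The centralizers $G_{\diag(t,t)}\simeq \GL_2(K)$ and $G'_{\diag(t,t,t)}\simeq \GL_3(K)$ are quasi-split, with Weyl groups of order $2$ and $6$ respectively. For generic $(t_1,t_2,t_3)\in (K^\times)^3$ the element $g=\diag(t_1,t_2,t_3)$ is regular in $G'$, and van Dijk's induced-character formula gives
$$|D^{G'}(g)|^{1/2}\Theta_{\Pi_D}(g)=\sum_{i_3=1}^{3}|D^{\GL_2(D)}(g_{i_3})|^{1/2}|D^{\GL_1(D)}(t_{i_3})|^{1/2}\,\Theta_{\pi_D}(g_{i_3})\,\omega(t_{i_3})^{-1},$$
where $g_{i_3}=\diag(t_{i_1},t_{i_2})$ with $\{i_1,i_2\}=\{1,2,3\}\setminus\{i_3\}$, the three terms indexing the cosets $W_L\backslash W_{G'}$. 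Letting $(t_1,t_2,t_3)\to (t,t,t)$ along the split torus of $\GL_3(K)$ and applying \ref{representations}(2) termwise---the factor $|D^{\GL_2(D)}(g_{i_3})|^{1/2}\Theta_{\pi_D}(g_{i_3})$ converges to $2|D^{\GL_2(D)}(\diag(t,t))|^{1/2}c_{\pi_D}(\diag(t,t))$, while $|D^{\GL_1(D)}(t_{i_3})|^{1/2}$ and $\omega(t_{i_3})^{-1}$ are continuous at $t$---and dividing by $6=|W(\GL_3(K),\diag)|$, we obtain
$$|D^{G'}(\diag(t,t,t))|^{1/2}c_{\Pi_D}(\diag(t,t,t))=\omega(t)^{-1}|D^{\GL_1(D)}(t)|^{1/2}\,D^H(t)\,c_{\pi_D}(\diag(t,t)),$$
where we have used the identity $|D^{\GL_2(D)}(\diag(t,t))|^{1/2}=D^H(t)$ that follows from a direct Lie-algebra computation (and is already noted in the proof of Lemma \ref{convergence of geometric side}).

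To conclude, set $\Delta(t):=|\det(1-\Ad(t))|_{D/K}|_F$. A direct dimension count gives $D^{H_{GR}}(t)=\Delta(t)^{4}$, $|D^{G'}(\diag(t,t,t))|^{1/2}=\Delta(t)^{9/2}$, and $|D^{\GL_1(D)}(t)|^{1/2}=\Delta(t)^{1/2}$, while $D^H(t)=\Delta(t)^2$. Multiplying the previous identity by $D^{H_{GR}}(t)/|D^{G'}(\diag(t,t,t))|^{1/2}=\Delta(t)^{-1/2}$, the powers of $\Delta(t)$ telescope and we recover the desired pointwise equality of integrands, completing the proof. The main technical obstacle will be the careful termwise application of \ref{representations}(2) in the limit computation: one must verify that the quasi-split maximal torus of $G'_{\diag(t,t,t)}\simeq\GL_3(K)$ restricts compatibly to the quasi-split maximal torus of $\GL_2(D)_{\diag(t,t)}\simeq \GL_2(K)$ on each two-element subset of indices, and that the discriminant and Weyl-group bookkeeping is handled uniformly across the three cosets in $W_L\backslash W_{G'}$.
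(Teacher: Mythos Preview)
Your proposal is correct and follows essentially the same route as the paper: both proofs compare the Shalika multiplicity formula (Proposition \ref{prop multiplicities}) with the Ginzburg--Rallis multiplicity formula from \cite{Wan16a}, and reduce the comparison to a computation of $c_{\Pi_D}$ at $\diag(t,t,t)$ in terms of $c_{\pi_D}$ at $\diag(t,t)$. The only difference is packaging: the paper invokes Lemma~2.3 of \cite{WalGGPII} (which encodes precisely the behavior of the germ coefficients $c_\pi$ under parabolic induction) as a black box, whereas you unfold that lemma by hand via van Dijk's induced-character formula and a termwise limit using \S\ref{representations}(2), together with the explicit discriminant bookkeeping in powers of $\Delta(t)$.
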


\begin{proof}
In Section 8.1 of \cite{Wan16a}, we proved a multiplicity formula $m_{GR}(\Pi_D)=m_{geom}(\Pi_D)$ for the Ginzburg-Rallis model. By the definition of $m_{geom}(\Pi_D)$ in \cite{Wan16a}, together with the definition of $\Pi_D$ above and Lemma 2.3 of \cite{WalGGPII}, we can easily show that
$$\displaystyle m_{geom}(\Pi_D)=\sum_{T\in \CT_{ell}(\GL_1(D))}|W(\GL_1(D),T)|^{-1}\int_{T/A_G} D^{\GL_1(D)}(t)^2 c_{\pi_D}\begin{pmatrix}t & \\ & t\end{pmatrix} \omega(t)^{-1}dt.$$
Hence by Proposition \ref{prop multiplicities}, we get $m(\pi_D)=m(\Pi_D)$, and this proves the theorem.
\end{proof}

Next, we will prove a relation between the Ginzburg-Rallis model for $\GL_6(F)$ and the Shalika model for $\GL_4(F)$. Let $\pi$ be a discrete series of $\GL_4(F)$ with central character $\omega^2$, and let $\pi_D$ be the Jacquet-Langlands lift of $\pi$ to $GL_2(D)$. Let $St(\omega^{-1})$ be the Steinberg representation of $\GL_2(F)$ twisted by the character $\omega^{-1}$ and let $\Pi$ be the normalized parabolic induction of the representation $\pi\boxtimes St(\omega^{-1})$ of the Levi subgroup $GL_4(F)\times GL_2(F)$ to $GL_6(F)$. Then $\Pi$ is a tempered representation of $\GL_6(F)$ with trivial central character. Let $m_{GR}(\Pi)$ denote the multiplicity of $\Pi$ relative to the Ginzburg-Rallis model (with trivial character) of $GL_6(F)$.

\begin{thm}\label{GR 2}
With the notations above, we have
$$m(\pi,\omega)+m_{GR}(\Pi)=1.$$
\end{thm}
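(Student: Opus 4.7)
The plan is to reduce the identity to an epsilon-dichotomy statement for the Ginzburg-Rallis model by chaining Theorems \ref{main theorem 1} and \ref{GR 1}. By Theorem \ref{main theorem 1}, $m(\pi,\omega)=m(\pi_D,\omega)$ since $\pi_D$ is the Jacquet-Langlands transfer of $\pi$. Applying then Theorem \ref{GR 1} to $\pi_D$ (which is tempered with central character $\omega^2$), we obtain $m(\pi_D,\omega)=m_{GR}(\Pi_D)$, where $\Pi_D:=\Ind(\pi_D\boxtimes \omega^{-1})$ is the parabolic induction on $\GL_3(D)$ considered in that theorem. Thus the statement becomes the Jacquet-Langlands dichotomy
\[
m_{GR}(\Pi)+m_{GR}(\Pi_D)=1,
\]
relating the Ginzburg-Rallis multiplicities of two tempered parabolic inductions built from JL-matching discrete data on the inner forms $\GL_6(F)$ and $\GL_3(D)$.

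To establish this identity I would apply the geometric multiplicity formula for the Ginzburg-Rallis model from \cite{Wan15,Wan16a} on both sides. Each multiplicity becomes a sum over conjugacy classes of elliptic tori ($T\subset \GL_2(F)$ for $m_{GR}(\Pi)$, $T'\subset D^\times$ for $m_{GR}(\Pi_D)$) of integrals of regularised germ coefficients weighted by Weyl discriminants. The two families of tori are in natural bijection, both parametrised by quadratic extensions $K/F$. The next step is to combine the character formula for parabolic induction with the Jacquet-Langlands character identities to match the two integrands pointwise: for $t\in T$ elliptic regular, the germ $c_\Pi(\diag(t,t,t))$ decomposes, via the induced character formula applied to $\Pi=\Ind(\pi\boxtimes St(\omega^{-1}))$, into a product involving $c_\pi(\diag(t,t))$ and the Steinberg character value $\Theta_{St(\omega^{-1})}(t)=-\omega^{-1}(\det t)$; analogously on the $D$-side $c_{\Pi_D}(\diag(t',t',t'))$ yields $c_{\pi_D}(\diag(t',t'))\cdot \omega^{-1}(N_{D/F}(t'))$. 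The JL character identity between $\pi$ and $\pi_D$ (as exploited in the proof of Theorem \ref{main 2}) gives $c_\pi(\diag(t,t))=c_{\pi_D}(\diag(t',t'))$ on matching tori, while the classical Deligne-Kazhdan identity between $St(\omega^{-1})$ on $\GL_2(F)$ and $\omega^{-1}$ on $D^\times$ supplies the relative sign $\Theta_{St(\omega^{-1})}(t)+\omega^{-1}(N_{D/F}(t'))=0$; the tempered contributions therefore cancel between the two sides.

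What remains after this cancellation must evaluate to $+1$. The main obstacle I anticipate is the careful treatment of the germ expansion at the singular semi-simple points $\diag(t,t,t)$, which are non-regular in both $\GL_6(F)$ and $\GL_3(D)$: the induced-character formula of Deligne-Kazhdan must be refined using a variant of the weighted orbital integral analysis developed in Section \ref{first application} of this paper, and one must classify the nilpotent orbits in the centralizer contributing to the leading term in order to verify that cross-terms vanish. A cleaner alternative route would be to split the Steinberg representation via the short exact sequence
\[
0\to St(\omega^{-1})\to \Ind_B^{\GL_2}(\omega^{-1/2}\boxtimes \omega^{-1/2})\to \omega^{-1}\circ\det\to 0,
\]
reducing the problem to a computation for the full principal series $\Ind_{P_{4,1,1}}^{\GL_6}(\pi\boxtimes \omega^{-1/2}\boxtimes \omega^{-1/2})$, whose Ginzburg-Rallis multiplicity can be evaluated directly by iterated transitivity of induction and Frobenius reciprocity combined with the non-vanishing of the Shalika functional on $\pi$ (which, together with Theorem \ref{main theorem 2}, accounts for the required $+1$).
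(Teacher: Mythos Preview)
Your first paragraph is exactly the paper's argument: reduce via Theorems \ref{main 2} and \ref{GR 1} to the identity $m_{GR}(\Pi)+m_{GR}(\Pi_D)=1$ for the Ginzburg-Rallis multiplicities on the inner forms $\GL_6(F)$ and $\GL_3(D)$. The paper then stops and simply quotes this dichotomy as an already established theorem from \cite{Wan15} and \cite{Wan16a}; nothing further is needed.

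Your second and third paragraphs attempt to reprove that dichotomy from scratch, and here there is a genuine gap. As you present it, the Jacquet--Langlands character relations produce a sign discrepancy between the Steinberg factor on $\GL_2(F)$ and the character $\omega^{-1}$ on $D^\times$, so the elliptic integrands on the two sides are negatives of each other; summing them gives $0$, not $1$. You write ``what remains after this cancellation must evaluate to $+1$'' without identifying what remains. The missing term is the contribution at the identity: the geometric multiplicity formula of \cite{Wan16a} has a summand $c_\Pi(1)$ coming from the trivial torus, and it is precisely the disparity between $c_\Pi(1)$ on $\GL_6(F)$ (where $\Pi$ is generic) and $c_{\Pi_D}(1)$ on $\GL_3(D)$ (where the relevant regular nilpotent orbit is absent) that produces the $1$. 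Your alternative route via the exact sequence for the Steinberg representation runs into the same issue, since Ginzburg--Rallis multiplicities are not additive in short exact sequences of non-semisimple representations. In short: keep your first paragraph and cite \cite{Wan15,Wan16a} for the final dichotomy, as the paper does.
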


\begin{proof}
Let $\Pi_D$ be the Jacquet-Langlands lift of $\Pi$ to $\GL_3(D)$. Then $\Pi_D$ is the parabolic induction of the representation $\pi_D\otimes \omega^{-1}$ of the Levi subgroup $\GL_2(D)\times \GL_1(D)$. By Theorem \ref{GR 1} and Theorem \ref{main 2}, we have
$$m_{GR}(\Pi_D)=m(\pi_D,\omega)=m(\pi,\omega).$$
On the other hand, in \cite{Wan15} and \cite{Wan16a}, we have proved that $m_{GR}(\Pi)+m_{GR}(\Pi_D)=1$. The theorem follows.
\end{proof}

For the rest part of this section, we are going to prove some partial results for the epsilon dichotomy conjecture of the Ginzburg-Rallis model. Let $\Pi$ be an irreducible generic representation of $\GL_6(F)$ with central character $\beta^2$ where $\beta$ is some character on $F^{\times}$, and let $m_{GR}(\Pi)$ be the multiplicity relative to the Ginzburg-Rallis model with character $\beta$. We first recall the epsilon dichotomy conjecture from \cite{Wan16a}.

\begin{conj}\label{epsilon}
With the notations above, we have
\begin{eqnarray*}
m_{GR}(\Pi)=1 &\iff& \epsilon(1/2,(\wedge^3\Pi)\otimes \beta^{-1})=1,\\
m_{GR}(\Pi)=0 &\iff& \epsilon(1/2,(\wedge^3\Pi)\otimes \beta^{-1})=-1.
\end{eqnarray*}
\end{conj}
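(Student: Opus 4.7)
The plan is to prove Conjecture \ref{epsilon} in the specific cases where $\Pi$ is one of the parabolically induced representations of $\GL_6(F)$ arising in Theorem \ref{GR 2}, with $\beta$ trivial (and with the companion cases on the inner form $\GL_3(D)$ treated via Theorem \ref{GR 1}). The starting observation is that Theorem \ref{GR 2} gives the identity $m_{GR}(\Pi) = 1 - m(\pi,\omega)$, which immediately reduces the problem to computing the generalized Shalika multiplicity $m(\pi,\omega)$ attached to a discrete series $\pi$ of $\GL_4(F)$.

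I would next translate $m(\pi,\omega)$ into a condition on $L$-functions. Assuming $\omega$ is a square (the generic case after a suitable twist, as in the remark following Theorem \ref{L function}), one has $m(\pi,\omega)=1$ iff the local twisted exterior square $L$-function $L(s,\pi,\wedge^2\otimes\omega^{-1})$ has a pole at $s=0$. The conjecture for our $\Pi$ thus reduces to the equivalence
\[ L(s,\pi,\wedge^2\otimes\omega^{-1}) \text{ is holomorphic at } s=0 \iff \epsilon(1/2,\wedge^3\Pi)=1. \]

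The third step is a Galois-side root-number computation. Writing $\phi_\Pi=\phi_\pi\oplus\omega^{-1}\phi_{\mathrm{St}}$, where $\phi_{\mathrm{St}}$ is the two-dimensional Steinberg Weil--Deligne parameter, one has the decomposition
\[ \wedge^3\phi_\Pi = \wedge^3\phi_\pi \oplus (\wedge^2\phi_\pi\otimes\omega^{-1}\phi_{\mathrm{St}}) \oplus (\phi_\pi\otimes\omega^{-2}), \]
and $\epsilon(1/2,\wedge^3\Pi)$ factors accordingly. The outer pieces combine via the functional equation $\epsilon(s,\tau)\epsilon(1-s,\tau^\vee)=\det\tau(-1)$ (together with $\wedge^3\phi_\pi=\omega^2\phi_\pi^\vee$) to a sign depending only on $\omega$, while the middle piece carries the interesting contribution. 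The self-dual $\sigma:=\wedge^2\phi_\pi\otimes\omega^{-1}$ has trivial determinant, and the pole of $L(s,\pi,\wedge^2\otimes\omega^{-1})$ at $s=0$ is equivalent to $\sigma$ containing the trivial subrepresentation.

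The main obstacle is extracting the correct sign from $\epsilon(1/2,\sigma\otimes\phi_{\mathrm{St}})$. A purely semisimplified computation gives $\epsilon(0,\sigma)\epsilon(1,\sigma)=\det\sigma(-1)=1$ unconditionally, which is too coarse to see the dichotomy; the monodromy of $\phi_{\mathrm{St}}$ must be incorporated via the standard Deligne formula for $\epsilon$ of a Weil--Deligne representation with nontrivial $N$, or equivalently via the relation between $\epsilon(s,\sigma\otimes\phi_{\mathrm{St}})$ and the $L$-factor $L(s,\sigma)$. The correction term involves $L(s,\sigma)$ at $s=0$, whose pole encodes precisely the trivial subrepresentation condition, and pulling this data into the root number produces the predicted sign. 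Finally, the companion case on the inner form $\GL_3(D)$ follows from Theorem \ref{GR 1} combined with Theorem \ref{main 2} and the existing dichotomy $m_{GR}(\Pi)+m_{GR}(\Pi_D)=1$ of \cite{Wan15}, \cite{Wan16a}, completing the new cases of Conjecture \ref{epsilon}.
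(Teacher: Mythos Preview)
Your overall architecture---use Theorem~\ref{GR 2} to convert $m_{GR}(\Pi)$ into a Shalika multiplicity, decompose $\wedge^3\phi_\Pi$ and show the two outer summands contribute trivially, then isolate the middle piece $\wedge^2\phi_\pi\otimes\phi_{\pi_0}$---matches the paper exactly, including the functional-equation cancellation you describe. The paper carries this out for general $\beta$ (reducing to $\beta=1$ by a twisting argument using Corollary~5.15 of \cite{Wan16a}), whereas you assume $\beta$ trivial from the outset; this is a minor omission.

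The substantive divergence is in the final step. You propose to pass from $m(\pi,\omega)$ to the pole of $L(s,\pi,\wedge^2\otimes\omega^{-1})$ (which, as you note, already requires the $n=2$ case of \cite{GT10} when $\omega$ is not a square), and then extract the sign of $\epsilon(1/2,\sigma\otimes\phi_{\mathrm{St}})$ from that pole by a direct Weil--Deligne computation with the Steinberg monodromy. You correctly identify this as the main obstacle, but you only sketch it. The paper bypasses this computation entirely: it transfers the Shalika multiplicity to the inner form via Theorem~\ref{main 2}, obtaining $m(\pi,\alpha^{-1})=m(\pi_D,\alpha^{-1})$, and then invokes Theorem~1.5(2) of Gan--Takeda \cite{GT10}, which is precisely the statement
\[
m(\pi_D,\alpha^{-1})=1 \iff \epsilon(1/2,\wedge^2\phi_\pi\otimes\phi_{\pi_0})=-1.
\]
In other words, the epsilon dichotomy you are trying to derive by hand is already a theorem in the literature for the quaternionic Shalika model, and the paper simply quotes it. Your route through the $L$-function pole and the Deligne correction term would, if completed, amount to reproving that piece of \cite{GT10}; it is not wrong in principle, but the step you flag as the obstacle is exactly where the content lies, and your sketch does not yet close it.
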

In \cite{Wan16a}, we have proved the conjecture when $\Pi$ is not a discrete series, or the parabolic induction of a discrete series of $\GL_4(F)\times \GL_2(F)$. Now we are going to prove the conjecture when $\Pi$ is the parabolic induction of a discrete series $\pi\boxtimes \pi_0$ of $\GL_4(F)\times \GL_2(F)$ with $\pi_0$ being a twist of the Steinberg representation of $\GL_2(F)$.

Let $\pi_0=St(\alpha)$ for some character $\alpha:F^{\times}\rightarrow \BC^{\times}$. Then the central character of $\pi_0$ is ${\alpha}^2$, which implies that the central character of $\pi$ is ${\alpha}^{-2}{\beta}^{2}$. Let $\phi_{\pi}$ (resp. $\phi_{\pi_0}$) be the Langlands parameter of $\pi$ (resp. $\pi_0$). Then the Langlands parameter $\phi_{\Pi}$ of $\Pi$ is just $\phi_{\pi}\oplus \phi_{\pi_0}$.

\begin{prop}
With the notations above, we have
\begin{equation}\label{epsilon 1}
\epsilon(1/2,(\wedge^3\Pi)\otimes \beta^{-1})=\epsilon(1/2,\wedge^2(\phi_{\pi})\otimes \phi_{\pi_0}\otimes \beta^{-1}).
\end{equation}
\end{prop}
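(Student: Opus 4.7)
The plan is to pass to the Galois side via the local Langlands correspondence, where by definition $\epsilon(1/2,(\wedge^3\Pi)\otimes \beta^{-1})=\epsilon(1/2,\wedge^3(\phi_\pi\oplus \phi_{\pi_0})\otimes \beta^{-1})$, and then to use the standard identity for the exterior cube of a direct sum together with multiplicativity of local $\epsilon$-factors. The decomposition
\[
\wedge^3(\phi_\pi\oplus \phi_{\pi_0})\;\cong\;\wedge^3\phi_\pi\;\oplus\;(\wedge^2\phi_\pi\otimes \phi_{\pi_0})\;\oplus\;(\phi_\pi\otimes \wedge^2\phi_{\pi_0})\;\oplus\;\wedge^3\phi_{\pi_0}
\]
simplifies because $\dim\phi_{\pi_0}=2$ forces $\wedge^3\phi_{\pi_0}=0$. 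Thus the desired equality (\ref{epsilon 1}) reduces to the cancellation
\[
\epsilon(1/2,\wedge^3\phi_\pi\otimes \beta^{-1})\cdot \epsilon(1/2,\phi_\pi\otimes \wedge^2\phi_{\pi_0}\otimes \beta^{-1})\;=\;1.
\]

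To verify this cancellation I will first use the $\dim=4$ and $\dim=2$ identities $\wedge^3\phi_\pi\cong \phi_\pi^\vee\otimes \det\phi_\pi$ and $\wedge^2\phi_{\pi_0}\cong \det\phi_{\pi_0}$, and compute the central characters. The relation $\omega_\Pi=\omega_\pi\omega_{\pi_0}=\beta^2$ combined with $\pi_0=St(\alpha)$, which has central character $\alpha^2$, forces $\det\phi_\pi=\omega_\pi=\alpha^{-2}\beta^2$, while $\det\phi_{\pi_0}=\alpha^2$. Setting $\sigma:=\phi_\pi^\vee\otimes \alpha^{-2}\beta$, I obtain the key identifications
\[
\wedge^3\phi_\pi\otimes \beta^{-1}\;=\;\sigma,\qquad \phi_\pi\otimes \wedge^2\phi_{\pi_0}\otimes \beta^{-1}\;=\;\phi_\pi\otimes \alpha^{2}\beta^{-1}\;=\;\sigma^\vee,
\]
so the two stray terms are Artin duals of each other.

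To close the argument I will invoke the standard local functional equation
\[
\epsilon(1/2,\sigma,\psi)\,\epsilon(1/2,\sigma^\vee,\psi)\;=\;\det\sigma(-1),
\]
and compute $\det\sigma=(\det\phi_\pi)^{-1}\cdot(\alpha^{-2}\beta)^{4}=\alpha^{-6}\beta^{2}$, which evaluates to $1$ at $-1$ because both exponents are even. This gives $\epsilon(1/2,\sigma)\epsilon(1/2,\sigma^\vee)=1$ and establishes the claim.

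I do not anticipate any serious analytic or geometric obstacle here; the whole argument is essentially a formal bookkeeping exercise once one is on the Galois side. The only delicate step is the last one: one must be slightly careful about the convention for $\epsilon$-factors (dependence on the additive character $\psi$) and make sure that the identity $\epsilon(1/2,\sigma)\epsilon(1/2,\sigma^\vee)=\det\sigma(-1)$ is used with the normalization compatible with local Langlands (so that $\det\phi_\pi$ really equals the central character of $\pi$). Modulo these standard compatibilities, the cancellation falls out immediately from the parity of the exponents in $\det\sigma=\alpha^{-6}\beta^{2}$.
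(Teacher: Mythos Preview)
Your argument is correct and coincides with the paper's own proof: both decompose $\wedge^3(\phi_\pi\oplus\phi_{\pi_0})$, observe that the two ``stray'' pieces $\wedge^3\phi_\pi\otimes\beta^{-1}$ and $\phi_\pi\otimes\det\phi_{\pi_0}\otimes\beta^{-1}$ are dual to one another, and then use $\epsilon(1/2,\sigma)\epsilon(1/2,\sigma^\vee)=\det\sigma(-1)$ together with the parity of the exponents in $\det\sigma=\alpha^{-6}\beta^{2}$ to get the cancellation. The only cosmetic difference is that the paper computes $\det(\wedge^3\phi_\pi\otimes\beta^{-1})=(\det\phi_\pi)^3\beta^{-4}$ directly rather than naming the representation $\sigma$.
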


\begin{proof}
Since $\phi_{\Pi}=\phi_{\pi}\oplus \phi_{\pi_0}$, we have
$$\wedge^3(\phi_{\Pi})=(\wedge^2(\phi_{\pi})\otimes \phi_{\pi_0})\oplus (\phi_{\pi}\otimes \det(\phi_{\pi_0}))\oplus (\wedge^3(\phi_{\pi})).$$
Since the central character of $\pi$ is ${\alpha}^{-2}{\beta}^{2}$ and the central character of $\pi_0$ is ${\alpha}^2$, we have $(\wedge^3(\phi_{\pi}) \otimes \beta^{-1})^{\vee} =\phi_{\pi}\otimes {\alpha}^2\otimes \beta^{-1} =\phi_{\pi}\otimes \det(\phi_{\pi_0})\otimes \beta^{-1}$. This implies that
\begin{eqnarray*}
\epsilon(1/2,(\wedge^3\Pi)\otimes \beta^{-1})&=&\epsilon(1/2,\wedge^2(\phi_{\pi})\otimes \phi_{\pi_0}\otimes \beta^{-1})\epsilon(1/2,\phi_{\pi}\otimes \det(\phi_{\pi_0})\otimes \beta^{-1}) \\
&&\epsilon(1/2, \wedge^3(\phi_{\pi})\otimes \beta^{-1})\\
&=&\det(\wedge^3(\phi_{\pi})\otimes \beta^{-1})(-1)\times \epsilon(1/2,\wedge^2(\phi_{\pi})\otimes \phi_{\pi_0}\otimes \beta^{-1}) \\
&=& (\det(\phi_{\pi}))^3(-1) \beta^{-4}(-1)\times \epsilon(1/2,\wedge^2(\phi_{\pi})\otimes \phi_{\pi_0}\otimes \beta^{-1})\\
&=& \epsilon(1/2,\wedge^2(\phi_{\pi})\otimes \phi_{\pi_0}\otimes \beta^{-1}).
\end{eqnarray*}
This proves the proposition.
\end{proof}

Now up to twist $\pi_0$ by the character $\beta^{-1}$, we may assume that $\beta=1$. This is allowable since twist by characters will not change the multiplicity and the epsilon factor. In fact, by the proposition above, we know that the epsilon factor will not be changed if we twist $\pi_0$ by characters. As for the multiplicity, by Corollary 5.15 of \cite{Wan16a}, the multiplicity $m_{GR}(\Pi)$ for the Ginzburg-Rallis model is equal to the multiplicity $m_{MM}(\pi\otimes \pi_0)$ of the middle model (we refer the readers to Appendix A of \cite{Wan16a} for the definition of the middle model). But it is easy to see from the definition that the multiplicity of the middle model will not be changed if we twist $\pi_0$ by characters.

Assuming that $\beta=1$, by Theorem \ref{GR 2}, we have $m_{GR}(\Pi)+m(\pi,\alpha^{-1})=1$. Let $\pi_D$ be the Jacquet-Langlands lift of $\pi$ to $\GL_2(D)$. By Theorem \ref{main 2}, we have $m(\pi,\alpha^{-1})=m(\pi_D,\alpha^{-1})$. Combining with the proposition above, we see that in order to prove the epsilon dichotomy conjecture for $\Pi$, it is enough to prove that
\begin{eqnarray*}
m(\pi_D,\alpha^{-1})=0 &\iff& \epsilon(1/2,\wedge^2(\phi_{\pi})\otimes \phi_{\pi_0})=1,\\
m(\pi_D,\alpha^{-1})=1 &\iff& \epsilon(1/2,\wedge^2(\phi_{\pi})\otimes \phi_{\pi_0})=-1.
\end{eqnarray*}
But the above relations have already been proved in Theorem 1.5(2) of \cite{GT10}. This finishes the proof of the epsilon dichotomy conjecture for $\Pi$.

\section{A connection to the local r-trace formula}\label{r trace formula}
In this section, we will rewrite the local trace formula for the Shalika model in terms of the local r-trace formula. In Langlands' proposal \cite{L04} for beyond endoscopy, one of the most important ingredient is a global r-trace formula (the name ``r-trace formula" was first introduced by Arthur in his note \cite{A15} for beyond endoscopy). To be specific, let $G$ be a connected reductive group defined over a number field $k$, and let $r$ be a finitely dimensional algebraic representation of the L-group of $G$. For any automorphic representation $\pi$ of $G(\BA_k)$, let $L(s,\pi,r)$ be the global automorphic L-function. For a given test function $f$ on $G(\BA_k)$, we let $S_{cusp}^{1}(f)$ be the cuspidal part of the stable Arthur-Selberg trace formula. Then we let $S_{cusp}^{r}(f)$ be a generalization of $S_{cusp}^{1}(f)$, in which the stable multiplicities of representations $\pi$ that occur in $S_{cusp}^{1}(f)$ are weighted by the order of poles
$$m_{r}(\pi):=-ord_{s=1}(L(s,\pi,r))$$
at $s=1$ of the automorphic L-function $L(s,\pi,r)$. The goal of the r-trace formula is to find a decomposition of $S_{cusp}^{r}(f)$ in terms of stable distributions on some smaller groups $G'$. Here $G'$ should run over elliptic ``beyond endoscopic data". Like the theory of endoscopy, the most important step is to find a geometric expansion for the distribution $S_{cusp}^{r}(f)$.

Guided by the same philosophy, we can also consider the local r-trace formula. Let $G$ be a connected reductive group defined over a local field $F$. We still let $r$ be a finitely dimensional algebraic representation of the L-group of $G$, and let $L(s,\pi,r)$ be the local L-function. For a test function $f$ on $G$, we can define the local analogy of the distribution $S_{cusp}^{r}(f)$ to be
\begin{equation}\label{r.1}
I_{disc}^{r}(f):=\displaystyle \sum_{\pi\in \Pi_2(G)} m_{r}(\pi) \Tr(\pi(f))
\end{equation}
where $m_{r}(\pi):=-ord_{s=0}(L(s,\pi,r))$ is the order of the poles at $s=0$ of the L-function $L(s,\pi,r)$. Note that if $G$ has non-trivial split center, we need to include the central character in the trace formula. Same as the global case, we want to find a geometric expansion for $I_{disc}^{r}(f)$.
\vspace{1em}

For the rest part of this section, we will discuss a special case of the local r-trace formula. We consider the case when $F$ is a p-adic field, $G=\GL_{2n}(F)$, and $r=\wedge^2$ is the exterior square representation of the L-group ${}^L G=\GL_{2n}(\BC)$. For $f\in {}^{\circ}\mathcal{C}(G,1)$, by applying the local trace formula of Theorem \ref{theo trace formula} in the case where $\omega$ is trivial, we will give a geometric expansion for $I_{disc}^{r}(f)$. Let us denote by $I_{spec}(f)$ and $I_{geom}(f)$ the spectral and geometric sides of this trace formula respectively. Then the spectral side becomes

$$\displaystyle I_{spec}(f)=\sum_{\pi\in \Pi_{2}(G,1)} \Tr(\pi(f)) m(\pi^\vee).$$

\noindent where, fo simplicity, we have set $m(\pi^\vee):=m(\pi^\vee,\mathbf{1})$. As every $\pi\in \Pi_2(G,1)$ is unitary, it is easy to see that $m(\pi^\vee)=m(\pi)$. Moreover, by Theorem \ref{L function}, the multiplicity $m(\pi)$ is nonzero if and only if the exterior square L-function $L(s,\pi,\wedge^2)$ has a pole at $s=0$. Since $\pi$ is a discrete series, the order of the pole of $L(s,\pi,\wedge^2)$ at $s=0$ is either 0 or 1. In the mean time, we know that $m(\pi)\leq 1$ (see \cite{JR}). Therefore, for all discrete series $\pi\in \Pi_2(G,1)$, we have
$$m(\pi^\vee)=-ord_{s=0}(L(s,\pi,r))=m_{r}(\pi).$$
This implies that
$$\displaystyle I_{spec}(f)=\sum_{\pi\in \Pi_{2}(G,1)}  \Tr(\pi(f)) m_r(\pi).$$
In particular, we have
$$I_{disc}^{r}(f)=I_{spec}(f)=I_{geom}(f).$$
To conclude, the geometric side $I_{geom}(f)$ of the local trace formula for the Shalika model gives a geometric expansion for $I_{disc}^{r}(f)$.

\begin{rmk}
In general, the local multiplicity problem for many spherical pairs are closely related to the Langlands functoriality and the poles of some L-functions $L(s,\pi,r)$. Hence if we can prove the local trace formula for these models, we can find the geometric expansion of the corresponding local r-trace formulas.
\end{rmk}

\appendix

\section{Slight generalization of a result of M\oe{}glin and Waldspurger}\label{appendix}

Let $F$ be a nonarchimedean field of characteristic zero with ring of integer $\mathcal{O}_F$ and normalized absolute value $\lvert .\rvert$. We fix henceforth an uniformizer $\varpi_F\in \mathcal{O}_F$. As in the core of this paper, we will abuse notations by denoting algebraic groups over $F$ and the corresponding group of $F$-points by the same letter. We will do the same for Lie algebras.

\vspace{2mm}

\noindent Let $G$ be a connected reductive group over $F$ with Lie algebra $\mathfrak{g}$. Fix a $G$-invariant nondegenerate symmetric bilinear pairing $\langle .,.\rangle:\mathfrak{g}\times \mathfrak{g}\to F$ and a nontrivial additive character $\psi:F\to \mathbb{C}^\times$. Let $P_0=M_0N_0$ be a minimal parabolic subgroup of $G$ with unipotent radical $N_0$ and a fixed Levi component $M_0$. Let $\overline{P}_0=M_0\overline{N}_0$ be the opposite parabolic subgroup and $A_0\subset M_0$ the maximal central split torus. Let $\mathfrak{n}_0$, $\overline{\mathfrak{n}}_0$ and $\overline{\mathfrak{p}}_0$ denote the Lie algebra of $N_0$, $\overline{N}_0$ and $\overline{P}_0$ respectively. Let $\Delta_0$ be the set of simple roots of $A_0$ in $\mathfrak{n}_0$ and for all $\alpha\in \Delta_0$ fix a nonzero vector $Y_{-\alpha}$ in the root subspace $\mathfrak{g}_{-\alpha}$ corresponding to $-\alpha$. Set $Y:=\sum_{\alpha\in \Delta_0}Y_{-\alpha}\in \overline{\mathfrak{n}}_0$. It is well-known that

$$\displaystyle [Y,\overline{\mathfrak{p}}_0]=\overline{\mathfrak{n}}_0. \leqno (1)$$

\noindent The exponential map induces a regular isomorphism $\exp: \mathfrak{n}_0\simeq N_0$. Let $\log:N_0\to \mathfrak{n}_0$ be its inverse and set

$$\displaystyle \xi(n):=\psi(\langle Y,\log n\rangle),\;\;\; n\in N_0.$$

\noindent Then $\xi$ is a character of $N_0$ which is generic (i.e. with a stabilizer in $M_0$ of minimal dimension). We will denote by $M_{0,\xi}$ the stabilizer of $\xi$ in $M_0$ (i.e. the centralizer of $Y$ in $M_0$).

\vspace{2mm}

\noindent Let $\pi$ be a smooth irreducible representation of $G$ and $\Theta_\pi$ its Harish-Chandra character. Recall that for all semi-simple element $x\in G$ there is a local expansion (see \cite{HCH} Theorem 16.2)

$$\displaystyle \Theta_\pi(x \exp(X))=\sum_{\mathcal{O}\in Nil(\mathfrak{g}_x)} c_{\pi,\mathcal{O}}(x)\widehat{j}(\mathcal{O},X)$$

\noindent for $X\in \mathfrak{g}_{x,reg}$ sufficiently close to $0$.
\vspace{1em}

\noindent For all $x\in M_{0,\xi}$, denote by $\mathcal{O}_x$ the $G_x$-adjoint orbit of $Y$ in $\mathfrak{g}_x$. Then $\mathcal{O}_x$ is an element of $Nil(\mathfrak{g}_x)$ which is maximal for the order defined by $\mathcal{O}'\leqslant \mathcal{O}\Leftrightarrow \mathcal{O}'\subset \overline{\mathcal{O}}$ where $\overline{\mathcal{O}}$ denotes closure of $\mathcal{O}$ for the $F$-analytic topology.

\vspace{2mm}

\noindent Let $W$ be the space of $\pi$ and let $W(N_0,\xi)$ be the subspace generated by all vectors of the form $\pi(n)v-\xi(n)v$ for $v\in V$ and $n\in N_0$. The coinvariant space $W/W(N_0,\xi)$ carries a natural representation of $M_{0,\xi}$ that we shall denote by $\pi_{N_0,\xi}$. In \cite{MW} it was shown that (see \cite{Var} for the case of residual characteristic 2)

$$\displaystyle \dim \pi_{N_0,\xi}=c_{\pi,\mathcal{O}_1}.(1)$$

\noindent The goal of this appendix is to extend slightly this result by using the same techniques. More precisely we will prove:

\begin{prop}
For all $x\in M_{0,\xi}$ we have
$$\displaystyle \Tr \pi_{N_0,\xi}(x)=D^{G/M_0}(x)^{1/2}c_{\pi,\mathcal{O}_x}(x)$$
\noindent where we have set
$$\displaystyle D^{G/M_0}(x):=D^G(x)D^{M_0}(x)^{-1}.$$
\end{prop}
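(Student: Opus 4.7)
The strategy is to generalize the M\oe{}glin--Waldspurger argument by translating their test-function construction from the identity to $x$. The hypothesis $x\in M_{0,\xi}$ is what makes the translation compatible with the setup: it ensures that $x$ normalizes $N_0$ and fixes $\xi$, so that $\pi(x)$ descends to a well-defined endomorphism of the twisted Jacquet module $\pi_{N_0,\xi}$, and that $Y\in\mathfrak{g}_x$ so the orbit $\mathcal{O}_x=G_x\cdot Y\in Nil(\mathfrak{g}_x)$ is defined.

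First I would construct a family $(\phi_n)_{n\geqslant 1}\subset C_c^\infty(G)$ of test functions supported in shrinking neighborhoods of $x$, obtained by averaging $\xi^{-1}$ over growing truncations of $N_0$. Concretely, fixing a decreasing filtration $(K_n)$ of $G$ by compact open subgroups with an Iwahori factorization with respect to $(\overline{P}_0,P_0)$ and a compatible exhaustion $(U_n^+)$ of $N_0$, one sets
$$\phi_n(g)=c_n\int_{U_n^+}\xi^{-1}(u)\,\mathbf{1}_{u\,x\,K_n}(g)\,du$$
with a suitable normalization $c_n$. On the spectral side I would show
$$\lim_{n\to\infty}\Tr\pi(\phi_n)=\Tr\pi_{N_0,\xi}(x).$$
The argument is identical to that of M\oe{}glin--Waldspurger: Jacquet's stabilization lemma guarantees that the operator $v\mapsto\vol(U_n^+)^{-1}\int_{U_n^+}\xi^{-1}(u)\pi(u)v\,du$ eventually stabilizes on each vector and furnishes a vector-space section of the quotient map $\pi\twoheadrightarrow\pi_{N_0,\xi}$. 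Composing with $\pi(x)$, which descends to $\pi_{N_0,\xi}$ by the remark above, and taking the trace yields $\Tr\pi_{N_0,\xi}(x)$ in the limit.

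On the geometric side, I would compute $\Tr\pi(\phi_n)=\int_G\phi_n\Theta_\pi$ using the local expansion $\Theta_\pi(x\exp X)=\sum_{\mathcal{O}\in Nil(\mathfrak{g}_x)}c_{\pi,\mathcal{O}}(x)\widehat{j}(\mathcal{O},X)$ for $X\in\mathfrak{g}_{x,\reg}$ near $0$. After a change of variables transporting a neighborhood of $x$ in $G$ to a neighborhood of $0$ in $\mathfrak{g}_x$ modulo conjugation, one obtains
$$\Tr\pi(\phi_n)=\sum_{\mathcal{O}\in Nil(\mathfrak{g}_x)}c_{\pi,\mathcal{O}}(x)\,J_n(\mathcal{O})+o(1),$$
where $J_n(\mathcal{O})$ is the integral of $\widehat{j}(\mathcal{O},\cdot)$ against a twisted pull-back of $\phi_n$. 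The defining property of $\widehat{j}(\mathcal{O},\cdot)$ rewrites $J_n(\mathcal{O})$ as an integral over $\mathcal{O}$ of the Fourier transform of a compactly supported function on $\mathfrak{g}_x$. The $N_0$-averaging against $\xi^{-1}$ concentrates this Fourier transform along $Y+\overline{\mathfrak{p}}_0^{\perp}$, so by the identity $[Y,\overline{\mathfrak{p}}_0]=\overline{\mathfrak{n}}_0$ from (1) any contributing orbit must contain $Y$; within $Nil(\mathfrak{g}_x)$ the $G_x$-orbit $\mathcal{O}_x$ through $Y$ is maximal, and $J_n(\mathcal{O})\to 0$ for all $\mathcal{O}\neq\mathcal{O}_x$ by the M\oe{}glin--Waldspurger dimension-count argument.

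The main obstacle, absent in the case $x=1$ where $D^{G/M_0}(1)=1$, is identifying $\lim_{n\to\infty}J_n(\mathcal{O}_x)$ with $D^{G/M_0}(x)^{1/2}$. This factor originates from the Jacobian of the local model near $x$: parametrizing $G$ near $x$ via $\overline{N}_0\cdot(xM_0)\cdot N_0$ and comparing with the integration over the conjugation orbit of $x$ contributes $|\det(1-\Ad(x))_{\mathfrak{g}/\mathfrak{g}_x}|^{1/2}=D^G(x)^{1/2}$, while the normalization on $M_{0,\xi}$ on which $\pi_{N_0,\xi}(x)$ is computed contributes $|\det(1-\Ad(x))_{\mathfrak{m}_0/\mathfrak{m}_{0,x}}|^{1/2}=D^{M_0}(x)^{1/2}$ in the denominator. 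Tracking these normalizations through Fourier duality and reconciling them with the M\oe{}glin--Waldspurger geometric lemma is the principal technical task.
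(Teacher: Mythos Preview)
Your overall architecture---realize $\Tr\pi_{N_0,\xi}(x)$ as a limit of traces $\Tr\pi(\phi_n)$, then evaluate the geometric side via the local character expansion at $x$---is the same as the paper's. But the implementation diverges, and your account of the factor $D^{G/M_0}(x)^{1/2}$ has a genuine gap.

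The paper does not build $\phi_n$ by averaging $\xi^{-1}$ over truncations of $N_0$. It follows the M\oe{}glin--Waldspurger/Varma construction verbatim, the new ingredient being an \emph{$Ad(x)$-stable} lattice $L\subset\mathfrak{g}$ satisfying $L=L_x\oplus L^x$ (with $L_x=L\cap\mathfrak{g}_x$, $L^x=L\cap\mathfrak{g}^x$) and the usual self-duality for the symplectic form $B_Y(Z,X)=\langle Y,[Z,X]\rangle$. One then sets $G_n=\exp(\varpi_F^nL)$ with the Heisenberg-type character $\xi_n$, and the test function is $L(x)\varphi_n$ with $\varphi_n=\vol(G_n)^{-1}\xi_n^{-1}\mathbf{1}_{G_n}$. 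This is supported in the \emph{shrinking} neighborhood $xG_n$ of $x$, so the local expansion applies directly; your description (``growing truncations of $N_0$'' yet ``shrinking neighborhoods of $x$'') is internally inconsistent and makes the applicability of the expansion unclear.

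The crucial point you miss is the mechanism producing $D^{G/M_0}(x)^{1/2}$. The paper parametrizes $xG_n$ by $(Z,X)\mapsto x\exp(-x^{-1}Zx)\exp(X)\exp(Z)$ with $Z\in L^x$, $X\in L_x$; the Jacobian is $D^G(x)$ (not $D^G(x)^{1/2}$). The character $\xi_n$ then factors, and the $X$-integral over $\mathfrak{g}_x$ reproduces the MW/Varma computation inside $G_x$, giving $c_{\pi,\mathcal{O}_x}(x)$. The $Z$-integral over $\mathfrak{g}^x$ is an explicit Gauss sum
\[
D^G(x)\,\vol(L^x)^{-1}\int_{(1-Ad(x^{-1}))^{-1}L^x}\psi\bigl(B_Y(Z_-,\,x^{-1}Z_+x-Z_+)\bigr)\,dZ,
\]
evaluated by exploiting the $B_Y$-duality between $V^{x,-}$ and $V^{x,2}=\mathfrak{g}^x\cap\mathfrak{n}_0$: it equals $\lvert\det(1-Ad(x))_{\mid \mathfrak{g}^x\cap\mathfrak{n}_0}\rvert=D^{G/M_0}(x)^{1/2}$. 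There is no separate ``$D^{M_0}(x)^{1/2}$ from the normalization on $M_{0,\xi}$'' as you suggest; the square root arises from the Lagrangian splitting of the symplectic space $(\mathfrak{g}^x/(\mathfrak{g}^x\cap Y^\sharp),B_Y)$. This Gauss-sum computation is the technical heart of the argument, and your sketch does not supply it.
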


\vspace{2mm}

\noindent\ul{Proof}: In order to include the case of residual characteristic 2, we will use \cite{Var} as our main reference (which however follows very closely \cite{MW}). Let $x\in M_{0,\xi}$. Note that $Ad(x)$ is semi-simple and compact (indeed this follows from the facts that $M_0$ is anisotropic modulo $A_0$ and $M_{0,\xi}\cap A_0$ is contained in the center of $G$). To agree with the conventions of \cite{Var}, we will assume that $\psi$ is unramified (i.e. its conductor is $\mathcal{O}_F$). This is no loss in generality since we can always scale $\psi$ so that it become unramified and up to scaling $\langle .,.\rangle$ by the corresponding inverse factor the character $\xi$ doesn't change and the same holds for the coefficient $c_{\pi,\mathcal{O}_x}(x)$ as we easily check from our normalizations. Let $\varphi$ be the sum of the positive coroots of $A_0$ with respect to $P_0$. Then we have $\varphi(s)Y\varphi(s)^{-1}=s^{-2}Y$ for all $s\in F^\times$. Set $t:=\varphi(\varpi_F)$. The adjoint action of $\varphi(F^\times)$ on $\mathfrak{g}$ induces a decomposition
$$\displaystyle \mathfrak{g}=\bigoplus_{i\in \mathbb{Z}} \mathfrak{g}_i$$
\noindent where $\mathfrak{g}_i:=\{X\in \mathfrak{g};\; \varphi(s)X\varphi(s)^{-1}=s^iX\; \forall s\in F^\times \}$. Set $\mathfrak{g}^-:=\bigoplus_{i\leqslant 0} \mathfrak{g}_i$ and $\mathfrak{g}^2:=\bigoplus_{i\geqslant 2} \mathfrak{g}_i$. Then we have $\mathfrak{g}=\mathfrak{g}^-\oplus \mathfrak{g}^2$ and $\mathfrak{n}_0=\mathfrak{g}^2$(this is because $\mathfrak{g}_1=0$). Let $Y^\sharp$ be the centralizer of $Y$ in $\mathfrak{g}$ and $V$ be a complement subspace to $Y^\sharp$ which is invariant by $\varphi(F^\times)M_{0,\xi}$ (in particular it is invariant by $\varphi(F^\times)$). By $\varphi(F^\times)$-invariance, we have a decomposition $V=V^-\oplus V^2$ where $V^-:=V\cap\mathfrak{g}^-$ and $V^2:=V\cap\mathfrak{g}^2$. Note that
$$\displaystyle V^2=\mathfrak{g}^2=\mathfrak{n}_0. \leqno (2)$$
\noindent Indeed, this follows from (1) and the fact that $\langle .,.\rangle$ induces a perfect pairing between $\overline{\mathfrak{n}}_0$ and $\mathfrak{n}_0$.

\vspace{2mm}

\noindent Since $Ad(x)$ is semi-simple, we have a decomposition $\mathfrak{g}=\mathfrak{g}_x\oplus \mathfrak{g}^x$ where $\mathfrak{g}_x$ and $\mathfrak{g}^x$ stand respectively for the kernel and the image of $Ad(x)-1$ in $\mathfrak{g}$. As $V$ is $Ad(x)$-stable, we have a similar decomposition $V=V_x\oplus V^x$. The bilinear form

$$\displaystyle B_Y:(Z,X)\in \mathfrak{g}\times\mathfrak{g}\mapsto B_Y(Z,X):=\langle Y, [Z,X]\rangle$$

\noindent is alternating $M_{0,\xi}$-invariant and nondegenerate when restricted to $V$. Moreover, the decomposition $V=V_x\oplus V^x$ is orthogonal for this alternating form. Set $V_x^-:=V_x\cap \mathfrak{g}^-$, $V_x^2:=V_x\cap \mathfrak{g}^2$, $V^{x,-}:=V^x\cap \mathfrak{g}^-$ and $V^{x,2}:=V^x\cap \mathfrak{g}^2$. Then we have $V_x=V_x^-\oplus V_x^2$ and $V^x=V^{x,-}\oplus V^{x,2}$ (indeed this follows from the facts that $V$ is $\varphi(F^\times)$-stable and $x$ centralizes $\varphi(F^\times)$). Moreover, the form $B_Y$ induces a perfect pairing between $V_x^-$ and $V_x^2$ on the one hand and between $V^{x,-}$ and $V^{x,2}$ on the other hand. Since $Ad(x)$ is compact and commutes with $Ad(\varphi(F^\times))$, we can find a lattice of $L^{x,-}$ of $V^{x,-}$ which is $Ad(x)$-stable and such that

$$\displaystyle L^{x,-}=\bigoplus_{i\leqslant 0} L^{x,-}\cap \mathfrak{g}_i.$$

\noindent Let $L^{x,2}$ be the dual lattice of $V^{x,2}$ with respect to $B_Y$, that is

$$\displaystyle L^{x,2}:=\{X\in V^{x,2};\; B_Y(Z,X)\in \mathcal{O}_F\; \forall Z\in L^{x,-}  \}.$$

\noindent Similarly, we fix a lattice $L_x^-\subset V_x^-$ with the property that

$$\displaystyle L_x^-=\bigoplus_{i\leqslant 0} L_x^-\cap\mathfrak{g}_i$$

\noindent and denote by $L_x^2\subset V_x^2$ the dual lattice. Finally, we also fix a lattice $L_Y\subset Y^\sharp$ which is $Ad(x)$-invariant and such that $L_Y=\bigoplus_{i\in \mathbf{Z}}L_Y\cap \mathfrak{g}_i$, $L_Y=L_Y\cap \mathfrak{g}_x\oplus L_Y\cap \mathfrak{g}^x$. We set

$$\displaystyle L:=L_Y\oplus L_x^-\oplus L_x^2\oplus L^{x,-}\oplus L^{x,2}$$

\noindent Then, $L$ is a lattice of $\mathfrak{g}$ which is $Ad(x)$-invariant and satisfies properties (i) and (ii) of \S 3 of \cite{Var}. Moreover, by construction we have

$$\displaystyle L=L_x\oplus L^x$$

\noindent where $L_x:=L\cap \mathfrak{g}_x$ and $L^x:=L\cap \mathfrak{g}^x$.

\vspace{2mm}

\noindent For all integer $n$ sufficiently large we set

$$\displaystyle G_n:=\exp(\varpi_F^n L),\;\;\; G_n':=t^{-n}G_nt^n.$$

\noindent When $n$ is large enough, these are compact-open subgroups of $G$. Again for $n$ sufficiently large, we define two characters $\xi_n:G_n\to \mathbf{C}^\times$ and $\xi_n':G_n'\to \mathbf{C}^\times$ by

$$\displaystyle \xi_n(\exp(X)):=\psi(\varpi_F^{-2n}\langle Y,X+\frac{1}{2}[X_+,X_-]\rangle),\;\;\; X\in \varpi_F^n L;$$

$$\displaystyle \xi_n'(\gamma):=\xi_n(t^n\gamma t^{-n}),\;\;\; \gamma\in G_n'$$

\noindent where $X\mapsto X_-$ and $X\mapsto X_+$ denote respectively the projections onto $\mathfrak{g}^-$ and $\mathfrak{g}^2$ relative to the decomposition $\mathfrak{g}=\mathfrak{g}^-\oplus \mathfrak{g}^2$. These characters are $Ad(x)$-invariant (since $x$ is in the centralizer of both $Y$ and $t$). Moreover, we can easily check, using the Campbell-Hausdorff formula, that for $n$ large enough, the character $\xi_n$ coincide with the character $\chi_n$ constructed in Lemma 6 of \cite{Var}. For all $n$ for which $G'_n$ and $\xi'_n$ are defined, we set

$$\displaystyle W'_n:=\{v\in W\mid\; \pi(\gamma)v=\xi_n'(\gamma)v\; \forall \gamma\in G_n' \}.$$

\noindent These subspaces are invariant by $\pi(x)$ and by Lemma 8 and Lemma 9.(b) of \cite{Var}, when $n$ is large enough, the natural projection $W\twoheadrightarrow W/W(N_0,\xi)$ restricts to an isomorphism $W_n'\simeq W/W(N_0,\xi)$. From there it easily follows that
$$\displaystyle \Tr \pi_{N_0,\xi}(x)= \Tr \pi(x)_{\mid W_n'} \leqno (3)$$

\noindent for all $n$ large enough. Fix a Haar measure $dg$ on $G$ and for $n$ sufficiently large, let $\varphi_n, \varphi_n'\in C_c^\infty(G)$ be the functions defined by

$$\displaystyle \varphi_n(\gamma)=\left\{
    \begin{array}{ll}
        \frac{1}{\vol(G_n)} \xi_n(\gamma^{-1}), & \mbox{if } \gamma\in G_n; \\
        0, & \mbox{otherwise.}
    \end{array}
\right.
$$

$$\displaystyle \varphi_n'(\gamma):=\varphi_n(t^n \gamma t^{-n}).$$

\noindent Then $\pi(\varphi'_n)$ is a projection onto $W'_n$ and thus
$$\displaystyle \Tr \pi(x)_{\mid W_n'}=\Tr \pi(x)\pi(\varphi'_n)=\Tr \pi(x)\pi(t^{-n})\pi(\varphi_n)\pi(t^n)=\Tr \pi(L(x)\varphi_n) \leqno (4)$$

\noindent where $(L(x)\varphi_n)(\gamma):=\varphi_n(x^{-1}\gamma)$ for all $\gamma\in G$.

\vspace{2mm}

\noindent Fix open neighborhoods $\omega\subset \mathfrak{g}$ and $\Omega\subset G$ of $0$ and $1$ respectively such that the exponential map induces an $F$-analytic isomorphism $\exp:\omega\simeq \Omega$. Let $\log:\Omega\to \omega$ denote the inverse of this map. We fix a Haar measure on $\mathfrak{g}$ such that the exponential map preserves measures locally in a neighborhood of $0$ and Haar measures on $\mathfrak{g}^x$ and $\mathfrak{g}_x$ whose product is equal to the Haar measure on $\mathfrak{g}$. Choose $e\geqslant 1$ such that $(1-Ad(x^{-1}))L^x\supset \varpi_F^e L^x$ and $\langle Y,L\rangle \subset \varpi_F^{-e}\mathcal{O}_F$.  We can find an integer $B\geqslant 0$ satisfying the following conditions:

\begin{enumerate}[(1)]
\setcounter{enumi}{4}
\item The map

$$\displaystyle (\varpi_F^B L^x)\times (\varpi_F^B L_x)\to G:\;\; (Z,X)\mapsto \exp(-x^{-1}Zx)\exp(X)\exp(Z)$$

\noindent is an $F$-analytic isomorphism onto an open neighborhood of $1$ contained in $\Omega$ and the Jacobian of this map is constant equal to $D^G(x)$.

\item For all $m,m'\geqslant B$, all $Z\in \varpi_F^m L^x$ and all $X\in \varpi_F^{m'}L_x$ we have
$$\displaystyle \log\left(e^{-x^{-1}Zx}e^Xe^Z \right)\in X+Z-x^{-1}Zx+\frac{1}{2}[X,Z+x^{-1}Zx]+\frac{1}{2}[Z,x^{-1}Zx]+\varpi_F^{m+m'+e}L.$$

\item $\displaystyle [L,L]\subset 2\varpi_F^{2e+1-B} L$.

\end{enumerate}

\noindent Assume $n$ is large. By (3) and (4), we have

$$\displaystyle \Tr \pi_{N_0,\xi}(x)=\int_G \Theta_\pi(x \gamma)\varphi_n(\gamma)d\gamma.$$

\noindent And by (5), this equals

$$\displaystyle D^G(x)\int_{\varpi_F^B L^x}\int_{\varpi_F^B L_x} \Theta_\pi(x \exp(X)) \varphi_n(\exp(-x^{-1}Zx)\exp(X)\exp(Z)) dX dZ.$$

\noindent By (6) and (7), we check that for $Z\in \varpi_F^B L^x$ and $X\in \varpi_F^B L_x$, we have

$$\exp(-x^{-1}Zx)\exp(X)\exp(Z)\in G_n$$

\noindent if and only if $X\in \varpi_F^n L_x$ and $Z-x^{-1}Zx\in \varpi_F^n L^x$ and that in that case, we have

\[\begin{aligned}
\displaystyle \xi_n(\exp(-x^{-1}Zx)\exp(X)\exp(Z))= & \psi\bigg(\varpi_F^{-2n}\bigg\langle Y, X+Z-x^{-1}Zx+ \frac{1}{2}\big([X,Z+x^{-1}Zx]+ \\
 & [Z,x^{-1}Zx]+[X_++Z_+-x^{-1}Z_+x, X_-+Z_--x^{-1}Z_-x]\big)\bigg\rangle \bigg).
\end{aligned}\]

\noindent Since $x$ centralizes $Y$, we have $\langle Y,\mathfrak{g}^x\rangle=0$ and thus $Y$ is orthogonal to $Z-x^{-1}Zx$, $[X,Z+x^{-1}Zx]$, $[X_+, Z_--x^{-1}Z_-x]$ and $[Z_+-x^{-1}Z_+x,X_-]$. Similarly, $\langle Y,\mathfrak{g}_i\rangle=0$ for $i\neq 2$ so that $Y$ is orthogonal to $[Z_-,x^{-1}Z_-x]$ and $[Z_+,x^{-1}Z_+x]$. Finally, since $\langle .,.\rangle$ is $Ad(x)$-invariant and $x$ centralizes $Y$, we have $\langle Y,[x^{-1}Z_+,x^{-1}Z_-x]\rangle=\langle Y, [Z_+,Z_-]\rangle$. From all of these, we get by direct computation that the above expression equals

$$\displaystyle \xi_n(\exp(X))\psi\left(\varpi_F^{-2n}B_Y(Z_-,x^{-1}Z_+x-Z_+)\right).$$

\noindent So that finally we end up with

\[\begin{aligned}
\displaystyle \Tr \pi_{N_0,\xi}(x)= & D^G(x)\vol(L^x)^{-1}\int_{(1-Ad(x^{-1}))^{-1}L^x}\psi\left(B_Y(Z_-,x^{-1}Z_+x-Z_+) \right) dZ \\
 & \times \vol(\varpi_F^n L_x)^{-1} \int_{\varpi_F^nL_x}\xi_n(\exp(X))\Theta_\pi(x\exp(X))dX.
\end{aligned}\]

\noindent Note that the lattice $L_x$ of $\mathfrak{g}_x$ satisfies the assumptions (i) and (ii) of \S 3 of \cite{Var}, that is

\begin{itemize}
\item $L_x=\bigoplus_{i\in \mathbf{Z}} L_x\cap \mathfrak{g}_{x,i}$ where $\mathfrak{g}_{x,i}:=\mathfrak{g}_x\cap \mathfrak{g}_i$;

\item The lattice $L_x/(L_x\cap Y^\sharp)$ is self-dual with respect to $\psi\circ B_Y$.
\end{itemize}

\noindent Hence, the same computation as that of the proof of Lemma 7 of \cite{Var} shows that

$$\displaystyle \vol(\varpi_F^n L_x)^{-1} \int_{\varpi_F^nL_x}\xi_n(\exp(X))\Theta_\pi(x\exp(X))dX=c_{\pi,\mathcal{O}_x}(x)$$

\noindent when $n$ is large. Thus, it only remains to show that
$$\displaystyle D^G(x)\vol(L^x)^{-1}\int_{(1-Ad(x^{-1}))^{-1}L^x}\psi\left(B_Y(Z_-,x^{-1}Z_+x-Z_+) \right) dZ=D^{G/M_0}(x)^{1/2}. \leqno (8)$$

\noindent By construction, we have a decomposition $L^x=L^x_Y\oplus L^{x,-}\oplus L^{x,2}$ where $L^x_Y:=L^x\cap Y^\sharp$. Fix Haar measures on $\mathfrak{g}^x_Y:=\mathfrak{g}^x\cap Y^\sharp$, $V^{x,-}$ and $V^{x,2}$ whose product gives the (already fixed) Haar measure on $\mathfrak{g}^x$ through the decomposition $\mathfrak{g}^x=\mathfrak{g}^x_Y\oplus V^{x,-}\oplus V^{x,2}$. Up to scaling all these Haar measures, we may assume that

$$\displaystyle \vol(L^x)=\vol(L^x_Y)=\vol(L^{x,-})=\vol(L^{x,2})=1.$$

\noindent Then, as the function $Z\in \mathfrak{g}\mapsto \psi\left(B_Y(Z_-,x^{-1}Z_+x-Z_+)\right)$ is invariant by translation by $Y^\sharp$, we have

\[\begin{aligned}
\displaystyle  & \int_{(1-Ad(x^{-1}))^{-1}L^x}\psi\left(B_Y(Z_-,x^{-1}Z_+x-Z_+)\right) dZ \\
 & =\vol((1-Ad(x^{-1}))^{-1} L^x_Y)\int_{(1-Ad(x^{-1}))^{-1}L^{x,-}}\int_{(1-Ad(x^{-1}))^{-1}L^{x,2}} \psi\left(B_Y(Z_-,(1-Ad(x^{-1}))Z_+) \right) dZ_+ dZ_- \\
& =\left\lvert \det(1-Ad(x))_{\mid \mathfrak{g}^x_Y\oplus V^{x,2}}\right\rvert^{-1}\int_{(1-Ad(x^{-1}))^{-1}L^{x,-}}\int_{L^{x,2}} \psi\left(B_Y(Z_-,Z_+) \right) dZ_+ dZ_-
\end{aligned}\]

\noindent where to get the last line we have used the fact that if $\mathcal{V}$ is an $F$-vector space, $\mathcal{L}\subset \mathcal{V}$ a lattice and $T$ an endomorphism without the eigenvalue $1$ such that $T\mathcal{L}=\mathcal{L}$, then $\vol((1-T^{-1})^{-1}\mathcal{L})=\lvert \det(1-T)\rvert^{-1} \vol(\mathcal{L})$. Since $L^{x,2}$ is the lattice dual to $L^{x,-}$ with respect to $B_Y$ and $\psi$ unramified, we have

$$\displaystyle \int_{L^{x,2}} \psi\left(B_Y(Z_-,Z_+) \right) dZ_+=\left\{
    \begin{array}{ll}
        1, & \mbox{if } Z_-\in L^{x,-}; \\
        0, & \mbox{otherwise}
    \end{array}
\right.
$$

\noindent for all $Z_-\in V^{x,-}$. Thus, we have

$$\displaystyle \int_{(1-Ad(x^{-1}))^{-1}L^{x,-}}\int_{L^{x,2}} \psi\left(B_Y(Z_-,Z_+) \right) dZ_+ dZ_-= \vol(L^{x,-})=1$$

\noindent and consequently

\[\begin{aligned}
\displaystyle D^G(x)\int_{(1-Ad(x^{-1}))^{-1}L^x}\psi\left(B_Y(Z_-,x^{-1}Z_+x-Z_+) \right) dZ & =D^G(x) \left\lvert \det(1-Ad(x))_{\mid \mathfrak{g}^x_Y\oplus V^{x,2}}\right\rvert^{-1} \\
 & =\left\lvert \det(1-Ad(x))_{\mid V^{x,-}}\right\rvert
\end{aligned}\]

\noindent where for the last equality we have used the fact that

$$\displaystyle D^G(x)=\left\lvert \det(1-Ad(x))_{\mid \mathfrak{g}^x}\right\rvert=\left\lvert \det(1-Ad(x))_{\mid \mathfrak{g}^x_Y\oplus V^{x,2}\oplus V^{x,-}}\right\rvert.$$

\noindent Furthermore, since $V^{x,-}$ and $V^{x,2}$ are in duality under the form $B_Y$ which is $Ad(x)$-invariant, we have
$$\displaystyle \left\lvert \det(1-Ad(x))_{\mid V^{x,-}}\right\rvert=\left\lvert \det(1-Ad(x^{-1}))_{\mid V^{x,2}}\right\rvert=\left\lvert \det(1-Ad(x))_{\mid V^{x,2}}\right\rvert=\left\lvert \det(1-Ad(x))_{\mid \mathfrak{g}^x\cap \mathfrak{n}_0}\right\rvert$$

\noindent where in the second equality we have used the fact that $Ad(x)$ is compact and in the last one we have used (2). Similarly, since $\mathfrak{g}^x\cap \mathfrak{n}_0$ and $\mathfrak{g}^x\cap \overline{\mathfrak{n}}_0$ are in duality under the form $\langle .,.\rangle$ which is $Ad(x)$-invariant, we have

$$\displaystyle \left\lvert \det(1-Ad(x))_{\mid \mathfrak{g}^x\cap \mathfrak{n}_0}\right\rvert=\left\lvert \det(1-Ad(x))_{\mid \mathfrak{g}^x\cap (\mathfrak{n}_0\oplus \overline{\mathfrak{n}}_0)}\right\rvert^{1/2}=D^{G/M_0}(x)^{1/2}.$$

\noindent This shows (8) and ends the proof of the proposition. $\blacksquare$

\vspace{2mm}

The following corollary is a direct consequence of the last proposition.

\begin{cor}\label{cor appendix}
Set $H:=M_{0,\xi}\ltimes N_0$. Let $\tau$ be a smooth irreducible representation of $M_{0,\xi}$ (necessarily of finite dimension) and let $\chi_\tau$ be its character. Assume that the central characters of $\pi$ and $\chi_\tau$ coincide on the split center $A_G$ of $G$. Then, we have

$$\displaystyle \dim \Hom_{H}(\pi,\tau\otimes \xi)=\int_{M_{0,\xi}/A_G} D^{G/M_0}(x)^{1/2}c_{\pi,\mathcal{O}_x}(x) \chi_\tau(x^{-1})dx$$

\noindent where the Haar measure on $M_{0,\xi}/A_G$ is chosen so that $\vol(M_{0,\xi}/A_G)=1$.
\end{cor}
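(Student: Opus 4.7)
The approach is to convert the question into a character computation via Frobenius reciprocity, observe that the resulting computation takes place on a compact quotient $M_{0,\xi}/A_G$, and then apply Schur orthogonality together with the character formula just established in the Proposition.

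Concretely, the character $\xi$ extends trivially on $M_{0,\xi}$ to a character of $H=M_{0,\xi}\ltimes N_0$, and $\tau\otimes\xi$ denotes the resulting representation of $H$. Since any element of $\Hom_H(\pi,\tau\otimes\xi)$ is automatically $(N_0,\xi)$-equivariant, it must factor through the twisted Jacquet module, yielding the canonical identification
$$\Hom_H(\pi,\tau\otimes\xi)=\Hom_{M_{0,\xi}}(\pi_{N_0,\xi},\tau).$$
The theorem of M\oe{}glin--Waldspurger recalled just before the Proposition gives $\dim\pi_{N_0,\xi}=c_{\pi,\mathcal{O}_1}(1)<\infty$, so $\pi_{N_0,\xi}$ is a finite-dimensional smooth representation of $M_{0,\xi}$; its central character agrees with that of $\tau$ on $A_G$ by hypothesis.

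Next I verify that $M_{0,\xi}/A_G$ is compact so that ordinary character orthogonality applies. Since $M_0$ is a minimal Levi, $M_0/A_0$ is anisotropic and hence compact. Moreover $A_G\subseteq M_{0,\xi}$ since central elements commute with $Y$, while $M_{0,\xi}\cap A_0$ is contained in the center of $G$---this is precisely the observation invoked at the beginning of the proof of the Proposition---so, $A_0$ being split, one has $M_{0,\xi}\cap A_0=A_G$. The induced closed embedding $M_{0,\xi}/A_G\hookrightarrow M_0/A_0$ exhibits $M_{0,\xi}/A_G$ as compact.

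Finally, applying Schur orthogonality on the compact group $M_{0,\xi}/A_G$ equipped with its volume-one Haar measure, to the finite-dimensional $M_{0,\xi}$-representations $\pi_{N_0,\xi}$ and $\tau$ sharing a central character on $A_G$, gives
$$\dim\Hom_{M_{0,\xi}}(\pi_{N_0,\xi},\tau)=\int_{M_{0,\xi}/A_G}\Tr\pi_{N_0,\xi}(x)\,\chi_\tau(x^{-1})\,dx.$$
Substituting the Proposition's identity $\Tr\pi_{N_0,\xi}(x)=D^{G/M_0}(x)^{1/2}c_{\pi,\mathcal{O}_x}(x)$ yields the claimed formula. The only non-formal step in this outline is the compactness verification; no genuine obstacle arises, since the substantive analytic content has been packaged into the preceding Proposition.
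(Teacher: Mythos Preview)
Your proof is correct and is precisely the argument the paper has in mind: the paper states only that the corollary ``is a direct consequence of the last proposition'' without further detail, and your Frobenius reciprocity / compactness / Schur orthogonality chain is exactly the intended unpacking of that phrase.
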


\flushright Rapha\"el Beuzart-Plessis \\
Universit\'e d'Aix-Marseille\\
I2M - CNRS (UMR 7373)\\
Campus de Luminy\\ 
13288 Marseille C\'edex 9, France \\
rbeuzart@gmail.com

\vspace{2mm}

Chen Wan \\
Fuld Hall 422, School of Mathematics \\
Institute for Advanced Study \\
1 Einstein Drive, Princeton, NJ 08540, USA \\
wanxx123@umn.edu
\end{document}